\documentclass[10pt]{amsart}

\usepackage{enumerate}
\usepackage{amssymb}
\usepackage{bm}
\usepackage{graphicx}
\usepackage[centertags]{amsmath}
\usepackage{amsfonts}
\usepackage{amsthm}
\usepackage{a4}
\usepackage{latexsym}
\usepackage{amsmath}
\usepackage{amsfonts}
\usepackage{amssymb}
\usepackage{amscd}
\usepackage{amsthm}
\usepackage{layout}
\usepackage{color}
\usepackage{mathrsfs}

\usepackage[normalem]{ulem}
\usepackage{comment}


\theoremstyle{definition}
\newtheorem{pr}{Problem}
\newtheorem{thm}{Theorem}[section]
\newtheorem{cor}[thm]{Corollary}
\newtheorem{prop}[thm]{Proposition}
\newtheorem{lem}[thm]{Lemma}

\newtheorem*{thm*}{Theorem}
\newtheorem*{notn*}{Notation}

\theoremstyle{definition}
\newtheorem{dfn}[thm]{Definition}

\newtheorem{exmp}[thm]{Example}

\newtheorem{notn}[thm]{Notation}

\theoremstyle{definition}
\newtheorem{rem}[thm]{Remark}
\newtheorem{rems}[thm]{Remarks}

\makeatletter
\makeatother

\newtheorem{thmA}{Theorem}

\bibliographystyle{plain}

\newcommand{\R}{\mathbb{R}}
\newcommand{\N}{\mathbb{N}}

\newcommand{\dt}{\mathscr{D}}
\newcommand{\supp}{\textnormal{supp}}

\newcommand{\spn}{\textnormal{span}}
\newcommand{\cspn}{\overline{\textnormal{span}}}
\renewcommand{\-}{\textnormal{-}}
\DeclareMathOperator{\sgn}{sgn}
\DeclareMathOperator{\co}{co}

\allowdisplaybreaks 

\begin{document}

\title[Joint spreading models and uniform approximation]{Joint spreading models and uniform approximation of bounded operators}
\author[S. A. Argyros]{S. A. Argyros}
\address{National Technical University of Athens, Faculty of Applied Sciences,
	Department of Mathematics, Zografou Campus, 157 80, Athens, Greece}
\email{sargyros@math.ntua.gr}\email{ale.grgu@gmail.com}
\author[A. Georgiou]{A. Georgiou}
\author[A.-R. Lagos]{A.-R. Lagos}
\address{National Technical University of Athens, School of Electrical and Computer Engineering, Zografou Campus, 157 80 Athens, Greece}
\email{lagosth993@gmail.com}
\author[P. Motakis]{Pavlos Motakis}
\address{Department of Mathematics, University of Illinois at Urbana-Champaign, Urbana, IL 61801, U.S.A.}
\email{pmotakis@illinois.edu}

\thanks{{\em 2010 Mathematics Subject Classification:} Primary 46B03, 46B06, 46B25, 46B28, 46B45.}
\thanks{The fourth named author
  was  supported by the National Science Foundation under Grant Numbers
  DMS-1600600 and DMS-1912897.}

\maketitle

	\begin{abstract}
		We investigate the following property for Banach spaces. A Banach space $X$ satisfies the Uniform Approximation on Large Subspaces (UALS) if there exists $C>0$ with the following property: for any $A\in\mathcal{L}(X)$ and convex compact subset  $W$ of $\mathcal{L}(X)$ for which there exists $\varepsilon>0$ such that for every $x\in X$ there exists $B\in W$ with $\|A(x)-B(x)\|\le\varepsilon\|x\|$, there exists a  subspace $Y$ of $X$ of finite codimension and a $B\in W$ with $\|(A-B)|_Y\|_{\mathcal{L}(Y,X)}\leq C\varepsilon$. We prove that a class of separable Banach spaces including $\ell_p$, for $1\le p< \infty$,  and $C(K)$, for $K$ countable and compact, satisfy the UALS. On the other hand every $L_p[0,1]$, for $1\le p\le \infty$ and $p\neq2$, fails the property and the same holds for $C(K)$, where $K$ is an uncountable metrizable compact space. Our sufficient conditions for UALS are based on joint spreading models, a multidimensional extension of the classical concept of spreading model, introduced and studied in the present paper.
	\end{abstract}

	\section*{introduction} This paper is devoted to the study of the Uniform Approximation on Large Subspaces (UALS) for an infinite dimensional Banach space $X$. This concept concerns a special case of the following general question. Find conditions such that the $\varepsilon$-pointwise approximation of a function $f$ by the elements of a family of functions $W$ yields that there exists a $g\in W$ which uniformly $\varepsilon'$-approximates the function $f$. One of the best results in this frame is the well known consequence of Hahn-Banach theorem. If $X$ is a Banach space, it may be viewed as a subspace of $X^{**}$ through the natural embedding. If $x_0\in X$, $W$ is a closed convex subset of $X$ and $\varepsilon>0$ are such that for every $x^*\in X^*$ there exists $x\in W$ with $|x^*(x_0)-x^*(x)|\le\varepsilon\|x^*\|$, then we have that for every $\varepsilon'>\varepsilon$ there exists $y_0\in W$ so that $\|x_0-y_0\|\le\varepsilon'$. It is natural to ask how the previous is extended  to the space of bounded linear operators $\mathcal{L}(X)$. The UALS property is an attempt to provide an answer. Notice that in the definition of UALS there are two differences from the above result. The first one is that the set $W$ is norm compact and this is necessary since the  normalized operators of rank one $\varepsilon$-pointwise approximate the identity for every $\varepsilon>0$.  The second one is that we expect the uniform approximation to happen on a finite codimensional subspace. This is also necessary, since as Bill Johnson pointed out, for every $X$ with $\dim X\ge2$ there exist $C>0$, $A\in\mathcal{L}(X)$ and a convex compact $W\subset\mathcal{L}(X)$ such that, for every $x$ in the unit ball of $X$, there is a $B$ in $W$ with $\|A(x)-B(x)\|=0$, whereas the norm distance of $A$ from $W$ is greater than $C$. There are two classes of Banach spaces satisfying the UALS. The first are spaces with the scalar-plus-compact property \cite{AH}, \cite{A} and the second one includes spaces with strong asymptotic homogeneity. The latter concerns the uniform uniqueness of $l$-joint spreading models, an extension of the classical spreading models \cite{BS}.

The paper is organized in five sections. In the first section we introduce the notion of plegma spreading sequences. These are finite collections of Schauder basic sequences in a Banach space that interact with one another in a spreading way when indexed by plegma families, a notion which first appeared in \cite{AKT}.

The second section is motivated by the definition of plegma spreading sequences and concerns the problems of whether or not finite collections of Schauder (unconditional) basic sequences contain subsequences that form, under a suitable order, a common Schauder (unconditional) basic sequence. For Schauder basic sequences we provide a complete characterization given in the following.

    \begin{thmA}
        Let $(x^1_n)_n,\ldots,(x^l_n)_n$ be seminormalized sequences in a Banach space $X$ such that each one is either weakly null, equivalent to the basis of $\ell_1$, or non-trivial weak-Cauchy. Let $I\subset\{1,\ldots,l\}$ be such that $(x^i_n)_n$ is a non-trivial weak-Cauchy sequence with $w^*\-\lim x^i_n=x^{**}_i$ for every $i\in I$ and set $F=\spn\{ x^{**}_i\}_{i\in I}$. Then there exist $M_1,\ldots,M_l$ infinite subsets of $\N$ such that $\cup_{i=1}^l\{x^i_{n} \}_{n\in M_i}$ is a Schauder basic sequence, under a suitable enumeration, if and only if $X\cap F=\{0\}$.
    \end{thmA}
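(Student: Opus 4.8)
The plan is to prove the two implications separately; necessity is short, while sufficiency requires a careful simultaneous selection of subsequences.

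\emph{Necessity.} Suppose that for some $M_1,\dots,M_l$ and some enumeration $(y_k)_k$ of $\cup_{i=1}^l\{x^i_n\}_{n\in M_i}$ the sequence $(y_k)_k$ is Schauder basic (in particular the listed vectors are distinct), with biorthogonal functionals $(y_k^*)_k$ on $Y:=\cspn\{y_k:k\in\N\}$, and assume toward a contradiction that there is $x=\sum_{i\in I}a_ix^{**}_i\in X$ with $x\neq 0$. Fix $k_1<k_2<\cdots$ in $\cap_{i\in I}M_i$ and put $z_n=\sum_{i\in I}a_ix^i_{k_n}$. Since each $(x^i_m)_m$ is weak-Cauchy with $w^*$-limit $x^{**}_i$, we have $z_n\to\sum_{i\in I}a_ix^{**}_i=x$ in the $w^*$-topology of $X^{**}$; as $x\in X$ this is weak convergence in $X$, and since $Y$ is norm-closed hence weakly closed, $x\in Y$. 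Fix $k$. Because $i\in I$ means $(x^i_m)_m$ is non-trivial weak-Cauchy, its $w^*$-limit $x^{**}_i$ is the common $w^*$-limit of all its subsequences and does not lie in $X$, so $(x^i_m)_m$ attains any fixed value only finitely often; as $k_n\to\infty$ it follows that for all large $n$ none of the vectors $x^i_{k_n}$, $i\in I$, equals $y_k$, whence $y_k^*(z_n)=\sum_{i\in I}a_i\,y_k^*(x^i_{k_n})=0$. Therefore $y_k^*(x)=\lim_n y_k^*(z_n)=0$ for every $k$, so $x=\sum_k y_k^*(x)\,y_k=0$, a contradiction. Hence $X\cap F=\{0\}$.

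\emph{Sufficiency.} Assume $X\cap F=\{0\}$. Replacing $X$ by its separable subspace $\cspn\{x^i_n:1\le i\le l,\,n\in\N\}$ (one checks this preserves all the hypotheses, the relevant $w^*$-limits and the condition $X\cap F=\{0\}$ transferring faithfully) we may assume $X$ is separable; a basic sequence produced inside this subspace remains basic in the original $X$. After the standard preliminary passage to subsequences (via Rosenthal's $\ell_1$-theorem and the Bessaga--Pe\l{}czy\'nski selection principle) we may assume each $(x^i_n)_n$ is Schauder basic with constant $\le 2$. Since for $i\in I$ the sequence $(x^i_n)_n$ cannot eventually lie in a fixed finite-dimensional subspace (otherwise $x^{**}_i\in X$), a short induction on $|I|$ yields starting indices $m^i_1$ with $\{x^i_{m^i_1}:i\in I\}$ linearly independent. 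For $i\in I$ and any infinite $M_i=\{m^i_1<m^i_2<\cdots\}$ we use the identity $x^i_{m^i_{k+1}}=x^i_{m^i_1}+\sum_{j=1}^{k}D^i_j$ with $D^i_j:=x^i_{m^i_{j+1}}-x^i_{m^i_j}$, noting that the $D^i_j$ are weakly null in $X$ as $j\to\infty$ (both endpoints tend $w^*$ to $x^{**}_i$) and that, by $w^*$-lower semicontinuity of the norm together with $x^{**}_i\notin X$, one can keep $\inf_j\|D^i_j\|>0$ along the selection; for $i\notin I$ the sequence $(x^i_n)_n$ (weakly null or $\ell_1$) is fed directly into the construction.

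The core of the argument is then to choose the sets $M_i$ so that the countable family $\mathcal B:=\{x^i_{m^i_1}:i\in I\}\cup\{D^i_j:i\in I,\,j\ge1\}\cup\{x^i_{m^i_k}:i\notin I,\,k\ge1\}$, in a suitable enumeration (the independent leading vectors $x^i_{m^i_1}$ first, then the remaining vectors interleaved in plegma order), is Schauder basic. Granting this, a routine partial-sum estimate — in essence, that a ``summing basis'' built over a basic sequence, prefixed by finitely many leading vectors, is again basic — shows that $\cup_{i=1}^l\{x^i_n\}_{n\in M_i}$, in the corresponding plegma enumeration, is Schauder basic, with constant controlled by that of $\mathcal B$ together with $\sup_{i,n}\|x^i_n\|$ and $\inf_{i,j}\|D^i_j\|$. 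One builds the $M_i$ level by level by a gliding-hump induction, and this is the step I expect to be the main obstacle: having placed finitely many elements of $\mathcal B$ and recorded approximate biorthogonal functionals for them inside a fixed countable norming subset of $B_{X^*}$, one must choose the next index $m^i_{j+1}$ so that $D^i_j=x^i_{m^i_{j+1}}-x^i_{m^i_j}$ is nearly annihilated by all recorded functionals — but $D^i_j$ is \emph{not} asymptotically weakly small once $m^i_j$ is fixed (it tends $w^*$ to $x^{**}_i-x^i_{m^i_j}$), so the selection has to be organized by a diagonal book-keeping on the countable norming set, arranging that each chosen $x^i_{m^i_j}$ is already $w^*$-close to $x^{**}_i$ on every functional that will later be invoked. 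The hypothesis $X\cap F=\{0\}$ is used precisely here: together with $\dim F<\infty$ it supplies a uniform estimate of the form $\|\sum_{i\in I}c_ix^{**}_i+u\|\gtrsim\|\sum_{i\in I}c_ix^{**}_i\|$ for $u$ ranging over the relevant tails, which keeps the leading vectors $\{x^i_{m^i_1}\}$ boundedly away from $\cspn\{D^i_j\}$ and prevents the summing contributions — which converge $w^*$ to combinations of the $x^{**}_i$ — from destroying the basicness of $\mathcal B$; this is exactly the phenomenon whose failure drove the necessity argument.
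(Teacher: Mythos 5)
Your necessity argument is essentially the paper's (Proposition \ref{propntwc}), but as written it has a slip: you pick indices $k_1<k_2<\cdots$ in $\cap_{i\in I}M_i$, and this intersection may well be finite or empty since the $M_i$ are arbitrary infinite sets. The repair is immediate — set $z_n=\sum_{i\in I}a_i x^i_{m^i_n}$ where $m^i_n$ is the $n$-th element of $M_i$; the weak convergence $z_n\to x$ and the eventual vanishing of each biorthogonal functional on $z_n$ survive, and one reaches the same contradiction (the paper phrases this by taking the $z_n$ to be blocks of the common basic sequence, which therefore cannot converge weakly to a nonzero vector).

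The sufficiency half has genuine gaps. First and most seriously, the $\ell_1$ case is not addressed: a sequence equivalent to the unit vector basis of $\ell_1$ is neither weakly null nor weak-Cauchy, so there is no reason that, given the finitely many norming functionals recorded at a given stage of the gliding hump, some far-out term of that sequence is nearly annihilated by all of them (in $\ell_1$ itself, $x_n=e_n$ and the functional $(1,1,1,\ldots)$ already block this). The paper's resolution is a dedicated ultrafilter argument: a cardinality count in $X^{**}/X$ (Lemma \ref{ufilter}) produces non-principal ultrafilters $p_i$ with $X\cap\spn\{F\cup\{T_i^{**}\delta_{p_i}\}_{i\in J}\}=\{0\}$, the norming functionals are then chosen to annihilate this enlarged finite-dimensional subspace, and Lemma \ref{leml1} extracts subsequences along which those functionals do tend to $0$ on the $\ell_1$-sequences; this is the main new idea of the proof and it is absent from your sketch. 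Second, the inductive step you yourself flag as ``the main obstacle'' is in fact where the theorem lives, and your proposed mechanism for invoking $X\cap F=\{0\}$ (separating the leading vectors from the span of the differences) is not the one that works. The paper's mechanism is Lemma \ref{lemlr}: since $\dim F<\infty$ and $X\cap F=\{0\}$, every $x\in S_X$ is normed up to $\varepsilon=d(S_X,F)>0$ by a functional that annihilates $F$; such functionals automatically satisfy $\lim_n f(x^i_n)=0$ for $i\in I$, so Mazur's gliding hump (Proposition \ref{basicmain}) runs directly on the original sequences — no passage to differences, and hence no summing-basis reduction (which, incidentally, is not ``routine'': the partial sums of a seminormalized basic sequence need not be basic, as the unit vector basis of $\ell_2$ shows, and recovering basicness of the interleaved original vectors from basicness of your family $\mathcal B$ would itself require the boundedness of the partial sums and a careful projection estimate you do not supply).
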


For unconditional sequences the following holds.

    \begin{thmA}
        Let $(e^i_{n})_{i=1,n\in\N}^l$ be a plegma spreading sequence such that each $(e^i_n)_n$ is unconditional. Then $(e^i_{n})_{i=1,n\in\N}^l$ is also an unconditional sequence.
    \end{thmA}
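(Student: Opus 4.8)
The plan is to reduce full unconditionality of the joint sequence to a bound on the coordinate projections onto a single row, and to obtain that bound from the rigidity of plegma realizations together with the dichotomy supplied by Theorem~I.

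For the reduction, write a finite linear combination as $v=\sum_{i=1}^l y_i$, where $y_i=\sum_n a_{i,n}e^i_n$ is the part of $v$ carried by the $i$-th row. If $(\varepsilon_{i,n})$ is any choice of signs and $v'=\sum_{i=1}^l y_i'$ with $y_i'=\sum_n\varepsilon_{i,n}a_{i,n}e^i_n$, then $\|y_i'\|\le C_0\|y_i\|$ since each $(e^i_n)_n$ is $C_0$-unconditional, and hence $\|v'\|\le C_0\sum_{i=1}^l\|y_i\|$. So it suffices to produce a constant $C$, depending only on the plegma spreading sequence, with $\|y_{i_0}\|\le C\|v\|$ for every $i_0$ and every choice of scalars: this gives $\|v'\|\le lC_0C\,\|v\|$, i.e. $(e^i_{n})_{i=1,n\in\N}^l$ is $lC_0C$-unconditional. (Such a bound is in fact necessary, since a coordinate projection of an unconditional basis onto a subset of the basis is bounded.)

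To bound $\|y_{i_0}\|$ I would first record that each row $(e^i_n)_n$, being a single slice of a plegma spreading sequence, is a seminormalized spreading basic sequence; by Theorem~I (equivalently, the classical dichotomy for spreading sequences) it is weakly null, equivalent to the $\ell_1$-basis, or non-trivial weak-Cauchy, and the last case embeds the summing basis of $c_0$ and is therefore excluded by unconditionality. Thus $\{1,\dots,l\}=\mathcal I_0\cup\mathcal I_1$ with the rows in $\mathcal I_0$ weakly null and those in $\mathcal I_1$ equivalent to the $\ell_1$-basis. Now fix $v=\sum_i\sum_n a_{i,n}e^i_n$, put $L=\|y_{i_0}\|$, and, for a large integer $M$, choose a plegma family $\mathbf s=(s_1,\dots,s_l)$ all of whose entries exceed $M$. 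The plegma spreading property gives $\|v\|=\big\|\sum_i\sum_n a_{i,n}e^i_{s_i(n)}\big\|$ and, applied to the row-$i_0$ subpattern, $L=\big\|\sum_n a_{i_0,n}e^{i_0}_{s_{i_0}(n)}\big\|$. Choosing a norming functional $\phi$ for the latter, $\|v\|\ge\phi\big(\sum_i\sum_n a_{i,n}e^i_{s_i(n)}\big)=L+\sum_{i\ne i_0}\sum_n a_{i,n}\phi\big(e^i_{s_i(n)}\big)$. For $i\in\mathcal I_0$ the terms $\phi(e^i_{s_i(n)})$ are made arbitrarily small by taking $M$ large, since $e^i_m\rightharpoonup 0$; the remaining terms, coming from rows in $\mathcal I_1\setminus\{i_0\}$, are controlled by $\|\phi\|$ times the $\ell_1$-norm of the corresponding coefficients, which one absorbs using that for an $\ell_1$-row $\sum_n|a_{i,n}|$ is comparable to $\|y_i\|$ and, by a companion far-out-realization argument, is dominated by $\|v\|$. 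Letting $M\to\infty$ then yields $\|v\|\ge cL$ with $c=c(l)>0$, as required.

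The delicate point — the one I expect to be the main obstacle — is precisely the treatment of the $\ell_1$-rows. A plegma realization keeps the $l$ entries of a column bunched together in the ambient index set, so one cannot separate the rows inside a single plegma family; the only available move is to push the entire configuration far out at once. This instantly disposes of the spectator terms sitting in weakly null rows, whose norming functionals decay, but the norming functionals of an $\ell_1$-row need not decay, and establishing that $\|v\|$ dominates the total $\ell_1$-mass carried by the rows in $\mathcal I_1$ requires a more careful weak$^*$-compactness/limiting argument based on the structure theory of plegma spreading sequences developed above. Once that estimate is available, the remainder is bookkeeping with constants.
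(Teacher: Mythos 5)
Your overall architecture agrees with the paper's: classify each row via the dichotomy for spreading sequences (non-trivial weak-Cauchy rows are excluded by unconditionality, leaving weakly null rows and $\ell_1$-rows) and then control the two blocks separately. However, both halves of your key estimate have genuine gaps. First, in the weakly null case your norming functional $\phi$ is chosen \emph{after} the far-out plegma family $\mathbf{s}$, so it varies with $M$; weak nullness of $(e^i_m)_m$ only gives $\psi(e^i_m)\to 0$ for a \emph{fixed} $\psi$, and says nothing about $\phi_M(e^i_{s_i(n)})$ as $M\to\infty$ (take $(e_n)$ the unit vector basis of $c_0$ and $\phi_M$ a coordinate functional at position $s_i(n)$ to see the claim fail). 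The paper's Proposition \ref{w0unc} avoids this by the Mazur device: it takes a \emph{convex combination} of many plegma shifts that agree on every coordinate except the targeted one, so that the weakly null contribution becomes small \emph{in norm}, uniformly over all functionals; the same averaging is what proves $E_0+E_1=E_0\oplus E_1$ in the paper's proof of the theorem, and it is the step your "push everything far out once" move cannot replace.

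Second, the estimate you explicitly defer --- that $\|v\|$ dominates the total $\ell_1$-mass carried by the rows in $\mathcal I_1$ --- is not a technicality to be absorbed; it is precisely the content of the paper's proposition that an $l$-tuple of plegma spreading $\ell_1$-rows is \emph{jointly} equivalent to the $\ell_1$-basis, and its proof requires the ultrafilter construction of Lemma \ref{uflim} to produce functionals $x_i^*$ with $\lim_{n\in M_i}x_i^*(e^i_n)=1$ and $\lim_{n\in M_j^i}x_i^*(e^j_n)=0$, separating a dominant positive block of one row from all the others. Moreover, even granting that lower bound for the pure $\ell_1$-block, you need it for $v$ containing weakly null rows as well, which again forces you to first split off the weakly null block by the convex-combination argument. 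So the proposal identifies the right skeleton but leaves unproved exactly the two lemmas that carry the weight of the paper's proof.
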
	

    We also provide a variant of the classical B. Maurey - H. P. Rosenthal example \cite{MR} of two unconditional sequences $(e^1_n)_n,(e^2_n)_n$ in a space $X$ such that, for every $M,L$ infinite subsets of $\N$, the sequence $(e_n^1)_{n\in M}\cup(e_n^2)_{n\in L}$ is not unconditional. This shows that the assumption of a plegma spreading sequence in the above theorem is necessary. Further, it is well known that the space generated by two unconditional sequences is not necessarily unconditionally saturated.

    In section three we define  joint spreading models, which as we have mentioned already, are a multidimensional extension of the classical Brunel-Sucheston spreading models. We also present some of their basic properties.

    The fourth section concerns spaces that admit uniformly unique joint spreading models with respect to certain families of Schauder basic sequences. Examples of such spaces are $\ell_p(\Gamma)$, for $1\le p<\infty$, $c_0(\Gamma)$ and, as we show, all Asymptotic $\ell_p$ spaces in the sense of \cite{MMT}. We also prove that the James Tree space admits a uniformly unique $l$-joint spreading model with respect to the family of all normalized weakly null Schauder basic sequences in $JT$. Each $l$-joint spreading model generated by a sequence from this family is $\sqrt{2}$-equivalent to the unit vector basis of $\ell_2$ and this is the best constant \cite{HB}, \cite{Be}. Our proof is a variant of the well known result due to I. Ameniya and  T. Ito \cite{AI} that every normalized weakly null sequence in $JT$ has a subsequence equivalent to the basis of $\ell_2$.

    The fifth section is devoted to the study of spaces satisfying the UALS and to classical spaces where this property fails. In the first part we study the property for spaces with very few operators, namely spaces with the scalar-plus-compact property \cite{AH}, \cite{A}. We prove the following.
     \begin{thmA}
	 	Every Banach space with the scalar-plus-compact property satisfies the  UALS.
	 \end{thmA}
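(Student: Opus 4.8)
The plan is to use the scalar-plus-compact property to reduce the pointwise approximation of $A$ by $W$ to a pointwise approximation by scalar multiples of the identity, and then to exploit that a compact operator is arbitrarily small on a suitable finite codimensional subspace, uniformly over $W$.

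Assume $X$ is infinite dimensional (there is nothing to prove otherwise). Since $X$ has the scalar-plus-compact property, every $T\in\mathcal{L}(X)$ admits a unique decomposition $T=\lambda_T I+S_T$, where $I$ is the identity, $\lambda_T$ a scalar and $S_T$ compact (uniqueness since $I$ is not compact), and the Calkin algebra $\mathcal{L}(X)/\mathcal{K}(X)$ is one dimensional. Hence the scalar part $\phi\colon T\mapsto\lambda_T$ is a \emph{bounded} linear functional on $\mathcal{L}(X)$ (it is the composition of the quotient map $\mathcal{L}(X)\to\mathcal{L}(X)/\mathcal{K}(X)$ with a linear isomorphism of the latter onto the scalars) and $T\mapsto S_T=T-\phi(T)I$ is a bounded linear map of $\mathcal{L}(X)$ into $\mathcal{K}(X)$. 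Consequently $\Lambda:=\phi(W)$ is a compact convex set of scalars and $\mathcal{S}:=\{S_A-S_B:B\in W\}$ is a norm compact subset of $\mathcal{K}(X)$.

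Next I would prove the following uniform statement: for every $\delta>0$ there is a finite codimensional subspace $Y$ of $X$ with $\sup_{B\in W}\|(S_A-S_B)|_Y\|_{\mathcal{L}(Y,X)}\le\delta$. Indeed, $C\mapsto C^*$ is a linear isometry of $\mathcal{K}(X)$ into $\mathcal{K}(X^*)$, so $\{C^*:C\in\mathcal{S}\}$ is norm compact, and a standard argument then gives that $\bigcup_{C\in\mathcal{S}}C^*(B_{X^*})$ is relatively norm compact in $X^*$; choosing a finite $(\delta/2)$-net $x_1^*,\dots,x_m^*$ of this set and putting $Y=\bigcap_{j\le m}\ker x_j^*$, for $C\in\mathcal{S}$, $x^*\in B_{X^*}$ and $y\in Y$ one has $|x^*(Cy)|=|(C^*x^*)(y)|\le\min_{j}\|C^*x^*-x_j^*\|\,\|y\|\le(\delta/2)\|y\|$, whence $\|C|_Y\|\le\delta/2$.

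Finally, apply this with $\delta=\varepsilon$ to obtain $Y$, pick $x\in Y$ with $\|x\|=1$ (possible as $Y$ is infinite dimensional), and use the hypothesis to find $B\in W$ with $\|A(x)-B(x)\|\le\varepsilon$. From $A(x)-B(x)=(\lambda_A-\lambda_B)x+(S_A-S_B)(x)$ and $x\in Y$,
\[
|\lambda_A-\lambda_B|=\|(\lambda_A-\lambda_B)x\|\le\|A(x)-B(x)\|+\|(S_A-S_B)(x)\|\le 2\varepsilon ,
\]
so $\operatorname{dist}(\lambda_A,\Lambda)\le2\varepsilon$; since $\Lambda$ is compact there is $B_0\in W$ with $|\lambda_A-\lambda_{B_0}|\le2\varepsilon$, and then $\|(A-B_0)(y)\|\le|\lambda_A-\lambda_{B_0}|\,\|y\|+\|(S_A-S_{B_0})(y)\|\le3\varepsilon\|y\|$ for every $y\in Y$, which is the conclusion with $C=3$ (a preliminary shrinking of $\delta$ to force $\operatorname{dist}(\lambda_A,\Lambda)\le\varepsilon$ would give $C=2$). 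I do not expect a genuine obstacle: the two points needing care are that the scalar part $\phi$ is norm bounded — which is exactly where one dimensionality of the Calkin algebra, i.e.\ the scalar-plus-compact property, is used — and that the compact parts can be made small on a single finite codimensional subspace \emph{uniformly} over $W$, which rests on compactness of $W$ together with Schauder's theorem on adjoints of compact operators.
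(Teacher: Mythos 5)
Your proof is correct, and it follows the same overall strategy as the paper: decompose $A$ and the elements of $W$ as scalar plus compact, make the compact parts small on a single finite codimensional subspace $Y$, and then use the pointwise hypothesis at one unit vector of $Y$ to force the scalar parts close together. Where you genuinely diverge is in the key lemma producing $Y$. The paper first takes a finite $\delta$-net $\{B_i\}$ of $W$ and applies its Proposition \ref{compact operators inf is zero over codim} (proved by a Mazur-type gliding-hump argument: a compact operator bounded below on every finite codimensional subspace would map $B_X$ onto a seminormalized basic sequence) to the finitely many compact parts $K_A,K_1,\ldots,K_n$, and only afterwards transfers the estimate from the net back to $W$. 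You instead obtain the uniform bound $\sup_{B\in W}\|(S_A-S_B)|_Y\|\le\delta$ over all of $W$ at once, using Schauder's theorem together with the total boundedness of $\bigcup_{C\in\mathcal{S}}C^*(B_{X^*})$ and a finite net in $X^*$ whose kernels define $Y$; to make $\mathcal{S}$ compact you need the continuity of the Calkin quotient $T\mapsto\lambda_T$, a point the paper sidesteps entirely by working with the net. Your route is more quantitative and arguably cleaner (it isolates the uniform smallness as a self-contained dual argument), while the paper's route recycles its Lemma 5.4/Proposition 5.5, which it also needs for the strictly singular variant. Two cosmetic remarks: the detour through $\Lambda=\phi(W)$ in your last step is unnecessary, since the operator $B$ you already found satisfies $|\lambda_A-\lambda_B|\le2\varepsilon$ and can serve as $B_0$; and by shrinking $\delta$ independently of $\varepsilon$ both arguments in fact yield any constant $C>1$, as in the paper's $\varepsilon+4\delta$ bound.
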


     The basic result for UALS concerns spaces which admit uniformly unique joint spreading models with respect to families of Schauder basic sequences that have certain stability properties. For this we first introduce the class of difference-including families (see Definition \ref{difference including}) and we prove the following.
     	 \begin{thmA}
	 \label{main theorem intro}
	 	Let $X$ be a Banach space and assume that for every separable subspace $Z$ of $X$ we have a difference-including collection $\mathscr{F}_Z$ of normalized Schauder basic sequences  in $Z$.  If there exists a uniform $K\geq 1$ such that each such $Z$ admits a $K$-uniformly unique $l$-joint spreading model with respect to $\mathscr{F}_Z$, then $X$ satisfies the UALS property.
	 \end{thmA}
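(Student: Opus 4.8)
Throughout write $\|T\|_e:=\inf\{\|T|_Y\|_{\mathcal{L}(Y,X)}:Y\le X\text{ of finite codimension}\}$ for $T\in\mathcal{L}(X)$; this is a seminorm on $\mathcal{L}(X)$ and $B\mapsto\|A-B\|_e$ is $1$-Lipschitz, so it attains its minimum on the compact set $W$. The plan is to show that there is a constant $\rho=\rho(K,l)$ such that for every triple $(A,W,\varepsilon)$ as in the definition of UALS one has $\min_{B\in W}\|A-B\|_e\le\rho^2\varepsilon$; taking any $C>\rho^2$ then proves the theorem, since $\|A-B\|_e\le C\varepsilon$ for the minimising $B$ means that a sufficiently small finite-codimensional $Y$ realises $\|(A-B)|_Y\|_{\mathcal{L}(Y,X)}\le C\varepsilon$. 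So I argue by contradiction: fix $C>\rho^2$ and suppose that for some admissible $(A,W,\varepsilon)$ we have $\|(A-B)|_Y\|_{\mathcal{L}(Y,X)}>C\varepsilon$ for every finite-codimensional $Y\le X$ and every $B\in W$; equivalently $\|A-B\|_e\ge C\varepsilon$ for all $B\in W$, hence $\operatorname{dist}_e(A,W):=\min_{B\in W}\|A-B\|_e\ge C\varepsilon$.

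Next I pass to the quotient seminormed space $V:=\mathcal{L}(X)/\{T:\|T\|_e=0\}$, where the class of $W$ is a compact convex set and $\operatorname{dist}_V([A],[W])=\operatorname{dist}_e(A,W)\ge C\varepsilon$. Since $[W]$ is closed and convex, the Hahn–Banach distance formula supplies (for every $\delta>0$) a functional $\psi\in V^*$ with $\|\psi\|\le 1$ and $\psi([A])-\sup_{B\in W}\psi([B])\ge C\varepsilon-\delta$; pulling back gives $\phi\in\mathcal{L}(X)^*$ with $|\phi(T)|\le\|T\|_e$ for all $T$ and $\phi(A)-\sup_{B\in W}\phi(B)\ge C\varepsilon-\delta$. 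The condition $|\phi(T)|\le\|T\|_e$ says that $\phi$ is an ``asymptotic'' functional: it is insensitive to the behaviour of $T$ on any finite-codimensional subspace.

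The heart of the argument — and the step I expect to be the main obstacle — is to \emph{realise} such an asymptotic functional concretely, with a controlled loss, using the rigidity of the joint spreading model. Concretely, the claim to be proved is: there exist a separable subspace $Z\le X$ containing all the relevant vectors, a normalised sequence $(x_n)_n$ in $Z$ lying (after passing to a subsequence) in $\mathscr{F}_Z$ — obtained from a plegma spreading sequence in $\mathscr{F}_Z$ by taking suitable combinations of its $l$ components — and functionals $x^*_n\in X^*$ with $\|x^*_n\|\le\rho(K,l)$, such that
$$\liminf_{n}\Big(x^*_n(Ax_n)-\sup_{B\in W}x^*_n(Bx_n)\Big)\;\ge\;\frac{1}{\rho(K,l)}\Big(\phi(A)-\sup_{B\in W}\phi(B)\Big).$$
This is exactly where the hypotheses are used: because $Z$ admits a $K$-uniformly unique $l$-joint spreading model with respect to $\mathscr{F}_Z$, the asymptotic structure of $Z$ is, up to the constant $K$, governed by a single fixed joint spreading model, so an essential-norm functional need not involve ``complex'' asymptotic data (several mutually singular weakly null sequences, Haar-type systems, etc.) but can be captured by one sequence paired with one bounded sequence of functionals; and the difference-including hypothesis is what guarantees that the auxiliary sequences entering the estimates — built from $(x_n)$, from $(Bx_n)$ for $B$ in a finite $\varepsilon$-net of $W$ extracted by compactness, and from their differences — remain inside families possessing a $K$-uniformly unique $l$-joint spreading model, so that the required comparison of norms of convex combinations holds with a constant depending only on $K$ and $l$ (this is where $\rho(K,l)$ is produced). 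I would build $(x_n)$ and $(x^*_n)$ by a gliding-hump/diagonal induction inside $Z$, using at each stage that finitely many finite-codimensional constraints intersect in a finite-codimensional subspace, so that the new vector can simultaneously be taken weakly small against all earlier functionals, be ``bad'' in the direction detected by $\phi$, and generate the prescribed joint spreading model in the limit; because everything takes place inside the separable $Z$, no separability of $X$ is needed.

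Granting the displayed inequality, the contradiction is immediate. For each $n$ the pointwise hypothesis gives $B^{(n)}\in W$ with $\|A(x_n)-B^{(n)}(x_n)\|\le\varepsilon\|x_n\|=\varepsilon$, hence
$$x^*_n(Ax_n)-\sup_{B\in W}x^*_n(Bx_n)\ \le\ x^*_n(Ax_n)-x^*_n(B^{(n)}x_n)\ \le\ \|x^*_n\|\,\varepsilon\ \le\ \rho(K,l)\,\varepsilon,$$
so the left-hand side of the displayed inequality is at most $\rho(K,l)\varepsilon$, whereas it is at least $\big(C\varepsilon-\delta\big)/\rho(K,l)$. Letting $\delta\to0$ yields $C\le\rho(K,l)^2$, contradicting the choice $C>\rho(K,l)^2$. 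Hence $X$ satisfies the UALS with constant $C=\rho(K,l)^2+1$, and the only genuinely delicate point is the realisation step of the preceding paragraph, where the uniform uniqueness of the joint spreading model and the stability built into the difference-including families must be combined to convert a ``spread-out'' asymptotic discrepancy between $A$ and $W$ into one witnessed by a single normalised sequence tested against bounded functionals.
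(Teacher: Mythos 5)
Your overall skeleton (contradiction, reduction to an ``essential distance'' $\inf_{B\in W}\|A-B\|_e\ge C\varepsilon$, Hahn--Banach separation in the quotient by $\ker\|\cdot\|_e$, and a final contradiction against the pointwise hypothesis) is sound as far as it goes, and your closing computation is correct \emph{granted} the displayed realisation inequality. But that realisation step is not a delicate technical point to be filled in later --- it is the entire content of the theorem, and the sketch you give for it does not engage with the actual obstruction. The separating functional $\phi$ lies in $\bigcap_Y\overline{\mathrm{aco}}^{w^*}\{x^*\otimes x: x\in B_Y,\ \|x^*\|\le 1\}$, i.e.\ it is a limit of \emph{convex combinations} of rank-one evaluations supported deeper and deeper in finite-codimensional subspaces. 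What you need is a single normalised sequence $(x_n)$ and bounded functionals $(x_n^*)$ with $x_n^*\big((A-B)x_n\big)\gtrsim C\varepsilon/\rho$ \emph{uniformly over all} $B\in W$; the hypothesis only gives, for each $B$ separately, badness of $A-B$ on every finite-codimensional subspace, and the bad vectors for different $B$'s are in general incompatible (this is exactly what happens in the space $\mathcal{X}=(\sum\oplus X_n\oplus Y_n)_2$ of Definition \ref{definition fails uals}, where the essential distance is bounded below yet no single vector defeats all of $W$). A gliding-hump induction that makes each new vector ``bad in the direction detected by $\phi$'' is not meaningful, because $\phi$ is a mixture and does not single out a direction in $X$; and nothing in your sketch explains how the $\sup_{B\in W}$ inside the liminf is controlled.

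This is precisely the difficulty the paper's proof of Theorem \ref{the theorem} is organised around, and it is resolved by a mechanism absent from your proposal: a finite $\delta$-net $(T_i)_{i=1}^l$ of $W$, an $l$-tuple of sequences $(z^i_k)_k$ in $\mathscr{F}_Z$ witnessing the badness of $A-T_{ij}$ (Lemma \ref{all difference-including domain and target}), the norm comparison \eqref{same norms basically} for convex combinations coming from the $K$-uniformly unique $l$-joint spreading model (Lemma \ref{constant coming from unique joint sm}), a partition of unity $f_1,\dots,f_l$ on $W$ producing a \emph{continuous, $T$-dependent} witness $x(T)=\|\sum_i f_i(T)z^i_{n_i}\|^{-1}\sum_i f_i(T)z^i_{n_i}$ with $\|(A-T)x(T)\|\ge\tfrac{13}{12}\varepsilon$, and finally the Kakutani--Bohnenblust--Karlin fixed point theorem applied to $\phi(T)=\{S\in W:\|(A-S)x(T)\|\le\varepsilon\}$ to produce the contradiction. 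Note that even this argument never produces a single vector that is simultaneously bad for every $B\in W$; it only produces, for each $T$, a vector bad for $T$, varying continuously, and lets the fixed point close the loop. Your proposal would need either to import this fixed-point/selection device or to supply a genuinely new argument for the realisation inequality; as written, there is a gap at its central step.
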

    A key ingedient of the proof is Kakutani's Fixed Point theorem for multivalued mappings \cite{BK}, \cite{Kakutani}. This argument has appeared in a work of W. T. Gowers and B. Maurey, which is related to the theorem (see Lemma 9 of \cite{GM}), and was the motivation for defining the UALS property. As a consequence of the above theorem, the following spaces and all of their subspaces satisfy the UALS. The space $\ell_p(\Gamma)$, for $1\le p<\infty$, $c_0(\Gamma)$, the James Tree space and all Asymptotic $\ell_p$ spaces for $1\le p\le\infty$.  The UALS property behaves quite well in duality. In particular the following hold.

     	  \begin{thmA}
	  Let $X$ be a reflexive Banach space with an FDD. Then $X$ satisfies the UALS if and only if $X^*$ does.
	  \end{thmA}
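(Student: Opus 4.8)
The plan is to prove the substantive implication --- that UALS for $X$ implies UALS for $X^*$ --- and then deduce the converse for free: since $X$ is reflexive with an FDD, that FDD is shrinking, so $X^*$ is again reflexive and is equipped with the dual FDD, while $X^{**}=X$; applying the implication with $X^*$ in the role of $X$ then yields ``UALS for $X^*$ $\Rightarrow$ UALS for $X$''. So fix a UALS constant $C$ for $X$, and let $K$ be the basis constant of the FDD $(F_n)_n$ of $X$ (thus $\|P_n\|\le K$ for the initial-segment projections $P_n$ and $\|Q_n\|\le 1+K$ for the tail projections $Q_n:=I-P_n$); I will show $X^*$ satisfies UALS with constant $C'=(1+K)(2C+1)$.

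First I would transport the data to $X$. Reflexivity makes $B\mapsto B^*$ an isometric linear bijection of $\mathcal{L}(X)$ onto $\mathcal{L}(X^*)$, so any $\mathcal{A}\in\mathcal{L}(X^*)$ and convex norm-compact $\mathcal{W}\subseteq\mathcal{L}(X^*)$ may be written $\mathcal{A}=A^*$ and $\mathcal{W}=W^*:=\{B^*:B\in W\}$ with $A\in\mathcal{L}(X)$ and $W\subseteq\mathcal{L}(X)$ convex norm compact. Suppose $\varepsilon>0$ witnesses the UALS hypothesis for $(\mathcal{A},\mathcal{W})$ on $X^*$. The key preliminary point is that the same $\varepsilon$ witnesses it for $(A,W)$ on $X$: if not, then for some $x_0\in X$ with $\|x_0\|=1$ one has $\|(A-B)x_0\|>\varepsilon$ for every $B\in W$, so the set $\{(A-B)x_0:B\in W\}$ --- convex and norm compact, being an affine norm-continuous image of $W$ --- lies at distance $>\varepsilon$ from $0$; Hahn--Banach separation then gives $x^*\in X^*$ with $\|x^*\|=1$ and $\mathrm{Re}\,x^*((A-B)x_0)>\varepsilon$ for all $B\in W$, whence $\|(\mathcal{A}-\mathcal{B})x^*\|=\|(A-B)^*x^*\|\ge|x^*((A-B)x_0)|>\varepsilon=\varepsilon\|x^*\|$ for every $\mathcal{B}=B^*\in\mathcal{W}$, contradicting the hypothesis for $X^*$ at $x^*$.

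Next I would apply UALS for $X$ to obtain a finite-codimensional $Y\subseteq X$ and $B\in W$ with $\|T|_Y\|_{\mathcal{L}(Y,X)}\le C\varepsilon$, where $T:=A-B$ (we may assume $T\neq 0$). The decisive step --- and the one where the FDD really matters, since an arbitrary finite-codimensional subspace need not be uniformly complemented --- is to replace $Y$ by a tail subspace with only a controlled loss of constant. Writing $Y=\bigcap_{i=1}^d\ker f_i$ with $f_1,\dots,f_d\in X^*$ linearly independent and choosing $u_1,\dots,u_d\in X$ with $f_i(u_j)=\delta_{ij}$, the fact that the FDD is shrinking gives $\delta_n:=\max_i\|f_iQ_n\|\to 0$; hence for $x$ in the unit ball of $Q_n(X)$ one has $|f_i(x)|=|(f_iQ_n)(x)|\le\delta_n$, so $y:=x-\sum_{i=1}^d f_i(x)u_i\in Y$ with $\|x-y\|\le\delta_n M$ where $M:=\sum_i\|u_i\|$. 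Taking $n$ with $\delta_n M\le\min(1,\varepsilon/\|T\|)$ gives $\|Tx\|\le\|T|_Y\|\,\|y\|+\|T\|\,\|x-y\|\le 2C\varepsilon+\varepsilon$, so $\|T|_{Q_n(X)}\|\le(2C+1)\varepsilon$ and therefore $\|TQ_n\|\le(1+K)(2C+1)\varepsilon=C'\varepsilon$.

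Finally I would return to $X^*$. Put $Z:=(T(P_nX))^\perp\subseteq X^*$; as $T(P_nX)$ is finite dimensional, $Z$ has finite codimension, and for $x^*\in Z$ we get $(TP_n)^*x^*=P_n^*T^*x^*=0$, hence $T^*x^*=Q_n^*T^*x^*=(TQ_n)^*x^*$, so $\|(\mathcal{A}-\mathcal{B})x^*\|=\|T^*x^*\|\le\|TQ_n\|\,\|x^*\|\le C'\varepsilon\|x^*\|$ with $\mathcal{B}:=B^*\in\mathcal{W}$. Thus $\|(\mathcal{A}-\mathcal{B})|_Z\|_{\mathcal{L}(Z,X^*)}\le C'\varepsilon$, which is exactly the conclusion of UALS for $X^*$ with the uniform constant $C'$. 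The only genuinely delicate point in the whole argument is the tail-subspace reduction in the previous paragraph; everything else is soft.
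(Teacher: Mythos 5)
Your argument is correct and follows essentially the same route as the paper's proof: the same three ingredients appear (a Hahn--Banach separation argument showing that pointwise $\varepsilon$-approximation dualizes, the reduction from an arbitrary finite-codimensional subspace to a tail subspace via the shrinking FDD, and the passage back through the kernel of a finite-rank composition), with the only difference being that you prove the implication ``$X$ UALS $\Rightarrow X^*$ UALS'' directly while the paper proves the reverse one --- an immaterial choice given the symmetry both arguments exploit.
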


	 \begin{thmA}
	 \label{dual theorem intro}
	 Let $X$ be a Banach space with an FDD. Assume that there exist a uniform constant $C>0$ and, for every separable subspace $Z$ of $X^*$,  a difference-including family $\mathscr{F}_Z$ of normalized sequences in $X^*$ such that $Z$ admits a $C$-uniformly unique $l$-joint spreading model with respect to $\mathscr{F}_Z$. Then $X$ satisfies the UALS property.
	 \end{thmA}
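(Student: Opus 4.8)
The plan is to reduce the statement to the previously established Theorem \ref{main theorem intro}, applied not to $X$ itself but to its dual $X^*$, and then to transfer the UALS property from $X^*$ back down to $X$ using the FDD structure of $X$.

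First I would verify that $X^*$ satisfies the hypotheses of Theorem \ref{main theorem intro}. By assumption, every separable subspace $Z$ of $X^*$ comes equipped with a difference-including family $\mathscr{F}_Z$ of normalized sequences with respect to which $Z$ admits a $C$-uniformly unique $l$-joint spreading model, the constant $C$ being uniform. That is precisely the input Theorem \ref{main theorem intro} requires of the space $X^*$, so we conclude that $X^*$ satisfies the UALS (with some constant $C'$ depending only on $C$). The one technical point to check here is that the families $\mathscr{F}_Z$ in the present statement are families of sequences in $X^*$ (not necessarily inside $Z$), whereas Theorem \ref{main theorem intro} asks for sequences \emph{in} $Z$; I would absorb this by passing, for each separable $Z$, to a larger separable subspace $\widetilde Z$ of $X^*$ containing all the relevant sequences, and noting that difference-inclusion and $C$-uniform uniqueness of the joint spreading model are inherited, so the hypothesis of Theorem \ref{main theorem intro} still holds verbatim for $X^*$.

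The heart of the argument is then the descent from $X^*$ to $X$. Given $A \in \mathcal{L}(X)$, a convex compact $W \subset \mathcal{L}(X)$, and $\varepsilon > 0$ witnessing the $\varepsilon$-pointwise approximation hypothesis of UALS for $X$, I would pass to the adjoints: set $A^* \in \mathcal{L}(X^*)$ and $W^* = \{B^* : B \in W\}$, which is again convex and (by weak$^*$-continuity of the adjoint map on bounded sets together with norm-compactness of $W$) norm-compact in $\mathcal{L}(X^*)$. The pointwise approximation $\|A(x) - B(x)\| \le \varepsilon\|x\|$ for a suitable $B = B_x \in W$ does \emph{not} immediately dualize pointwise in $X^*$, so instead I would exploit the FDD: write $(F_n)$ for the FDD of $X$ with projections $P_n$ onto $F_1 \oplus \cdots \oplus F_n$, and show that the pointwise-on-$X$ hypothesis forces, for each functional $x^* \in X^*$, the existence of $B \in W$ with $\|A^*(x^*) - B^*(x^*)\|$ small \emph{modulo a finite-dimensional error controlled by how much $x^*$ is supported high up in the FDD of $X^*$}. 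Running the UALS of $X^*$ then produces a finite-codimensional $Y^* \subseteq X^*$ and a $B \in W$ with $\|(A^* - B^*)|_{Y^*}\| \le C'\varepsilon$; taking $Y = \bigcap_{j \in J}\ker x_j^{**}$ for a finite set realizing the codimension and using reflexivity-type control along the FDD (pre-annihilators), I would produce the finite-codimensional $Y \subseteq X$ with $\|(A - B)|_Y\| \le C\varepsilon$ for a universal $C$.

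The main obstacle I anticipate is exactly this last transfer: pointwise approximation is not self-dual, so the naive "take adjoints" move loses the hypothesis. The FDD is what must be leveraged to repair this — one needs a lemma saying that an operator $A$ on a space with FDD which is pointwise $\varepsilon$-approximated by $W$ has adjoint pointwise $O(\varepsilon)$-approximated by $W^*$ on a suitable norming subset of $X^*$ built from finitely-supported functionals, and that finite-codimensional subspaces of $X^*$ pull back, via the pre-annihilator, to finite-codimensional subspaces of $X$ on which the original (non-adjoint) estimate holds. Making this precise — in particular controlling the accumulated constants so that the final bound is $C\varepsilon$ with $C$ independent of $A$, $W$, $\varepsilon$ — is where the real work lies; everything else is a citation of Theorem \ref{main theorem intro} and routine compactness bookkeeping.
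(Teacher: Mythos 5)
Your proposal has a genuine gap at the descent step, and it is precisely the step you flag as "where the real work lies." Two points. First, the dualization of the pointwise approximation is not an FDD issue at all: the paper's Proposition \ref{dualize e-approximation} shows, by a separation argument using only the convexity and (WOT-)compactness of $W$, that $W^*=\{T^*:T\in W\}$ $\varepsilon$-pointwise approximates $A^*$ on all of $X^*$ with no loss. So that part of your worry dissolves. Second, and fatally, the black-box use of "$X^*$ satisfies the UALS" produces a finite codimensional subspace $Y^*$ of $X^*$ of the form $\bigcap_j\ker x_j^{**}$ with $x_j^{**}\in X^{**}$; such a subspace need not be $w^*$-closed, and even when it is, it need not be the range of $Q^*$ for a bounded finite-corank projection $Q$ on $X$. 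The only mechanism for pulling the estimate $\|(A^*-B^*)|_{Y^*}\|\le C'\varepsilon$ back down to $X$ is the identity $\|Q_n(A-B)\|=\|(A^*-B^*)Q_n^*\|$ for the FDD tail projections, followed by Lemma \ref{tail in front and in the back}; to replace a general $Y^*$ by such a dual tail one needs Lemma \ref{uals shrinking tail subspace} applied in $X^*$, which requires $X^*$ to have a shrinking FDD, i.e.\ essentially that $X$ be reflexive. Your route is in fact exactly the paper's proof of Theorem \ref{uals and reflexivity} (the reflexive case); it does not prove Theorem \ref{theorem UALS by duality}, where $X^*$ may fail to have an FDD altogether.

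The paper circumvents this by never invoking the UALS of $X^*$ as a black box. After renorming so the FDD is bimonotone, it reformulates the goal directly as: find $T\in W$ and $n_0$ with $\|(A^*-T^*)Q^*_{n_0}\|<C\varepsilon$ (which transfers to $X$ via $\|Q_{n_0}(A-T)\|=\|(A^*-T^*)Q^*_{n_0}\|$ and Lemma \ref{tail in front and in the back}). Assuming this fails for every $T$ and every $n$, it reruns the Kakutani fixed-point argument of Theorem \ref{the theorem} inside $X^*$, replacing the Mazur-type Lemma \ref{all difference-including domain and target} by its dual version, Lemma \ref{DUAL all difference-including domain and target}, which uses the bimonotone FDD of $X$ to produce $w^*$-null normalized functionals supported in dual tails on which all the operators $A^*-T^*_{ij}$ stay bounded below. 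The nonemptiness of the values of the multivalued map is supplied by Proposition \ref{dualize e-approximation}. If you want to salvage your outline, you must either restrict to reflexive $X$ or replace the appeal to "UALS of $X^*$" by this quantitative tail-projection version of the argument.
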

     As a consequence of the above, $\mathscr{L}_\infty$-spaces with separable dual and their quotients with an FDD satisfy the UALS. Thus the spaces $C(K)$, for $K$ countable compact, have the property. We also provide an example of a reflexive Banach space that admits a uniformly unique spreading model and fails the UALS property. This example shows that if a space admits a uniformly unique spreading model, this does not necessarily imply that it admits a uniformly unique $l$-joint spreading model for every $l\in\N$, and that the assumption in Theorems \ref{main theorem intro} and \ref{dual theorem intro} of a uniformly unique $l$-joint spreading model cannot be weakened by assuming a uniformly unique spreading model. Finally, we prove that the spaces $L_p[0,1]$, for $1\le p\le \infty$ and $p\neq2$, and $C(K)$, for any uncountable and metrizable compact space $K$, fail the UALS.

     \bigskip

     \textbf{\em Acknowledgement.} We express our thanks to I. Gasparis, W. B. Johnson and B. Sari for their comments and remarks that allowed us to improve the content of the paper.

	\section{Plegma Spreading Sequences}
	
	    We recall the notion of plegma families which first appeared in \cite{AKT} and were used to define higher order spreading models. Interestingly, they were used in a rather different way there and we slightly modify their definition. We shall refer to the notion from \cite{AKT} as strict plegma families. We use them to introduce the notion of plegma spreading sequences. These are finite collections of sequences that interact with one another in a spreading way when indexed by plegma families. We start with some notation we will use throughout the paper.

    \begin{notn*}
        By $\N=\{1,2,\ldots\}$ we denote the set of all positive integers. We will use capital letters as $L,M,N,\ldots$ (resp. lower case letters as $s,t,u,\ldots$) to denote infinite subsets (resp. finite subsets) of $\N$. For every infinite subset $L$ of $\N$, the notation $[L]^\infty$ (resp. $[L]^{<\infty}$) stands for the set of all infinite (resp. finite) subsets of $L$. For every $s\in[\N]^{<\infty}$, by $|s|$ we denote the cardinality of $s$. For $L\in[\N]^\infty$ and $k\in\N$, $[L]^k$ (resp. $[L]^{\le k}$) is the set of all $s\in[L]^{<\infty}$ with $|s|=k$ (resp. $|s|\le k$). For every $s,t\in[\N]^{<\infty}$, we write $s<t$ if either at least one of them is the empty set, or $\max s<\min t$. Also for  $\emptyset\neq s\in[\N]^\infty$ and $n\in\N$ we write $n<s$ if $n<\min s$.

        We shall identify strictly increasing sequences in $\N$ with their corresponding range, i.e. we view every strictly increasing sequence in $\N$ as a subset of $\N$ and conversely every subset of $\N$ as the sequence resulting from the increasing order of its elements. Thus, for an infinite subset $L=\{l_1<l_2<\ldots\}$ of $\N$ and $i\in\N$, we set $L_i=l_i$ and similarly, for a finite subset $s=\{n_1,\ldots,n_k\}$ of $\N$ and for $1\le i\le k$, we set $s(i)=n_i$.

        Given a Banach space $X$ with a Schauder basis $(e_n)_n$, then for every $x\in X$ with $x=\sum_na_ne_n$ we write $\supp(x)$ to denote the support of $x$, i.e.  $\supp(x)=\{n\in\N:a_n\neq0\}$. Generally, we follow \cite{LT} for standard notation and terminology concerning Banach space theory.
    \end{notn*}
	
	\begin{dfn}
		Let $M\in[\N]^\infty$ and  $\mathcal{F}$ be either $[M]^k$ for some $k\in\N$ or $[M]^\infty$.\linebreak A {\em plegma} (resp. {\em strict plegma}) family in $\mathcal{F}$ is a finite sequence $(s_i)_{i=1}^l$ in $\mathcal{F}$ satisfying the following properties.
		\begin{enumerate}
			\item[(i)] $s_{i_1}(j_1)<s_{i_2}(j_2)$ for every $1\le j_1<j_2\le k$ or $j_1<j_2\in\N$ and $1\le i_1,i_2\le l$.
			\item[(ii)] $s_{i_1}(j)\le s_{i_2}(j)$ $\big($resp. $s_{i_1}(j)< s_{i_2}(j)\big)$ for every $1\le i_1<i_2\le l$ and every $1\le j\le k$ or $j\in\N$.
		\end{enumerate}
	For each $l\in \N$, the set of all sequences $(s_i)^l_{i=1}$ which are plegma families in $\mathcal{F}$ will be denoted by $Plm_l(\mathcal{F})$ and that of the strict plegma ones by $S$-$Plm_l(\mathcal{F})$.
	\end{dfn}

	The following is a consequence of Ramsey's Theorem \cite{Ramsey}.

	\begin{thm}[\cite{AKT}]\label{plmramsey}
		Let $M$ be an infinite subset of $\N$ and $k,l\in\N$. Then for every finite partition $S$-$Plm_l([M]^k)=\cup_{i=1}^nP_i$, there exist $L\in[M]^\infty$ and $1\le i_0\le n$ such that $S$-$Plm_l([L]^k)\subset P_{i_0}$.
	\end{thm}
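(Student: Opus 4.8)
The plan is to reduce the statement to the classical infinite Ramsey theorem by observing that a strict plegma family in $[M]^k$ of length $l$ is uniquely determined by, and can be chosen freely in terms of, the $kl$-element set obtained as the union of its members.

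First I would make this precise. Fix $M\in[\N]^\infty$ and $k,l\in\N$. Given a strict plegma family $(s_i)_{i=1}^l$ in $[M]^k$, the strict forms of conditions (i) and (ii) force the $kl$ integers $s_i(j)$, $1\le i\le l$, $1\le j\le k$, to be pairwise distinct: condition (ii) gives $s_1(j)<\cdots<s_l(j)$ within each coordinate, and condition (i) puts all $j_1$-coordinates strictly below all $j_2$-coordinates whenever $j_1<j_2$. Consequently, writing the union $t=\bigcup_{i=1}^l s_i$ in increasing order as $t=\{t_1<\cdots<t_{kl}\}$, the first $l$ elements of $t$ are exactly the first coordinates in increasing order, the next $l$ the second coordinates, and so on, so that necessarily
\[
s_i(j)=t_{(j-1)l+i}\qquad(1\le i\le l,\ 1\le j\le k).
\]
Conversely, for any $t=\{t_1<\cdots<t_{kl}\}\in[M]^{kl}$, the above formula defines $s_1,\dots,s_l\in[M]^k$ that form a strict plegma family. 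Hence the map $\Phi\colon[M]^{kl}\to S$-$Plm_l([M]^k)$, $\Phi(t)=(s_i)_{i=1}^l$, is a bijection. The point to retain is that $\Phi$ is compatible with passing to subsets: for every $L\in[M]^\infty$ we have $\Phi([L]^{kl})=S$-$Plm_l([L]^k)$, since the union of a strict plegma family in $[L]^k$ is a $kl$-element subset of $L$, and conversely the displayed formula applied to $t\in[L]^{kl}$ produces members of $[L]^k$.

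Next I would transfer the given partition through $\Phi$. Starting from $S$-$Plm_l([M]^k)=\bigcup_{i=1}^n P_i$, set $Q_i=\Phi^{-1}(P_i)$, so that $[M]^{kl}=\bigcup_{i=1}^n Q_i$ is a finite coloring of the $kl$-element subsets of $M$. By the infinite Ramsey theorem \cite{Ramsey} there exist $L\in[M]^\infty$ and $1\le i_0\le n$ with $[L]^{kl}\subset Q_{i_0}$. Applying $\Phi$ and using compatibility with subsets yields $S$-$Plm_l([L]^k)=\Phi([L]^{kl})\subset\Phi(Q_{i_0})=P_{i_0}$, which is exactly the desired conclusion.

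I do not expect a serious obstacle here: the whole content lies in recognizing the bijection $\Phi$ and its behaviour under restriction to infinite subsets, after which the statement is immediate from Ramsey's theorem. The only mild care needed is to check that the strictness of conditions (i) and (ii) is precisely what guarantees that the $kl$ coordinates are distinct and linearly ordered into consecutive blocks, so that $\Phi$ is well defined and bijective; without strictness coordinates could coincide and this correspondence would fail.
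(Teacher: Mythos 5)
Your proof is correct, and it is essentially the intended argument: the paper states the result as a consequence of Ramsey's theorem and cites \cite{AKT}, where the proof is exactly this reduction via the bijection between $S$-$Plm_l([M]^k)$ and $[M]^{kl}$ (compatible with restriction to infinite subsets $L\subset M$), followed by an application of the classical infinite Ramsey theorem \cite{Ramsey}. Your verification that strictness of conditions (i) and (ii) forces the $kl$ coordinates to be distinct and to sort into consecutive blocks of length $l$ is precisely the point that makes the correspondence work.
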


	\begin{dfn}\label{def plegma shifts}
	Let $\pi=\{1,\ldots,l\}\times\{1\ldots,k\}$, $s=(s_i)_{i=1}^l$ be a plegma family in $[\N]^k$ and $(e^i_{n})_{i=1,n\in\N}^l$ be a sequence in a linear space $E$.
		\begin{enumerate}
			\item[(i)] The {\em plegma shift} of $\pi$ with respect to the plegma family $s$ is the set\linebreak $s(\pi)=\{(i,s_i(j)):(i,j)\in\pi \}$ and for a subset $A$ of $\pi$, the {\em plegma shift} of $A$ with respect to $s$ is the set $s(A)=\{(i,s_i(j)):(i,j)\in A \}$.
			\item[(ii)]	Let $x\in E$ with $x=\sum_{(i,j)\in F}a_{ij}e^i_j$ and $F\subset\pi$. The {\em plegma shift} of $x$ with respect to $s$ is the vector $s(x)=\sum_{(i,j)\in F}a_{ij}e^i_{s_i(j)}$.			
		\end{enumerate}
	\end{dfn}

	Recall that a sequence $(e_n)_n$ in a seminormed space $E$ is called spreading if $\|\sum_{i=1}^na_ie_i\|=\|\sum_{i=1}^na_ie_{k_i}\|$ for every $n\in\N$, $k_1<\ldots<k_n$ and $a_1,\ldots,a_n\in\R$. Then, under Definition \ref{def plegma shifts}, we have the following reformulation: $(e_n)_n$ is spreading if $\|\sum_{i=1}^na_ie_i\|=\|s(\sum_{i=1}^na_ie_i)\|$ for every $n\in\N$, $a_1,\ldots,a_n\in\R$ and every plegma family $s\in Plm_1([\N]^n)$. Next we introduce the notion of plegma spreading sequences, which are an extension of the above.
	
	\begin{dfn}
		A sequence $(e^i_{n})_{i=1,n\in\N}^l$ in a Banach space $E$ will be called \textit{plegma spreading} if each $(e^i_n)_n$ is a normalized Schauder basic sequence and, for every $x\in\spn\{e^i_{n}\}_{i=1,n\in\N}^l$, we have that $\|x\|=\|s(x)\|$ for all plegma shifts $s(x)$ of $x$.
	\end{dfn}

	\begin{rem} Let $(e^i_{n})_{i=1,n\in\N}^l$ be a plegma spreading sequence.
		\begin{enumerate}
			\item[(i)] For every $I\subset \{1,\ldots,l\}$ the sequence $(e^i_n)_{i\in I,n\in\N}$ is also plegma spreading and in particular the sequence $(e_n^i)_n$ is spreading for every $1\le i\le l$.
			\item[(ii)] The set $\{e^i_n\}_{i=1,n\in\N}^l$ is linearly independent.
            \item[(iii)] For every $(s_i)_{i=1}^l\in Plm_l([\N]^\infty)$, the sequence $(e^i_{s_i(n)})_{i=1,n\in \N}^l$ is isometric to $(e^i_{n})_{i=1,n\in\N}^l$ under the natural mapping $T(e^i_{n})=e^i_{s_i(n)}$.
            \item[(iv)] For every $k\in\N$, $x\in\spn\{e^i_{n}\}_{i=1,n=1}^{l,k}$ and $s_n=(s^n_i)_{i=1}^l\in Plm_l([\N]^k)$, such that $s_l^m(k)<s^n_1(1)$ for every $m<n$, we have that the sequence $(x_n)_n$, with $x_n=s_n(x)$, is spreading.
		\end{enumerate}		
	\end{rem}

    \section{Finite Families of Sequences in Banach Spaces}

In this section we study in which cases $l$-tuples of Schauder (unconditional) basic sequences in a given Banach space have subsequences indexed by plegma families that form a common Schauder (unconditional) basic sequence with a natural order. As it turns out this is related to the $w^*$ limits of these sequences in the second dual. The case in which some of the sequences are equivalent to the unit vector basis of $\ell_1$ is the interesting one and we use ultrafilters to deduce the desired conclusion.

    \subsection{Finite Families of Schauder Basic Sequences}	
We first treat non-trivial weak-Cauchy sequences and sequences equivalent to the unit vector basis of $\ell_1$ and eventually we consider weakly null sequences as well. We include a proof of the following well known lemma for completeness.

    \begin{lem}\label{bslem}
        Let $(x_n)_n$ be a normalized sequence in a Banach space $X$ and $x^*_1,\ldots,x^*_k\in {X^*}$ with $\lim x^*_i(x_n)=0$ for all $1\le i \le k$. For every $\delta>0$, there exists $n_0\in \N$ such that $d(x_n,\cap_{i=1}^k\ker x^*_i)<\delta$ for every $n\ge n_0$.
    \end{lem}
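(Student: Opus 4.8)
The plan is to realize $N:=\bigcap_{i=1}^k\ker x^*_i$ as the kernel of a single finite-rank operator and then invoke the fact that a finite-dimensional quotient carries a unique norm topology. Concretely, define $\phi\colon X\to\R^k$ by $\phi(x)=(x^*_1(x),\dots,x^*_k(x))$ and equip $\R^k$ with the maximum norm $\|\cdot\|_\infty$. Then $\phi$ is bounded and linear with $\ker\phi=N$, so $\phi$ factors as $\phi=\widetilde\phi\circ Q$, where $Q\colon X\to X/N$ is the quotient map and $\widetilde\phi\colon X/N\to\R^k$ is a bounded linear \emph{injection}. In particular $X/N$ is linearly isomorphic to a subspace of $\R^k$, hence $\dim(X/N)\le k<\infty$.

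Since $X/N$ is finite dimensional, all norms on it are equivalent. Applying this to $\|\cdot\|_{X/N}$ and to the function $z\mapsto\|\widetilde\phi(z)\|_\infty$ — which is a genuine norm precisely because $\widetilde\phi$ is injective — I obtain a constant $C>0$ with
\[
\|Q(x)\|_{X/N}\le C\,\|\widetilde\phi(Q(x))\|_\infty=C\,\|\phi(x)\|_\infty=C\max_{1\le i\le k}|x^*_i(x)|\qquad(x\in X).
\]
Recalling that $\|Q(x)\|_{X/N}=d(x,N)$, this yields $d(x,N)\le C\max_{1\le i\le k}|x^*_i(x)|$ for every $x\in X$.

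To finish, fix $\delta>0$. Since $\lim_n x^*_i(x_n)=0$ for each of the finitely many indices $1\le i\le k$, there is $n_0\in\N$ with $\max_{1\le i\le k}|x^*_i(x_n)|<\delta/C$ for all $n\ge n_0$; combining this with the displayed estimate gives $d(x_n,N)<\delta$ for every $n\ge n_0$, which is the assertion.

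There is essentially no hard step here — this is why the lemma is ``well known''. The only points that deserve (mild) care are that the injectivity of $\widetilde\phi$ is what makes $z\mapsto\|\widetilde\phi(z)\|_\infty$ an honest norm, so that equivalence of norms applies, and that the $x^*_i$ need not be assumed linearly independent: any linear dependences merely shrink $\dim(X/N)$ and do not affect the estimate. One could alternatively argue by induction on $k$, removing one functional at a time, but the finite-rank-operator formulation avoids that bookkeeping.
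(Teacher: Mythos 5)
Your proof is correct, but it takes a different route from the paper's. You realize $N=\cap_{i=1}^k\ker x^*_i$ as the kernel of the finite-rank map $\phi=(x^*_1,\dots,x^*_k):X\to\R^k$, observe that $X/N$ is therefore finite dimensional, and use equivalence of norms on $X/N$ to get the uniform estimate $d(x,N)\le C\max_i|x^*_i(x)|$ for all $x\in X$, from which the lemma is immediate. The paper instead argues by contradiction via Hahn--Banach: if $d(x_n,N)\ge\delta$, there is a norm-one functional $x^*$ with $x^*(x_n)\ge\delta$ vanishing on $N$; such a functional must lie in $Y=\spn\{x^*_1,\dots,x^*_k\}$, and a fixed finite $\delta/4$-net of $S_Y$ (on which the $x_n$ are eventually small) rules this out. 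Both arguments ultimately rest on the same finite-dimensionality, surfacing either as "$\dim(X/N)\le k$" or as "every functional annihilating $N$ lies in $\spn\{x^*_i\}$." Your version buys a clean quantitative inequality valid for every $x\in X$ (and, incidentally, never uses that $(x_n)_n$ is normalized or even bounded), while the paper's net argument is the more self-contained duality computation. Your remark that linear dependence among the $x^*_i$ is harmless is accurate; no gap.
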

    \begin{proof}
        Let $Y=\spn\{x^*_1,\ldots,x^*_k\}$ and $F$ be a finite ${\delta}/{4}$-net of $S_Y$.  Then there exists $n_0\in\N$ such that $f(x_n)<{\delta}/{4}$ for every $f\in F$ and $n\ge n_0$. Pick any $n\ge n_0$. Then, if $d(x_n,\cap_{i=1}^k\ker x^*_i)\ge \delta$,  we may find $x^*\in {X^*}$ with $\|x^*\|=1$ such that $x^*(x_n)\ge \delta$ and $\cap_{i=1}^k\ker x^*_i\subset\ker x^* $. Hence $x^*\in Y$ and there exists $f\in F$ with $\|x^*-f\|<{\delta}/{4}$, which is a contradiction to $x^*(x_n)\ge\delta$ since $f(x_n)<{\delta}/{4}$.
    \end{proof}

The following is a variation of Mazur's method \cite[Theorem 1.a.5]{LT} for finding Schauder basic sequences in infinite dimensional Banach spaces.

    \begin{prop}\label{basicmain}
        Let $(e^1_n)_n,\ldots,(e^l_n)_n$ be seminormalized sequences in a Banach space $X$ and let $E$ denote the closed linear span of $\{e^i_n\}_{i=1,n\in\N}^l$ and $S_E$ the unit sphere of $E$. Assume that there exist $\varepsilon>0$ and a  collection $\{K_F: F\subset S_E$ finite$\}$ of finite subsets of $X^*$ such that
        \begin{enumerate}
            \item[(i)] for every finite $F\subset S_E$ and $x\in F$ there exists $x^*\in K_F$ with $\|x^*\|=1$ and $x^*(x)\ge\varepsilon$,
            \item[(ii)] for every finite $F\subset S_E$ and $1\le i\le l$, there exists $L\in[\N]^\infty$ such that $\lim_{n\in L} x^*(e^i_{n})=0$ for all $x^*\in K_F$, and
            
            \item[(iii)] for every finite $F'\subset F\subset S_E$, we have $K_{F'}\subset K_F$.
        \end{enumerate}
        Then there exist $M_1,\ldots,M_l\in[\N]^\infty$ and a suitable enumeration under which $\cup_{i=1}^l\{e^i_n\}_{n\in M_i}$ is a Schauder basic sequence.
    \end{prop}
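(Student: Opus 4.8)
The plan is to run a diagonal Mazur-type construction, choosing the subsequences $M_1,\dots,M_l$ one index at a time so that every new vector $e^i_n$ with $n\in M_i$ is chosen after a finite net has been fixed that almost-norms all previously selected finitely supported vectors, and so that the norming functionals nearly annihilate the new vector. Concretely, I would fix a summable sequence of positive reals $(\delta_m)_m$ with $\prod_m(1+\delta_m/\varepsilon)$ or $\sum_m\delta_m$ small enough to yield a basis constant, enumerate the ``slots'' of the union $\cup_{i=1}^l\{e^i_n\}_{n\in M_i}$ in the intended final order as $y_1,y_2,\dots$, and build finite $\delta$-nets $F_1\subset F_2\subset\cdots$ of the unit spheres of the successive finite-dimensional spaces $\spn\{y_1,\dots,y_m\}$. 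At stage $m$, having chosen $y_1,\dots,y_m$ and the net $F_m\subset S_{\spn\{y_1,\dots,y_m\}}$, I pass to the functional set $K:=K_{F_m}$ (finite, by hypothesis; note condition (iii) makes these sets increasing, which is what lets the nets be nested compatibly), and then, using condition (ii) applied to $F_m$ and to the index $i$ that the next slot $y_{m+1}$ belongs to, I find an infinite $L$ along which all $x^*\in K$ vanish on $e^i_n$; I pick $y_{m+1}=e^i_{n}$ for some $n\in L$ larger than all previously used indices in coordinate $i$, so that $x^*(y_{m+1})$ is as small as I like for every $x^*\in F_m$'s associated norming set.

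The standard Mazur criterion then gives basicness: for scalars $(a_t)_{t\le q}$ and $p<q$, I want $\|\sum_{t\le p}a_t y_t\|\le C\|\sum_{t\le q}a_t y_t\|$. Writing $z=\sum_{t\le p}a_t y_t$, normalize and approximate $z/\|z\|$ within $\delta_p/4$ by some $x\in F_p$; by condition (i) there is $x^*\in K_{F_p}$ with $\|x^*\|=1$ and $x^*(x)\ge\varepsilon$, hence $x^*(z)\ge(\varepsilon-\delta_p/4)\|z\|$. Since every later-chosen vector $y_t$ with $p<t\le q$ was selected so that $|x^*(y_t)|$ is tiny for all $x^*\in K_{F_{t-1}}\supset K_{F_p}$ (here using (iii) again: $K_{F_p}\subset K_{F_{t-1}}$), the tail $\sum_{p<t\le q}a_t y_t$ is nearly killed by $x^*$, so $|x^*(\sum_{t\le q}a_t y_t)|\ge(\varepsilon-\delta_p/4)\|z\|-(\text{small})$, and dividing by $\|x^*\|=1$ and $\|\sum_{t\le q}a_t y_t\|$ yields the basis-constant bound with $C$ depending only on $\varepsilon$ and $\sum_m\delta_m$, provided the tail terms were made small \emph{relative to $|a_t|$}, which one arranges by the usual device of making $|x^*(y_{m+1})|\le \delta_{m+1}$ and then, in the estimate, controlling $|a_t|$ by $\|z\|$ up to a factor coming from the partial biorthogonality already established at earlier stages — i.e.\ one inducts on the basis constant simultaneously.

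The main obstacle, and the point requiring care rather than routine bookkeeping, is the circularity between ``$|a_t|$ is controlled by the norm of the partial sum'' and ``the sequence built so far is basic with a good constant'': the bound on $|a_t|$ in terms of $\|\sum_s a_s y_s\|$ is exactly a basis-constant statement, so the $\delta_m$'s must be chosen and the induction set up so that the constant does not blow up as more vectors are added. The clean way is to prove the stronger inductive statement that the first $m$ chosen vectors form a basic sequence with constant $\le C_m$ where $C_m\uparrow C<\infty$, choosing $\delta_{m+1}$ \emph{after} $C_m$ is known and small enough that $C_{m+1}\le C_m(1+\delta_{m+1})$ or the like. A secondary point is purely organizational: one must fix in advance the interleaving pattern that defines the ``suitable enumeration'' of $\cup_{i=1}^l\{e^i_n\}_{n\in M_i}$ — e.g.\ round-robin through $i=1,\dots,l$ — and verify that condition (ii), which only promises a \emph{single} common infinite set $L$ for a \emph{fixed} $i$ and fixed finite $F$, suffices because at each stage we only need to place one new vector from one coordinate; the finiteness of $K_{F_m}$ is what makes the ``$\lim=0$'' in (ii) usable to get an \emph{actual} small value at a single chosen index. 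With these two matters handled, the rest is the verbatim Mazur argument.
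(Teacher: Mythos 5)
Your proposal is correct and follows the same Mazur-type selection scheme as the paper: round-robin through the indices $i$, nested finite nets $F_1\subset F_2\subset\cdots$ of the unit spheres of the growing finite-dimensional spans, and at each stage an application of conditions (ii) and (iii) (via the finiteness of $K_{F_m}$, as in Lemma \ref{bslem}) to pick the next vector $e^{i}_{n}$ so that it is nearly annihilated by all functionals in $K_{F_m}$. The only real divergence is in how the final basis-constant estimate is closed. You keep the selected vectors $x_k=e^{i_k}_{n_k}$ themselves and absorb the nonzero error terms $|x^*(x_t)|$ by a simultaneous induction on the basis constant; you correctly identify the circularity (bounding $|a_t|$ by the norm of the full sum is itself a basis-constant statement) and your fix does work, provided the tolerances are made summable against a constant of order $1/\varepsilon$ fixed in advance, and provided you bound $|a_t|$ by the norm of the \emph{full} sum $\sum_{t\le q}a_ty_t$ rather than by $\|z\|=\|\sum_{t\le p}a_ty_t\|$ as written. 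The paper instead sidesteps the circularity entirely: at each stage it chooses, alongside $x_{k+1}$, an auxiliary vector $y_{k+1}$ lying \emph{exactly} in $\cap\{E\cap\ker x^*:x^*\in K_{F_k}\}$ with $\|x_{k+1}-y_{k+1}\|<\varepsilon_{k+1}$ (the nets are taken in the span of both the $x$'s and the $y$'s). Since the norming functionals kill all later $y$'s exactly, the sequence $(y_k)_k$ is $10/(9\varepsilon)$-basic by a one-line computation with no induction, and $(x_k)_k$ is then basic by the principle of small perturbations. Your route is marginally more self-contained; the paper's buys a cleaner estimate at the cost of carrying the auxiliary sequence. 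Both yield a basis constant of order $1/\varepsilon$.
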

    \begin{proof}
We may assume that the sequences $(e^{i}_n)_n$, $1\le i\le l$, are normalized. Indeed, if we normalize the given sequences then conditions (i), (ii), and (iii) will not be affected. If we obtain the result for the normalized versions of the given sequences, then we can revert to subsequences of the initial ones. Let $(\varepsilon_n)_n$ be a sequence in $(0,1/2)$ such that $\sum_{n=1}^\infty \varepsilon_n<{\varepsilon}/{5}$.  We will construct, by induction on $\N$, a Schauder basic sequence $(x_k)_k$ with $x_k=e^{i_k}_{n_k}$, where $i_k= (k-1\mod l)+1$ and $n_{k+1}>n_{k}$. Hence the sets $M_i=\{ n_k : i_k=i\}$, for $1\le i\le l$, and the lexicographic order on $\N\times\{1,\ldots,l\}$ yield the desired result.

        We set $x_1=y_1=e^1_1$ and $F_1 = \{x_1/\|x_1\|,-x_1/\|x_1\|\}$. Assume that $x_1,\ldots,x_k$, $y_1,\ldots,y_k$, and $F_1\subset F_2\subset\cdots \subset F_k$ have been chosen, for some $k\in\N$, such that the following are satisfied: for $1\leq m\leq k$ each $x_m$ is of the form $e^{i_m}_{n_m}$, each $F_m$ is an $\varepsilon_m/2$-net of the unit sphere of $X_m = \spn\{x_1,y_1,\ldots,x_m,y_m\}$, and for $m>1$ each $y_m$ is in $Y_m = \cap\{E\cap\mathrm{ker}x^*:x^*\in K_{F_{m-1}}\}$ with $\|x_m - y_m\| < \varepsilon_m$. We describe the next inductive step. By property (ii) there is $L\in[\mathbb{N}]^\infty$ such that for all $x^*\in K_{F_k}$ we have $\lim_{n\in L}x^*(e^{i_{k+1}}_n)= 0$. Apply Lemma \ref{bslem} to the sequence $(e^{i_{k+1}}_n)_n$ and the subset $\{x^*|_E:x^*\in F_k\}$ of $E^*$ to find $x_{k+1} = e^{i_{k+1}}_{n_{k+1}}$ such that $d(x_{k+1},\cap\{Y\cap \ker x^*:x^*\in K_{F_k}\})<{\varepsilon_{k+1}}/{2}$. Then we may choose a $y_{k+1}\in\cap\{Y\cap \ker x^*:x^*\in K_{F_k}\}$ with $\|x_{k+1}-y_{k+1}\|<\varepsilon_{k+1}$. Finally, pick an $\varepsilon_{k+1}$-net $F_{k+1}$ of the unit sphere of $X_{k+1} = \spn\{x_1,y_1,\ldots,x_{k+1},y_{m+1}\}$.
        
Note that for each $k\in\mathbb{N}$ and $x\in X_m$ there exists $x^*\in K_{F_{m}}$ with $\|x^*\| = 1$ and $x^*(x) \geq (\varepsilon-\varepsilon_m/2)\|x\| \geq (9\varepsilon/10)\|x\|$. Also, because for $m\leq n$ we have $K_{F_m}\subset K_{F_n}$ we have $y_{n+1}\in Y_{n+1}\subset Y_m$ and hence for $x^*\in K_{F_m}$ we deduce $x^*(y_{n+1}) = 0$. We use these facts to first observe that $(y_k)_k$ is $K$-Schauder basic, for $K=(10/(9\varepsilon))$. Indeed, if $m\leq n$ and $a_1,\ldots,a_n\in\mathbb{R}$ then $x = \sum_{i=1}^ma_iy_i\in X_m$ and for some $x^*\in F_m$ with $\|x^*\| = 1$
\begin{equation*}
\Big\|\sum_{i=1}^ma_iy_i\Big\| \leq \frac{6}{5\varepsilon}x^*\Big(\sum_{i=1}^ma_iy_i\Big)
 = \frac{10}{9\varepsilon}x^*\Big(\sum_{i=1}^na_iy_i\Big) \leq \frac{10}{9\varepsilon}\Big\|\sum_{i=1}^na_iy_i\Big\|.
\end{equation*}       
By the principle of small perturbations, for $(x_k)_k$ to be Schauder basic it suffices to show $2K\sum_k\|x_k - y_k\|/\|y_k\| < 1$. This follows from   $\sum_k\|x_k - y_k\|/\|y_k\| < 2\sum_n\varepsilon_k <2\varepsilon/5$.
    \end{proof}

    \begin{rem}\label{remplm}
        Let us observe that $(M_i)_{i=1}^l$, as constructed in the previous proof, is a plegma family in $[\N]^\infty$. In general, for every $M_1,\ldots,M_l\in [\N]^\infty$, there exists a plegma family $(s_i)_{i=1}^l$ in $[\N]^\infty$ with $s_i\subset M_i$.

        Moreover, for any plegma family $(s_i)_{i=1}^l\in Plm_l([\N]^\infty)$, we associate a natural order on $\{s_i(n)\}_{i=1,n\in\N}^l$ and that is the lexicographic order on $[\N]\times\{1,\ldots,l\}$.
    \end{rem}

 	The following lemma is an immediate consequence of the principle of local reflexivity \cite{LR}, however we also give the following easy proof.
 	
    \begin{lem}\label{lemlr}
        Let $X$ be a Banach space and $F$ be a linear subspace of $X^{**}$ with finite dimension and $X\cap F=\{0\}$. Then, for every $\delta>0$ and $x\in X$ with $\|x\|=1$, there exists $x^*\in X^*$ with $\|x^*\|\le 1+\delta$ such that $x^*(x)\ge \varepsilon=d(S_X,F)$ and $x^{**}(x^*)=0$ for every $x^{**}\in F$.
    \end{lem}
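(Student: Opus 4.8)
\textbf{Proof proposal for Lemma \ref{lemlr}.}

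The plan is to work in the finite-dimensional space $F$ together with the one-dimensional space spanned by $x$, and build the desired functional by a Hahn--Banach extension from a space on which it is already determined. First I would set $G=\spn(F\cup\{\iota(x)\})$, where $\iota\colon X\to X^{**}$ is the canonical embedding; since $X\cap F=\{0\}$ and $\|x\|=1$, the vector $\iota(x)$ is not in $F$, so $G=F\oplus\R\iota(x)$ algebraically. On $G$ I would define the linear functional $\phi$ by $\phi(f+t\iota(x))=t\varepsilon$ for $f\in F$ and $t\in\R$, where $\varepsilon=d(S_X,F)$. By construction $\phi$ vanishes on $F$ and $\phi(\iota(x))=\varepsilon$.

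The key estimate is that $\|\phi\|_{G^*}\le 1$. Indeed, for $t\neq 0$ we have $\|f+t\iota(x)\|=|t|\,\|\iota(x)-(-f/t)\|\ge |t|\, d(\iota(x),F)$, and $d(\iota(x),F)=d(S_X,F)=\varepsilon$ because scaling $x$ on $S_X$ is irrelevant: $d(\iota(x),F)=\inf_{f\in F}\|x-f\|$ and this infimum over $f\in F$ equals $\varepsilon$ since $F$ is a subspace (so it is scale-invariant in the argument $x$ ranging over $S_X$, and for a fixed unit $x$ it is exactly $d(x,F)\ge d(S_X,F)=\varepsilon$; here one uses $d(x,F)\ge\varepsilon$, which is all that is needed). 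Hence $|\phi(f+t\iota(x))|=|t|\varepsilon\le\|f+t\iota(x)\|$, and the bound for $t=0$ is trivial. Therefore $\|\phi\|\le 1$.

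Now I would invoke Goldstine/weak$^*$-density or, more simply, a direct argument: $G$ is a finite-dimensional subspace of $X^{**}$, and I want to perturb $\phi$ slightly so that it becomes (the restriction to $G$ of) a weak$^*$-continuous functional, i.e. an element of $\iota_{X^*}(X^*)\subset X^{***}$. Concretely, the space $\{x^{**}|_G: x^{**}\in X^{*}\}$—meaning the image of $X^*$ under the restriction map $X^*\to G^*$, $x^*\mapsto (g\mapsto g(x^*))$—is all of $G^*$ when $G$ is finite-dimensional, but we need norm control. This is exactly the content of the principle of local reflexivity applied to the finite-dimensional subspace $G\subset X^{**}$ and the finite-dimensional subspace $\spn\{\phi\}\subset G^*$: given $\delta>0$ there is a linear map, or just a single functional, $x^*\in X^*$ with $\|x^*\|\le(1+\delta)\|\phi\|\le 1+\delta$ such that $g(x^*)=\phi(g)$ for all $g\in G$. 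Taking $g=\iota(x)$ gives $x^*(x)=\varepsilon$, and taking $g\in F$ gives $x^{**}(x^*)=\phi(x^{**})=0$ for every $x^{**}\in F$, as required.

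The only real point requiring care—the part the authors presumably mean by ``easy proof'' avoiding the heavy machinery—is producing $x^*$ with the norm bound $1+\delta$ without citing the full principle of local reflexivity. For this I would instead argue directly: consider the restriction map $R\colon X^*\to F^*\oplus\R$ (mirroring $G^*$) and note that its range is closed and, by weak$^*$-density of $\iota(X)$ in $X^{**}$ (Goldstine), dense in the relevant finite-dimensional target; a standard open-mapping/compactness argument on finite-dimensional spaces then upgrades ``dense'' to ``$(1+\delta)$-surjective,'' yielding $x^*$ with $\|x^*\|\le(1+\delta)\|\phi\|$. I expect this density-to-quantitative-surjectivity step to be the main obstacle to write cleanly; everything else is the elementary linear-algebra construction of $\phi$ and the distance computation above.
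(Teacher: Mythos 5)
Your proposal is correct and follows essentially the same route as the paper's proof: both construct the norm-one functional on the finite-dimensional subspace $\spn(F\cup\{x\})\subset X^{**}$ vanishing on $F$ and taking value at least $\varepsilon=d(S_X,F)$ at $x$, and then use Goldstine's theorem together with finite-dimensionality to replace it by an element of $(1+\delta)B_{X^*}$. The "density-to-quantitative-surjectivity" step you flag is exactly how the paper concludes, via the inclusion $B_{Y^*}\subset I^*((1+\delta)B_{X^*})$ for the restriction map $I^*:X^{***}\to Y^*$.
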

    \begin{proof}
     Let $x\in X$ with $\|x\|=1$ and $Y=\spn\{F\cup\{x\}\}$. Then there exists $x^{***}\in S_{Y^*}$ such that $x^{***}(x)\ge\varepsilon$ and $x^{***}(x^*)=0$ for every $x^{**}\in F$. Then we consider the identity map $I:Y\to X^{**}$ and recall that its conjugate $I^*:X^{***}\to Y^*$ is $w^*\-w^*$-continuous. Since $B_{X^*}$ is a $w^*$-dense subset of $B_{X^{***}}$ and $Y$ is of finite dimension, it follows that $I^*(B_{X^*})$ is norm dense in $B_{Y^*}$ and hence, for every $\delta>0$, we have that $B_{Y^*}\subset I^*((1+\delta)B_{X^*})$, which yields the desired result.
    \end{proof}

	Recall that a sequence $(x_n)_n$ in a Banach space $X$ is called non-trivial weak-Cauchy if there exists $x^{**}\in X^{**}\setminus X$ such that $w^*\-\lim x_n=x^{**}$. The next proposition provides a complete characterization to the aforementioned problem for finite collections of such sequences.

    \begin{prop}\label{propntwc}
         Let $(e^1_n)_n,\ldots,(e^l_n)_n$ be seminormalized non-trivial weak-Cauchy sequences in a Banach space $X$ and $F=\spn\{e^{**}_i\}_{i=1}^l$, where $w^*\-\lim e^i_n=e^{**}_i$. Then there exists an $(s_i)_{i=1}^l\in Plm_l([N]^\infty)$ such that $\{e^i_{s_i(n)} \}_{i=1,n\in \N}^l$ is a Schauder basic sequence, enumerated by the natural plegma order, if and only if $X\cap F=\{0\}$.
    \end{prop}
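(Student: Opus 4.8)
The plan is to treat the two implications separately: the existence of the desired subsequences will follow from Proposition~\ref{basicmain} combined with Lemma~\ref{lemlr}, while the reverse implication will follow from a short biorthogonality argument.

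For the \emph{if} part, assume $X\cap F=\{0\}$. Since $\dim F<\infty$ and $X$ is norm closed in $X^{**}$, one first checks that $c:=d(S_X,F)>0$: if there were $x_n\in S_X$ and $f_n\in F$ with $\|x_n-f_n\|\to 0$, then $(f_n)$ would have a norm-convergent subsequence with limit $f\in F$, $\|f\|=1$, whence $f=\lim x_n\in X\cap F$, a contradiction. Next, using Lemma~\ref{lemlr} with $\delta=1$ and normalizing, fix for every $x\in S_E$ a functional $\widehat{x}^*\in X^*$ with $\|\widehat{x}^*\|=1$, $\widehat{x}^*(x)\ge c/2$, and $e^{**}_i(\widehat{x}^*)=0$ for all $1\le i\le l$. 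For a finite $\Phi\subset S_E$ set $K_\Phi=\{\widehat{x}^*:x\in\Phi\}$. Then condition (i) of Proposition~\ref{basicmain} holds with $\varepsilon=c/2$; condition (iii) is immediate since $x\mapsto\widehat{x}^*$ is a fixed assignment; and condition (ii) holds with $L=\N$, because $\lim_n\widehat{x}^*(e^i_n)=e^{**}_i(\widehat{x}^*)=0$ for each $\widehat{x}^*\in K_\Phi$ by the definition of $e^{**}_i$. Proposition~\ref{basicmain} now yields $M_1,\dots,M_l\in[\N]^\infty$ such that $\bigcup_{i=1}^l\{e^i_n\}_{n\in M_i}$ is Schauder basic under the suitable enumeration; by Remark~\ref{remplm}, $(s_i)_{i=1}^l:=(M_i)_{i=1}^l$ is a plegma family in $[\N]^\infty$ and the suitable enumeration is exactly the natural plegma order, which is what we wanted.

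For the \emph{only if} part we argue by contraposition: assume $X\cap F\neq\{0\}$ and fix $x_0\in X\setminus\{0\}$ with $x_0=\sum_{i=1}^l a_ie^{**}_i$. Let $(s_i)_{i=1}^l\in Plm_l([\N]^\infty)$ be arbitrary and let $(u_k)_k$ be the enumeration of $\{e^i_{s_i(n)}\}_{i=1,n\in\N}^l$ by the natural plegma order; suppose, for contradiction, that $(u_k)_k$ is Schauder basic, with biorthogonal functionals $(u^*_k)_k$ (extended by Hahn--Banach to elements of $X^*$) and closed linear span $U$. Put $g_n=\sum_{i=1}^l a_ie^i_{s_i(n)}$ for $n\in\N$. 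By the plegma conditions the vectors $e^1_{s_1(n)},\dots,e^l_{s_l(n)}$ occupy the consecutive positions $(n-1)l+1,\dots,nl$ of $(u_k)_k$, so $g_n\in\spn\{u_k\}$ and $u^*_k(g_n)=0$ once $(n-1)l\ge k$. On the other hand, each $(e^i_{s_i(n)})_n$ is a subsequence of the weak-Cauchy sequence $(e^i_n)_n$, so $w^*\-\lim_n e^i_{s_i(n)}=e^{**}_i$ and hence $x^*(g_n)\to\sum_{i=1}^l a_ie^{**}_i(x^*)=x^*(x_0)$ for every $x^*\in X^*$; that is, $g_n\to x_0$ weakly in $X$. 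Since $U$ is norm closed and convex, it is weakly closed, so $x_0\in U$; and $u^*_k(x_0)=\lim_n u^*_k(g_n)=0$ for every $k$. As $(u_k)_k$ is a Schauder basis of $U$, this forces $x_0=\sum_k u^*_k(x_0)u_k=0$, a contradiction. Hence no such plegma family exists.

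The main obstacle is the construction in the \emph{if} part, namely producing a single family $\{K_\Phi:\Phi\subset S_E\text{ finite}\}$ fulfilling hypotheses (i)--(iii) of Proposition~\ref{basicmain} simultaneously: conditions (i) and (ii) pull in opposite directions, and it is precisely the assumption $X\cap F=\{0\}$, used via the principle of local reflexivity as encapsulated in Lemma~\ref{lemlr}, that produces a normalized functional which both norms a prescribed vector of $E$ and annihilates every $e^{**}_i$. The remaining points --- positivity of $d(S_X,F)$, matching the ``suitable enumeration'' of Proposition~\ref{basicmain} with the natural plegma order, and the weak-convergence computation in the converse --- are routine.
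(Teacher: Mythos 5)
Your proposal is correct and follows essentially the same route as the paper: the forward implication via Lemma~\ref{lemlr} (producing normalized functionals that norm points of $S_E$ while annihilating $F$) fed into Proposition~\ref{basicmain} and Remark~\ref{remplm}, and the converse by observing that $\sum_i a_ie^i_{s_i(n)}$ would be a weakly convergent sequence with nonzero limit inside a basic sequence's span. The only cosmetic difference is that you spell out, via the biorthogonal functionals, the standard fact that a Schauder basic sequence cannot converge weakly to a nonzero vector, which the paper simply invokes.
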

    \begin{proof}
        Let $X\cap F\neq\{0\}$, hence there exists $x=\sum_{i=1}^la_ie^{**}_i\in X$ with $x\neq 0$.\linebreak If there exists a plegma family $(s_i)_{i=1}^l\in Plm_l([\N]^\infty)$ such that $\cup_{i=1}^l\{e^i_n\}_{n\in M_i}$ is a Schauder basic sequence, we consider the sequence $(x_n)_n$ with $x_n=\sum_{i=1}^la_ie^i_{s_i(n)}$. Notice that $(x_n)_n$ is a Schauder basic sequence with $w\-\lim x_k=x$, which is a contradiction since $x\neq0$.

        Suppose now that $X\cap F=\{0\}$ and let $\varepsilon=d(S_E,F)$. Then for every $x\in S_E$, by Lemma \ref{lemlr}, there exists an $f_x\in E^*$ with $\|f_x\| = 1$ such that $f_x(x)\ge\varepsilon/2$ and $x^{**}(f_x)=0$ for every $x^{**}\in F$ and hence $\lim x^*(e^i_n)=0$ for every $1\le i\le l$. Finally, applying Proposition \ref{basicmain} (setting $K_F = \{f_x:x\in F\}$) and Remark \ref{remplm} the proof is complete.
    \end{proof}

	Next we give an example of a plegma spreading sequence, formed by two  non-trivial Weak-Cauchy sequences, that is not Schauder basic.
	
	\begin{dfn}[\cite{J1}]\label{james space}
		On the space $c_{00}(\N)$ we define the following norm
		\[
		\|x\|_J=\sup\Bigg(\sum_{i=1}^n\bigg(\sum_{k\in I_i}x(k)\bigg)^2\Bigg)^{\frac{1}{2}},
		\]
		where the supremum is taken over all finite collections $I_1,\ldots,I_n$ of disjoint intervals of natural numbers. The \textit{James' space}, denoted by $J$, is the completion of $c_{00}(\N)$ with respect to $\|\cdot\|_J$.
	\end{dfn}

   \begin{exmp}
    \label{james mixed funnily}
    	Let $(e_n)_n$ denote the standard basis of James' space and recall that it is a non-trivial weak-Cauchy sequence. We consider the sequences $(e^1_n)_n$ and $(e^2_n)_n$ in {\em J}, with $e^1_n=e_{2n}+e_1$ and $e^2_n=e_{2n+1}-e_1$, which are also non-trivial weak-Cauchy and denote by $e^{**}_1,e^{**}_2$ their $w^*\-$limits. Notice that the sequence $(e^i_n)_{i=1,n\in\N}^2$ is a plegma spreading sequence in $J$. Moreover, since $e_1\in J\cap\spn\{e^{**}_1,e^{**}_2 \}$ and $T(e^i_n)=e^i_{s_i(n)}$ is an isometry for every $(s_i)_{i=1}^2\in Plm_l([\N]^\infty)$, then the same arguments as in Proposition \ref{propntwc} yield that $(e^i_n)_{i=1,n\in\N}^2$ is not Schauder basic.
    \end{exmp}

    We pass now to study the case of finite families of $\ell_1$ sequences in a Banach space.\linebreak As is well known, $\beta\N$ denotes the Stone-{\v C}ech compactification of $\N$ and therefore $\ell_\infty(\N)$ is isometric to $C(\beta\N)$. It is also known that the elements of $\beta\N$ are the ultrafilters on $\N$. The identification of $\ell_\infty(\N)$ with $C(\beta\N)$ yields that the conjugate space of $\ell_\infty(\N)$ is isometric to $\mathcal{M}(\beta\N)$, the set of all regular measures on $\beta\N$.

     For $f\in\ell_\infty(\N)$ and $p$ an ultrafilter on $\N$, the evaluation of the Dirac measure $\delta_p$ on the function $f$ is given as $\delta_p(f)=\lim_p f(n)$, where $\lim_p f(n)$ is the unique limit of $(f(n))_{n}$ with respect to the ultrafilter $p$. Let us also observe that if $T:\ell_1\to X$ is an isomorphic embedding, then $T^{**}:\mathcal{M}(\beta\N)\to X^{**}$ and for every $p\in\beta\N$ and $x^*\in X^*$, we have that $T^{**}\delta_p(x^*)=\lim_px^*(Te_n)$. For further information on ultrafilters we refer to \cite{CN}.

    \begin{lem}\label{leml1}
        Let $X$ be a Banach space and $T:\ell_1\to X$ be an isomorphic embedding. Let $\alpha\in\R$, $x^*_1,\ldots,x^*_k\in X^*$ and $p$ be a non-principal ultrafilter on $\N$ such that $T^{**}\delta_p(x^*_i)=\alpha$ for all $1\le i\le k$. Then there exists $M\in[\N]^\infty$ such that $\lim_{n\in M} x^*_i(Te_n)=\alpha$ for every $1\le i\le k$.
    \end{lem}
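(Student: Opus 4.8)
The plan is to translate the hypothesis into a statement about ordinary ultrafilter limits of scalar sequences and then extract the set $M$ by a diagonal argument along a decreasing chain of members of $p$.

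First I would recall, from the discussion preceding the statement, that for every $x^*\in X^*$ one has $T^{**}\delta_p(x^*)=\lim_p x^*(Te_n)$. Writing $f_i=(x^*_i(Te_n))_n\in\ell_\infty(\N)$, the hypothesis $T^{**}\delta_p(x^*_i)=\alpha$ thus says exactly that $\lim_p f_i(n)=\alpha$ for each $1\le i\le k$. For $m\in\N$ put $g_m=\{n\in\N: |f_i(n)-\alpha|<1/m \text{ for all } 1\le i\le k\}$. Then $g_m=\bigcap_{i=1}^k\{n\in\N:|f_i(n)-\alpha|<1/m\}$ is a finite intersection of members of $p$, hence $g_m\in p$; moreover $g_1\supseteq g_2\supseteq\cdots$, and each $g_m$ is infinite, since $p$ is non-principal and therefore every member of $p$ is infinite.

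Next I would build $M$ diagonally: choose $n_1\in g_1$, and inductively choose $n_{m+1}\in g_{m+1}$ with $n_{m+1}>n_m$, which is possible because $g_{m+1}$ is infinite. Set $M=\{n_m:m\in\N\}\in[\N]^\infty$. Given $1\le i\le k$ and $\varepsilon>0$, fix $m_0$ with $1/m_0<\varepsilon$; then for every $m\ge m_0$ we have $n_m\in g_m\subseteq g_{m_0}$, so $|x^*_i(Te_{n_m})-\alpha|=|f_i(n_m)-\alpha|<1/m_0<\varepsilon$. Hence $\lim_{n\in M}x^*_i(Te_n)=\alpha$ for every $1\le i\le k$, which is the desired conclusion.

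There is no real obstacle in this argument; the only point requiring a little care is the use of non-principality of $p$, which is what guarantees that each $g_m$ is infinite (not merely nonempty) and thus allows the strictly increasing diagonal selection to be carried out.
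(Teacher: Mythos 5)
Your proof is correct and follows essentially the same route as the paper: identify $T^{**}\delta_p(x^*_i)=\lim_p x^*_i(Te_n)$, observe that the sets $\{n:|x^*_i(Te_n)-\alpha|<1/m \text{ for all } i\}$ belong to $p$ and are infinite by non-principality, then diagonalize. Your write-up merely spells out the diagonalization in more detail than the paper does.
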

	\begin{proof}
		Notice that $T^{**}\delta_p(x^*_i)=\lim_px^*_i(Te_n)$ and also that, for any $n\in\N$, the set $M_n=\{m\in \N:|x^*_i(Te_m)-\alpha|<{1}/{n},\;\text{for all } 1\le i\le l \}$ is in $p$ and is not finite, since $p$ is a non-principal ultrafilter. Let $M$ be a diagonalization of $(M_n)_n$, i.e. $M(n)\in M_n$ for all $n\in\N$, then $(Te_n)_{n\in M}$ is the desired subsequence.
	\end{proof}

	\begin{lem}\label{ufilter}
		Let $X$ be a separable Banach space and $F$ be a finite dimensional subspace of $X^{**}$ with $X\cap F=\{0\}$. Let also $(e^1_n)_n,\ldots,(e^l_n)_n$ be sequences in $X$ such that each one is equivalent to the basis of $\ell_1$ and denote by $T_i$ the corresponding embedding. Then there exist non-principal ultrafilters $p_1,\ldots,p_l$ on $\N$ such that
    \begin{enumerate}
        \item[(i)] The set $F\cup\{T_i^{**}\delta_{p_{i}}\}_{i=1}^{l}$ is linearly independent.
        \item[(ii)] $X\cap\spn\{F\cup\{T_i^{**}\delta_{p_{i}}\}_{i=1}^{l}\}=\{0\}$.
    \end{enumerate}
	\end{lem}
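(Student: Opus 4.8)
The plan is to build the ultrafilters $p_1,\dots,p_l$ one at a time, by induction on $j=0,1,\dots,l$, keeping alive the hypotheses that $G_j:=\spn\big(F\cup\{T_i^{**}\delta_{p_i}\}_{i=1}^{j}\big)$ has dimension $\dim F+j$ and satisfies $X\cap G_j=\{0\}$; the terminal subspace $G_l=\spn\big(F\cup\{T_i^{**}\delta_{p_i}\}_{i=1}^{l}\big)$ then witnesses (i) and (ii) simultaneously. The inductive step rests on the following single‑step claim, which I would isolate and prove first: \emph{if $Z$ is a separable Banach space, $G\subseteq Z^{**}$ is a finite dimensional subspace, and $S\colon\ell_1\to Z$ is an isomorphic embedding, then there is a non‑principal ultrafilter $p$ on $\N$ with $S^{**}\delta_p\notin Z+G$.} Granting this and applying it to $Z=X$, $G=G_j$, $S=T_{j+1}$ produces $p_{j+1}$ with $u_{j+1}:=T_{j+1}^{**}\delta_{p_{j+1}}\notin X+G_j$. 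One then checks the elementary linear‑algebra fact that, since $X\cap G_j=\{0\}$, for $u\in X^{**}$ the two conditions ``$u\notin G_j$'' and ``$X\cap\spn(G_j\cup\{u\})=\{0\}$'' hold together if and only if $u\notin X+G_j$: indeed $u\notin X+G_j$ forces $u\notin G_j$, and a nonzero $v=g+\lambda u\in X$ with $g\in G_j$ must have $\lambda\neq 0$ (else $v\in X\cap G_j=\{0\}$), whence $u=\lambda^{-1}(v-g)\in X+G_j$; conversely, if $u=x+g$ with $x\in X$, $g\in G_j$, then $x=u-g\in X\cap\spn(G_j\cup\{u\})$ forces $x=0$, i.e. $u\in G_j$. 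Consequently $G_{j+1}=\spn(G_j\cup\{u_{j+1}\})$ again has dimension $\dim G_j+1$ and trivial intersection with $X$, which closes the induction (the base case $j=0$ being the hypothesis $X\cap F=\{0\}$).

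To prove the single‑step claim I would argue by contradiction: suppose $S^{**}\delta_p\in Z+G$ for \emph{every} non‑principal ultrafilter $p$ on $\N$. Since $Z$ is norm closed in $Z^{**}$ and $G$ is finite dimensional, $Z+G$ is norm closed; as $S^{**}$ is a bounded operator, taking spans and passing to the norm closure gives
\[
S^{**}\big(\cspn\{\delta_p : p\in\beta\N\setminus\N\}\big)\subseteq Z+G,
\]
where $\cspn$ is the norm‑closed linear span inside $\mathcal M(\beta\N)=\ell_\infty(\N)^*=\ell_1^{**}$. Dirac measures at distinct points of $\beta\N$ are mutually singular, so $\big\|\sum_i a_i\delta_{p_i}\big\|=\sum_i|a_i|$ for distinct $p_i$; hence that closed span is isometric to $\ell_1(\beta\N\setminus\N)$, which is \emph{non‑separable} because $\beta\N\setminus\N$ is uncountable. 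On the other hand $S^{**}\colon\ell_1^{**}\to Z^{**}$ is itself an isomorphic embedding: writing $S=\iota\circ S_0$ with $S_0\colon\ell_1\to\overline{S(\ell_1)}$ an onto isomorphism and $\iota$ the inclusion, we get $S^{**}=\iota^{**}\circ S_0^{**}$ with $S_0^{**}$ an isomorphism and $\iota^{**}$ an isometry (the adjoint of the metric surjection $\iota^*$). Therefore the left‑hand side above is a non‑separable subspace of $Z+G$; but $Z+G$ is separable, since $Z$ is separable and $G$ is finite dimensional — a contradiction. Hence some non‑principal $p$ satisfies $S^{**}\delta_p\notin Z+G$.

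I expect this separability/cardinality step to be the crux. The tempting shortcut — quotient by $X$ and use that $p\mapsto T_i^{**}\delta_p$ is weak‑$*$ continuous while the image of $G_j$ in $X^{**}/X$ is finite dimensional — does not work directly, because $X$ need not be weak‑$*$ closed in $X^{**}$ (unless $X$ is reflexive), so $X+G_j$ is not weak‑$*$ closed and there is no continuous map into a finite dimensional quotient to exploit; replacing the weak‑$*$ closed span by the \emph{norm}‑closed span keeps the index set equal to all of $\beta\N\setminus\N$, which is precisely what makes the $\ell_1(\Gamma)$‑non‑separability obstruction available. Everything else — the outer induction, the elementary equivalence above, and the standard fact that the second adjoint of an isomorphic embedding is an isomorphic embedding — is routine. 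As a byproduct the construction yields slightly more than stated: $\dim\spn\big(F\cup\{T_i^{**}\delta_{p_i}\}_{i=1}^{l}\big)=\dim F+l$, which is the intended reading of ``$F\cup\{T_i^{**}\delta_{p_i}\}_{i=1}^{l}$ is linearly independent''.
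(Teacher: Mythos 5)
Your proof is correct, and it reaches the conclusion by a genuinely different obstruction than the paper's. Both arguments choose the ultrafilters one at a time and both ultimately say ``there are too many points of $\beta\N$ for every $T_i^{**}\delta_p$ to land in the bad set,'' but the paper measures size \emph{algebraically}: it observes that $\{T_i^{**}\delta_p\}_{p\in\beta\N}$ is equivalent to the unit vector basis of $\ell_1(2^{\mathfrak c})$, hence linearly independent of cardinality $2^{\mathfrak c}$, while the Hamel dimension of the separable space $X$ is at most $\mathfrak c$; working in the purely algebraic quotient $Q:X^{**}\to X^{**}/X$ it extracts, for each $i$, an uncountable set $A_i\subset\beta\N$ with $\{QT_i^{**}\delta_p\}_{p\in A_i}$ linearly independent, and then picks $p_i\in A_i$ so that $QT_i^{**}\delta_{p_i}$ avoids the finite-dimensional span of $Q[F]$ and the previously chosen vectors, deducing non-principality a posteriori from $T_i^{**}\delta_{p_i}\notin X$. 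Your version replaces the Hamel-dimension count $\mathfrak c<2^{\mathfrak c}$ with a \emph{topological} one: $X+G_j$ is norm closed (closed plus finite-dimensional) and separable, whereas $S^{**}$ embeds the non-separable space $\cspn\{\delta_p:p\in\beta\N\setminus\N\}\cong\ell_1(\beta\N\setminus\N)$ isomorphically into $X^{**}$, so some $S^{**}\delta_p$ must escape $X+G_j$; restricting to $\beta\N\setminus\N$ from the outset gives non-principality for free. What each buys: the paper's route produces the uncountable independent families $A_i$ outright (which is essentially what Corollary \ref{coruf} reuses), while yours avoids the algebraic quotient and any cardinal arithmetic beyond the uncountability of $\beta\N\setminus\N$, and your explicit equivalence ``(i) and (ii) for the next vector $u$ hold iff $u\notin X+G_j$'' is a clean packaging of a step the paper leaves implicit. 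All the supporting facts you invoke (closedness of $Z+G$, mutual singularity of Dirac measures at distinct points, and that $S^{**}$ is an isomorphic embedding via the factorization through $\overline{S(\ell_1)}$) are correctly justified.
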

	\begin{proof}
		Let us observe that the cardinality of $\beta\N$ is $2^\mathfrak{c}$ whereas that of any separable Banach space is less or equal to $\mathfrak{c}$. We also remind that the family $\{\delta_p:p\in\beta\N \}$ is equivalent to the basis of $\ell_1(2^\mathfrak{c})$ and hence linearly independent. The same remains valid for a fixed $1\le i\le l$ and the family $\{T^{**}_i\delta_p:p\in\beta\N \}$, since $T^{**}_i$ is an isomorphism. We consider the linear space $X^{**}/X$ and we denote by $Q$ the natural quotient map $Q:X^{**}\to X^{**}/X$.
		\\\par
		{\em\underline{Claim}} : For every $1\le i \le l$, there exists an uncountable subset $A_i$ of $\beta\N$ such that the family $\{QT^{**}_i\delta_p:p\in A_i\}$ is linearly independent.
		\begin{proof}[Proof of Claim]\renewcommand{\qedsymbol}{}
			If not, there would exist a countable subset $A_i$ of $\beta\N$ such that $\{QT^{**}_i\delta_p:p\in A_i \}$ is a maximal independent subfamily of $\{QT^{**}_i\delta_p:p\in \beta\N\}$, for some $1\le i\le l$. Then $\{T^{**}_i\delta_p:p\in\beta\N\}\subset\spn\{X\cup\{T_i^{**}\delta_p:p\in A_i \}\}$, which yields a contradiction, since the algebraic dimension of $X$ is less or equal to $\mathfrak{c}$.
		\end{proof}
		Since $F$ satisfies $X\cap F=\{0\}$, it follows that $Q|_F$ is an isomorphism and by induction we will choose, for every $1\le i\le l$, an ultrafilter $p_{i}$ on $\N$ such that $p_{i}\in A_i$ and $QT^{**}_i\delta_{p_{i}}\notin\spn\{ Q[F]\cup\{ QT^{**}_{j}\delta_{p_{j}} \}_{j<i}\}$. For $i=1$, there exists $p_{1}\in A_1$ with $QT^{**}_{1}\delta_{p_{1}}\notin Q[F]$, since $F$ has finite dimension and $A_1$ is uncountable. Suppose that $p_1,\ldots,p_i$ have been chosen for some $i<l$. Then there exists $p_{i+1}\in A_{i+1}$ with $QT^{**}_{i+1}\delta_{p_{i+1}}\notin\spn\{ Q[F]\cup\{ QT^{**}_{j}\delta_{p_{j}} \}_{j\le i}\}$, for the same reason as above and this completes the inductive construction. Notice that the ultrafilters $p_1,\ldots,p_l$ are non-principal since $T^{**}\delta_{p_i}\notin X$ for every $1\le i\le l$.
	\end{proof}

    \begin{cor}\label{coruf}
        Let $(e^1_n)_n,\ldots,(e^l_n)_n$ be sequences in a separable Banach space $X$ such that each one is equivalent to the basis of $\ell_1$ and denote by $T_i$ the corresponding embedding. Then, for every $k\in\N$ and every $1\le i\le l$ and $1\le j\le k$, there exists a non-principal ultrafilter $p_{ij}$ on $\N$ such that
    \begin{enumerate}
        \item[(i)] The set $\{T_i^{**}\delta_{p_{ij}}\}_{i=1,j=1}^{l,k}$ is linearly independent.
        \item[(ii)] $X\cap\spn\{T_i^{**}\delta_{p_{ij}}\}_{i=1,j=1}^{l,k}=\{0\}$.
    \end{enumerate}
    \end{cor}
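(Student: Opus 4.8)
The plan is to obtain the corollary from Lemma \ref{ufilter} by a finite induction on an index $j$ running from $0$ to $k$, at each stage enlarging a finite dimensional subspace of $X^{**}$ that meets $X$ only at $0$.

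Fix $k\in\N$. I would start with $F_0=\{0\}$, which is a finite dimensional subspace of $X^{**}$ with $X\cap F_0=\{0\}$. For the inductive step, suppose that for some $0\le j<k$ we have chosen non-principal ultrafilters $p_{ij'}$ on $\N$, for $1\le i\le l$ and $1\le j'\le j$, such that the set $\{T_i^{**}\delta_{p_{ij'}}\}_{i=1,j'=1}^{l,j}$ is linearly independent and, putting $F_j=\spn\{T_i^{**}\delta_{p_{ij'}}\}_{i=1,j'=1}^{l,j}$, we have $X\cap F_j=\{0\}$. Since $F_j$ is a finite dimensional subspace of $X^{**}$ with $X\cap F_j=\{0\}$, Lemma \ref{ufilter} applies to $X$, $F_j$ and the sequences $(e^1_n)_n,\ldots,(e^l_n)_n$ and produces non-principal ultrafilters $p_{1,j+1},\ldots,p_{l,j+1}$ for which $F_j\cup\{T_i^{**}\delta_{p_{i,j+1}}\}_{i=1}^l$ is linearly independent and $X\cap\spn(F_j\cup\{T_i^{**}\delta_{p_{i,j+1}}\}_{i=1}^l)=\{0\}$. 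Because $\{T_i^{**}\delta_{p_{ij'}}\}_{i=1,j'=1}^{l,j}$ is a basis of $F_j$, the first of these says exactly that $\{T_i^{**}\delta_{p_{ij'}}\}_{i=1,j'=1}^{l,j+1}$ is linearly independent, and its span is $F_{j+1}:=\spn(F_j\cup\{T_i^{**}\delta_{p_{i,j+1}}\}_{i=1}^l)$, so the second gives $X\cap F_{j+1}=\{0\}$. This closes the induction, and after $k$ steps the ultrafilters $\{p_{ij}\}_{i=1,j=1}^{l,k}$ satisfy (i) and (ii).

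I do not expect any real obstacle: the corollary is just an iteration of Lemma \ref{ufilter}, the only thing to verify at each step being that the span of the union of the previously chosen vectors with the newly chosen ones is again finite dimensional and intersects $X$ trivially, which is precisely the content of the two clauses of the lemma. Alternatively, one could invoke Lemma \ref{ufilter} a single time, applied with $F=\{0\}$ to the list of $lk$ sequences obtained by listing each $(e^i_n)_n$ exactly $k$ times; as the lemma does not require the listed sequences to be pairwise distinct, this yields the conclusion at once.
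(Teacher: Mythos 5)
Your argument is correct and matches the paper's intent: the paper states this result as an immediate consequence of Lemma \ref{ufilter} with no written proof, and your induction on $j$ (or, even more directly, your one-shot application of the lemma with $F=\{0\}$ to the $lk$-fold list of sequences, which the lemma indeed permits since it never requires the listed sequences to be distinct) is precisely the routine iteration being left to the reader. Note also that the linear independence clause of the lemma automatically forces each new vector $T_i^{**}\delta_{p_{i,j+1}}$ to lie outside $F_j$, so distinctness from the previously chosen vectors is not an extra issue.
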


    The following lemma is an immediate consequence of the above and it will used in the next subsection.

    \begin{lem}\label{uflim}
        Let $(e^1_n)_n,\ldots,(e^l_n)_n$ be sequences in a separable Banach space $X$ such that each one is equivalent the basis of $\ell_1$ and denote by $T_i$ the corresponding embedding. Then there exist $x^*_1\ldots,x^*_l\in X^*$ such that the following hold.
        \begin{enumerate}
            \item[(i)] For every $1\le i\le l$, there exists $M_i\in[\N]^\infty$ such that $\lim_{n\in M_i} x^*_i(e^i_n)=1$.
            \item[(ii)] For every $1\le i,j\le l$, there exists $M^i_{j}\in[\N]^\infty$ such that $\lim_{n\in M^i_{j}} x^*_i(e^j_n)=0$.
        \end{enumerate}
    \end{lem}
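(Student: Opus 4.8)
The plan is to deduce the lemma directly from Corollary \ref{coruf} applied with $k=2$, combined with Lemma \ref{leml1}. First I would invoke Corollary \ref{coruf} with $k=2$ to obtain non-principal ultrafilters $p_{ij}$ on $\N$, for $1\le i\le l$ and $j\in\{1,2\}$, so that the vectors $u_{ij}=T_i^{**}\delta_{p_{ij}}$ form a linearly independent family in $X^{**}$; the second conclusion of the corollary (that $X\cap\spn\{u_{ij}\}=\{0\}$) will not be needed for this argument.

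The next step is an elementary duality remark: whenever $u_1,\ldots,u_m\in X^{**}$ are linearly independent, the evaluation map $x^*\mapsto(u_r(x^*))_{r=1}^m$ from $X^*$ into $\R^m$ is surjective, since otherwise some nonzero $(c_r)_r$ would satisfy $\sum_r c_r u_r(x^*)=0$ for all $x^*\in X^*$, i.e. $\sum_r c_r u_r=0$ in $X^{**}$, contradicting independence. Applying this to the $2l$ vectors $u_{ij}$, for each fixed $i$ I can pick $x^*_i\in X^*$ with $u_{i1}(x^*_i)=1$, $u_{i2}(x^*_i)=0$, and $u_{rj}(x^*_i)=0$ for all $r\ne i$ and $j\in\{1,2\}$.

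Finally I would read off (i) and (ii), using that $u_{ij}(x^*)=T_i^{**}\delta_{p_{ij}}(x^*)=\lim_{p_{ij}}x^*(e^i_n)$ and invoking Lemma \ref{leml1} (each time with a single functional) to pass from ultrafilter limits to genuine limits along infinite subsets of $\N$. Concretely, $\lim_{p_{i1}}x^*_i(e^i_n)=1$ yields $M_i$ as in (i); $\lim_{p_{i2}}x^*_i(e^i_n)=0$ yields $M^i_i$; and for $j\ne i$, $\lim_{p_{j1}}x^*_i(e^j_n)=0$ yields $M^i_j$. There is essentially no obstacle here; the only point deserving a moment's thought is why $k=2$ is needed rather than $k=1$: for a fixed $i$ one must simultaneously produce a subsequence of $(x^*_i(e^i_n))_n$ converging to $1$ (for (i)) and one converging to $0$ (the diagonal case $j=i$ of (ii)), which forces two distinct ultrafilters attached to the $i$-th sequence.
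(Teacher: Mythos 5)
Your proposal is correct and follows essentially the same route as the paper: Corollary \ref{coruf} with $k=2$ to produce $2l$ linearly independent vectors $T_i^{**}\delta_{p_{ij}}$, a biorthogonality argument to extract the functionals $x^*_i$, and Lemma \ref{leml1} to convert the ultrafilter limits into genuine limits along infinite subsets. The only cosmetic difference is that the paper obtains the $x^*_i$ by first choosing $x^{***}_i\in X^{***}$ and then appealing to the principle of local reflexivity, whereas your direct observation that the evaluation map $x^*\mapsto(u_r(x^*))_r$ is surjective reaches the same functionals at least as cleanly.
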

    \begin{proof}
    	From Corollary \ref{coruf}, there exist $p_1,\ldots,p_l,q_1,\ldots,q_l$ non-principal ultrafilters on $\N$ such that the set $\{T_i^{**}\delta_{p_{i}}\}_{i=1}^{l}\cup\{T_i^{**}\delta_{q_{i}}\}_{i=1}^{l}$ is linearly independent. Then, for each $1\le i\le l$, we choose $x^{***}_i\in X^{***}$ such that $x^{***}_i(T^{**}_j\delta_{p_{j}})=\delta_{ij}$ and $x^{***}_i(T^{**}_{j}\delta_{q_{j}})=0$ for every $1\le j\le l$. The principle of local reflexivity then yields an $x^*_i\in X^*$ such that $T^{**}_j\delta_{p_j}(x^*_i)=\delta_{ij}$ and $T^{**}_{j}\delta_{q_{j}}(x^*_i)=0$ for every $1\le j\le l$. Finally, applying Lemma \ref{leml1} we obtain the desired subsequences.
    \end{proof}

	Next we give a characterization to the problem in the general case. Recall that as follows from Rosenthal's $\ell_1$ theorem \cite{Rosenthal} and the theory of Schauder bases, if $(x_n)_n$ is a Schauder basic sequence in a Banach space $X$, then it contains a subsequence which is either weakly null or equivalent to a basis of $\ell_1$ or non-trivial weak-Cauchy.

    \begin{thm}\label{thmbsc}
        Let $(e^1_n)_n,\ldots,(e^l_n)_n$ be seminormalized sequences in a Banach space $X$ such that each one is either weakly null, equivalent to the basis of $\ell_1$, or non-trivial weak-Cauchy. Let $I\subset\{1,\ldots,l\}$ be such that $(e^i_n)_n$ is a non-trivial weak-Cauchy sequence with $w^*\-\lim e^i_n=e^{**}_i$ for every $i\in I$ and set $F=\spn\{ e^{**}_i\}_{i\in I}$. Then there exists $(s_i)_{i=1}^l\in Plm_l([N]^\infty)$ such that $\{e^i_{s_i(n)} \}_{i=1,n\in \N}^l$ is a Schauder basic sequence, enumerated by the natural plegma order, if and only if $X\cap F=\{0\}$.
    \end{thm}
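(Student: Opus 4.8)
The plan is to prove the two implications separately. The forward implication --- if such a plegma family exists then $X\cap F=\{0\}$ --- is proved by contraposition, exactly as in Proposition \ref{propntwc}; the reverse implication is obtained by feeding a suitable collection of norming functionals into Proposition \ref{basicmain}, the only genuinely new feature compared to Proposition \ref{propntwc} being the presence of sequences equivalent to the $\ell_1$-basis, which I would handle through the ultrafilter machinery of Lemmas \ref{ufilter} and \ref{leml1}.

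\emph{Forward implication.} Assume $X\cap F\neq\{0\}$ and fix $x=\sum_{i\in I}a_ie^{**}_i\in X$ with $x\neq0$; then not all $a_i$ vanish. Suppose $(s_i)_{i=1}^l\in Plm_l([\N]^\infty)$ is such that $\{e^i_{s_i(n)}\}_{i=1,n\in\N}^l$ is Schauder basic in the natural plegma order, and put $x_n=\sum_{i\in I}a_ie^i_{s_i(n)}$. In the lexicographic plegma order the vectors indexed by a fixed $n$ occupy consecutive positions, so $(x_n)_n$ is a block sequence of $\{e^i_{s_i(n)}\}_{i=1,n\in\N}^l$ with each $x_n\neq0$, hence Schauder basic. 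On the other hand $x^*(x_n)=\sum_{i\in I}a_ix^*(e^i_{s_i(n)})\to\sum_{i\in I}a_ie^{**}_i(x^*)=x^*(x)$ for every $x^*\in X^*$, using $e^i_m\xrightarrow{w^*}e^{**}_i$, so $x_n\to x$ weakly; hence $x$ lies in the closed (thus weakly closed) linear span of $(x_n)_n$, and evaluating the coordinate functionals of the basic sequence $(x_n)_n$ and letting $n\to\infty$ forces every coordinate of $x$ to be $0$, that is, $x=0$, a contradiction.

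\emph{Reverse implication.} Assume $X\cap F=\{0\}$. First I would reduce to the case where $X$ is separable, replacing it by $E=\cspn\{e^i_n\}_{i=1,n\in\N}^l$: each of the three alternatives is either intrinsic to the sequence or is preserved under restriction of functionals, and the isometric embedding $E^{**}\hookrightarrow X^{**}$ induced by the inclusion carries the span of the $w^*$-limits of the non-trivial weak-Cauchy sequences, computed now in $E^{**}$, onto $F$, so that span still meets $E$ only at $0$; after relabelling, $X$ is separable, $F\subset X^{**}$ is finite dimensional, $X\cap F=\{0\}$, and $E=X$ in the notation of Proposition \ref{basicmain}. Write $\{1,\dots,l\}=I\cup J\cup I_0$, with $J$ the set of $\ell_1$-indices and $I_0$ that of the weakly null ones, and let $T_i\colon\ell_1\to X$, $i\in J$, be the embeddings with $T_ie_n=e^i_n$. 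By Lemma \ref{ufilter}, applied to the $\ell_1$-sequences $(e^i_n)_n$, $i\in J$, and to the finite dimensional subspace $F$, there exist non-principal ultrafilters $(p_i)_{i\in J}$ such that $G:=\spn\big(F\cup\{T_i^{**}\delta_{p_i}:i\in J\}\big)$ satisfies $X\cap G=\{0\}$. As $G$ is finite dimensional and $X$ is closed in $X^{**}$, this gives $\varepsilon_0:=d(S_X,G)>0$. For each $x\in S_X$, Lemma \ref{lemlr} (with $G$ in place of $F$ and, say, $\delta=1$) yields $f_x\in X^*$ with $\|f_x\|\le2$, $f_x(x)\ge\varepsilon_0$ and $g(f_x)=0$ for all $g\in G$; I set $\widehat f_x=f_x/\|f_x\|$ and $K_{F'}=\{\widehat f_x:x\in F'\}$ for every finite $F'\subset S_X$.

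It then remains to check that $\{K_{F'}\}$ satisfies hypotheses (i)--(iii) of Proposition \ref{basicmain} with $\varepsilon=\varepsilon_0/2$. Conditions (i) and (iii) are immediate from the construction. For (ii), fix a finite $F'\subset S_X$ and $i\in\{1,\dots,l\}$: if $i\in I_0$ then $\widehat f_x(e^i_n)\to0$ for every $x$, and if $i\in I$ then $\widehat f_x(e^i_n)\to e^{**}_i(\widehat f_x)=0$ since $e^{**}_i\in F\subset G$, so in both cases $L=\N$ works; if $i\in J$, then $\lim_{p_i}\widehat f_x(e^i_n)=T_i^{**}\delta_{p_i}(\widehat f_x)=0$ for each $x\in F'$ because $T_i^{**}\delta_{p_i}\in G$, and Lemma \ref{leml1}, applied to the finite set $\{\widehat f_x:x\in F'\}$, the ultrafilter $p_i$ and $\alpha=0$, produces an $L\in[\N]^\infty$ along which $\widehat f_x(e^i_n)\to0$ for all $x\in F'$ simultaneously. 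Proposition \ref{basicmain} then yields $M_1,\dots,M_l\in[\N]^\infty$ and an enumeration making $\bigcup_{i=1}^l\{e^i_n\}_{n\in M_i}$ Schauder basic; by Remark \ref{remplm} the $M_i$ form a plegma family and this enumeration is the natural plegma order, so $s_i=M_i$ completes the proof. I expect the main obstacle, compared with the purely weak-Cauchy case of Proposition \ref{propntwc}, to be exactly this treatment of the $\ell_1$-sequences: a norming functional cannot be made to tend to $0$ on such a sequence along all of $\N$, only along a subsequence, and the crucial point is that annihilating the single bidual vector $T_i^{**}\delta_{p_i}$ for a well-chosen ultrafilter $p_i$ is precisely what lets Lemma \ref{leml1} extract such a subsequence, while Lemma \ref{ufilter} guarantees that these finitely many extra directions can be adjoined to $F$ without meeting $X$, keeping $\varepsilon_0>0$.
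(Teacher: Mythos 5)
Your proposal is correct and follows essentially the same route as the paper: the forward implication is the weak-limit argument from Proposition \ref{propntwc}, and the reverse implication adjoins the vectors $T_i^{**}\delta_{p_i}$ from Lemma \ref{ufilter} to $F$, uses Lemma \ref{lemlr} to produce norming functionals annihilating this enlarged finite-dimensional space, extracts subsequences for the $\ell_1$-indices via Lemma \ref{leml1}, and feeds everything into Proposition \ref{basicmain}. Your explicit reduction to the separable case (so that Lemma \ref{ufilter} applies) is a point the paper glosses over, but it is a refinement of the same argument rather than a different one.
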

	\begin{proof}
		Let $J\subset\{1,\ldots,l \}$ be such that $(e^i_n)_n$ is equivalent to the basis of $\ell_1$ for each $i\in J$ and denote by $T_i$ the corresponding embedding. Then Lemma \ref{ufilter} yields for every $i\in J$ a non-principal ultrafilter $p_i$ on $\N$ such that $X\cap Y=\{0\}$, where $Y=\spn\{F\cup\{T^{**}_i\delta_{p_i} \}_{i\in J}\}$. For $\varepsilon=d(S_X,Y)$ it follows from Lemma \ref{lemlr} that, for every $x\in X$ with $\|x\|=1$, there exists $f_x\in X^*$ with $\|f_x\| = 1$ such that $f_x(x)\ge \varepsilon/2$ and $x^{**}(f_x)=0$ for every $x^{**}\in Y$. For every finite $F\subset S_X$, we set $K_F = \{f_x:x\in F\}$. Note that $\lim x^*(e^i_n)=0$ for every $i\in I$, every finite $F\subset S_X$ and every $x^*\in K_F$. Also, from Lemma \ref{leml1}, for each $i\in J$, there exists $M_i\in[\N]^\infty$ with $\lim_{n\in M_i} x^*(e^i_{n})=0$ for all $x^*\in K_f$. Applying Proposition \ref{basicmain}, we derive the desired result.
    \end{proof}

	More specifically, for a plegma spreading sequence $(e^i_n)_{i=1}^l$, Rosenthal's theorem yields that each $(e^i_n)$ is either weakly null, equivalent to the unit vector basis of $\ell_1$, or non-trivial Weak-Cauchy, since it is spreading. Taking also into account the behavior of plegma spreading sequences we give a corollary of the above theorem.

	\begin{cor}	
		Let $(e^i_n)_{i=1,n\in\N}^l$ be a plegma spreading sequence in a Banach space $X$ and $I\subset\{1,\ldots,l\}$ be such that $(e^i_n)_n$ is a non-trivial weak-Cauchy sequence with $w^*\-\lim e^i_n=e^{**}_i$ for every $i\in I$ and set $F=\spn\{ e^{**}_i\}_{i\in I}$. Then $(e^i_n)_{i=1,n\in\N}^l$ is a Schauder basic sequence, enumerated by the lexicographic order on $[\N]\times\{1,\ldots,l\}$, if and only if $X\cap F=\{0\}$.
	\end{cor}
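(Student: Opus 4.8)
The plan is to derive this corollary from Theorem~\ref{thmbsc}, exploiting the one extra feature a plegma spreading sequence enjoys over an arbitrary family of seminormalized sequences: its plegma shifts are isometries, so there is no need to pass to subsequences. As was noted just above, since each $(e^i_n)_n$ is spreading, Rosenthal's $\ell_1$ theorem~\cite{Rosenthal} forces each of them to be weakly null, equivalent to the unit vector basis of $\ell_1$, or non-trivial weak-Cauchy; hence Theorem~\ref{thmbsc} applies to $(e^1_n)_n,\ldots,(e^l_n)_n$ with the given $I$ and $F=\spn\{e^{**}_i\}_{i\in I}$.

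For the forward implication I would assume $X\cap F=\{0\}$. Then Theorem~\ref{thmbsc} produces a plegma family $(s_i)_{i=1}^l\in Plm_l([\N]^\infty)$ for which $\{e^i_{s_i(n)}\}_{i=1,n\in\N}^l$, enumerated in the natural plegma order, is a Schauder basic sequence. Since $(e^i_n)_{i=1,n\in\N}^l$ is plegma spreading, the natural map $e^i_n\mapsto e^i_{s_i(n)}$ extends to a surjective linear isometry $T$ between the respective closed linear spans. By Remark~\ref{remplm} the natural plegma order on $\{e^i_{s_i(n)}\}_{i=1,n\in\N}^l$ and the lexicographic order on $[\N]\times\{1,\ldots,l\}$ used to enumerate $\{e^i_n\}_{i=1,n\in\N}^l$ are induced by the same indexing $(n,i)\mapsto\cdot$, so $T^{-1}$ carries the $k$-th term of $(e^i_{s_i(n)})_{i=1,n\in\N}^l$ to the $k$-th term of $(e^i_n)_{i=1,n\in\N}^l$. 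As being a Schauder basic sequence (with a prescribed basis constant) is preserved under such term-by-term isometries, $(e^i_n)_{i=1,n\in\N}^l$ is Schauder basic in the lexicographic order.

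For the converse I would repeat the argument from Proposition~\ref{propntwc}. Suppose $X\cap F\neq\{0\}$ and fix $0\neq x=\sum_{i\in I}a_ie^{**}_i\in X$. Put $x_n=\sum_{i\in I}a_ie^i_n$; then $x_n$ is the plegma shift of $x_1$ obtained from the plegma family in $[\N]^1$ all of whose members equal $\{n\}$, so $\|x_n\|=\|x_1\|$, and $x_1\neq0$ because $\{e^i_n\}_{i=1,n\in\N}^l$ is linearly independent and not every $a_i$ vanishes. Since $\supp(x_n)$ and $\supp(x_{n+1})$ are consecutive blocks in the lexicographic enumeration of $(e^i_n)_{i=1,n\in\N}^l$, if the latter were Schauder basic then $(x_n)_n$ would be a (seminormalized) block basic sequence of it and hence itself Schauder basic. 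However, for every $x^*\in X^*$ we have $x^*(x_n)=\sum_{i\in I}a_ix^*(e^i_n)\to\sum_{i\in I}a_ie^{**}_i(x^*)=x^*(x)$, so $x_n\to x$ weakly with $x\neq0$; this contradicts the elementary fact that a Schauder basic sequence has no nonzero weak limit (such a limit would lie in the closed linear span of the sequence and every coordinate functional would vanish on it, forcing it to be $0$). Hence $(e^i_n)_{i=1,n\in\N}^l$ is not Schauder basic, which completes the equivalence.

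I do not expect a serious obstacle here: once Theorem~\ref{thmbsc} and the isometry property of plegma spreading sequences are in hand the whole argument is short. The only point that needs a little care is verifying, in the forward implication, that the plegma-shift isometry respects the two enumerations, so that \textquotedblleft Schauder basic\textquotedblright\ genuinely transfers from $(e^i_{s_i(n)})_{i=1,n\in\N}^l$ back to $(e^i_n)_{i=1,n\in\N}^l$.
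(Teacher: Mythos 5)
Your proof is correct and follows essentially the same route as the paper: invoke Theorem \ref{thmbsc} and transfer the Schauder basic property through the isometry $T(e^i_n)=e^i_{s_i(n)}$ afforded by the plegma spreading assumption. The only cosmetic difference is that for the converse you inline the weak-limit argument from Proposition \ref{propntwc} instead of appealing to the ``only if'' half of Theorem \ref{thmbsc} together with the same isometry, which is what the paper's one-line proof does implicitly.
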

	\begin{proof}
		 Theorem \ref{thmbsc} yields a plegma family $(s_i)_{i=1}^l$ in $[\N]^\infty$ such that $(e^i_{s_i(n)})_{i=1,n\in\N}^l$ is a Schauder basic sequence if and only if $X\cap F= \{0\}$, and since $T(e_n^i)=e^i_{s_i(n)}$ is an isometry, then the same holds for $(e^i_n)_{i=1,n\in\N}^l$.
	\end{proof}

    \subsection{Finite Families of Unconditional Sequences}
    We now study the case of unconditional sequences. We start with plegma spreading sequences. Recall that every weakly null spreading sequence in a Banach space is unconditional. The following proposition extends this result to plegma spreading sequences, using similar arguments as in the classical case.

	\begin{prop}\label{w0unc}
		Let $(e^i_{n})_{i=1,n\in\N}^l$ be a plegma spreading sequence such that each $(e^i_n)_n$ is weakly null. Then $(e^i_{n})_{i=1,n\in\N}^l$ is an unconditional sequence.
	\end{prop}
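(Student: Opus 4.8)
The plan is to adapt the classical proof that a weakly null spreading sequence is unconditional, with plegma shifts in the sense of Definition \ref{def plegma shifts} playing the role of ordinary index shifts. Since each $(e^i_n)_n$ is weakly null, no row is non-trivial weak-Cauchy, so the subspace $F$ appearing in the Corollary preceding this proposition is $\{0\}$ and $(e^i_{n})_{i=1,n\in\N}^l$ is already a Schauder basic sequence under the lexicographic order. It therefore suffices to exhibit a single constant $C\ge 1$ such that $\|\sum_{(i,n)\in A}a_{i,n}e^i_n\|\le C\|\sum_{(i,n)}a_{i,n}e^i_n\|$ for every finitely supported scalar family $(a_{i,n})$ and every $A\subset\{1,\dots,l\}\times\N$; the estimate for arbitrary sign changes then follows in the usual way, by writing $\sum\varepsilon_{i,n}a_{i,n}e^i_n=2\sum_{(i,n)\in A}a_{i,n}e^i_n-\sum_{(i,n)}a_{i,n}e^i_n$ with $A=\{(i,n):\varepsilon_{i,n}=1\}$.

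The engine of the argument, and the only place weak nullity enters, is an insertion-and-averaging principle. Fix $k\in\N$ and a finite vector $x\in\spn\{e^i_n\}_{i=1,n=1}^{l,k}$, and let $(s_m)_m$ be plegma families in $[\N]^k$ whose supports march to the right, as in part (iv) of the Remark following the definition of plegma spreading sequences. Then $(s_m(x))_m$ is spreading by that Remark, it satisfies $\|s_m(x)\|=\|x\|$ for all $m$ (it is isometric to $x$ by part (iii) of the same Remark), and, since each $(e^i_n)_n$ is weakly null and the indices $s^m_i(j)$ tend to infinity, $s_m(x)\to 0$ weakly; hence the Cesàro averages $\frac1M\sum_{m=1}^M s_m(x)$ tend weakly to $0$. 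Combining this with the weak lower semicontinuity of the norm yields, for instance, $\|y\|\le\|x\|$ whenever $x=y+z$ and $z$ is supported on strictly larger second coordinates (``columns'') than $y$, since then $\|x\|=\|y+z\|=\|y+z'\|$ for every rightward plegma shift $z'$ of $z$ and $y+z'\to y$ weakly; more generally it lets one delete a piece of a vector sitting at large coordinates at no cost to the limiting norm.

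With this principle available, the proof of the suppression estimate proceeds along the lines of the classical case $l=1$: given $x=\sum a_{i,n}e^i_n$ and $A$, one uses plegma shifts --- together with part (iii) of the Remark to pass between isometric copies --- to rearrange $x$ so that the ``$A$-part'' $\sum_{(i,n)\in A}a_{i,n}e^i_n$ is isolated while its complement is pushed to large coordinates and then averaged away by the engine above, giving the desired inequality with $C$ independent of $x$ and $A$. I expect the main difficulty to be combinatorial rather than analytic: a plegma shift moves all second coordinates of a given row at once, and across rows the $j$-th coordinates are tied together by the inequalities $s_{i_1}(j)\le s_{i_2}(j)$ for $i_1<i_2$ in the definition of a plegma family, so one cannot disperse the complement of $A$ while keeping the $A$-part fixed in a free, row-by-row fashion. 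The way around this is to carry out the rearrangements through plegma families with sufficiently large gaps and to invoke part (iv) of the Remark to guarantee that the resulting shifted copies still form spreading --- hence weakly null --- sequences; this is exactly the point at which the hypothesis of \emph{plegma} spreading, rather than mere rowwise spreading, is essential.
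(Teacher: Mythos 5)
Your reduction to a suppression estimate is fine, and your ``engine'' (weak lower semicontinuity applied to $y+z'_m$ where the $z'_m$ are rightward plegma shifts of $z$, hence weakly null) is correct as far as it goes. But as you yourself half-concede, it only goes as far as deleting a part $z$ that sits on \emph{strictly larger columns} than the retained part $y$, because a plegma shift acts on the whole vector and preserves the interleaving order of all coordinates: if $A$ omits $(i,1)$ but contains $(i,2)$, no plegma shift can move $e^i_1$ past the image of $e^i_2$, so ``isolating the $A$-part while pushing its complement to large coordinates'' is simply not an available rearrangement. Your proposed fix --- plegma families with large gaps plus part (iv) of the Remark --- does not supply the missing mechanism: part (iv) only says that the sequence of \emph{whole} shifted copies $(s_n(x))_n$ is spreading; it gives no way to move the complement of $A$ relative to $A$. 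This is the genuine gap.

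The paper closes it by deleting \emph{one coordinate at a time}. Fix $(i_0,j_0)$ and choose plegma shifts $s_1,\ldots,s_m$ of $x$ that agree on every position of $\pi\setminus\{(i_0,j_0)\}$ and differ only in where they send $e^{i_0}_{j_0}$; this is possible because the base plegma family can be chosen with a long gap around the $(i_0,j_0)$ slot, inside which $s_t(j_0)$ may range freely. Since $(e^{i_0}_n)_n$ is weakly null, Mazur's theorem gives a convex combination $\sum_t\lambda_t e^{i_0}_{s_t(j_0)}$ of norm less than $\varepsilon\|x\|/|a_{i_0j_0}|$. Each $\|s_t(x)\|=\|x\|$ by plegma spreading, so $\|\sum_t\lambda_t s_t(x)\|\le\|x\|$, while $\sum_t\lambda_t s_t(x)$ differs from an isometric copy of $\sum_{(i,j)\ne(i_0,j_0)}a_{ij}e^i_j$ by a vector of norm at most $\varepsilon\|x\|$. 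Hence deleting one coordinate costs a factor $1+\varepsilon$, and iterating over the coordinates outside $F$ (with a suitable choice of $\varepsilon$'s) yields the suppression estimate for arbitrary $F\subset\pi$. This is the same convex-combination device you would use in the classical $l=1$ case; the ``push the tail to infinity and use lower semicontinuity'' variant does not prove unconditionality of weakly null spreading sequences even for $l=1$, since there too the deleted set is in general interleaved with the retained one.
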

    \begin{proof}
    	Let $\pi=\{1,\ldots,l\}\times\{1,\ldots,k \}$ and $x=\sum_{(i,j)\in \pi}a_{ij}e^i_j$. Since each $(e^i_n)_n$ is weakly null, then for every $\varepsilon>0$ and $(i_0,j_0)\in\pi$, there exist $s_1(x),\ldots,s_m(x)$ plegma shifts of $x$ and a convex combination $\sum_{t=1}^m\lambda_ts_t(x)$ such that
    	\begin{enumerate}
    		\item[(i)] $s_{t_1}(e^i_j)=s_{t_2}(e^i_j)$ for every $(i,j)\in\pi'=\pi\setminus\{(i_0,j_0)\}$ and $1\le t_1,t_2,\le m$.
    		\item[(ii)] $\|\sum_{t=1}^n\lambda_ts_t(e^{i_0}_{j_0})\|<\varepsilon\|x\|/a_{i_0j_0}$.
    		\item[(iii)] $\sum_{t=1}^m\lambda_ts_t(x)=\sum_{ (i,j)\in\pi'}a_{ij}s_1(e^i_{_j})+\sum_{t=1}^n\lambda_ta_{i_0j_0}s_t(e^{i_0}_{j_0})$.
    	\end{enumerate}
    Then since $(e^i_{n})_{i=1,n\in\N}^l$ is plegma spreading we have that $\|s_t(x)\|=\|x\|$ for all $1\le t\le m$ and also $\|\sum_{ (i,j)\in\pi'}a_{ij}s_1(e^i_{j})\|=\|\sum_{ (i,j)\in\pi' }a_{ij}e^i_{j}\|$ and hence
    \[ \Big\|\sum_{ (i,j)\in\pi'}a_{ij}e^i_{j}\Big\|\le (1+\varepsilon)\Big\|\sum_{(i,j)\in \pi}a_{ij}e^i_j\Big\|.\]
    Finally, applying iteration we show that for every $\varepsilon>0$ and every $F\subset \pi$,
    \[
    \Big\|\sum_{ (i,j)\in F}a_{ij}e^i_{j}\Big\|\le (1+\varepsilon)\Big\|\sum_{(i,j)\in \pi}a_{ij}e^i_j\Big\|.
    \]
    \end{proof}

    \begin{prop}
        Let $(e^i_{n})_{i=1,n\in\N}^l$ be a plegma spreading sequence. If each $(e^i_n)_n$ is equivalent to the basis of $\ell_1$, then the same holds for $(e^i_{n})_{i=1,n\in\N}^l$.
    \end{prop}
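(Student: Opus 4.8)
The plan is to combine the plegma-spreading structure with Rosenthal's $\ell_1$ theorem and the linear independence of ultrafilter limits, exactly as in the Schauder-basic case treated above, but now tracking lower $\ell_1$-bounds instead of just a basis constant. Since each $(e^i_n)_n$ is equivalent to the basis of $\ell_1$, let $T_i:\ell_1\to X$ denote the corresponding embedding, and let $c_i>0$ be a lower $\ell_1$-constant, i.e. $\|\sum_n a_n e^i_n\|\ge c_i\sum_n|a_n|$ for all scalars. Passing to the closed linear span $E=\cspn\{e^i_n\}_{i=1,n\in\N}^l$, which is separable, and applying Lemma \ref{uflim} (or directly Corollary \ref{coruf} with $k=1$), we obtain functionals $x^*_1,\dots,x^*_l\in E^*$ and infinite sets along which $x^*_i(e^i_n)\to 1$ while $x^*_i(e^j_n)\to 0$ for $j\neq i$. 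By Remark \ref{remplm}, after passing to a plegma family of subsequences and using part (iii) of the Remark together with property (iii) of the plegma-spreading definition (applied via isometry $T(e^i_n)=e^i_{s_i(n)}$), we may assume from the start that along the given enumeration the functionals $x^*_i$ act on the tail of $(e^i_n)_n$ with $x^*_i(e^i_n)$ arbitrarily close to $1$ and $x^*_i(e^j_n)$ arbitrarily close to $0$ for $j\neq i$.

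The key step is then a block/gliding-hump estimate. Fix $k\in\N$, a finite set $F\subset\pi=\{1,\dots,l\}\times\{1,\dots,k\}$ (or more simply the whole block with $k$ arbitrary), and scalars $a_{ij}$. Using part (iv) of the Remark following the definition of plegma-spreading sequences, choose a plegma family $(s_n)_n$ of shifts so that the blocks $x_n=s_n\big(\sum_{(i,j)}a_{ij}e^i_j\big)$ are "far apart", and then for each index $(i,j)$ select, via the $w^*$-convergence along the ultrafilter, a further shift of the $i$-th coordinate so that the chosen functional $x^*_i$ essentially isolates the $(i,j)$-coordinate. Concretely, for each $(i,j)\in\pi$ one produces a functional $\phi_{ij}\in E^*$ of norm close to $1$ with $\phi_{ij}(e^i_j)$ close to $1$ and $\phi_{ij}(e^{i'}_{j'})$ close to $0$ for $(i',j')\neq(i,j)$; testing $\sum a_{ij}e^i_j$ against a norm-one convex-combination witness of the $\phi_{ij}$'s (exactly the averaging device used in the proof of Proposition \ref{basicmain} to get the lower bound $(9\varepsilon/10)\|x\|$) yields
\[
\Big\|\sum_{(i,j)\in\pi}a_{ij}e^i_j\Big\|\ \ge\ c\,\sum_{(i,j)\in\pi}|a_{ij}|
\]
for some $c>0$ depending only on $l$ and on $\min_i c_i$, after replacing $x$ by an appropriate plegma shift and using $\|s(x)\|=\|x\|$. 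The reverse inequality $\|\sum a_{ij}e^i_j\|\le\sum_i\|\sum_j a_{ij}e^i_j\|\le\sum_{(i,j)}|a_{ij}|$ is immediate from the triangle inequality and the normalization, so $(e^i_n)_{i=1,n\in\N}^l$ is $c$-equivalent to the unit vector basis of $\ell_1$ over $\{1,\dots,l\}\times\N$ with the lexicographic order, which is what we want (and in particular it is an unconditional sequence, consistent with the previous proposition).

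The main obstacle is the construction of the "isolating" functionals $\phi_{ij}$ simultaneously for all $(i,j)$ in a block of size $l\cdot k$ while keeping their norms uniformly bounded and the lower constant $c$ independent of $k$. This is precisely where one needs Corollary \ref{coruf}: it guarantees that for each $k$ the family $\{T_i^{**}\delta_{p_{ij}}\}_{i=1,j=1}^{l,k}$ is linearly independent and misses $X$ (hence misses $E$), so by the principle of local reflexivity one can dualize to get $x^*_{ij}\in E^*$ with $x^*_{ij}(T^{**}_{i'}\delta_{p_{i'j'}})=\delta_{(i,j),(i',j')}$ and controlled norm. The subtlety is that the norms produced this way a priori depend on $k$; one circumvents this by not asking for an exact biorthogonal system on the block, but only for approximate isolation of one coordinate at a time, exploiting that the plegma-spreading hypothesis lets us isometrically move the "active" coordinate far out where a single pair of ultrafilters (the ones from Lemma \ref{uflim}) already does the job — so the relevant constant is the one coming from the local-reflexivity dualization of just the pair $\{T^{**}_i\delta_{p_i},T^{**}_i\delta_{q_i}\}$, which is uniform in $k$. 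Carrying this interplay out carefully, with the averaging over shifts to pass from "isolate one coordinate" to "control the whole $\ell_1$-sum", is the technical heart of the argument; everything else is bookkeeping with plegma families.
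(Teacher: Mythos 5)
There is a genuine gap at exactly the point you flag as ``the technical heart of the argument.'' Your scheme produces, for each $(i,j)$ in the block $\pi=\{1,\dots,l\}\times\{1,\dots,k\}$, an approximately isolating functional $\phi_{ij}$ of norm close to $1$, and then tests $x=\sum_{(i,j)}a_{ij}e^i_j$ against a norm-one convex combination of (signed) $\phi_{ij}$'s. But a convex combination $\sum_{(i,j)}\lambda_{ij}\sgn(a_{ij})\phi_{ij}$ evaluates on $x$ to approximately $\sum_{(i,j)}\lambda_{ij}|a_{ij}|$, which is a weighted \emph{average} of the $|a_{ij}|$ and is therefore bounded by $\max_{(i,j)}|a_{ij}|$; this yields a lower bound of $c\max_{(i,j)}|a_{ij}|\ge (c/lk)\sum_{(i,j)}|a_{ij}|$, whose constant degrades with $k$. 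To get the full sum $\sum|a_{ij}|$ you would need the unweighted functional $\sum_{(i,j)}\sgn(a_{ij})\phi_{ij}$ with a norm bound independent of $k$, and controlling that norm is essentially equivalent to the statement being proved. So ``isolating one coordinate at a time'' is the wrong granularity, and no amount of bookkeeping with plegma families repairs it.

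The paper's resolution is a pigeonhole on rows and signs that you do not have. Normalize $\sum_{(i,j)}|a_{ij}|=1$; then for $x$ or $-x$ there is a single row $i_0$ with $\sum_{j\in J^+_{i_0}}a_{i_0j}\ge 1/(2l)$, where $J^+_{i_0}=\{j:a_{i_0j}>0\}$ --- a constant depending only on $l$, not on $k$. One then uses only the single functional $x^*_{i_0}$ from Lemma \ref{uflim} (note that this lemma provides, for the \emph{same} sequence $(e^{i_0}_n)_n$, both a set $M_{i_0}$ along which $x^*_{i_0}(e^{i_0}_n)\to 1$ and a set $M^{i_0}_{i_0}$ along which $x^*_{i_0}(e^{i_0}_n)\to 0$, coming from two different ultrafilters), and chooses a plegma shift $s$ routing the indices in $J^+_{i_0}$ into $M_{i_0}$, those in $J^-_{i_0}$ into $M^{i_0}_{i_0}$, and every other row $j\neq i_0$ into $M^{i_0}_{j}$. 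Then $x^*_{i_0}(s(x))\ge (1-\varepsilon)/(2l)-\delta$, and since $\|x\|=\|s(x)\|$ by the plegma-spreading hypothesis, $\|x\|\ge\big((1-\varepsilon)/(2l)-\delta\big)/\max_i\|x^*_i\|$. Your identification of Lemma \ref{uflim} (equivalently Corollary \ref{coruf}) as the source of the functionals is correct, and your upper bound via the triangle inequality is fine, but without the row-and-sign selection the lower $\ell_1$ estimate with a constant uniform in $k$ does not follow from what you wrote.
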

    \begin{proof}
    	Let $0<\varepsilon<1$, $0<\delta<(1-\varepsilon)/2l$, and $x=\sum_{j=1}^k\sum_{i=1}^la_{ij}e^i_{j}$ with $\sum_{j=1}^k\sum_{i=1}^l|a_{ij}|=1$. Then either for $x$ or $-x$ there exists $1\le i_0\le l$ such that $\sum_{j\in J_{i_0}^+}a_{i_0j}\ge {1}/{2l}$, where $J_{i_0}^+=\{j:a_{i_0j}>0 \}$. Moreover, from Lemma \ref{uflim}, for each $1\le i,j\le l$, there exist $x_{i}\in X^*$ and $M_{i},M^i_{j}$ in $[\N]^\infty$ such that $\lim_{n\in M_i}x_{i}^*(e^{i}_n)=1$ and $\lim_{n\in M^i_{j}}x_{i}^*(e^j_n)=0$. We set $M=\max\{\|x^*_i\|:{i=1,\ldots,l}\}$ and choose a plegma family $(s_i)_{i=	1}^l$ in $[\N]^k$ such that
    	\begin{enumerate}
    		\item[(i)] $s_{i_0}(j)\in M_{i_0}$ and $x^*(e^i_{s_{i_0}(j)})>1-\varepsilon$ for every $j\in J^+_{i_0}$.
    		\item[(ii)] $s_{i_0}(j)\in M^{i_0}_{i_0}$ for every $j\in J_{i_0}^-=\{j:a_{i_0j}<0 \}$.
    		\item[(ii)] $s_j\subset M^{i_0}_j$ for every $1\le j\le l$ with $j\neq i_0$.
    		\item[(iii)] $x_{i_0}^*(\sum_{j=1}^k\sum_{\substack{ i=1 \\ i\neq i_0}}^la_{ij}e^i_{s_i(j)}+\sum_{j\in J^-_{i_0}}a_{i_0j}e^{i_0}_{s_{i_0}(j)})<\delta$.
    	\end{enumerate}
    Hence $x_{i_0}^*(x)\ge \frac{1-\varepsilon}{2l}-\delta$ and therefore $\|x\|\ge (\frac{1-\varepsilon}{2l}-\delta)/M$ which yields the desired result.
    \end{proof}

	Combining the two previous propositions we have the following final result.

    \begin{thm}
    \label{unconditional plegma spreading jointly unconditional}
        Let $(e^i_{n})_{i=1,n\in\N}^l$ be a plegma spreading sequence such that each $(e^i_n)_n$ is unconditional. Then $(e^i_{n})_{i=1,n\in\N}^l$ is also an unconditional sequence.
    \end{thm}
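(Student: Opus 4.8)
The plan is to combine the two preceding propositions; the bridge between them is an auxiliary dichotomy. I would first show that \emph{a normalized, spreading, unconditional basic sequence $(x_n)_n$ is either weakly null or equivalent to the unit vector basis of $\ell_1$}. If $(x_n)_n$ is not weakly null, choose $x^*\in S_{X^*}$, a number $a>0$, and $n_1<n_2<\cdots$ with $x^*(x_{n_k})\to a$ (after replacing $x^*$ by $-x^*$ if needed). For $c_1,\dots,c_p\ge 0$, spreading gives $\|\sum_j c_jx_j\|=\|\sum_j c_jx_{n_{k_j}}\|$ for all $k_1<\cdots<k_p$, so picking the $k_j$ large enough that $x^*(x_{n_{k_j}})>a-\varepsilon$ we get $\|\sum_j c_jx_j\|\ge(a-\varepsilon)\sum_j c_j$, whence $\|\sum_j c_jx_j\|\ge a\sum_j c_j$. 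If $K$ is the unconditionality constant then $\|\sum_j a_jx_j\|\ge K^{-1}\|\sum_j|a_j|x_j\|\ge(a/K)\sum_j|a_j|$ for all scalars, and since $\|\sum_j a_jx_j\|\le\sum_j|a_j|$ the sequence is equivalent to the $\ell_1$ basis.

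By the remark following the definition of a plegma spreading sequence, each $(e^i_n)_n$ is spreading, and it is unconditional by hypothesis; so by the dichotomy each $(e^i_n)_n$ is weakly null or equivalent to the $\ell_1$ basis, and in particular none is non-trivial weak-Cauchy. Hence the corollary to Theorem \ref{thmbsc} (with empty set of non-trivial weak-Cauchy components, so $F=\{0\}$) shows $(e^i_n)_{i=1,n\in\N}^l$ is a Schauder basic sequence in the lexicographic order. Put $W=\{i:(e^i_n)_n\ \text{weakly null}\}$ and $L=\{1,\dots,l\}\setminus W$. Again by the same remark, $(e^i_n)_{i\in W,n}$ and $(e^i_n)_{i\in L,n}$ are plegma spreading; so Proposition \ref{w0unc} gives that the first is unconditional with some constant $C_W$, and the proposition immediately preceding this theorem gives that the second is equivalent to the $\ell_1$ basis, so that $\|\sum_{i\in L,\,j}a_{ij}e^i_j\|\ge c_L\sum_{i\in L,\,j}|a_{ij}|$ for some $c_L>0$.

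The crucial step is to show that the $L$-coordinates are small relative to the whole vector: there is $c>0$ with $\sum_{i\in L,\,j}|a_{ij}|\le c\,\|x\|$ for every finitely supported $x=\sum_{i=1,\,j}^{l}a_{ij}e^i_j$. By homogeneity assume $\sum_{i\in L,j}|a_{ij}|=1$, fix $i_0\in L$ with $\sum_j|a_{i_0j}|\ge 1/l$, and (replacing $x$ by $-x$ if needed) assume $\sum_{j\in J^+}a_{i_0j}\ge 1/(2l)$, $J^+=\{j:a_{i_0j}>0\}$. Passing to the separable space $\cspn\{e^i_n\}_{i=1,n\in\N}^l$ and extending functionals by Hahn--Banach, Lemma \ref{uflim} applied to $(e^i_n)_{i\in L}$ yields $x^*_i\in X^*$ ($i\in L$) with $x^*_i(e^i_n)\to1$ along some $M_i$ and $x^*_i(e^j_n)\to 0$ along some $M^i_j$ ($i,j\in L$); also $x^*_{i_0}(e^j_n)\to 0$ for $j\in W$ since $(e^j_n)_n$ is weakly null. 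As in the proof of the proposition immediately preceding this theorem, I would choose one plegma family $(s_i)_{i=1}^l$ with $s_{i_0}(j)\in M_{i_0}$ and $x^*_{i_0}(e^{i_0}_{s_{i_0}(j)})>1-\varepsilon$ for $j\in J^+$, $s_{i_0}(j)\in M^{i_0}_{i_0}$ for $j\notin J^+$, $s_i\subset M^{i_0}_i$ for $i\in L\setminus\{i_0\}$, and $s_i$ sufficiently far out for $i\in W$, so that $x^*_{i_0}(s(x))\ge\tfrac{1-\varepsilon}{2l}-\varepsilon$. As $(e^i_n)_{i,n}$ is plegma spreading, $\|x\|=\|s(x)\|\ge x^*_{i_0}(s(x))/\|x^*_{i_0}\|$, and $\varepsilon\to 0$ gives $\|x\|\ge 1/(2lM)$ with $M=\max_{i\in L}\|x^*_i\|$; so $c=2lM$ works.

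It remains to assemble: for finitely supported scalars $(a_{ij})$, signs $(\theta_{ij})$ and $x=\sum_{i,j}a_{ij}e^i_j$,
\[
\Big\|\sum_{i,j}\theta_{ij}a_{ij}e^i_j\Big\|\le\Big\|\sum_{i\in W,j}\theta_{ij}a_{ij}e^i_j\Big\|+\Big\|\sum_{i\in L,j}\theta_{ij}a_{ij}e^i_j\Big\|\le C_W\Big\|\sum_{i\in W,j}a_{ij}e^i_j\Big\|+\frac{1}{c_L}\sum_{i\in L,j}|a_{ij}|,
\]
and since $\sum_{i\in L,j}|a_{ij}|\le c\|x\|$ and $\|\sum_{i\in W,j}a_{ij}e^i_j\|\le\|x\|+\sum_{i\in L,j}|a_{ij}|\le(1+c)\|x\|$, we get $\|\sum_{i,j}\theta_{ij}a_{ij}e^i_j\|\le(C_W(1+c)+c/c_L)\|x\|$, i.e.\ $(e^i_n)_{i=1,n\in\N}^l$ is unconditional; the cases $W=\emptyset$ or $L=\emptyset$ are already one of the two propositions. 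The step I expect to be the main obstacle is the crucial estimate on the $L$-coordinates: the lower $\ell_1$-bound for the $\ell_1$-block must persist in the presence of the weakly null block, and this is exactly where the two propositions are merged — the $\ell_1$-functionals from Lemma \ref{uflim} detect the $\ell_1$-block, while weak nullity annihilates the contribution of the $W$-block after a suitable plegma shift.
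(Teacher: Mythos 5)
Your proof is correct, and it shares the paper's overall strategy --- split the indices into the weakly null block $W$ and the (necessarily) $\ell_1$ block $L$, handle each block by one of the two preceding propositions, and then show the two blocks span a direct sum --- but the mechanism by which you obtain the direct sum is genuinely different. The paper extends the argument of Proposition \ref{w0unc}: convex combinations of plegma shifts annihilate the weakly null coordinates one at a time while plegma spreading preserves the norm, yielding the near-isometric projection bound $\|\sum_{i\in J}\sum_{j}b_{ij}e^i_j\|\le(1+\varepsilon)\|x\|$ and hence $E_0\oplus E_1$ directly. You instead extend the argument of the $\ell_1$ proposition: the ultrafilter functionals of Lemma \ref{uflim} detect the $\ell_1$ block, weak nullity kills the $W$-block after a single suitably chosen plegma shift, and you obtain the estimate $\sum_{i\in L}\sum_j|a_{ij}|\le c\,\|x\|$, which is quantitatively worse ($c=2lM$ versus $1+\varepsilon$) but stronger in form, since it gives the projection bound and the lower $\ell_1$ estimate on the $L$-block in one stroke. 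Two further points in your favour: you make explicit the dichotomy (a normalized spreading unconditional sequence is weakly null or equivalent to the $\ell_1$ basis) that the paper uses only tacitly when it applies the $\ell_1$ proposition to the complement of the weakly null indices, and your final triangle-inequality assembly is a clean substitute for the paper's appeal to the closedness of $E_0+E_1$. The detour through Schauder basicness via the corollary to Theorem \ref{thmbsc} is harmless but unnecessary, as your final sign estimate already yields unconditionality.
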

    \begin{proof}
    	Let $I\subset\{1,\ldots,l\}$ be such that the sequence $(e^i_n)_n$ is weakly null for every $i\in I$ and denote its complement by $J$. We denote by $E_0$ the closed linear span of $\{e^i_n\}_{i\in I,n\in \N}$ and by $E_1$ that of $\{e^i_n\}_{i\in J,n\in \N}$. Then, for any $x\in E_0+E_1$ with $x=\sum_{j=1}^k(\sum_{i\in I}a_{ij}e^i_j+\sum_{i\in J}b_{ij}e^i_j)$, using similar arguments as in the proof of Proposition \ref{w0unc}, we have that for every $\varepsilon>0$
    \[\Big\|\sum_{j=1}^k\sum_{i\in J}b_{ij}e^i_j\Big\|\le(1+\varepsilon)\Big\|\sum_{j=1}^k\Big(\sum_{i\in I}a_{ij}e^i_j+\sum_{i\in J}b_{ij}e^i_j\Big)\Big\|.\]
    Hence $E_0+E_1=E_0\oplus E_1$ and since both $(e^i_n)_{i\in I,n\in\N}$ and $(e^i_n)_{i\in J,n\in\N}$ are unconditional sequences, as follows from the two previous propositions, then the same holds for their union.
    \end{proof}

\subsection{Unconditional Sequences in Singular Position}

   The following is a variant of the Maurey-Rosenthal classical example \cite{MR}. As Theorem \ref{unconditional plegma spreading jointly unconditional} asserts, the strong assumption of being plegma spreading yields that $l$-tuples of unconditional sequences are jointly unconditional. The purpose of this example is to demonstrate that this strong condition is in fact necessary.

    \begin{prop}
	Let $N_1$, $N_2$ be a partition of $\mathbb{N}$ into two infinite sets. There exists a Banach space $X$ with a Schauder basis $(e_n)_n$ such that the following hold.
        \begin{enumerate}
            \item[(i)] The sequences $(e_n)_{n\in N_1}$ and $(e_n)_{n\in N_2}$ are unconditional.
            \item[(ii)] For every $M\subset \N$ such that $M\cap N_1$ and $M\cap N_2$ are both infinite sets, the sequence $(e_n)_{n\in M}$ is not unconditional.
        \end{enumerate}
    \end{prop}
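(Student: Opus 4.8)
The plan is to adapt the Maurey--Rosenthal construction, building $X$ from a Gowers--Maurey type norming set $W\subseteq c_{00}(\mathbb{N})$ but forcing the conditionality-producing functionals to straddle the partition $N_1\cup N_2$. Fix rapidly increasing integer sequences $(m_j)_j$ and $(n_j)_j$, with $n_j$ growing much faster than $m_j$ as in the standard set-up, together with an injective coding $\sigma$ defined on finite sequences of successively supported rational functionals. Let $W$ be the smallest subset of $c_{00}(\mathbb{N})$ that contains $\pm e_n^*$ for all $n$, is closed under the ordinary operations $\phi\mapsto\frac1{m_j}(\phi_1+\cdots+\phi_{n_j})$ for successive $\phi_1<\cdots<\phi_{n_j}$ in $W$, and is closed under the following special operation: whenever $(\phi_1,\dots,\phi_{n_{2j+1}})$ is a $\sigma$-special sequence in $W$ whose $i$-th piece is supported in $N_1$ for odd $i$ and in $N_2$ for even $i$, then $\frac1{m_{2j+1}}(\phi_1+\cdots+\phi_{n_{2j+1}})\in W$; include also special functionals of this shape with the roles of $N_1$ and $N_2$ interchanged. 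The essential feature in the choice of $\sigma$ is that the weight of $\phi_{i+1}$ inside a special sequence is made to depend only on the weights of $\phi_1,\dots,\phi_i$ and on the signs of those among $\phi_1,\dots,\phi_i$ supported in the ``second'' half of the given orientation --- so that the signs of the ``first''-half pieces remain free --- while $\sigma$ stays injective on special sequences of either orientation. Put $X=\overline{c_{00}(\mathbb{N})}$ under $\|x\|=\sup\{\phi(x):\phi\in W\}$, with $(e_n)_n$ the unit vector basis.

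First I would record the routine facts. Since $e_n^*\in W$ and every $\phi\in W$ satisfies $\|\phi\|_{\ell_\infty}\le1$, the basis $(e_n)_n$ is normalized; and since for any $\phi\in W$ and any interval $E\subseteq\mathbb{N}$ the restriction of $\phi$ to $E$ is dominated, up to a universal constant, by $W$ (an ordinary or special operation cut by an interval is again an ordinary operation together with two ``end'' pieces), the partial-sum projections are uniformly bounded, so $(e_n)_n$ is a Schauder basis. For (i), fix $x$ supported in $\spn\{e_n:n\in N_1\}$ and compute $\|x\|$. The ordinary functionals of $W$ already form a family invariant under coordinatewise sign changes, because $W=-W$. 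A special functional of the first family, restricted to the coordinates in $N_1$, retains only its odd-indexed (hence $N_1$-supported) pieces, whose signs $\sigma$ left free; a special functional of the second family restricted to $N_1$ retains only its even-indexed pieces, again with free signs. Hence $\{\phi|_{N_1}:\phi\in W\}$ is invariant under coordinatewise sign changes, which gives $\|\sum_n\varepsilon_nx_ne_n\|=\|\sum_nx_ne_n\|$ for every choice of signs $(\varepsilon_n)$; thus $(e_n)_{n\in N_1}$ is $1$-unconditional, and by symmetry so is $(e_n)_{n\in N_2}$.

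For (ii), let $M\subseteq\mathbb{N}$ with $M\cap N_1$ and $M\cap N_2$ both infinite. For each large odd $j$, run the usual rapidly increasing sequence / exact-pair machinery inside $\spn\{e_n:n\in M\}$ to produce normalized constant-$\ell_1$-averages $y_1<y_2<\cdots<y_{n_j}$ with $y_i$ supported in $M\cap N_1$ for odd $i$ and in $M\cap N_2$ for even $i$, together with a special functional $\phi=\frac1{m_j}(\phi_1+\cdots+\phi_{n_j})$ of the first family in which $\phi_i$ norms $y_i$ and the weights are threaded through $\sigma$ as the construction proceeds --- this is compatible precisely because the $y_i$, like the $\phi_i$, genuinely alternate across the partition. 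With $v=\frac{m_j}{n_j}\sum_iy_i$, the basis inequality gives $\|v\|\le C$ for a universal $C$, while $\phi(v)$ is bounded below by a universal constant, so $\|v\|\asymp1$ and hence $\|\sum_iy_i\|\asymp n_j/m_j$. Now consider the sign pattern that flips every $N_2$-block, i.e.\ $w=\sum_{i\ \mathrm{odd}}y_i-\sum_{i\ \mathrm{even}}y_i$. No special functional can be aligned with $w$: to do so one would need to replace $\phi$ by $\frac1{m_j}(\psi_1+\cdots+\psi_{n_j})$ with $\psi_i$ norming $y_i$ for odd $i$ and $-y_i$ for even $i$, but then the $N_2$-pieces of $(\psi_i)$ carry the opposite signs to those of $(\phi_i)$, so by injectivity of $\sigma$ the sequence $(\psi_i)$ is not a special sequence. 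Consequently only ordinary functionals and ``mismatched'' special functionals act on $w$, and the tree analysis for such vectors yields $\|w\|\lesssim n_j/m_j^2$. Since flipping the signs of the blocks $y_i$ is a legitimate operation when testing unconditionality, the sequence $(e_n)_{n\in M}$ has unconditionality constant at least of the order of $\|\sum_iy_i\|/\|w\|\gtrsim m_j$, which tends to infinity; hence $(e_n)_{n\in M}$ is not unconditional.

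The step I expect to be the main obstacle is the upper estimate $\|w\|\lesssim n_j/m_j^2$ --- the basis inequality for sign patterns that break the $\sigma$-matching --- which is the technical heart of every construction of Maurey--Rosenthal / Gowers--Maurey type and requires a careful tree analysis of the action of $W$ on rapidly increasing sequences. A secondary delicate point is tuning the coding $\sigma$ so as to simultaneously (a) leave the signs of the ``first-half'' pieces free, which is what makes part (i) work, (b) lock enough of the ``second-half'' data that flipping a block of a witness destroys every available special functional, which is what makes part (ii) work, and (c) remain injective on $\sigma$-special sequences of both orientations.
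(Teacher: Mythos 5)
Your overall strategy---a norming set containing sign-free functionals supported entirely in $N_1$ or entirely in $N_2$, plus coded ``special'' functionals that straddle the partition and whose coherence is destroyed by flipping one side---is the right idea and matches the paper's in spirit. But your realization of it via a Gowers--Maurey type \emph{recursive} norming set leaves a genuine gap, one you flag yourself: the upper estimate $\|w\|\lesssim n_j/m_j^2$ for the sign-flipped vector. In a norming set closed under the operations $\phi\mapsto\frac{1}{m_j}(\phi_1+\cdots+\phi_{n_j})$, that estimate is precisely the basic inequality for rapidly increasing sequences, which requires a full tree analysis and cannot be waved through; as written, part (ii) is a plan rather than a proof. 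There is also an unresolved subtlety in part (i): to get unconditionality of $(e_n)_{n\in N_1}$ you need $\{\phi|_{N_1}:\phi\in W\}$ to be dominated by $W$ after \emph{arbitrary coordinatewise} sign changes, and since the odd-indexed pieces of a special functional are themselves recursively built objects (possibly interval restrictions of functionals that straddled the partition at a lower level of the recursion), ``the signs of the first-half pieces are left free by $\sigma$'' does not by itself settle this.

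The paper avoids both difficulties by staying much closer to the original Maurey--Rosenthal example. Its norming set is defined explicitly, not recursively: the weighted functionals are single-level averages $\frac{1}{\sqrt{\mu_j}}\sum_{i\in E}\varepsilon_ie_i^*$ with completely arbitrary signs and with $E$ contained entirely in $N_1$ or entirely in $N_2$ (so $1$-unconditionality of each $(e_n)_{n\in N_i}$ is immediate), and a special functional is a sum $\sum_{i=1}^2\sum_{k=1}^nf^i_k$ of such weighted functionals whose supports form a $\sigma$-coded special sequence and whose $N_1$- and $N_2$-pieces at each level $k$ are sign-consistent. The lower bound $\|\sum_{i,k}x^i_k\|\ge 2n$ is read off from the defining functional, and the upper bound $\|\sum_{i,k}(-1)^ix^i_k\|\le 5$ follows from two elementary facts: pairs of weighted functionals and vectors with distinct weights contribute at most $1/2$ in total, by the lacunarity condition $\sum_i\sum_{j>i}\sqrt{\mu_i}/\sqrt{\mu_j}\le 1/2$; and, by injectivity of $\sigma$, two special sequences agree up to some level, disagree in at most one further level of equal weight, and have pairwise distinct weights thereafter, while on the common initial part the consistency requirement forces the $N_1$- and $N_2$-contributions to cancel against the alternating signs. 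No tree analysis is needed. If you want a complete argument, I would recommend switching to this explicit single-level norming set; the recursive version can presumably be made to work, but at a far greater technical cost.
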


    We fix a strictly increasing sequence of natural numbers $(\mu_i)_i$ such that
    \begin{equation*}\label{muj}
    \sum_{i=1}^\infty\sum_{j>i}\frac{\sqrt\mu_i}{\sqrt\mu_j}\le\frac{1}{2}
    \end{equation*}

    \noindent Denote by $\mathcal{P}$ the collection of all finite sequences $(E_k)_{k=1}^n$ of successive non-empty finite subsets of $\N$. Take an injection $\sigma:\mathcal{P}\to \N$ such that $\sigma((E_k)_{k=1}^{n})>\max\{\# E_k \}_{k=1}^{n}$, for every $(E_k)_{k=1}^{n}\in\mathcal{P}$, and finally fix a partition of $\N$ into two infinite subsets $N_1$ and $N_2$.

    \begin{dfn}
        A sequence $(E^i_k)_{i=1,k=1}^{2,n}$ of non-empty finite subsets of $\N$ is called a {\em special sequence} if the following hold.
        \begin{enumerate}
            \item[(i)] $E^1_k\subset N_1$ and $E^2_k\subset N_2$ for every $1\le k\le n$.
            \item[(ii)] The sets $E^1_k$ and $E^2_k$ are successive for every $1\le k\le n$.
            \item[(iii)] The sets $E^2_k$ and $E^1_{k+1}$ are successive for every $1\le k <n$.
            \item[(iv)] $\#E^1_1 = \#E^2_1= \mu_{j_1}$, for some $j_1\in\mathbb{N}$.
            \item[(v)] $\#E^1_{k_0}=\#E^2_{k_0}=\mu_{j_{k_0}}$, where $j_{k_0}=\sigma((E^i_k)_{i=1,k=1}^{2,k_0-1})$ for every $1<{k_0}\le n$.
        \end{enumerate}
    \end{dfn}

    \begin{rem}\label{remtreespecial}
		Let $(E^i_k)_{i=1,k=1}^{2,n}$ and $(F^i_k)_{i=1,k=1}^{2,m}$ be special sequences and set $k_0=\min\{k:\# E^1_k\neq\#F^1_k\}$. Then since $\sigma$ is an injection, notice that if $k_0>1$ the following hold.
		\begin{enumerate}
			\item[(i)] If $k_0>2$ then $E^1_k=F^1_k$ and $E^2_k=F^2_k$ for every $1\le k<k_0-1$.
			\item[(ii)] $\#E^1_{k_0-1}=\# F^1_{k_0-1}$ and $E^1_{k_0-1}\neq F^1_{k_0-1}$ or $E^2_{k_0-1}\neq F^2_{k_0-1}$.
			\item[(iii)] $\#E^1_{k}\neq\# F^1_{k}$ for every $k_0\le k\le \min\{n,m\}$.
		\end{enumerate}
	\end{rem}

    Let $(e^*_i)_i$ as well as $(e_i)_i$ denote the unit vector basis of $c_{00}(\N)$ and for every $f,x\in c_{00}(\N)$ with $f=\sum_{i=1}^na_ie^*_i$ and $x=\sum_{i=1}^mb_ie_i$ set $f(x)=\sum_{i=1}^{\min\{n,m\}}a_ib_i$. Finally, for $f,g\in c_{00}(\N)$ with $f=\sum_{k=1}^{n}a_{i_k}e^*_{i_k}$ and $g=\sum_{k=1}^nb_{j_k}e^*_{j_k}$, where $a_{i_k},b_{j_k}\neq0$, we will say that $f$ and $g$ are {\em consistent} if $\sgn(a_{i_k})=\sgn(b_{j_k})$ for every $1\le k\le n$.

    \begin{dfn}
        Consider the following subsets of $c_{00}(\N)$.
        \[
        W_0=\{0\}\cup\{\pm e^*_i:i\in \N\},
        \]
        \[
        W_1=\Bigg\{\frac{1}{\sqrt{\mu_j}}\sum_{i\in E}\varepsilon_ie^*_i:E\subset\N_1\;\textnormal{or}\;E\subset N_2,\#E=\mu_j,\varepsilon_i\in\{-1,1\}\;\textnormal{for}\;i\in E\Bigg\},
        \]
        \begin{align*}
        W_2=\Bigg\{&\sum_{i=1}^2\sum_{k=1}^{n}f^i_k:f^i_k\in W_0\cup W_1,(\supp(f^i_k))_{i=1,k=1}^{2,n}\;\textnormal{is a special sequence},\\
        & f^1_k\;\textnormal{and}\;f^2_k\;\textnormal{are consistent for every }1\le k\le n\Bigg\}
        \end{align*}
        and set $W=\{P_E(f):f\in W_0\cup W_1\cup W_2,E\;\textnormal{interval of }\N \}$. Define a norm on $c_{00}(\N)$ by setting $\|x\|=\sup\{f(x):f\in W\}$ and let $X^{(2)}_{MR}$ denote its completion with respect to this norm.
        \end{dfn}
    	
    	\begin{rem}
    		For any $f=\sum_{i=1}^na_ie^*_i$ in $W$ and $1\le k <l \le n$, it follows that $\sum_{i=k}^la_ie^*_i$ is in $W$ as well. That is, the sequence $(e_i)_i$ forms a normalized and bimonotone Schauder basis for $X^{(2)}_{MR}$.
    	\end{rem}

    	\begin{rem}
			For any $f=\sum_{i=1}^na_ie^*_i$ in $W$ and any choice of signs $(\varepsilon_i)_{i\in N_1}$ (or $(\varepsilon_i)_{i\in N_2}$), it follows that there exist $(b_i)_{i=1}^n$ such that $b_i=\varepsilon_ia_i$ for all $i\in N_1\cap\{1,\ldots,n\}$ (or $i\in N_2\cap\{1,\ldots,n\}$) and  $g=\sum_{i=1}^nb_ie^*_i$ is in $W$. Hence, the sequences $(e_i)_{i\in N_1}$ and $(e_i)_{i\in N_2}$ are $1$-unconditional.
    	\end{rem}

        \begin{dfn}
            We will call an $x$ in $X^{(2)}_{MR}$ a {\em weighted vector} if $x=\frac{1}{\sqrt{\mu_\ell}}\sum_{i\in E}\varepsilon_ie_i$ with $\#E=\mu_\ell$ and $\varepsilon_i\in\{-1,1\}$. We also define the {\em weight} of $x$ as $w(x)=\mu_\ell$.

            Moreover, any $f=\frac{1}{\sqrt{\mu_\ell}}\sum_{i\in E}\varepsilon_ie^*_i$ in $W$ with $\#E=\mu_\ell$ and $\varepsilon_i\in\{-1,1\}$, will be called a {\em weighted functional} and we define the {\em weight} of $f$ as $w(f)=\mu_\ell$.
        \end{dfn}

        \begin{lem}\label{incweig}
            Let $x_1,\ldots,x_n$ be successive weighted vectors with increasing weights and $f_1,\ldots,f_m$ be successive functionals with increasing weights. If $w(x_i)\neq w(f_j)$ for every $1\le i\le n$ and $1\le j\le m$, then $\sum_{i=1}^m\sum_{i=1}^n|f_j(x_i)|\le\frac{1}{2}$.
        \end{lem}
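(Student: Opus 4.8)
The plan is to reduce the whole estimate to one elementary per-pair bound together with an injectivity observation that exploits successiveness. Write $w(x_i)=\mu_{a_i}$ and $w(f_j)=\mu_{b_j}$; since $(\mu_i)_i$ is strictly increasing, the assumption that the weights of $x_1,\dots,x_n$ (resp.\ of $f_1,\dots,f_m$) increase means $a_1<\cdots<a_n$ (resp.\ $b_1<\cdots<b_m$), and the assumption $w(x_i)\ne w(f_j)$ says $a_i\ne b_j$ for all $i,j$. First I would record that, writing $x_i=\mu_{a_i}^{-1/2}\sum_{k\in E_i}\varepsilon^i_k e_k$ with $\#E_i=\mu_{a_i}$ and $f_j=\mu_{b_j}^{-1/2}\sum_{k\in F_j}\eta^j_k e^*_k$ with $\#F_j=\mu_{b_j}$ (so $E_i=\supp(x_i)$, $F_j=\supp(f_j)$, and $f_j(x_i)$ is supported on $E_i\cap F_j$), one has
\[
|f_j(x_i)|\le \#(E_i\cap F_j)\,\mu_{a_i}^{-1/2}\mu_{b_j}^{-1/2}\le \mu_{\min\{a_i,b_j\}}\,\mu_{a_i}^{-1/2}\mu_{b_j}^{-1/2}=\frac{\sqrt{\mu_{\min\{a_i,b_j\}}}}{\sqrt{\mu_{\max\{a_i,b_j\}}}},
\]
using $\#(E_i\cap F_j)\le\min\{\#E_i,\#F_j\}=\mu_{\min\{a_i,b_j\}}$ and that $\mu$ is increasing.

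Next I would prove the key claim: the map $(i,j)\mapsto\bigl(\min\{a_i,b_j\},\max\{a_i,b_j\}\bigr)$ is injective on the set $\mathcal{S}=\{(i,j):E_i\cap F_j\ne\emptyset\}$. Because each of $a_1<\cdots<a_n$ and $b_1<\cdots<b_m$ is strictly increasing, an $x$-index is determined by its weight and an $f$-index by its weight; so for a fixed pair $r<s$ there is at most one $(i,j)$ with $(a_i,b_j)=(r,s)$ and at most one with $(a_i,b_j)=(s,r)$, and it remains to exclude the possibility that \emph{both} of these lie in $\mathcal{S}$. Suppose $(i_1,j_1),(i_2,j_2)\in\mathcal{S}$ with $a_{i_1}=r=b_{j_2}$ and $a_{i_2}=s=b_{j_1}$, where $r<s$. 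Then $a_{i_1}<a_{i_2}$ forces $i_1<i_2$, hence $\max E_{i_1}<\min E_{i_2}$ by successiveness of the $x_i$, and $b_{j_2}<b_{j_1}$ forces $j_2<j_1$, hence $\max F_{j_2}<\min F_{j_1}$ by successiveness of the $f_j$. Choosing a point of $E_{i_1}\cap F_{j_1}$ gives $\min F_{j_1}\le\max E_{i_1}$, and choosing a point of $E_{i_2}\cap F_{j_2}$ gives $\min E_{i_2}\le\max F_{j_2}$; chaining the four inequalities yields $\min E_{i_2}\le\max F_{j_2}<\min F_{j_1}\le\max E_{i_1}<\min E_{i_2}$, a contradiction. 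This cross‑cancellation argument is the one place I expect to need care; the rest is bookkeeping.

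With these two ingredients in hand, the conclusion follows by summing, since $|f_j(x_i)|=0$ whenever $(i,j)\notin\mathcal{S}$ and the above map is injective on $\mathcal{S}$:
\begin{align*}
\sum_{j=1}^m\sum_{i=1}^n|f_j(x_i)|
&\le\sum_{(i,j)\in\mathcal{S}}\frac{\sqrt{\mu_{\min\{a_i,b_j\}}}}{\sqrt{\mu_{\max\{a_i,b_j\}}}}\\
&\le\sum_{r<s}\frac{\sqrt{\mu_r}}{\sqrt{\mu_s}}=\sum_{i=1}^{\infty}\sum_{j>i}\frac{\sqrt{\mu_i}}{\sqrt{\mu_j}}\le\frac12,
\end{align*}
where the last inequality is the defining property of the sequence $(\mu_i)_i$ fixed above (note also that one should read the asserted bound as being on $\sum_{j=1}^m\sum_{i=1}^n|f_j(x_i)|$). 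This completes the argument.
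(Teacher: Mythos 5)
Your proof is correct and follows essentially the same route as the paper's: the per-pair estimate $|f_j(x_i)|\le \sqrt{\mu_{\min\{a_i,b_j\}}}/\sqrt{\mu_{\max\{a_i,b_j\}}}$ obtained from $\#(E_i\cap F_j)\le\min\{\mu_{a_i},\mu_{b_j}\}$, followed by summing over distinct weight pairs against the defining inequality $\sum_i\sum_{j>i}\sqrt{\mu_i}/\sqrt{\mu_j}\le 1/2$. The only difference is that the paper dispatches the counting step with the phrase ``each pair $(\mu_{\ell_i},\mu_{k_j})$ appears only once,'' while you prove the sharper statement actually needed to match the sum over \emph{unordered} pairs $r<s$, namely that at most one of the two ordered configurations $(a_i,b_j)=(r,s)$ and $(a_{i'},b_{j'})=(s,r)$ can have intersecting supports; your cross-cancellation argument is precisely where the successiveness hypothesis enters and is what rules out an extra factor of $2$, so this added care is a genuine (if minor) improvement on the paper's write-up.
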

        \begin{proof}
            Let us observe that for $x,f$ weighted with $w(x)=\mu_\ell$ and $w(f)=\mu_k$ such that $\mu_\ell\neq \mu_k$, it holds that $|f(x)|\le\frac{\min\{ \sqrt{\mu_\ell}, \sqrt{\mu_k}  \}}{\max\{ \sqrt{\mu_\ell}, \sqrt{\mu_k} \} }$ and hence $|f(x)|\le\frac{\sqrt{\mu_\ell}}{\sqrt{\mu_k}}$ if $\mu_\ell<\mu_k$ and $|f(x)|\le\frac{\sqrt{\mu_k}}{\sqrt{\mu_\ell}}$ if $\mu_k<\mu_\ell$. Let $w(x_i)=\mu_{\ell_i}$ and $w(f_j)=\mu_{k_j}$. Then, for each pair $(i,j)$, we have that $|f_j(x_i)|\le\min\Big\{ \frac{ \sqrt{ \mu_{\ell_i}} }{ \sqrt{\mu_{k_j}} },\frac{ \sqrt{ \mu_{k_j}} }{ \sqrt{\mu_{\ell_i}} }   \Big\}$ and since each pair $(\mu_{\ell_i},\mu_{k_j})$ appears only once, we have that
            \[
            \sum_{j=1}^m\sum_{i=1}^n|f_j(x_i)|\le\sum_{j=1}^\infty\sum_{j>i}\frac{\sqrt{\mu_i}}{\sqrt{\mu_j}}\le\frac{1}{2}.
            \]
        \end{proof}

		\begin{prop}\label{propunc}
			Let $(E^i_k)_{i=1,k=1}^{2,n}$ be a special sequence and define the vector $x^i_k=(1/\sqrt{\# E^i_k})\sum_{i\in E^i_k}e_i$ for $1\le k\le n$ and $i=1,2$. Then $\|\sum_{i=1}^2\sum_{k=1}^nx^i_k\|\ge 2n$ whereas $\|\sum_{i=1}^2\sum_{k=1}^n(-1)^{i}x^i_k\|\le 5$.
		\end{prop}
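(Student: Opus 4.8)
The plan is the standard Maurey--Rosenthal scheme. For the lower bound I would exhibit the norming functional $f=\sum_{i=1}^2\sum_{k=1}^nf^i_k$ with $f^i_k=\frac{1}{\sqrt{\#E^i_k}}\sum_{l\in E^i_k}e^*_l$: each $f^i_k$ is a weighted functional of weight $\#E^i_k=\mu_{j_k}$ supported in $N_i$, the family $(\supp f^i_k)_{i=1,k=1}^{2,n}$ is exactly the given special sequence, and $f^1_k,f^2_k$ are consistent (all coefficients are positive), so $f\in W_2\subseteq W$. Since $E^1_1<E^2_1<\dots<E^1_n<E^2_n$ are pairwise disjoint, $f^i_k$ is disjointly supported from $x^{i'}_{k'}$ unless $(i,k)=(i',k')$, when $f^i_k(x^i_k)=1$; hence $f\big(\sum_{i,k}x^i_k\big)=2n$ and $\big\|\sum_{i,k}x^i_k\big\|\ge 2n$.

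For the upper bound, put $y=\sum_{i=1}^2\sum_{k=1}^n(-1)^ix^i_k$. First I would record that $w(x^i_k)=\mu_{j_k}$ with $\mu_{j_1}<\cdots<\mu_{j_n}$ (an induction using $\sigma(\cdot)>\max_k\#E^1_k$ and $\mu_i\ge i$), that $x^1_k$ is supported in $N_1$ and $x^2_k$ in $N_2$, and that every element of $W$ is $P_E(g)$ with $E$ an interval and $g\in W_0\cup W_1\cup W_2$; since $(P_Eg)(y)=g(P_Ey)$ it is enough to bound $|g(P_Ey)|$ by $5$. Throughout I would use the estimate from the proof of Lemma \ref{incweig}: a weighted functional and a weighted vector of weights $\mu_a\ne\mu_b$, or truncations of these, pair to at most $\sqrt{\mu_{\min}}/\sqrt{\mu_{\max}}$ in absolute value, and to at most $1$ when the weights agree. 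If $g\in W_0$ then $|g(P_Ey)|\le\|y\|_\infty\le1$. If $g\in W_1$, then $g$ is supported in a single $N_i$ and $g(P_Ey)=\pm\sum_kg(P_Ex^i_k)$; at most one $k$ has $\mu_{j_k}=w(g)$, contributing at most $1$, and by the normalization $\sum_i\sum_{l>i}\sqrt{\mu_i}/\sqrt{\mu_l}\le\frac12$ the remaining terms sum to at most $\frac12$, so $|g(P_Ey)|\le\frac32$.

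The main case is $g=\sum_{k'=1}^m(h^1_{k'}+h^2_{k'})\in W_2$, with $h^i_{k'}$ weighted functionals of weight $\mu_{j'_{k'}}$, supports $(F^i_{k'})$ a special sequence, and $h^1_{k'},h^2_{k'}$ consistent. Expanding, $g(P_Ey)=\sum_{k'}\sum_k\big(h^2_{k'}(P_Ex^2_k)-h^1_{k'}(P_Ex^1_k)\big)$, and I would compare $(E^i_k)$ with $(F^i_k)$ using Remark \ref{remtreespecial}. Writing $k_0=\min\{k:\#E^1_k\ne\#F^1_k\}$ (the case where the two sequences agree on all common blocks is the easy limiting one), Remark \ref{remtreespecial} gives $E^i_k=F^i_k$ for $k<k_0-1$, so for such $k'$ the block term $h^2_{k'}(x^2_{k'})-h^1_{k'}(x^1_{k'})$ vanishes by consistency, unless the block is cut by an endpoint of $E$, in which case a one-line cancellation of signed coefficients bounds its contribution by $1$; the block $k_0-1$ has common weight but possibly different supports, and the $W_1$ estimate applied once for $i=1$ and once for $i=2$ bounds it by $2\cdot\frac32=3$; and the injectivity of $\sigma$ together with $\sigma(\cdot)>\max\#E_k$ forces all the weights $\mu_{j'_{k'}}$ with $k'\ge k_0$ to differ from every $\mu_{j_k}$, so, reorganized by $i$ into the successive weighted functionals $h^i_{k_0}<\cdots<h^i_m$ tested on the successive weighted vectors $x^i_1<\cdots<x^i_n$ of pairwise distinct weights, Lemma \ref{incweig} bounds the tail by $2\cdot\frac12=1$. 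The point that keeps the total at $5$ and not $6$ is that at most one endpoint of the interval $E$ can lie in a stem block $k'<k_0-1$: if both do, then the block $k_0-1$ and the tail of $g$ are paired only against zeroed vectors, so only two cut stem blocks survive ($\le2$); otherwise the surviving contributions are one truncated stem block ($\le1$), the block $k_0-1$ ($\le3$) and the distinct-weight tail ($\le1$), so $|g(P_Ey)|\le5$ and hence $\|y\|\le5$.

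I expect the main obstacle to be exactly this last bookkeeping: extracting from Remark \ref{remtreespecial} the precise overlap pattern of the two special sequences, isolating the common stem (on which consistency gives block-by-block cancellation) from the distinct-weight tail (on which Lemma \ref{incweig} applies), controlling the finitely many exceptional blocks (the divergence index $k_0-1$ and the at most two blocks truncated by $E$), and handling the degenerate case $k_0=1$, where there is no stem and one must account for at most a single additional weight coincidence between the two sequences.
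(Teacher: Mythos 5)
Your proof follows the paper's argument essentially verbatim: the same functional $f\in W_2$ gives the lower bound $2n$, and the upper bound is obtained by the same decomposition of a $W_2$-functional via the divergence index $k_0$ from Remark \ref{remtreespecial} into a consistency-cancelling stem, the transitional block $k_0-1$, and a distinct-weight tail controlled by Lemma \ref{incweig}, with the interval truncation responsible for the passage from $3$ to $5$. If anything, your bookkeeping for the truncated case and for the degenerate case $k_0=1$ is more explicit than the paper's, which dispatches these with ``similar arguments.''
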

		\begin{proof}
			Set $\mu_{j_k}=\#E^1_k$ for $1\le k\le n$. The first part follows easily from the fact that $f=\sum_{i=1}^2\sum_{k=1}^n(1/\sqrt{\# E^i_k})\sum_{j\in E^i_k}e^*_j$ is in $W$. For the second part set $y=\sum_{i=1}^2\sum_{k=1}^n(-1)^{i}x^i_k$ and let $g=\sum_{i=1}^2\sum_{k=1}^m\frac{1}{\sqrt{\mu_{j_k}}}\sum_{j\in F^i_k}e^*_j$ in $W_2$ and also $k_0=\min \{k:\#E^1_k\neq \#F^1_k \}$, making the convention $\min\emptyset = m+1$. If $k_0=1$, then the previous lemma yields that $|g(y)|\le\frac{1}{2}$. Otherwise, by Remark \ref{remtreespecial} and Lemma \ref{incweig} the following hold.
			\begin{enumerate}
				\item[(i)] If $k_0>2$, then $g^1_{k}(y)=-g^2_k(y)$ for every $1\le k<k_0-1$.
				\item[(ii)] $|g^i_{k_0-1}(y)|\le 1$ and $\sum_{k=k_0}^m|g^i_k(y)|\le\frac{1}{2}$ for $i=1,2$.
			\end{enumerate}
			 Hence $|g(y)|\le 3$. Finally, in the general case that $g\in W$, using similar arguments we conclude that $|g(y)|\le 5$.
		\end{proof}
	
		\begin{prop}
			Let $M\subset \N$ be such that $M\cap N_1$ and $M\cap N_2$ are both infinite sets. Then the sequence $(e_i)_{i\in M}$ is not unconditional.
		\end{prop}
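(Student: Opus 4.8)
The plan is to show that $(e_i)_{i\in M}$ admits no finite unconditional constant by producing, for each $n\in\N$, a block vector supported inside $M$ whose norm drops from at least $2n$ to at most $5$ after a single change of signs. The mechanism is already available in Proposition \ref{propunc}; what must be checked is that the special sequence used there can be chosen with all of its sets lying inside $M$.

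First I would construct, by recursion on $k$, a special sequence $(E^i_k)_{i=1,k=1}^{2,n}$ with the extra requirement that $E^1_k\subset M\cap N_1$ and $E^2_k\subset M\cap N_2$ for every $1\le k\le n$. The point is that, apart from the successiveness conditions (ii) and (iii), the only constraint a special sequence imposes is on the common cardinality of $E^1_k$ and $E^2_k$: at the first step this cardinality is $\mu_{j_1}$ for a freely chosen $j_1\in\N$, and for $k>1$ it equals $\mu_{j_k}$ with $j_k=\sigma\big((E^i_\ell)_{i=1,\ell=1}^{2,k-1}\big)$ determined by the part already built. Since $M\cap N_1$ and $M\cap N_2$ are both infinite, at each stage $k$ there remain infinitely many elements of $M\cap N_1$ and of $M\cap N_2$ to the right of everything chosen so far. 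Thus I first read off the required size $\mu_{j_k}$ (picking $j_1$ arbitrarily when $k=1$), then choose $E^1_k\subset M\cap N_1$ of that size above the previous block, and then $E^2_k\subset M\cap N_2$ of that size above $E^1_k$. This recursion produces a special sequence all of whose sets are contained in $M$.

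Now put $x^i_k=(\#E^i_k)^{-1/2}\sum_{j\in E^i_k}e_j$ as in Proposition \ref{propunc}; these are normalized vectors with pairwise disjoint supports, all contained in $M$. Let $w=\sum_{i=1}^2\sum_{k=1}^n(-1)^i x^i_k=\sum_{j\in M}a_je_j$, and note that flipping the signs of the coordinates $a_j$ with $j\in N_2$ turns $w$ into $v=\sum_{i=1}^2\sum_{k=1}^n x^i_k$. By Proposition \ref{propunc} we have $\|v\|\ge 2n$ and $\|w\|\le 5$. If $(e_i)_{i\in M}$ were $C$-unconditional this would force $2n\le\|v\|\le C\|w\|\le 5C$, i.e. $C\ge 2n/5$; since $n\in\N$ is arbitrary, $(e_i)_{i\in M}$ is not unconditional.

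The only genuine work is the recursive construction of the special sequence, and the single subtlety there is that for $k>1$ the cardinalities are not chosen by us but dictated by $\sigma$ on the already constructed initial segment. This is harmless: whatever index $j_k$ the injection $\sigma$ returns, $\mu_{j_k}$ is a fixed finite number, and both $M\cap N_1$ and $M\cap N_2$ contain infinitely many elements beyond the current position from which a set of that size can be extracted. I expect this to be the main (and only, and quite mild) obstacle; everything else is a direct appeal to Proposition \ref{propunc}.
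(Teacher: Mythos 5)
Your proof is correct and follows essentially the same route as the paper: choose a special sequence with $E^1_k\subset M\cap N_1$ and $E^2_k\subset M\cap N_2$ (possible since both sets are infinite and the cardinalities dictated by $\sigma$ are finite), then apply Proposition \ref{propunc} to get an unconditional constant of at least $2n/5$ for every $n$. The paper states this in one line; your added detail on the recursive construction and the sign-flip is exactly the justification it leaves implicit.
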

		\begin{proof}
			We may, choose for each $n\in \N$, a special sequence $(E^i_k)_{i=1,k=1}^{2,n}$ such that $E^1_k\subset M\cap N_1$ and $E^2_k\subset M\cap N_2$ for every $1\le k\le n$, and apply Proposition	\ref{propunc} to conclude that
			\[
			\sup\Bigg\{ \Big\|\sum_{i\in M}\varepsilon_{i}a_ie_i\Big\|:\varepsilon_i\in\{-1,1\},\Big\|\sum_{i\in M}\varepsilon_{i}a_ie_i\Big\|\le 1\Bigg\}\ge\frac{2n}{5}.
			\]	
			Since $n$ is arbitrary, it follows that $(e_i)_{i\in M}$ is not unconditional.	
		\end{proof}

       The following problem is however open.

        \begin{pr}
            Let $(e^1_n)_n$ and $(e^2_n)_n$ be subsymmetric sequences in a Banach space, i.e. spreading and unconditional. Do there exist $M,L$ infinite subsets of $\N$ such that the sequence $\{e^1_n\}_{n\in M}\cup\{e^2_n\}_{n\in L}$ is unconditional?
        \end{pr}

		Despite the fact that for $(e_n)_{n\in N_1}$ and $(e_n)_{n\in N_2}$  any subsequences fail to form a common unconditional sequence, the following more general result shows that we may find further block subsequences which satisfy this property.

        \begin{prop}
            Let $(x_n)_n$ and $(y_n)_n$ be unconditional sequences in a Banach space $X$. There exist  block sequences $(z_n)_n$ and $(w_n)_n$, of $(x_n)_n$ and  $(y_n)_n$ respectively, such that  $\{z_n\}_{n\in\N}\cup\{w_n\}_{n\in\N}$ is an unconditional sequence.
        \end{prop}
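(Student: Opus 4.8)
The plan is to split the argument into a soft reduction and then a construction of the required block sequences, the latter being organized along the three standard ``normal forms'' of an unconditional sequence.

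\textbf{Reduction.} It suffices to produce normalized block sequences $(z_n)_n$ of $(x_n)_n$ and $(w_n)_n$ of $(y_n)_n$ which are \emph{disjointly positioned}, in the sense that $Z=\cspn\{z_n\}_n$ and $W=\cspn\{w_n\}_n$ form a bounded direct sum $Z\oplus W$ in $X$, with projections $P_Z,P_W$. Indeed, a block sequence of a $C$-unconditional sequence is $C$-unconditional, so $(z_n)_n$ is $C_1$-unconditional and $(w_n)_n$ is $C_2$-unconditional, and for $u=\sum_n a_nz_n+\sum_n b_nw_n$ and any signs $(\varepsilon_n)_n,(\delta_n)_n$,
\begin{align*}
\Big\|\sum_n\varepsilon_na_nz_n+\sum_n\delta_nb_nw_n\Big\|
&\le\Big\|\sum_n\varepsilon_na_nz_n\Big\|+\Big\|\sum_n\delta_nb_nw_n\Big\|\\
&\le C_1\|P_Z(u)\|+C_2\|P_W(u)\|\le\big(C_1\|P_Z\|+C_2\|P_W\|\big)\|u\|.
\end{align*}
Applying the same estimate to initial segments of the interlacing $z_1,w_1,z_2,w_2,\ldots$ shows it is also a Schauder basic sequence. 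Hence $\{z_n\}_n\cup\{w_n\}_n$ is an unconditional sequence, and the whole problem becomes the production of such $Z$ and $W$.

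\textbf{Normal forms and the easy cases.} I would first pass to block sequences putting each of $(x_n)_n,(y_n)_n$ in one of three forms: by the dichotomy for spaces with an unconditional basis, $\cspn\{x_n\}_n$ is either reflexive (whence every normalized block sequence of $(x_n)_n$ is weakly null) or contains an isomorphic copy of $c_0$ or of $\ell_1$ (whence a suitable normalized block sequence of $(x_n)_n$ is equivalent to the $c_0$- or the $\ell_1$-basis); similarly for $(y_n)_n$. If, after this reduction, at least one of the two sequences is $c_0$-like or $\ell_1$-like, a gliding-hump and small-perturbation argument produces disjointly positioned blocks directly: e.g. an $\ell_1$-like $(z_n)_n$ keeps a uniform lower $\ell_1$-estimate after absorbing a summably small perturbation, which lets one thin $(y_n)_n$ to blocks $(w_n)_n$ with $Z\cap W=\{0\}$ and $P_Z,P_W$ bounded; the $c_0$-case is dual, and the mixed $c_0$/$\ell_1$ and ``one weakly null, one $c_0$ or $\ell_1$'' cases are handled the same way.

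\textbf{The main case: both weakly null.} Here I would pass to block sequences $(z_n)_n$ of $(x_n)_n$ and $(w_n)_n$ of $(y_n)_n$ consisting of long normalized averages, chosen by a diagonal construction so that the pair $\big((z_n)_n,(w_n)_n\big)$ generates a $2$-joint spreading model $(\tilde e^i_{n})_{i=1,n\in\N}^2$; by construction this is a plegma spreading sequence. Since $(z_n)_n$ and $(w_n)_n$ are weakly null and unconditional, each $(\tilde e^i_n)_n$ is unconditional (a spreading model inherits the unconditional constant), so Proposition~\ref{w0unc}, or equivalently Theorem~\ref{unconditional plegma spreading jointly unconditional}, shows the joint spreading model is jointly $K$-unconditional. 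It then remains to transfer this back to honest block sequences in $X$: one must choose the average-lengths and the gaps between successive blocks growing rapidly enough that the length-$k$ behaviour of $(z_n)_n,(w_n)_n$ approximates the joint spreading model \emph{uniformly in $k$}, so that a diagonal extraction yields block sequences that are themselves jointly $(K+1)$-unconditional in $X$, and in particular span a bounded direct sum $Z\oplus W$.

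\textbf{Where the work is.} The soft reduction, the trichotomy, and the $c_0$/$\ell_1$ cases are routine. The real obstacle is this last transfer step: the preceding Maurey--Rosenthal-type example shows that it genuinely fails if one only passes to \emph{subsequences} (there the interlaced subsequence meets both sides of the partition and is not unconditional), so the argument must use full-fledged averaging blocks whose weights/supports grow fast enough to make the spreading-model approximation uniform in length. Making that uniformity precise — choosing the averages and recording the estimates so that the diagonalization survives with a fixed constant — is essentially the entire content of the proof in the weakly null case.
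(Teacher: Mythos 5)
Your reduction (produce block sequences whose closed spans form a bounded direct sum) is fine, but the main case of your plan contains a genuine gap that cannot be repaired by the method you describe. You propose to pass to averaging blocks generating a $2$-joint spreading model, invoke Proposition \ref{w0unc} to get that the model is unconditional, and then ``transfer back'' by making the spreading-model approximation uniform in the length $k$. That uniformity is exactly what spreading models never provide: the estimate $\delta_k$ in Definition \ref{ljsm} only controls plegma families of length $k$ whose first entry lies beyond $M(k)$, so a set of $k$ terms beginning near the start of the sequence is never covered, and no diagonal extraction or choice of rapidly growing average lengths removes this obstruction. The single-sequence analogue already shows the step is impossible in general: every normalized weakly null sequence generates a $1$-suppression unconditional spreading model, yet in a hereditarily indecomposable reflexive space (Gowers--Maurey) the weakly null basis has no unconditional block sequence at all. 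Unconditionality of a spreading model simply does not descend to any block sequence of the generating sequence, so the ``real obstacle'' you flag is not a technical uniformity to be arranged but a false implication. (Your $c_0$/$\ell_1$ cases are also only sketched, and the trichotomy via James's theorem is an unnecessary detour, but those are minor by comparison.)

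The paper's proof avoids asymptotic structure entirely and runs on a two-line dichotomy. Either there are subsequences $(x_n)_{n\in M_1}$, $(y_n)_{n\in M_2}$ with $d(S_Z,S_Y)>0$ for $Z=\cspn\{x_n\}_{n\in M_1}$, $Y=\cspn\{y_n\}_{n\in M_2}$; then $Y+Z$ is closed, the Closed Graph Theorem gives $Y\oplus Z$ with bounded projections, and joint unconditionality follows as in your reduction. Otherwise every pair of tails contains normalized finitely supported vectors arbitrarily close to one another, so one builds normalized blocks $(z_n)_n$ of $(x_n)_n$ and $(w_n)_n$ of $(y_n)_n$ with $\sum_n\|z_n-w_n\|<1/(2C)$, where $C$ is the basis constant of $(z_n)_n$; by the principle of small perturbations $\{z_{2n}\}_{n\in\N}\cup\{w_{2n-1}\}_{n\in\N}$ is equivalent to the unconditional block sequence $(z_n)_n$, and we are done. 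If you want to salvage your write-up, replace the entire weakly null analysis by this dichotomy; note that it is precisely the ``spans are close together'' horn, which your scheme does not address, where the interesting examples (such as the Maurey--Rosenthal variant in this paper) live.
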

        \begin{proof}
            Assume that there exist two subsequences $(x_n)_{n\in M_1}$ and $(y_n)_{n\in M_2}$ such that $d(S_Z,S_Y)>0$, where $Z=\spn\{x_n\}_{n\in M_1}$ and $Y=\spn\{y_n\}_{n\in M_2}$. Then $Y+Z$ is closed. Hence by the Closed Graph Theorem we have that $Y+Z=Y\oplus Z$ and this yields that $\{x_n \}_{n\in M_1}\cup\{y_n \}_{n\in M_2}$ is unconditional.

            Otherwise, we choose by induction $(z_n)_n$ and $(w_n)_n$ which are normalized blocks of $(x_n)_n$ and $(y_n)_n$ respectively with $\sum_{n=1}^\infty\|z_n-w_n\|<1/(2C)$, where $C$ is the basis constant of $(x_n)_n$, and hence also of $(z_n)_n$. Then, by the principle of small perturbations, it follows that the sequence $\{z_{2n}\}_{n\in\N}\cup\{w_{2n-1}\}_{n\in\N}$ is equivalent to $(z_n)_n$, which is unconditional.
        \end{proof}

        \begin{rem}
        A natural question arising from the previous proposition is whether every space generated by two unconditional sequences is unconditionally saturated. The answer is negative and this follows from a well known more general result. Let $X$ be a Banach space with a Schauder basis $(x_n)_n$, $Y$ be a separable Banach space and $(d_n)_n$ be a dense subset of the unit ball of $Y$. Then the sequences $(x_n)_n$ and $(y_n)_n$ with $y_n=x_n+d_n/2^n$ are equivalent and generate the space $X\oplus Y$. Hence if $(x_n)_n$ is unconditional and $Y$ contains no unconditional sequence we obtain the desired result. We thank Bill Johnson for bringing to our attention this classical argument.
        \end{rem}

\section{Joint Spreading Models}

We introduce the notion of $l$-joint spreading models which is the central concept of this paper. It describes the joint asymptotic behavior of a finite collection of sequences. As it is demonstrated in \cite{AM1}, in certain spaces this behavior may be radically more rich than the one of usual spreading models. It is worth pointing out that spreading models have been tied to the study of bounded linear operators \cite{AM2} and the present paper clarifies that joint spreading models are no exception.

    \begin{dfn}\label{ljsm}

    	Let $l\in\N$, $(x^1_n)_n,\ldots,(x^l_n)_n$ be Schauder basic sequences in a Banach space $(X,\|\cdot\|)$ and $(e^i_n)_{i=1,n\in\N}^l$ be a sequence in a Banach space $(E,\|\cdot\|_\ast)$.
    	
    	Let $M\in[\N]^\infty$. We will say that the $l$-tuple $((x_n^i)_{n\in M})_{i=1}^l$ \textit{generates} $(e^i_n)_{i=1,n\in\N}^l$ as an \textit{l-joint spreading model} if the following is satisfied. There exists a null sequence $(\delta_n)_n$ of positive reals such that for every $k\in\N$, $(a_{ij})_{i=1, j=1}^{l, k}\subset[-1,1]$ and every strict-plegma family $(s_i)_{i=1}^l\in S$-$Plm_l([M]^k)$ with  $M(k)\le s_1(1)$, we have that
    	\[
    	\Bigg| \Big\|\sum_{j=1}^k\sum_{i=1}^l a_{ij}x^i_{s_i(j)}\Big\| - \Big\|\sum_{j=1}^k\sum_{i=1}^l a_{ij}e^i_{j}\Big\|_\ast \Bigg| < \delta_k.
    	\]
    	
    	We will also say that $((x_n^i)_{n})_{i=1}^l$ \textit{admits} $(e^i_n)_{i=1,n\in\N}^l$ as an \textit{$l$-joint spreading model} if there exists $M\in[\N]^\infty$ such that $((x_n^i)_{n\in M})_{i=1}^l$ generates $(e^i_n)_{i=1,n\in\N}^l$.
    	
    	Finally, for a subset $A$ of $X$, we will say that $A$ {\em admits} $(e^i_n)_{i=1,n\in\N}^l$ as an \textit{$l$-joint spreading model} if there exists an $l$-tuple $((x_n^i)_n)_{i=1}^l$ of sequences in $A$ which admits $(e^i_n)_{i=1,n\in\N}^l$ as an \textit{$l$-joint spreading model}.
    \end{dfn}
	
	Notice that for $l=1$, the previous definition recovers the classical Brunel-Sucheston spreading models.

	\begin{rem}
		Let $(x^1_n)_n,\ldots,(x^l_n)_n$ be Schauder basic sequences in a Banach space $(X,\|\cdot\|)$. Let also $M\in[\N]^\infty$ be such that the $l$-tuple $((x_n^i)_{n\in M})_{i=1}^l$ generates the sequence $(e^i_n)_{i=1,n\in\N}^l$ as an $l$-joint spreading model. Then the following hold.
		\begin{enumerate}
			\item[(i)] For every $1\le i\le l$, the sequence $(e_n^i)_n$ is the spreading model admitted by  $(x^i_n)_n$.
			\item[(ii)] The sequence $(e^i_n)_{i=1,n\in\N}^l$ is plegma spreading.  Although $l$-joint spreading models are defined using strict plegma families, these sequences behave in a spreading way that involves plegma families.
			\item[(iii)] For every $M'\in[M]^\infty$ we have that $((x_n^i)_{n\in {M^\prime}})_{i=1}^l$ generates $(e^i_n)_{i=1,n\in\N}^l$ as an $l$-joint spreading model.
			\item[(iv)] For every $(\delta_n)_n$ null sequence of positive reals there exists $M'\in[M]^\infty$ such that $((x_n^i)_{n\in M'})_{i=1}^l$ generates $(e^i_n)_{i=1,n\in\N}^l$ as an $l$-joint spreading model with respect to $(\delta_n)_n$.
			\item[(v)] If $|\|\cdot\||$ is an equivalent norm on $X$, then every $l$-joint spreading model admitted by $(X,\|\cdot\|)$ is equivalent to an $l$-joint spreading model admitted by $(X,|\|\cdot\||)$.
		\end{enumerate}
	\end{rem}

	Next we prove a Brunel-Sucheston type result for $l$-joint spreading models.
	
	\begin{thm}
	\label{joint Brunel-Sucheston}
		Let $l\in\N$ and $X$ be a Banach space. Then every $l$-tuple of Schauder basic sequences in $X$ admits an $l$-joint spreading model.
	\end{thm}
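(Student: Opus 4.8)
The plan is a Ramsey-type stabilization followed by a diagonalization, in the spirit of the classical Brunel--Sucheston construction but driven by the plegma version of Ramsey's theorem (Theorem \ref{plmramsey}). First I would assume, as one may, that each $(x^i_n)_n$ is seminormalized and set $R=\sup_{i,n}\|x^i_n\|<\infty$. For each $k\in\N$ I would fix a countable subset $D_k$ of $[-1,1]^{lk}$ dense for the $\ell_1$-norm. Then, for a triple $(k,\mathbf a,m)$ with $\mathbf a=(a_{ij})\in D_k$ and $m\in\N$ and for any $L\in[\N]^\infty$, I would partition $S$-$Plm_l([L]^k)$ into the finitely many cells recording the integer part of $m\,\|\sum_{j\le k}\sum_{i\le l}a_{ij}x^i_{s_i(j)}\|$ --- a quantity lying in $[0,lkR]$ --- and invoke Theorem \ref{plmramsey} to obtain $L'\in[L]^\infty$ on which this norm is constant up to an error $1/m$. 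Enumerating all such triples and iterating yields a decreasing chain $\N=M_0\supseteq M_1\supseteq\cdots$, and I would let $M$ be a diagonal set of this chain.

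Next, along $M$, for every $k$ and $\mathbf a\in D_k$ the quantity $\|\sum_{j\le k}\sum_{i\le l}a_{ij}x^i_{s_i(j)}\|$ converges, as the strict plegma family $(s_i)_{i=1}^l\in S$-$Plm_l([M]^k)$ moves off to infinity, to a limit $\tau_k(\mathbf a)$. Since for fixed $(s_i)$ the map $\mathbf a\mapsto\|\sum_{j\le k}\sum_{i\le l}a_{ij}x^i_{s_i(j)}\|$ is $R$-Lipschitz in the $\ell_1$-norm, uniformly in $(s_i)$, each $\tau_k$ is $R$-Lipschitz on $D_k$ and extends to an $R$-Lipschitz function on $[-1,1]^{lk}$; moreover, since the restriction of a strict plegma family to an initial segment of its columns is again a strict plegma family, the $\tau_k$ are compatible under enlarging the index box. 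I would then take $(e^i_n)_{i=1,n\in\N}^l$ to be the unit vector basis of $c_{00}(\{1,\dots,l\}\times\N)$ and declare $\|\sum_{(i,j)}a_{ij}e^i_j\|_*:=\tau_k((a_{ij}))$ whenever the support lies in $\{1,\dots,l\}\times\{1,\dots,k\}$; subadditivity and homogeneity pass to the limit, so this is a well-defined seminorm, and I would let $(E,\|\cdot\|_*)$ be the associated completion (passing to the quotient by the kernel if it is not already a norm; in fact each row $(e^i_n)_n$, being the Brunel--Sucheston spreading model of the seminormalized $(x^i_n)_n$, is seminormalized).

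Finally, to match Definition \ref{ljsm} exactly, I would extract a further subset. Fix $\delta_k=1/k$; for each $k$ choose a finite $(\delta_k/4R)$-net $G_k\subseteq D_k$ of $[-1,1]^{lk}$ and, using the stabilization already obtained, an $N_k\in\N$ (depending only on $k$) with $|\,\|\sum_{j,i}a_{ij}x^i_{s_i(j)}\|-\tau_k(\mathbf a)\,|<\delta_k/4$ for every $\mathbf a\in G_k$ and every $(s_i)_{i=1}^l\in S$-$Plm_l([M]^k)$ with $s_1(1)\ge N_k$. Then pick $q_1<q_2<\cdots$ in $M$ with $q_k\ge N_k$ for all $k$ and set $M'=\{q_1<q_2<\cdots\}$. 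For $k\in\N$, $\mathbf a\in[-1,1]^{lk}$ and $(s_i)_{i=1}^l\in S$-$Plm_l([M']^k)$ with $M'(k)=q_k\le s_1(1)$, approximating $\mathbf a$ within $\delta_k/4R$ by some $\mathbf a'\in G_k$ and combining the two Lipschitz bounds with the choice of $N_k$ gives $|\,\|\sum_{j,i}a_{ij}x^i_{s_i(j)}\|-\|\sum_{j,i}a_{ij}e^i_j\|_*\,|<\delta_k$. Hence $((x^i_n)_{n\in M'})_{i=1}^l$ generates $(e^i_n)_{i=1,n\in\N}^l$ as an $l$-joint spreading model.

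Everything that merely ``passes to the limit'' is routine. The two points I expect to require care are: first, the simultaneous uniformity over the continuum of coefficient arrays in $[-1,1]^{lk}$ and over all admissible strict plegma families, which is exactly why one needs \emph{both} Lipschitz continuity in the coefficients (to reduce to finite nets) \emph{and} the genuine stabilization furnished by Theorem \ref{plmramsey}, rather than mere convergence along a subnet; and second, threading the tail condition $M'(k)\le s_1(1)$ correctly through the diagonalization, which is why the thresholds $N_k$ were arranged to depend only on $k$, so they can be absorbed into the single increasing sequence $(q_k)_k$.
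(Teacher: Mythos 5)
Your argument is correct and is essentially the paper's own proof: the paper's Lemma \ref{basicpropl} performs the same Ramsey stabilization (finite nets of the coefficient cube, a finite partition of the range of the norm, Theorem \ref{plmramsey}, then diagonalization), and the proof of Theorem \ref{joint Brunel-Sucheston} then defines the limit seminorm on $c_{00}$ exactly as you do, invoking a modification of \cite[Proposition 1.B.2]{BL} (using that Schauder basic sequences have no norm-convergent subsequence) to see that it is in fact a norm, where you instead allow a quotient by the kernel. The only cosmetic difference is bookkeeping: you run one Ramsey application per triple $(k,\mathbf a,m)$ over countable dense sets, while the paper runs one per $k$ over a finite net and a finite interval partition.
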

	
	First we describe the following combinatorial lemma which will yield the theorem.

	\begin{lem}\label{basicpropl}
		Let $(x^1_n)_n,\ldots,(x^l_n)_n$ be bounded sequences in a Banach space $X$ and $(\delta_n)_n$ be a decreasing null sequence of positive real numbers. Then for every $M\in[\N]^\infty$, there exists $L\in[M]^\infty$ such that
		\[
		\Bigg| \Big\|\sum_{j=1}^k\sum_{i=1}^l a_{ij}x^i_{s_i(j)}\Big\| - \Big\|\sum_{j=1}^k\sum_{i=1}^l a_{ij}x^i_{t_i(j)}\Big\| \Bigg| < \delta_k
		\]
		for every $k\in\N$, $(a_{ij})_{i=1, j=1}^{l, k}\subset[-1,1]$ and $(s_i)_{i=1}^l,(t_i)_{i=1}^l\in S\-Plm_l([L]^k)$ with $s_1(1),t_1(1)\ge L(k)$.
	\end{lem}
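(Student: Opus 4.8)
The plan is to prove this by an iterated diagonalization argument built on Ramsey's theorem for strict plegma families (Theorem \ref{plmramsey}). The obstacle to a naive approach is that the quantity $\|\sum_{j,i}a_{ij}x^i_{s_i(j)}\|$ depends continuously on the coefficients $(a_{ij})$, so we cannot directly partition into finitely many classes; the standard fix is to first discretize the coefficient cube $[-1,1]^{lk}$ by a finite net, apply the Ramsey-type stabilization on the (now finite) partition induced by comparing values up to $\delta_k/3$, and then pass back to arbitrary real coefficients using uniform continuity of the norm expression in the (bounded) vectors.

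More precisely, fix $k\in\N$. Let $\Delta_k$ be a finite $\eta_k$-net of $[-1,1]^{lk}$ with $\eta_k$ chosen small enough (depending on $\sup_{i,n}\|x^i_n\|$, $l$, $k$, $\delta_k$) that replacing any coefficient vector by a net point changes the relevant norm by less than $\delta_k/3$. For each $\mathbf{a}\in\Delta_k$ the map $(s_i)_{i=1}^l\mapsto \|\sum_{j=1}^k\sum_{i=1}^l a_{ij}x^i_{s_i(j)}\|$ is a bounded real-valued function on $S\text{-}Plm_l([\N]^k)$; partition $S\text{-}Plm_l([M]^k)$ according to which interval of a fixed finite grid (of mesh $\delta_k/3$) inside $[-R,R]$ this value lands in, simultaneously for all $\mathbf{a}\in\Delta_k$ — this is still a \emph{finite} partition. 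By Theorem \ref{plmramsey} there is $L^{(k)}\in[M]^\infty$ on which $S\text{-}Plm_l([L^{(k)}]^k)$ lies in a single cell, so for any two strict plegma families $(s_i),(t_i)\in S\text{-}Plm_l([L^{(k)}]^k)$ and any $\mathbf{a}\in\Delta_k$ the two norms differ by at most $\delta_k/3$. Combining with the two net-approximation errors gives the estimate $<\delta_k$ for \emph{all} $(a_{ij})\in[-1,1]^{lk}$ on $[L^{(k)}]$. Note the side condition $s_1(1),t_1(1)\ge L(k)$ in the statement is automatically handled: it just restricts attention to strict plegma families whose entries come after the $k$-th element, which is a Ramsey-stable subfamily, so we may intersect with it before stabilizing (or absorb it afterward by passing to $L^{(k)}\setminus\{$first $k$ elements$\}$).

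Finally, diagonalize: starting from $M_0=M$, recursively apply the above with $k$ to get $M_0\supseteq M_1\supseteq M_2\supseteq\cdots$ with $M_k\in[M_{k-1}]^\infty$ having the property for that particular $k$, and let $L=\{L(n):n\in\N\}$ be a diagonal set with $L(n)\in M_n$ for all $n$ and $L$ eventually inside $M_k$ for each fixed $k$. Then for any $k$, every strict plegma family in $S\text{-}Plm_l([L]^k)$ with first entry $\ge L(k)$ has all its entries in $M_k$ (since those entries are among $L(k), L(k+1),\ldots$, all of which lie in $M_k$), so the stabilized estimate for index $k$ applies, yielding the conclusion of the lemma. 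The main thing to get right is the bookkeeping: ensuring that the discretization parameter $\eta_k$ and the finitely-many-cells construction genuinely produce a \emph{finite} partition to which Theorem \ref{plmramsey} applies, and that the diagonal set's tail is contained in each $M_k$ so that the side condition $s_1(1)\ge L(k)$ forces membership in the correct stabilized class; both are routine once set up carefully.
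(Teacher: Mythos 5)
Your proposal is correct and follows essentially the same route as the paper: discretize the coefficient cube by a finite net, partition the strict plegma families according to the (finitely many) grid cells the norms fall into simultaneously over all net points, stabilize via Theorem \ref{plmramsey}, pass back to arbitrary coefficients by the net approximation, and diagonalize the resulting nested sets $M_k$ so that the side condition $s_1(1),t_1(1)\ge L(k)$ forces membership in the $k$-th stabilized set. The paper's proof uses the error split $\delta_k/4$ in place of your $\delta_k/3$ but is otherwise the same argument.
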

	\begin{proof}
		Let $C>0$ be such that $\|x^i_n\|<C$ for all $i=1,\ldots,l$ and $n\in\N$, and set $L_0=M$. We will construct, by induction, a decreasing sequence $(L_k)_{k\ge0}$ such that for every $k\in\N$, $(a_{ij})_{i=1,j=1}^{l,k}\subset [-1,1]$ and $(s_i)_{i=1}^l,(t_i)_{i=1}^l\in S\-Plm_l([L_k]^k)$,
		\[
		\Bigg| \Big\|\sum_{j=1}^k\sum_{i=1}^l a_{ij}x^i_{s_i(j)}\Big\| - \Big\|\sum_{j=1}^k\sum_{i=1}^l a_{ij}x^i_{t_i(j)}\Big\| \Bigg| < \delta_k.
		\]
		Suppose that $L_0,\ldots,L_{k-1}$ have been chosen, for some $k\in\N$. Let $A$ be a finite $\frac{\delta_k}{4klC}$-net of $[-1,1]$ and $B$ be a partition of $[0,lC]$ consisting of disjoint intervals with length less than $\frac{\delta_k}{4}$. We set $\mathcal{F}=\{f:A^{kl}\to B\}$ and for $f\in\mathcal{F}$
		\[\begin{split}
		P_f=\Big\{(s_i)_{i=1}^l\in S\-Plm_l([L_{k-1}]^{k}):\Big\|\sum_{j=1}^{k}\sum_{i=1}^l a_{ij}x_{s_i(j)}^i\Big\|\in f(a),\\ \text{for all } a=((a_{ij})_{j=1}^k)_{i=1}^l\in A^{kl} \Big\}.
		\end{split}\]
		Then $S\-Plm_l([L_{k-1}]^k)=\cup_{f\in\mathcal{F}}P_f$ and by Theorem \ref{plmramsey} there exist $L_k\in[\N]^\infty$ and $f\in\mathcal{F}$ such that $S\-Plm_l([L_k]^k)\subset P_f$. Hence for every $(a_{ij})_{i=1,j=1}^{l,k}\subset A$ and $(s_i)_{i=1}^l,(t_i)_{i=1}^l\in S\-Plm_l([L_k]^k)$, we have that
		\[
		\Bigg| \Big\|\sum_{j=1}^n\sum_{i=1}^l a_{ij}x^i_{s_i(j)}\Big\| - \Big\|\sum_{j=1}^n\sum_{i=1}^l a_{ij}x^i_{t_i(j)}\Big\| \Bigg| < \frac{\delta_k}{4}.
		\]
		Since $A$ is a net of $[-1,1]$, it is easy to see that $L_{k}$ is as desired. Finally, choosing $L$ to be a diagonalization of $(L_k)_k$ completes the proof.
	\end{proof}
	
    \begin{proof}[Proof of Theorem \ref{joint Brunel-Sucheston}]
        Let $(x^1_n)_n,\ldots,(x^l_n)_n$ be Schauder basic sequences. Observe that the previous lemma yields an infinite subset $L$ of $\N$ such that for every $k\in\N$ and $(a_{ij})_{i=1, j=1}^{l, k}\subset[-1,1]$ and every sequence $((s^n_i)_{i=1}^l)_n$ of strict plegma families in $[L]^k$ with $\lim s_1^n(1)=\infty$, the sequence $(\|\sum_{j=1,i=1}^{k,l}a_{ij}x^i_{s^n_i(j)}\|)_n$ is Cauchy, with the limit independent from the choice of the sequence $((s^n_i)_{i=1}^l)_n$.

        Denote by $(e_n)_n$ the usual basis of $c_{00}(\N)$ and for every $i=1,\ldots,l$ and $n\in\N$, set $e^i_n=e_{k
        (i,n)}$, where $k(i,n)=(n-1)l+in$. Using the above observation, we define a seminorm $\|\cdot\|_*$ on $c_{00}(\N)$ as follows:
        \[\Big\|\sum_{j=1}^k\sum_{i=1}^l a_{ij}e^{i}_j\Big\|_*=\lim_n \Big\|\sum_{j=1}^k\sum_{i=1}^l a_{ij}x^{i}_{s^n_i(j)}\Big\|\]
        where $((s^n_i)_{i=1}^l)_n\subset Plm_l([L]^k)$ with $s^n_1(1)\to\infty$ and $(a_{ij})_{i=1,j=1}^{l,k}\in[-1,1]$.  Since each $(x^i_n)_n$ is a Schauder basic sequence and hence does not contain any norm convergent subsequences, a modification of \cite[Proposition 1.B.2]{BL} yields that $\|\cdot\|_*$ is a norm. Denote by $E$ the completion of $c_{00}(\N)$ with respect to this norm and notice that the $l$-tuple $((x^i_n)_{n\in L}
        )_{i=1}^l$ generates the sequence $(e^i_n)_{i=1,n\in \N}^l$ in $E$ as an $l$-joint spreading model.
    \end{proof}

	The following proposition is an immediate consequence of the definition of $l$-joint spreading models and Theorem \ref{thmbsc}.

\begin{prop}
	Let $(x^1_n)_n,\ldots,(x^l_n)_n$ be Schauder basic sequences in a Banach space $X$ such that each one is either weakly null, equivalent to the basis of $\ell_1$, or non-trivial weak-Cauchy and the $l$-tuple $(x^i_n)_{i=1,n\in\N}^l$ generates the sequence $(e^i_n)_{i=1,n\in\N}^l$ as an $l$-joint spreading model. Let $I\subset\{1,\ldots,l\}$ be such that $(x^i_n)_n$ is a non-trivial weak-Cauchy sequence with $w^*\-\lim x^i_n=x^{**}_i$ for every $i\in I$ and set $F=\spn\{ x^{**}_i\}_{i\in I}$. If $X\cap F=\{0\}$, then $(e^i_n)_{i=1,n\in\N}^l$ is a Schauder basic sequence.
\end{prop}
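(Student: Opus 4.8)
The plan is to combine Theorem~\ref{thmbsc} with a limiting argument whose content is that the property of being $K$-Schauder basic survives the passage to an $l$-joint spreading model. Let $M\in[\N]^\infty$ and let $(\delta_k)_k$ be a null sequence of positive reals witnessing that $((x^i_n)_{n\in M})_{i=1}^l$ generates $(e^i_n)_{i=1,n\in\N}^l$; we may assume each $(x^i_n)_n$ is seminormalized, as is implicit throughout the theory of $l$-joint spreading models. The properties of being weakly null, equivalent to the basis of $\ell_1$, or non-trivial weak-Cauchy, together with the relevant $w^*$-limits $x^{**}_i$ for $i\in I$, pass to the subsequences indexed by $M$, so Theorem~\ref{thmbsc} applies to $((x^i_n)_{n\in M})_{i=1}^l$: since $X\cap F=\{0\}$, there is a plegma family $(r_i)_{i=1}^l\in Plm_l([\N]^\infty)$ such that, setting $t_i=M\circ r_i$, the sequence $\{x^i_{t_i(n)}\}_{i=1,n\in\N}^l$ is Schauder basic when enumerated by the lexicographic order on $[\N]\times\{1,\ldots,l\}$; let $K\ge 1$ be its basis constant.

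Next I would verify that the re-indexed tuple $((x^i_{t_i(n)})_n)_{i=1}^l$ still generates $(e^i_n)_{i=1,n\in\N}^l$ with the same $(\delta_k)_k$. Indeed, given $k\in\N$ and a strict plegma family $(u_i)_{i=1}^l\in S\-Plm_l([\N]^k)$ with $u_1(1)\ge k$, the family $(v_i)_{i=1}^l$ defined by $v_i(j)=t_i(u_i(j))$ is again a strict plegma family, now contained in $S\-Plm_l([M]^k)$ (a routine verification: composing a plegma family on the outside with a strict plegma family on the inside yields a strict plegma family), and $v_1(1)\ge M(k)$ because each $r_i$ and $M$, being increasing enumerations of subsets of $\N$, satisfy $r_i(m)\ge m$ and $M(n)\ge n$. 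Hence $\|\sum_{j,i}a_{ij}x^i_{t_i(u_i(j))}\|=\|\sum_{j,i}a_{ij}x^i_{v_i(j)}\|$ is within $\delta_k$ of $\|\sum_{j,i}a_{ij}e^i_j\|_\ast$, i.e.\ the generation inequality transfers verbatim. Thus we are reduced to the case in which the generating sequences $y^i_n:=x^i_{t_i(n)}$ are, jointly and in the lexicographic order on the index pairs $(n,i)$, $K$-Schauder basic.

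It remains to pass to the limit. Fix $k\in\N$, scalars $(a_{ij})_{i=1,j=1}^{l,k}\subset[-1,1]$, and an initial segment $S$ of $\{1,\ldots,k\}\times\{1,\ldots,l\}$ for the lexicographic order on the pairs $(j,i)$; set $z=\sum_{j=1}^k\sum_{i=1}^la_{ij}e^i_j$ and $z'=\sum_{(j,i)\in S}a_{ij}e^i_j$. For $k'\ge k$, pad the coefficients by zeros over $\{1,\ldots,k'\}\times\{1,\ldots,l\}$ and choose any $(u_i)_{i=1}^l\in S\-Plm_l([\N]^{k'})$ with $u_1(1)\ge k'$. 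Applying the generation inequality at level $k'$ to $z$ and to $z'$ gives $\big|\,\|z\|_\ast-\|\sum_{j\le k}\sum_ia_{ij}y^i_{u_i(j)}\|\,\big|<\delta_{k'}$ and $\big|\,\|z'\|_\ast-\|\sum_{(j,i)\in S}a_{ij}y^i_{u_i(j)}\|\,\big|<\delta_{k'}$. A direct check with the plegma inequalities for $(u_i)$ shows that the ambient order on the finite set $\{y^i_{u_i(j)}:1\le j\le k',\,1\le i\le l\}$ is exactly the lexicographic order on the pairs $(j,i)$; since $S$ is an initial segment contained in $\{1,\ldots,k\}\times\{1,\ldots,l\}$, the $K$-basis property of $(y^i_n)_n$ yields $\|\sum_{(j,i)\in S}a_{ij}y^i_{u_i(j)}\|\le K\|\sum_{j\le k}\sum_ia_{ij}y^i_{u_i(j)}\|$. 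Chaining the three inequalities gives $\|z'\|_\ast\le K\|z\|_\ast+(K+1)\delta_{k'}$, and letting $k'\to\infty$ yields $\|z'\|_\ast\le K\|z\|_\ast$. Since $\{e^i_n\}_{i=1,n\in\N}^l$ is linearly independent (the $l$-joint spreading model being plegma spreading), this inequality for every $k$, every choice of scalars, and every initial segment $S$ is exactly the assertion that $(e^i_n)_{i=1,n\in\N}^l$, enumerated by the lexicographic order on $[\N]\times\{1,\ldots,l\}$, is Schauder basic with basis constant at most $K$.

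I expect the only genuinely delicate part to be the bookkeeping: making sure the plegma re-indexing produced by Theorem~\ref{thmbsc} is compatible with the strict plegma families built into the definition of an $l$-joint spreading model, and that the several orders (the ambient order on $\{y^i_n\}$, the enumeration of the $l$-joint spreading model, and the lexicographic order on index pairs) line up so that partial sums of $(e^i_n)_{i=1,n\in\N}^l$ correspond to partial sums of a subsequence of $(y^i_n)_n$. There is no deeper difficulty; once these verifications are in place the statement follows, as claimed, at once from Theorem~\ref{thmbsc}.
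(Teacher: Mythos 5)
Your proof is correct and follows the route the paper intends: the paper states this proposition without proof, calling it an immediate consequence of the definition of $l$-joint spreading models and Theorem \ref{thmbsc}, and your argument is exactly the natural fleshing-out of that remark (extract the plegma-reindexed Schauder basic subsequences via Theorem \ref{thmbsc}, check that the reindexed tuple still generates the same joint spreading model, and transfer the uniform basis constant to the limit). The bookkeeping you flag — compatibility of the composed plegma families and the agreement of the ambient order with the lexicographic order on $(j,i)$ — is carried out correctly.
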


The next example demonstrates that the opposite statement of the above is not always true, that is, $(e^i_n)_{i=1,n\in\N}^l$ may be Schauder basic whereas $X\cap F\neq\{0\}$.

\begin{exmp}
	Define the norm on $c_{00}(\N)$ given by $\|x\|=\sup \sum_{i=1}^n |\sum_{k\in I_i}x(k)|$, where the supremum is taken over all finite collections $I_1,\ldots,I_n$ of successive intervals of natural numbers with $n\leq \min I_1$. Denote by $X$ the completion of  $c_{00}(\N)$ with respect to this norm. Then the usual basis $(e_n)_n$ is a non-trivial weak-Cauchy sequence that generates a spreading model equivalent to the unit vector basis of $\ell_1$.
	Consider the sequences $e^1_n=e_{2n-1}-e_1$ and $e^2_n=e_{2n}+e_1$. As follows from Proposition \ref{propntwc}, none of their subsequences form a common Schauder basic sequence whereas the $l$-joint spreading model admitted by $(e^i_n)_{i=1,n\in\N}^2$ is equivalent to the unit vector basis of $\ell_1$.
\end{exmp}
	
	Recall that spreading models generated by weakly null sequences are unconditional. This is extended to joint spreading models by an easy modification of the classical case \cite[Proposition 5.1]{BL}.
	
	\begin{prop}
		Let $(x^1_n)_n,\ldots,(x^l_n)_n$ be weakly null Schauder basic sequences in a Banach space $X$ that admit $(e_n^i)_{i=1,n\in\N}^l$ as an $l$-joint spreading model. Then $(e_n^i)_{i=1,n\in\N}^l$ is 1-suppression unconditional and hence for every $\varepsilon>0$ and $k\in\N$ there exists $n\in\N$ such that for every $(s_i)_{i=1}^l\in S\-Plm_l([\N]^k)$ with $n\le s_1(1)$, the sequence $(x_{s_i(j)}^i)_{i=1,j=1}^{l,k}$ is $(1+\varepsilon)$-suppression unconditional.
	\end{prop}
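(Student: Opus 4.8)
The plan is to adapt the classical Brunel--Sucheston argument showing that the spreading model of a weakly null sequence is $1$-suppression unconditional, carrying out the convex-combination construction simultaneously in all $l$ coordinates. Since the $l$-tuple $((x^i_n)_n)_{i=1}^l$ admits $(e^i_n)_{i=1,n\in\N}^l$ as an $l$-joint spreading model, after passing to the generating subsequence and relabelling (which preserves weak nullity and the Schauder basic property of each $(x^i_n)_n$) we may assume it is generated along $\N$, with a null sequence $(\delta_n)_n$ which, by part (iv) of the Remark following Definition \ref{ljsm}, we are free to make as small as we please without changing $(e^i_n)_{i=1,n\in\N}^l$. Put $\pi=\{1,\dots,l\}\times\{1,\dots,k\}$. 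It suffices to prove that for every $k$, every $(a_{ij})_{(i,j)\in\pi}\subset[-1,1]$ and every $(i_0,j_0)\in\pi$,
\[
\Big\|\sum_{(i,j)\in\pi\setminus\{(i_0,j_0)\}}a_{ij}e^i_j\Big\|_\ast\leq\Big\|\sum_{(i,j)\in\pi}a_{ij}e^i_j\Big\|_\ast,
\]
since the full suppression inequality then follows by deleting the coordinates of $\pi\setminus A$ one at a time (allowing zero coefficients to accommodate arbitrary finite supports), and homogeneity removes the restriction to $[-1,1]$.

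Fix $k$, $(a_{ij})$, $(i_0,j_0)$ and $\varepsilon>0$. First I would reserve finitely many (at least $lk$) natural numbers to serve as the coordinates of the plegma families in the columns $<j_0$ and in the rows $<i_0$ of column $j_0$, letting $q$ be their maximum. Since $(x^{i_0}_n)_n$ is weakly null, $0$ lies in the norm closure of the convex hull of $\{x^{i_0}_n:n>q\}$ (Mazur's theorem), so there are $q<r_1<\dots<r_p$ and scalars $\lambda_1,\dots,\lambda_p\ge0$ with $\sum_t\lambda_t=1$ and $\|\sum_{t=1}^p\lambda_t x^{i_0}_{r_t}\|<\varepsilon$. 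Next, choose for $t=1,\dots,p$ strict plegma families $(s^t_i)_{i=1}^l\in S\-Plm_l([\N]^k)$, with $s^t_1(1)$ beyond the model's threshold, such that $s^t_{i_0}(j_0)=r_t$ for each $t$ while $s^t_i(j)$ is independent of $t$ for every $(i,j)\neq(i_0,j_0)$; this is arranged by putting the reserved coordinates below $r_1$ and placing every coordinate in a column $>j_0$, and every coordinate in a row $>i_0$ of column $j_0$, above $r_p$. Writing $v_t=\sum_{(i,j)\in\pi}a_{ij}x^i_{s^t_i(j)}$ and $u=\sum_{(i,j)\neq(i_0,j_0)}a_{ij}x^i_{s^1_i(j)}$ we have $v_t=u+a_{i_0j_0}x^{i_0}_{r_t}$, hence $\sum_t\lambda_t v_t=u+a_{i_0j_0}\sum_t\lambda_t x^{i_0}_{r_t}$ and therefore $\|u\|\le\max_t\|v_t\|+\varepsilon$. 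Since each $(s^t_i)_{i=1}^l$ is an admissible strict plegma family, the defining inequality of the model gives $\big|\,\|v_t\|-\|\sum_{(i,j)\in\pi}a_{ij}e^i_j\|_\ast\,\big|<\delta_k$, and applying it once more to $(s^1_i)_{i=1}^l$ with the $(i_0,j_0)$-coefficient set equal to $0$ gives $\big|\,\|u\|-\|\sum_{(i,j)\neq(i_0,j_0)}a_{ij}e^i_j\|_\ast\,\big|<\delta_k$. Combining the three estimates,
\[
\Big\|\sum_{(i,j)\neq(i_0,j_0)}a_{ij}e^i_j\Big\|_\ast<\Big\|\sum_{(i,j)\in\pi}a_{ij}e^i_j\Big\|_\ast+\varepsilon+2\delta_k,
\]
and since $\varepsilon>0$ is arbitrary and, by part (iv) of the Remark, $\delta_k$ may be taken arbitrarily small (re-running the construction along a thinner generating subsequence), the displayed claim follows; hence $(e^i_n)_{i=1,n\in\N}^l$ is $1$-suppression unconditional.

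For the remaining assertion, fix $\varepsilon>0$ and $k\in\N$ and set $m_k=\inf\{\|\sum_{(i,j)\in\pi}a_{ij}e^i_j\|_\ast:\max_{(i,j)\in\pi}|a_{ij}|=1\}$; this is strictly positive, being the infimum over a compact set of a continuous norm on the $lk$-dimensional span of $\{e^i_j\}$, which is linearly independent because $(e^i_n)_{i=1,n\in\N}^l$ is plegma spreading (Remark after Definition \ref{ljsm}, together with the Remark following the definition of plegma spreading sequences). By part (iv) of that Remark we may take the generating subsequence with $4\delta_k<\varepsilon m_k$; let $n$ be the corresponding threshold for this $k$. Then for any admissible strict plegma family $(s_i)_{i=1}^l$ with $s_1(1)\ge n$, any $A\subset\pi$, and coefficients normalised so that $\max_{(i,j)}|a_{ij}|=1$, the defining inequality together with the $1$-suppression unconditionality of $(e^i_n)_{i=1,n\in\N}^l$ and the lower bound $\|\sum_{(i,j)\in\pi}a_{ij}x^i_{s_i(j)}\|\ge m_k-\delta_k\ge 3m_k/4$ give
\[
\Big\|\sum_{(i,j)\in A}a_{ij}x^i_{s_i(j)}\Big\|\le\Big\|\sum_{(i,j)\in\pi}a_{ij}x^i_{s_i(j)}\Big\|+2\delta_k\le(1+\varepsilon)\Big\|\sum_{(i,j)\in\pi}a_{ij}x^i_{s_i(j)}\Big\|,
\]
which, by homogeneity in the coefficients, is the claimed $(1+\varepsilon)$-suppression unconditionality. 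The step I expect to require the most care is the simultaneous choice of the families $(s^t_i)_{i=1}^l$: one must verify that, with the free coordinate $s^t_{i_0}(j_0)=r_t$ forced by the weak-nullity selection to range over a prescribed tail, the remaining coordinates can still be fixed once and for all so that each $(s^t_i)_{i=1}^l$ is a genuine strict plegma family lying beyond the model's threshold. Everything else is the classical convex-block computation applied coordinate-wise.
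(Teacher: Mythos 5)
Your proof of the first assertion (the $1$-suppression unconditionality of $(e^i_n)_{i=1,n\in\N}^l$) is correct and follows exactly the route the paper intends: the paper gives no written proof here, deferring to ``an easy modification of the classical case'' of \cite[Proposition 5.1]{BL}, and the device you use --- holding every plegma coordinate fixed except the one at position $(i_0,j_0)$, letting that coordinate run over the support of a convex combination made small by weak nullity, and transferring the resulting inequality to $\|\cdot\|_\ast$ through the defining approximation --- is the same one the paper carries out explicitly in the proof of Proposition \ref{w0unc} for plegma spreading sequences. The point you single out as delicate, namely that the reserved coordinates can be placed below $r_1$ and the trailing ones above $r_p$ so that each $(s^t_i)_{i=1}^l$ is a genuine strict plegma family beyond the threshold, is indeed the only thing to check, and your arrangement is valid.

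The second assertion, however, has a gap. You shrink $\delta_k$ by invoking item (iv) of the remark following Definition \ref{ljsm} and passing to a thinner generating subsequence with $4\delta_k<\varepsilon m_k$; but the improved approximation then holds only for strict plegma families drawn from that thinner set, whereas the statement quantifies over all of $S\-Plm_l([\N]^k)$ with $s_1(1)\ge n$. For fixed $k$ a single generating sequence carries a single error $\delta_k$, and no choice of threshold $n$ improves it, so since the claim must hold for every $\varepsilon>0$ this re-selection cannot deliver it. What is needed instead is that for fixed $k$ the approximation error decays as $s_1(1)\to\infty$: given $(s_i)_{i=1}^l\in S\-Plm_l([\N]^k)$ with $s_1(1)$ large, prepend $k'-k$ columns using integers from $[k',s_1(1))$ and assign them zero coefficients; the resulting family lies in $S\-Plm_l([\N]^{k'})$ with first entry at least $k'$, the $X$-side vector is unchanged, and the $\ast$-norm of the padded $e$-vector equals that of the original by the plegma-spreading invariance of the model, so the error is at most $\delta_{k'}$ with $k'\to\infty$ as $s_1(1)\to\infty$. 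With that observation in place of the subsequence re-selection, your final computation goes through verbatim.
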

	
\begin{rem}
		The notion of $l$-joint spreading models can be naturally extended, by a diagonalization argument, to $\omega$-joint spreading models which are $\omega$-plegma spreading sequences and are generated by countably many Schauder basic sequences.
	\end{rem}

	\section{spaces with unique joint spreading model}
	In this section we study spaces that admit a uniformly unique joint spreading model with respect to certain families of sequences. In the first part we prove the uniform uniqueness of $l$-joint spreading models for the classical $\ell_p$ and $c_0$ spaces. Then we pass to Asymptotic $\ell_p$ spaces \cite{MMT} and in the last part we study this problem for the James Tree space.
		
	\begin{dfn}\label{defuniqe}
		Let $\mathscr{F}$ be a family of normalized sequences in a Banach space $X$. We will say that $X$ admits a \textit{uniformly unique $l$-joint spreading model} with respect to $\mathscr{F}$ if there exists $K>0$ such that, for every $l\in\N$, any two  $l$-joint spreading models generated by sequences from $\mathscr{F}$ are $K$-equivalent.
	\end{dfn}

	\begin{rem}
		Let $\mathscr{F}$ be a family of normalized sequences in a Banach space $X$ such that, for some $l\in\N$, there exists $K_l>0$ such that any two $l$-joint spreading models generated by sequences from $\mathscr{F}$ are $K_l$-equivalent.
		\begin{enumerate}
			\item[(i)]	For every $l'<l$, there exists $K_{l'}\le K_l$ such that any two $l'$-joint spreading models generated by sequences from $\mathscr{F}$ are $K_{l'}$-equivalent.
			\item[(ii)] The space $X$ may fail to admit a uniformly unique $l$-joint spreading model with respect to $\mathscr{F}$. For examples of such spaces see \cite{AM1} and the space in Definition \ref{definition fails uals}.
		\end{enumerate}
	\end{rem}

	\begin{notn}
		For a Banach space $X$,  we will denote by $\mathscr{F}(X)$ the set of all normalized Schauder basic sequences in $X$, by $\mathscr{F}_0(X)$ its subset consisting of the sequences that are weakly null and by $\mathscr{F}_C(X)$ the set of all normalized $C$-Schauder basic sequences in $X$. Finally, if $X$ has a Schauder basis, we shall denote by $\mathscr{F}_{b}(X)$ the set of all normalized block sequences in $X$.
	\end{notn}

	Next we present some examples of spaces admitting a uniformly unique $l$-joint spreading model. We start with the classical sequence spaces $\ell_p$ and $c_0$.

	\begin{prop} Each  of the following spaces admits a uniformly unique $l$-joint spreading model which is in fact equivalent to its unit vector basis.
	\begin{enumerate}
			\item[(i)] The spaces $\ell_p$, for $1<p<\infty$, with respect to $\mathscr{F}(\ell_p)$.
			\item[(ii)] The space $\ell_1$ with respect to $\mathscr{F}_b(\ell_1)$, but not with respect to $\mathscr{F}(\ell_1)$.
			\item[(iii)] The space $c_0$ with respect to $\mathscr{F}_0(c_0)$, but not with respect to $\mathscr{F}(c_0)$.	 				
		\end{enumerate}
	\end{prop}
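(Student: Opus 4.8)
The plan is to verify each of the three assertions separately, in each case exploiting the well-known description of spreading models and block bases in these classical spaces. The unifying idea is that for an $l$-joint spreading model $(e^i_n)^l_{i=1,n\in\N}$ generated by an $l$-tuple $((x^i_n)_n)^l_{i=1}$ from the relevant family $\mathscr F$, a strict plegma family $(s_i)^l_{i=1}$ with $M(k)\le s_1(1)$ selects, one coordinate block at a time, vectors $x^1_{s_1(j)},\dots,x^l_{s_l(j)}$ that are ``far out'' along each sequence, and when the sequences in question are (asymptotically) disjointly supported with respect to a common basis, the joint norm $\|\sum_{i,j}a_{ij}x^i_{s_i(j)}\|$ is computed by the natural $\ell_p$ (resp.\ $c_0$) formula up to a vanishing error. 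The key input for the positive directions is therefore: \emph{every normalized weakly null (in $c_0$), or normalized $C$-Schauder basic (in $\ell_p$, $1<p<\infty$), or normalized block (in $\ell_1$ and $c_0$) sequence has a subsequence that is $(1+\varepsilon)$-equivalent to the unit vector basis, and moreover an $l$-tuple of such sequences can be passed to subsequences that behave, jointly, like disjointly supported vectors.}

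For (i), $1<p<\infty$: I would first recall that $\ell_p$ is reflexive, so every normalized Schauder basic sequence is weakly null (it has no norm-convergent subsequence and any weak-Cauchy limit would lie in $\ell_p$; passing to a subsequence and using that bounded sequences in reflexive spaces are weakly convergent, the limit is $0$ after a further subsequence since a nonzero weak limit is incompatible with the basis being seminormalized — more simply, use that $\ell_p$ has no $\ell_1$ subsequences and is reflexive). Then, by the standard gliding-hump argument, each $(x^i_n)_n$ has a subsequence $1+\varepsilon$-equivalent to a block sequence of the unit vector basis, and by a simultaneous diagonalization across $i=1,\dots,l$ and across the plegma index we may assume the vectors $x^i_{s_i(j)}$ selected by any strict plegma family with $s_1(1)$ large are, up to $\varepsilon$, successively supported blocks. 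Since disjointly supported normalized blocks in $\ell_p$ satisfy $\|\sum c_m u_m\| = (\sum |c_m|^p\|u_m\|^p)^{1/p}$ with $\|u_m\|$ close to $1$, the joint spreading model norm is $(1+\varepsilon)$-equivalent to the $\ell_p$ norm on $c_{00}$, uniformly in $l$; hence $K$ can be taken (for instance) $4$.

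For (ii), $\ell_1$ with respect to $\mathscr F_b(\ell_1)$: a normalized block sequence of the unit vector basis of $\ell_1$ is \emph{isometrically} equivalent to the unit vector basis of $\ell_1$ (by the triangle inequality one direction, and the other is immediate since the blocks are disjointly supported with coefficients having the right signs available), so an $l$-tuple of normalized block sequences, after passing to strict plegma selections that are successive, again gives disjoint blocks and the joint norm is exactly the $\ell_1$ norm; thus $K=1$ works and the $l$-joint spreading model is isometrically the $\ell_1$ basis. The negative part, that $\ell_1$ fails uniqueness with respect to $\mathscr F(\ell_1)$, I would obtain from the Maurey--Rosenthal-type phenomenon or, more elementarily, by exhibiting inside $\ell_1$ a normalized weak-Cauchy (non-weakly-null) sequence, e.g.\ $x_n = e_1$ summed with a tail block — in fact the cleanest route is: take $x^1_n = e_{2n}$ and $x^2_n = e_{2n+1}$, which give the $\ell_1$ basis, versus a second pair built using a sequence whose spreading model differs because $\mathscr F(\ell_1)$ contains sequences like $x_n = \tfrac12 e_1 + \tfrac12 e_{n+1}$, whose spreading model is not isometrically $\ell_1$; arranging an $l$-joint spreading model from such sequences that is boundedly but not uniformly-in-$K$ close to $\ell_1$ contradicts any fixed $K$. (Here I expect to need a little care: the assertion is merely that no \emph{single} $K$ works over all $l$, so one only needs, for each $K$, one value of $l$ and one pair of $l$-joint spreading models that are not $K$-equivalent; a Maurey--Rosenthal special-sequence construction as in Section 2.3, run inside an $\ell_1$-type norm, produces exactly this.)

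For (iii), $c_0$ with respect to $\mathscr F_0(c_0)$: every normalized weakly null sequence in $c_0$ has a subsequence equivalent to a block sequence of the unit vector basis (gliding hump), and a normalized block sequence of the $c_0$-basis is isometrically equivalent to the $c_0$-basis; diagonalizing over the $l$ sequences and over plegma selections, the joint spreading model norm is (up to $\varepsilon$, hence after the usual ``pass to a good $M$'' normalization, exactly) $\|\sum a_{ij}e^i_j\|_\infty$, so $K=1$ and the model is the $c_0$-basis. For the failure with respect to $\mathscr F(c_0)$, observe $c_0$ contains the non-weakly-null, normalized, weak-Cauchy summing basis $x_n = e_1+\cdots+e_n$ (suitably normalized it is $\tfrac1?$—actually it is already normalized in $c_0$), whose spreading model is \emph{not} equivalent to $c_0$ (it is the summing basis's spreading model, which is not unconditional), so two $1$-joint (already $l=1$ suffices here) spreading models — the $c_0$-basis versus the summing-basis spreading model — are not equivalent at all, a fortiori not $K$-equivalent for any $K$.

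\textbf{Main obstacle.} The routine parts are the gliding-hump reductions to block bases and the disjoint-support norm computations; these are standard and I would cite \cite{LT}. The genuinely delicate point is the \emph{negative} clause in (ii): showing that $\ell_1$ admits \emph{no} uniform $K$ over all $l$ with respect to $\mathscr F(\ell_1)$ — as opposed to merely failing isometric uniqueness — requires producing, for each prescribed $K$, a pair of $l$-joint spreading models at mutual distance exceeding $K$, and the natural source of such examples is a Maurey--Rosenthal special-sequence mechanism (cf.\ Section 2.3) adapted to an $\ell_1$-flavoured norm; getting the constants to blow up with $l$ while keeping the generating sequences inside $\mathscr F(\ell_1)$ is where the real work lies. (In (iii) the corresponding negative statement is easy because a single non-unconditional spreading model already destroys equivalence outright, so no blow-up is needed.)
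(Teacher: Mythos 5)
The paper states this proposition without proof (it is treated as standard), so there is no argument of the authors' to compare against; I can only assess your proposal on its own terms. The positive clauses are handled correctly: in (i) every normalized basic sequence in the reflexive space $\ell_p$ is weakly null, a simultaneous gliding hump along a common subsequence $M$ makes the vectors selected by any strict plegma family in $[M]^k$ essentially successively supported, and disjointly supported normalized vectors in $\ell_p$ (resp.\ $c_0$) compute the $\ell_p$ (resp.\ $c_0$) norm of the coefficients; the same works for block sequences in $\ell_1$ and weakly null sequences in $c_0$, and the uniqueness over \emph{every} admissible $M$ follows because within any such $M$ one can always find arbitrarily spread-out strict plegma families, which force the value of the joint spreading model norm. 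Your negative clause in (iii) via the summing basis is also correct: its spreading model $\max_i|\sum_{j\geq i}a_j|$ is not even equivalent to the $c_0$ basis, so $l=1$ already suffices.

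The gap is in the negative clause of (ii), which you flag as the delicate point but then attack with the wrong tool. A Maurey--Rosenthal special-sequence mechanism ``adapted to an $\ell_1$-flavoured norm'' cannot be run here: the witnessing sequences must live in $\ell_1$ itself, where every normalized block basis is isometrically the unit vector basis and every seminormalized basic sequence is unconditional after passing to a subsequence, so there is no conditional structure to exploit. The correct (and elementary) source of non-uniformity is exactly the example you mention in passing but do not push far enough: for $0<\varepsilon<1$ set $x_n^{(\varepsilon)}=(1-\varepsilon)e_1+\varepsilon e_{n+1}$. This is a normalized Schauder basic sequence (with basis constant of order $1/\varepsilon$, which is why it lies in $\mathscr{F}(\ell_1)$ but escapes every $\mathscr{F}_C(\ell_1)$), and it generates the spreading model $\|\sum_j a_ju_j\|=(1-\varepsilon)|\sum_j a_j|+\varepsilon\sum_j|a_j|$. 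Since $\|u_1-u_2\|=2\varepsilon$ while $\|u_1\|=1$, this spreading model is not $K$-equivalent to the unit vector basis of $\ell_1$ for any $K<1/\varepsilon$; letting $\varepsilon\to 0$ defeats every uniform constant already at $l=1$, with no blow-up in $l$ needed. Note that the Remark immediately following the proposition in the paper (on $\mathscr{F}_C(\ell_1)$, where the constant $2C+1$ appears via the lower bound $\lambda\geq 1/(2C+1)$ on the mass of the point-wise-null part) pinpoints precisely this phenomenon: non-uniformity can only come from sequences whose point-wise limit carries almost all of the norm, i.e.\ from unbounded basis constants, not from any conditional special-sequence construction.
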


	It is immediate to see that the above remain valid for the spaces $\ell_p(\Gamma)$, for $1\le p<\infty$, and $c_0(\Gamma)$, for any infinite set $\Gamma$.

	\begin{rem}
	For $C\geq 1$, the space $\ell_1$ admits a uniformly unique $l$-joint spreading model with respect to $\mathscr{F}_C(\ell_1)$. To see this, let $(x_n)_n$ be an arbitrary normalized $C$-Schauder basic sequence in $\ell_1$. Passing, if it necessary, to a subsequence $(x_n)_n$ has a point-wise limit $x_0$ in $\ell_1$. That is, $\lim_ne_i^*(x_n) = e_i^*(x_0)$ for all $i\in\mathbb{N}$. Also, if we set $z_n = x_n - x_0$, then $\lim_n\|z_n\|= \lambda$ may be assumed to exist. It follows that $\|x_0\| + \lambda = 1$ and that $0<\lambda\leq 1$. We can also assume that $(\lambda^{-1}z_n)_{n\geq n_0}$ is $(1+1/n_0)$-equivalent to the usual basis of $\ell_1$. We conclude that for any $M\in\mathbb{N}$:
	\begin{equation*}
	\begin{split}
	M(1-\lambda) &= M\|x_0\|\leq\lim_k\Big\|\sum_{n=1}^Mx_{n+k}\Big\| \leq C\lim_k \Big\|\sum_{n=1}^Mx_{n+k} - \sum_{n=M+1}^{2M}x_{n+k}\Big\|\\
	&=C\lim_k \Big\|\sum_{n=1}^Mz_{n+k} - \sum_{n=M+1}^{2M}z_{n+k}\Big\| = C2M\lambda.
	\end{split}
	\end{equation*}
	Therefore, $\lambda \geq 1/(2C+1)$. Hence if $(x_n^i)_n$, $1\leq i \leq l$, is an $l$-tuple of $C$-Schauder basic sequences we may pass to subsequences so that $(x_n^i)_n$ converges point-wise to some $x_0^i$ for $1\leq i\leq l$ and if we set $(z_n^i)_n = (x_n^i- x_0^i)$ for $1\leq i\leq l$ then these sequences are point-wise null and they are all bounded bellow from $1/(2C+1)$. We may then conclude that for any $\varepsilon>0$, passing to appropriate subsequences,  $(x_n^i)_n$, $1\leq i\leq m$, is jointly $(2C+1+\varepsilon)$-equivalent to the unit vector basis of $\ell_1$.
\end{rem}

\begin{rem}
	Although for any $C\geq1$ the space $\ell_1$ admits a uniformly unique $l$-joint spreading model with respect to $\mathscr{F}_C(\ell_1)$, this is no longer true for spaces with the Schur property. For example, define for each $n\in\mathbb{N}$ the norm $\|\cdot\|_n$ on $\ell_1$ given by $\|x\|_n = \max\{\|x\|_{\ell_2},n^{-1}\|x\|_{\ell_1}\}$. Set $X = (\sum_n\oplus X_n)_{\ell_1}$, where $X_n=(\ell_1,\|\cdot\|_n)$, which has the Schur property. Although every spreading model of this space is equivalent to the unit vector basis of $\ell_1$, this does not happen for a uniform constant.
\end{rem}

	Another example of spaces admitting a uniformly unique joint spreading model are asymptotic $\ell_p$ spaces. We start with their definition.

	\begin{dfn}[\cite{MT}]
	\label{asellp MT}
	A Banach space $X$ with a normalized Schauder basis is {\em asymptotic} $\ell_p$ (resp. {\em asymptotic} $c_0$) if there exists $C>0$ such that any finite sequence $(x_i)_{i=1}^n$ of normalized vectors in $X$ with $n<\supp(x_1)<\cdots<\supp(x_n)$ is $C$-equivalent to the standard basis of $\ell^n_p$ (resp. $c_0^n$), for $1\le p<\infty$.
\end{dfn}

The classical examples of asymptotic $\ell_p$ spaces are Tsirelson's original space \cite{Tsirelson} and its p-convexifications \cite{FJ}. The next proposition follows easily from the above definition and the fact that an asymptotic $\ell_p$ space, for $1<p<\infty$, is reflexive.

\begin{prop}
	Every asymptotic $\ell_p$ or asymptotic $c_0$ space $X$ admits a uniformly unique $l$-joint spreading model with respect to $\mathscr{F}_b(X)$. Moreover, every asymptotic $\ell_p$ space, for $1<p<\infty$, admits a uniformly unique $l$-joint spreading model with respect to $\mathscr{F}(X)$.
\end{prop}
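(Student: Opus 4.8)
The plan is to handle the two assertions separately, the decisive work being in the block-sequence case; the general case then reduces to it via reflexivity and a Bessaga--Pe\l czy\'nski perturbation.

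\emph{Block sequences.} Fix the asymptotic $\ell_p$ (resp. asymptotic $c_0$) constant $C$ of $X$, let $(x^i_n)_n$, $1\le i\le l$, be normalized block sequences, and suppose they generate an $l$-joint spreading model $(e^i_n)_{i=1,n\in\N}^l$ along some $M\in[\N]^\infty$. First I would pass, by diagonalizing over the sequences one at a time (possible since a normalized block sequence has $\min\supp(x^i_n)\to\infty$), to an infinite $M'\subseteq M$ -- along which the same $l$-joint spreading model is still generated, by the stability properties recorded after Definition \ref{ljsm} -- such that (a) $\min\supp(x^i_n)>ln$ for all $n\in M'$ and all $i$, and (b) $\max\supp(x^i_m)<\min\supp(x^{i'}_n)$ whenever $m<n$ in $M'$ and $1\le i,i'\le l$. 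Then for any $k$, any $(a_{ij})_{i,j}\subset[-1,1]$ and any strict plegma family $(s_i)_{i=1}^l\in S\-Plm_l([M']^k)$ with $s_1(1)\ge M'(k)$, the plegma and strictness conditions force the index values $s_i(j)$ to increase with the natural (lexicographic) order of the pairs $(j,i)$, so by (b) the $lk$ vectors $x^i_{s_i(j)}$ are successive and by (a) the first of them has support minimum exceeding $l\cdot s_1(1)\ge lk$. Hence Definition \ref{asellp MT} applies: these $lk$ vectors, in their natural order, are $C$-equivalent to the unit vector basis of $\ell_p^{lk}$ (resp. $c_0^{lk}$). Letting the error sequence $(\delta_n)_n$ of the joint spreading model be as small as we please on the $k$-th coordinate (regenerate along a further subsequence, which retains (a)--(b) and the same $(e^i_n)$), we conclude that $(e^i_n)_{i=1,n\in\N}^l$ itself is $C$-equivalent to the unit vector basis of $\ell_p$ (resp. $c_0$). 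Since $C$ is independent of $l$ and of the chosen block sequences, any two such models are $C^2$-equivalent, which is the first assertion.

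\emph{General basic sequences, $1<p<\infty$.} Here $X$ is reflexive. Given normalized basic sequences $(x^i_n)_n$, $1\le i\le l$, with basis constants $K^i$, generating $(e^i_n)^l$, the crucial point is that each $(x^i_n)_n$ has a weakly null subsequence. By weak compactness pass to a subsequence converging weakly to some $x^i$, and suppose $x^i\neq0$. After a further subsequence $(x^i_n-x^i)_n$ is seminormalized (it is not norm null, since $(x^i_n)$ is basic) and weakly null, so by Bessaga--Pe\l czy\'nski it agrees with a normalized block sequence $(u^i_n)_n$ up to a perturbation $(w^i_n)_n$ with $\sum_n\|w^i_n\|<\varepsilon$, and we may also arrange the supports of $(u^i_n)_n$ to grow fast enough for the asymptotic $\ell_p$ estimate to be available. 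I would then test the basic sequence inequality for $(x^i_n)$ on the scalar vector equal to $1$ on the first $m$ coordinates and to $-m/(m'-m)$ on the next $m'-m$, with $m'=m^N$: for $N$ large the longer combination has vanishing $x^i$-part, so its norm is at most $C(m+m^p/(m'-m)^{p-1})^{1/p}+\varepsilon\le C'm^{1/p}$, while a functional norming $x^i$ shows the shorter combination has norm at least $m\|x^i\|-Cm^{1/p}-\varepsilon$. Thus $m\|x^i\|\le(K^i+1)C'm^{1/p}+(K^i+1)\varepsilon$, which for small $\varepsilon$ and large $m$ is absurd. Hence all $x^i=0$, and after a common diagonalization the $(x^i_n)_n$ are $\varepsilon$-perturbations of normalized block sequences which can be arranged to satisfy (a)--(b); the block case then gives that the joint spreading model of the block sequences is $C$-equivalent to the $\ell_p$ basis, and since a perturbation of total mass $\varepsilon$ changes the values of the joint spreading model by at most $l\varepsilon$ (uniformly in $k$ and in coefficients bounded by $1$), letting $\varepsilon\to0$ shows $(e^i_n)^l$ is $C$-equivalent to the $\ell_p$ basis. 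Uniqueness with constant $C^2$ follows as before.

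\emph{Main obstacle.} The only non-routine step is the claim that normalized basic sequences in a reflexive asymptotic $\ell_p$ space, $1<p<\infty$, have weakly null subsequences -- equivalently, that a nonzero weak limit is impossible, because it would force $\|\sum_{k\le m}z_k\|$ to grow linearly while the asymptotic estimates keep alternating-type sums of order $m^{1/p}$, contradicting the basic sequence inequality; this is exactly the place where reflexivity (the absence of $\ell_1$) enters. Everything else is bookkeeping with plegma families and supports or a standard small-perturbation argument, so I expect the write-up to be short once this reduction is in place.
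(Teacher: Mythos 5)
Your proof is correct and follows exactly the route the paper intends (the paper offers no written proof, remarking only that the statement ``follows easily from the definition and the fact that an asymptotic $\ell_p$ space, for $1<p<\infty$, is reflexive''): the block case via the plegma/support bookkeeping and Definition \ref{asellp MT}, and the general case by reducing to it through reflexivity and a Bessaga--Pe\l czy\'nski perturbation. The only comment is that your ``main obstacle'' is not one: a weakly convergent Schauder basic sequence is automatically weakly null in any Banach space (the biorthogonal functionals kill the limit, which lies in every tail span), so the asymptotic-$\ell_p$ estimate you devise to rule out a nonzero weak limit, while valid, is unnecessary.
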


 The following proposition concerns spaces with uniformly unique joint spreading models with respect to families that have certain stability properties. The joint spreading models of such spaces are unconditional and sometimes even equivalent to some $\ell_p$ or $c_0$. Families with such properties play an important role in the study of the UALS  in the next section.

\begin{prop}
	\label{kind of krivine sort of}
	Let $X$ be a Banach space that admits a $K$-uniformly unique $l$-joint spreading model with respect to a family of normalized Schauder basic sequences $\mathscr{F}$. Assume that $\mathscr{F}$ is such that:

	\begin{itemize}
		\item[(a)] If $(x_j)_j$ in $\mathscr{F}$ then any subsequence of $(x_j)_j$ is in $\mathscr{F}$.
		\item[(b)] If $(x_j)_j$ is in $\mathscr{F}$ then there exists an infinite subset $L = \{l_i:i\in\mathbb{N}\}$ of $\mathbb{N}$ such that if $z_i = \|x_{l_{2i-1}} - x_{l_{2i}}\|^{-1}(x_{l_{2i-1}} - x_{l_{2i}})$, for $i\in\mathbb{N}$, then the sequence $(z_i)_i$ is in $\mathscr{F}$.
		\item[(c)]

		If $(x_j)_j$ is in $\mathscr{F}$ and $(\lambda_i)_{i=1}^N$ is a finite sequence of scalars, not all of which are zero, then there exists an infinite subset $L = \{l_i:i\in\mathbb{N}\}$ of $\mathbb{N}$ such that if for $n\in\N$:
		
		\[z_n = \Big\|\sum_{i=1}^N\lambda_i x_{l_{N(n-1)+i}}\Big\|^{-1}\Bigg(\sum_{i=1}^N\lambda_i x_{l_{N(n-1)+i}}\Bigg),\]
		
		\noindent then the sequence $(z_i)_i$ is in $\mathscr{F}$.

	\end{itemize}
	The following statements hold.
	\begin{itemize}
		
		\item[(i)] If $\mathscr{F}$ satisfies (a) then every $l$-joint spreading model admitted by an $l$-tuple of sequences in $\mathscr{F}$ is spreading when enumerated with the natural plegma order and it is $K$-equivalent to the spreading model generated by any sequence in $\mathscr{F}$.
		
		\item[(ii)] If $\mathscr{F}$ satisfies (a) and (b) then every $l$-joint spreading model admitted by sequences in $\mathscr{F}$ is $K$-suppression unconditional.
		
		\item[(iii)] If $\mathscr{F}$ satisfies (a) and (c) then every $l$-joint spreading model admitted by sequences in $\mathscr{F}$ is  $K$-equivalent to the unit vector basis of $\ell_p$ (for $p=\infty$ we mean the unit vector basis of $c_0$).
	\end{itemize}
\end{prop}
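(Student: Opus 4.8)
The plan is uniform across (i)--(iii). Fix an $l$-tuple $\mathbf{x}=((x^i_n)_n)_{i=1}^l$ of members of $\mathscr{F}$ generating $(e^i_n)_{i,n}$ (recall that $(e^i_n)_{i,n}$ is plegma spreading and that each $(e^i_n)_n$ is the classical spreading model of $(x^i_n)_n$). From $\mathbf{x}$ and the relevant closure property of $\mathscr{F}$ one builds a \emph{new} tuple of members of $\mathscr{F}$, identifies the joint spreading model it generates as an explicit expression in the $(e^i_n)$ — this uses only that the indices occurring in a strict plegma family sit, in increasing order, in the natural plegma order, after the stabilisation of Theorem~\ref{joint Brunel-Sucheston}/Lemma~\ref{basicpropl} and, when coordinates must be aligned, Remark~\ref{remplm} — and then feeds $K$-uniform uniqueness into that identification at the appropriate level of jointness, so that no compounding of constants occurs.

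\emph{(i).} Let $(u_n)_n\in\mathscr{F}$ be arbitrary; by (a) the constant tuple $((u_n)_n,\dots,(u_n)_n)$ is a tuple of members of $\mathscr{F}$. A strict plegma family $(s_i)_{i=1}^l$ applied to one sequence just produces the increasing sequence $s_1(1)<s_2(1)<\dots<s_l(1)<s_1(2)<\dots$, whose $t$-th term occupies the $t$-th slot of the natural plegma order; hence the joint spreading model of this constant tuple, read in the natural plegma order, is exactly the (genuinely spreading) spreading model of $(u_n)_n$. By $K$-uniform uniqueness, $(e^i_n)_{i,n}$ in the natural plegma order is $K$-equivalent to it, and since $(u_n)_n$ was arbitrary this is (i).

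\emph{(ii).} Apply (b) to each $(x^i_n)_n$, obtaining normalised difference sequences $(z^i_n)_n\in\mathscr{F}$ with $z^i_n=\|x^i_p-x^i_q\|^{-1}(x^i_p-x^i_q)$ for block-mates $p<q$ and $\|x^i_p-x^i_q\|\to\lambda_i:=\|e^i_1-e^i_2\|_\ast$. Thinning and shifting the $(z^i_n)_n$ so their underlying index-blocks interleave across coordinates (still in $\mathscr{F}$ by (a)), and handling the cross-coordinate bookkeeping via the $2l$-joint spreading model of the $l$ sequences each used twice (also in $\mathscr{F}$), one identifies the joint spreading model of the resulting tuple with a sequence of the form $(\lambda_i^{-1}(e^i_{2n-1}-e^i_{2n}))_{i,n}$; $K$-uniform uniqueness then gives, for all $(a_{ij})$,
\[\tfrac1K\Big\|\sum_{i,j}a_{ij}e^i_j\Big\|\ \le\ \Big\|\sum_{i,j}a_{ij}\lambda_i^{-1}(e^i_{2j-1}-e^i_{2j})\Big\|\ \le\ K\Big\|\sum_{i,j}a_{ij}e^i_j\Big\|.\]
Iterating (apply (b) to the $(z^i_n)_n$, and so on) yields the same $K$-equivalence for every ``nested-difference'' ($\pm1$ Thue--Morse type) configuration on dyadic blocks; this rules out summing-basis behaviour, and the standard argument for spreading sequences — an averaging over these sign patterns combined with the spreading behaviour of (i) — upgrades it to $\|\sum\varepsilon_{ij}a_{ij}e^i_j\|\le K\|\sum a_{ij}e^i_j\|$ for every sign pattern $(\varepsilon_{ij})$. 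Since a coordinate projection equals $\tfrac12(I\pm D_\varepsilon)$ for the sign-change operator $D_\varepsilon$, $\|P_F\|\le\tfrac{K+1}{2}\le K$, i.e.\ $K$-suppression unconditionality. The delicate point here is keeping the constant exactly $K$ through the sign-averaging, together with the index-alignment bookkeeping.

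\emph{(iii).} Fix $(u_n)_n\in\mathscr{F}$ with spreading model $(\hat e_n)_n$ and a pattern $(a_i)_{i=1}^N$; by (c) the normalised blocks $z_n\approx\theta^{-1}\sum_{i=1}^N a_i u_{l_{N(n-1)+i}}$, $\theta=\|\sum a_i\hat e_i\|$, lie in $\mathscr{F}$, and their spreading model is the corresponding normalised ``tiled'' sequence, so $K$-uniform uniqueness produces the approximate multiplicativity
\[\Big\|\sum_{n=1}^m\sum_{i=1}^N b_n a_i\,\hat e_{N(n-1)+i}\Big\|\ \in\ [K^{-1},K]\cdot\Big\|\sum_{i=1}^N a_i\hat e_i\Big\|\cdot\Big\|\sum_{n=1}^m b_n\hat e_n\Big\|.\]
Taking all coefficients $1$ gives $\phi(Nm)\in[K^{-1},K]\,\phi(N)\phi(m)$ with $\phi(m)=\|\hat e_1+\dots+\hat e_m\|$; since $\phi$ is subadditive and bounded below, this quasi-multiplicativity forces $\phi(m)\in[K^{-1},K]\,m^{1/p}$ for a unique $p\in[1,\infty]$ (the exponent $0$ corresponding to $p=\infty$, the $c_0$ case). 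Finally, Krivine's theorem applied to the spreading sequence $(\hat e_n)_n$ furnishes, for this $p$, normalised blocks of $(u_n)_n$ — of iterated repeating-pattern form, hence in $\mathscr{F}$ by (c) and (a) — whose spreading model is $(1+\varepsilon)$-equivalent to the $\ell_p$ basis for all $\varepsilon>0$; feeding the constant $l$-tuple of such blocks into $K$-uniform uniqueness (as in (i)) forces $(e^i_n)_{i,n}$ in the natural plegma order to be $K$-equivalent to the unit vector basis of $\ell_p$ (resp.\ $c_0$). The delicate point is, once more, keeping the constant at $K$ in passing from the approximate multiplicativity to the $\ell_p$-estimate — which is exactly why one invokes Krivine's theorem rather than iterating the multiplicativity relation by hand.
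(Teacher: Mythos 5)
Your part (i) is essentially the paper's argument (the paper uses $l$ disjointly indexed subsequences of a single $(u_n)_n$ rather than the constant tuple, but the point is identical: a strict plegma family enumerated in the natural plegma order is just an increasing sequence of indices, so the resulting $l$-joint spreading model is isometric to the ordinary spreading model of $(u_n)_n$). What you miss is that (i) already does all the heavy lifting for (ii) and (iii): since the $l$-joint spreading model is $K$-equivalent, in the plegma order, to the ordinary spreading model of \emph{any single} sequence in $\mathscr{F}$, it suffices to exhibit \emph{one} sequence in $\mathscr{F}$ whose one-dimensional spreading model has the desired property with constant $1$ (resp. $1+\varepsilon$). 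For (ii) the paper takes the normalized differences supplied by (b), whose spreading model is $(\|e_{2j-1}-e_{2j}\|^{-1}(e_{2j-1}-e_{2j}))_j$ and is $1$-suppression unconditional by the classical result \cite[Proposition 4.3]{BL}; $K$-equivalence to a $1$-suppression unconditional sequence immediately gives $K$-suppression unconditionality. For (iii) the paper applies Krivine's theorem in the form of \cite[Paragraph 1.6.3]{MMT} to produce, via (c), sequences in $\mathscr{F}$ whose first $m$ terms are $(1+\varepsilon)$-equivalent to the $\ell_{p_m}^m$ basis, and takes a limit point of $(p_m)_m$.

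The genuine gap is in your (ii). From the $K$-equivalence of the nested-difference configurations to the original joint spreading model you assert that ``an averaging over these sign patterns combined with the spreading behaviour of (i)'' yields $\|\sum\varepsilon_{ij}a_{ij}e^i_j\|\le K\|\sum a_{ij}e^i_j\|$ for \emph{every} sign pattern, and you then deduce suppression unconditionality from sign unconditionality. No such ``standard argument'' exists in this direction: the classical telescoping/averaging proof produces suppression unconditionality (with constant $1$, for the \emph{difference} spreading model) first, and sign unconditionality only afterwards with a doubled constant. Your route runs the implication backwards, and the step where you would have to control the constant is precisely the one you label ``delicate'' and leave unproved; a naive iteration of the $K$-equivalences compounds constants. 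In (iii), the quasi-multiplicativity detour is unnecessary, and the claim that Krivine's theorem furnishes blocks ``for this $p$'' (a $p$ you have fixed in advance from $\phi$) misstates the theorem --- Krivine gives you \emph{some} $p$, and it is the uniform uniqueness that then forces all the exponents to agree; your final step, however, coincides with the paper's and is correct once that is fixed.
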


\begin{proof}
	The first item follows by taking an arbitrary sequence $(x_j)_j$ in $\mathscr{F}$, passing to a subsequence that generates some spreading model $(e_i)_i$, and then taking disjointly indexed subsequences $(x_i^1)_i,\ldots,(x_i^l)_i$, which by assumption are all in $\mathscr{F}$. Clearly, they generate an $l$-joint spreading model that is isometrically equivalent to $(e_i)_i$. We conclude that any $l$-joint spreading model generated by an $l$-tuple of sequences in $\mathscr{F}$ is $K$-equivalent to $(e_i)_i$, when endowed with the natural plegma order.
	
	For the second item it is sufficient, by (i), to show that any spreading model admitted by a sequence in $\mathscr{F}$ has the desired property. Pick an arbitrary sequence $(x_j)_j$ in $\mathscr{F}$ which by (a) may be chosen to generate some spreading model $(e_j)_j$. Applying (b) to $(x_j)_j$ we can deduce that there is a sequence in $\mathscr{F}$ that generates as a spreading model the sequence $(\|e_{2j-1}-e_{2j}\|^{-1}(e_{2j-1} - e_{2j}))_j$, which by \cite[Proposition 4.3]{BL} is 1-suppression unconditional. Observe that any sequence that is $K$-equivalent to a 1-suppression unconditional sequence is $K$-suppression unconditional.
	
	Assume now that (a) and (c) hold. Clearly, (a) and (c) together imply (b) so we may pick up where we left off, namely having at hand a sequence $(x_j)_j$ in $\mathscr{F}$ that generates a spreading model that is $1$-suppression unconditional. By \cite[Paragraph 1.6.3]{MMT}, as a direct application of Krivine's Theorem \cite{Krivine} \cite{Lember}, for any $m\in\mathbb{N}$ and $\varepsilon>0$, we may choose scalars $\lambda_1,\ldots,\lambda_N$ such that any $m$ terms of the resulting sequence $(z_n)_n$ are $(1+\varepsilon)$-equivalent to the unit vector basis of $\ell_p^m$, for some $1\leq p\leq \infty$. This means that there exists a constant $K$ such that, for any $m\in\mathbb{N}$, there exists $1\leq p_m\leq\infty$ such that the first $m$ terms of any spreading model generated by a sequence from $\mathscr{F}$ are $K$-equivalent to the unit vector basis of $\ell_{p_m}$. Taking a limit point of $(p_m)_m$ yields the conclusion.
\end{proof}

	\subsection{Coordinate-free asymptotic $\ell_p$ spaces (Asymptotic $\ell_p$ spaces)}
Notice that Definition \ref{asellp MT} of an asymptotic $\ell_p$ space from \cite{MT} depends on the Schauder basis of $X$ and not only on $X$. A coordinate-free version of this definition can be found in \cite[Subsection 1.7]{MMT} and it is based on a game of two players (S) and (V). In each turn of the game player (S) chooses a closed finite codimensional subspace $Y$ of $X$ and player (V) chooses a normalized vector $y\in Y$. A Banach space $X$ is called {\em Asymptotic} $\ell_p$ if there exists a constant $C$ such that, for every $n\in\N$, player (S) has a wining strategy in the game $G(p,n,C)$, that is to force in $n$ steps player (V) to choose a sequence $(y_i)_{i=1}^n$ that is $C$-equivalent to the unit vector basis of $\ell_p^n$ (or $c_0^n$ for $p=\infty$). We point out that the original formulation of this property is different. The equivalence of the original definition with this more convenient version follows from \cite[Subsection 1.5]{MMT}.

Next we show that for a separable Asymptotic $\ell_p$ space $X$, for $1\le p\le \infty$, there exists a certain family of sequences in $X$, described in Proposition \ref{unique joint if these go to zero}, with respect to which $X$ admits a uniformly unique $l$-joint spreading model. This family has certain properties that $\mathscr{F}_0(X)$ fails when $X$ contains $\ell_1$ and this result will be used in the next section to prove that an Asymptotic $\ell_1$ space satisfies the UALS. We start with the following lemma.

\begin{lem}\label{lemma As lp}
	Let $X$ be a separable $C$-Asymptotic $\ell_p$ space, for $1\le p\le \infty$. Then there exists a countable collection $\mathscr{Y}$ of finite codimensional subspaces of $X$ such that, for every $\varepsilon>0$ and $n\in\N$, player (S) has a winning strategy in the game $G(p,n,C+\varepsilon)$ when choosing finite codimensional subspaces from $\mathscr{Y}$.
\end{lem}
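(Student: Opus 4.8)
### Proof Plan

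\textbf{The approach.} The goal is to produce a \emph{countable} family $\mathscr{Y}$ of finite codimensional subspaces of $X$ so that player (S) essentially loses nothing, up to an arbitrarily small $\varepsilon$, by being forced to play only from $\mathscr{Y}$. The source of this is separability: $X$ has a countable dense subset, hence also a countable family of finite subsets of $X^*$ that is, in an appropriate sense, norming-dense. The plan is to build $\mathscr{Y}$ as the collection of all subspaces of the form $\bigcap_{x^*\in G}\ker x^*$ where $G$ ranges over a fixed countable family $\mathcal{G}$ of finite subsets of $X^*$, and then show that any finite codimensional subspace $Y$ that (S) might want to play is \emph{approximated from inside} by some member of $\mathscr{Y}$ well enough that the $C$-equivalence estimates degrade by at most $\varepsilon$.

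\textbf{Key steps, in order.} First I would fix a dense sequence $(x_k)_k$ in $S_X$ and, using Hahn--Banach, a countable set $D^*=\{x_k^*\}_k\subset S_{X^*}$ with $x_k^*(x_k)=1$; more carefully, for the perturbation argument I want a countable $D^*\subset B_{X^*}$ that is norming, i.e. $\|x\|=\sup_{x^*\in D^*}x^*(x)$ for all $x\in X$ (available since $X$ is separable). Let $\mathcal{G}$ be the countable set of all finite subsets of $D^*$ and set $\mathscr{Y}=\{Y_G:=\bigcap_{x^*\in G}\ker x^* : G\in\mathcal{G}\}$, a countable family of finite codimensional subspaces. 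Second, I would record the standard perturbation fact: if $Y=\bigcap_{i=1}^m\ker f_i$ is finite codimensional and $\delta>0$, then because $D^*$ is norming and $\mathrm{span}\{f_1,\dots,f_m\}$ is finite dimensional, one can choose $g_1,\dots,g_m\in D^*$ with $\|f_i-g_i\|$ small; then $Y_G$ for $G=\{g_1,\dots,g_m\}$ has the same codimension and every unit vector of $Y_G$ lies within $\delta$ of the unit sphere of $Y$ (and vice versa), so that $Y_G$ is ``$\delta$-close'' to $Y$ in the gap metric. Third, I would run (S)'s given winning strategy for $G(p,n,C)$ and, at each of the $n$ rounds, intercept the subspace $Y$ that the strategy outputs and replace it by a $Y_G\in\mathscr{Y}$ that is $\delta_r$-close to it, for a sequence $\delta_1,\dots,\delta_n$ chosen so small that the accumulated perturbation of the $n$ vectors (V) produces changes the lower and upper $\ell_p^n$-estimates by at most $\varepsilon$. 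The point is that after (V) picks $y\in Y_G$, the vector $y$ is within $\delta_r$ of a norm-one vector $y'\in Y$, so feeding $y'$ (renormalized) back into the original strategy keeps us on a winning branch; the small-perturbation principle for basic-type estimates (the $\delta_r$ summable and controlled by $C$ and $n$) then yields that the actual sequence $(y_r)_{r=1}^n$ chosen in $Y_G$'s is $(C+\varepsilon)$-equivalent to the unit vector basis of $\ell_p^n$.

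\textbf{Main obstacle.} The delicate point is not the existence of $\mathscr{Y}$ but the \emph{bookkeeping in the adversarial game}: (S)'s strategy is a function of (V)'s past moves, and after we substitute $Y_G$ for $Y$, (V) plays inside $Y_G$, not inside $Y$, so to re-enter the strategy we must push (V)'s move back into $Y$ via the gap estimate and then argue that the strategy's response to the \emph{perturbed} history is still close to a subspace we can realize in $\mathscr{Y}$. This requires choosing the tolerances $\delta_r$ adaptively and decreasingly (depending on $n$, $C$, and on the norms of the functionals the strategy has produced so far), and verifying that the total drift over $n$ rounds stays below a threshold that the small-perturbations principle converts into an $\varepsilon$-loss in the $\ell_p^n$ constant. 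I expect this inductive estimate to be the technical heart of the argument; everything else is soft.
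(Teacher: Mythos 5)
There is a genuine gap, and it sits exactly where you place the weight of the argument: the claim that, because $D^*$ is a countable norming subset of $B_{X^*}$, one can pick $g_1,\dots,g_m\in D^*$ with $\|f_i-g_i\|$ small. A norming set need not be norm-dense in $B_{X^*}$, and if $X^*$ is non-separable no countable subset of $X^*$ can be; gap-metric closeness of $\ker f$ and $\ker g$ really does require norm-closeness of the normalized functionals. Concretely, in $X=\ell_1$ take $f=(1,1,1,\dots)\in\ell_\infty$ and let $g$ be any functional from the usual countable norming set of finitely supported sign vectors: every $e_n$ with $n$ beyond the support of $g$ is a unit vector of $\ker g$ at distance $1$ from $\ker f$, so $\ker f$ is not gap-approximated by any $Y_G\in\mathscr{Y}$. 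Thus your family $\mathscr{Y}$ cannot approximate arbitrary finite codimensional subspaces whenever $X^*$ is non-separable --- and that is precisely the case the lemma is designed for: it is invoked to handle Asymptotic $\ell_1$ spaces (which may contain $\ell_1$, e.g.\ $\ell_1$ itself is Asymptotic $\ell_1$), where $\mathscr{F}_0(X)$ is unavailable and $X^*$ is non-separable. For $1<p<\infty$ the space is reflexive, $X^*$ is separable, and your argument would go through with $D^*$ taken norm-dense in $B_{X^*}$; but that does not cover the statement as claimed.

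The fix, and the paper's actual route, is to discretize player (V)'s moves rather than player (S)'s subspaces. For each $n$ one pre-plays the winning strategy for $G(p,n,C)$ against a countable tree of (V)-moves: $Y^n_\emptyset$ is the strategy's opening subspace, $\{x^n_{\{i\}}\}_i$ is a countable dense subset of its unit sphere, $Y^n_{\{k_1,\dots,k_m\}}$ is the strategy's response to the history determined by $x^n_{\{k_1\}},\dots,x^n_{\{k_1,\dots,k_m\}}$, and at each node one again records a countable dense subset of the unit sphere of the new subspace. The countably many recorded subspaces $Y^n_F$ form $\mathscr{Y}$. In the real game (S) plays only recorded subspaces and, after (V) picks $y_{m+1}$, selects a recorded vector $x^n_{F\cup\{k_{m+1}\}}$ with $\|y_{m+1}-x^n_{F\cup\{k_{m+1}\}}\|<\tilde\varepsilon/n$, which keeps the play on a recorded branch of the tree; the perturbation now acts on vectors, which is legitimate because $X$ (not $X^*$) is separable, and a small-perturbation estimate converts the $C$-equivalence of $(x^n_{k_i})_{i=1}^n$ to $\ell_p^n$ into $(C+\varepsilon)$-equivalence of the actual $(y_i)_{i=1}^n$. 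Your third step is morally this argument, but it only becomes sound once the discretization is moved from the subspaces to the vectors.
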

\begin{proof}
If $X$ is $C$-asymptotic $\ell_p$ in the sense described in the paragraph above we shall, for fixed $n\in\mathbb{N}$, assume the role of player (V) and let player (S) follow a winning strategy during a multitude of outcomes in a game of $G(p,n,C)$. More accurately, we will describe how to define a collection of vectors of $X$ of the form $\{x^n_F: \emptyset \neq F\in[\mathbb{N}]^{\leq n}\}$ and a collection of closed finite codimensional subspaces of $X$ of the form $\{Y^n_F: F\in[\mathbb{N}]^{\leq n-1}\}$ that satisfy:
\begin{itemize}

\item[(i)] for all $F\in[\mathbb{N}]^{\leq n-1}$, the norm-closure  of $\{x^n_{F\cup\{i\}}:i>\max(F)\}$  is the unit sphere of $Y^n_F$ (here, $\max(\emptyset) = 0$) and
\item[(ii)] for every $\{k_1,\ldots,k_m\}$, in $[\mathbb{N}]^{\leq n}$ we have that
\[\left(Y^n_\emptyset ,x^n_{\{k_1\}}\right),\;\left(Y^n_{\{k_1\}},x^n_{\{k_1,k_2\}}\right),\ldots,\left(Y^n_{\{k_1,\ldots,k_{m-1}\}},x^n_{\{k_1,\ldots,k_m\}}\right)\]
is the outcome of a game of $G(p,n,C)$ after $m$ rounds in which player (S) has followed a winning strategy.

\end{itemize}
Player (S) initiates and he chooses a finite codimensional subspace $Y^n_\emptyset$. As player (V), we choose a dense subset $\{x^n_{\{i\}}: i\in\mathbb{N}\}$ of the unit sphere of $Y^n_\emptyset$. If for some $1\leq m<n$ we have chosen $\{x^n_F: \emptyset\neq F\in[\mathbb{N}]^{\leq m}\}$ and $\{Y^n_F: F\in[\mathbb{N}]^{\leq m-1}\}$, we complete the inductive step as follows: for every $F = \{k_1<\cdots<k_m\}$, by assumption (ii), player (S) may continue following a winning strategy and choose a closed finite codimensional subspace $Y$ such that for every unit vector $y\in Y$ the sequence $(x^n_{k_i})_{i=1}^m\phantom{}^\frown(y)$ is $C$-equivalent to the unit vector basis of $\ell_p^{m+1}$. Set $Y^n_F = Y$ and then, for all $i>\max(F)$, choose a unit vector $x^n_{F\cup\{i\}}$ in $Y_F$ such that the set $\{x^n_{F\cup\{i\}}:i>\max(F)\}$ is dense in the unit sphere of $Y^n_F$.

Set $\mathscr{Y}=\{ Y^n_F:n\in\N, F\in[\mathbb{N}]^{\leq n-1}\}$ and fix $\varepsilon>0$ and $n\in\N$. Let us also take $\tilde\varepsilon>0$ to be determined later. We will describe a winning strategy for player (S) in the game $G(p,n,C+\varepsilon)$, choosing finite codimensional subspaces from $\mathscr{Y}$. Player (S) initiates the game and chooses the subspace $Y_1=Y^n_\emptyset$ and player (V) chooses an arbitrary normalized vector $y_1$ from $Y_1$. Before the next turn, player (S) also chooses $k_1\in\N$ such that $\|y_1-x^n_{k_1}\|<\tilde\varepsilon/n$. Let $(Y_1,y_1),\ldots(Y_m,y_m)$ be the outcome in the first $m$ turns of the game, for $1\le m<n$, while player (S) has also chosen $k_1,\ldots,k_{m}\in\N$ with $\|y_i-x^n_{k_i}\|<\tilde\varepsilon/n$, for $1\le i\le m$. In the next turn of the game, player (S) chooses the subspace $Y_{m+1}=Y^n_{\{k_1,\ldots,k_{m}\}}$ and $k_{m+1}\in\N$ such that $\|y_{m+1}-x^n_{k_{m+1}}\|<\tilde\varepsilon/n$, where $y_{m+1}$ is the vector player (V) choose from $Y_{m+1}$. Hence if $(Y_1,y_1),\ldots(Y_n,y_n)$ is the final outcome of the game, notice that the sequence $(x^n_{k_i})_{i=1}^n$ is $C$-equivalent to the unit vector basis of $\ell_p^n$ and $\|y_i-x^n_{k_i}\|<\tilde\varepsilon/n$ for all $1\le i\le n$. If we take $1\leq A,B$ with $AB \leq C$ such that $(1/A)\leq (\sum_{i=1}^n|a_i|^p)^{1/p}\leq \|\sum_{i=1}^na_ix_{k_i}^n\| \leq B(\sum_{i=1}^n|a_i|^p)^{1/p}$ then we conclude that $(y_i)_{i=1}^n$ is $C(1+\tilde\varepsilon)/(1-\tilde\varepsilon C)$ equivalent to the unit vector basis of $\ell_p^n$. For $\tilde\varepsilon$ sufficiently small we deduce the conclusion.
\end{proof}

\begin{prop}
	\label{unique joint if these go to zero}
	Let $X$ be a separable $C$-Asymptotic $\ell_p$ space, for $1\le p\le \infty$. There exists a countable subset $\mathscr{A}$ of $X^*$ such that, if
	\[\mathscr{F}_{0,\mathscr{A}} = \left\{(x_n)_n:\;(x_n)_n\text{ is normalized and }\lim_nf(x_n) = 0\text{ for all }f\in\mathscr{A}\right\},\]
	then $X$ admits $\ell_p$ as a $C^2$-uniformly unique $l$-joint spreading model with respect to the family $\mathscr{F}_{0,\mathscr{A}}$.
\end{prop}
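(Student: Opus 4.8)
The plan is to read off $\mathscr{A}$ from the countable family of subspaces furnished by Lemma~\ref{lemma As lp}. Write each $Y\in\mathscr{Y}$ as $Y=\bigcap_{f\in A_Y}\ker f$ with $A_Y\subset X^*$ finite, and set $\mathscr{A}=\bigcup_{Y\in\mathscr{Y}}A_Y$, which is countable. The reason for this choice is the following consequence of Lemma~\ref{bslem}: if $(x_n)_n$ is normalized with $\lim_n f(x_n)=0$ for every $f\in\mathscr{A}$, then for each $Y\in\mathscr{Y}$ and each $\eta>0$ we have $d(x_n,Y)<\eta$ for all sufficiently large $n$, and since $\|x_n\|=1$ this means that for all large $n$ some unit vector of $Y$ lies within $2\eta$ of $x_n$. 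Thus a member of $\mathscr{F}_{0,\mathscr{A}}$ is, eventually and up to arbitrarily small perturbation, a legitimate move for player (V) against any move of player (S) taken from $\mathscr{Y}$.

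Fix now an $l$-tuple $((x^i_n)_n)_{i=1}^l$ of Schauder basic sequences in $\mathscr{F}_{0,\mathscr{A}}$ and, by Theorem~\ref{joint Brunel-Sucheston}, an infinite $M$ along which it generates an $l$-joint spreading model $(e^i_n)_{i=1,n\in\mathbb{N}}^l$ with some tolerance $(\delta_k)_k$; by the remarks following Definition~\ref{ljsm} we may shrink $M$ so that $(\delta_k)_k$ tends to $0$ as quickly as we please, and may repeat this at will. I will show that $(e^i_n)_{i=1,n\in\mathbb{N}}^l$ is $C$-equivalent to the unit vector basis of $\ell_p(\{1,\ldots,l\}\times\mathbb{N})$ (of $c_0$ when $p=\infty$); granting this, any two $l$-joint spreading models generated by $l$-tuples from $\mathscr{F}_{0,\mathscr{A}}$ are $C^2$-equivalent, and since $\mathscr{F}_{0,\mathscr{A}}$ is non-empty (it contains Schauder basic sequences: a diagonal argument over an enumeration of $\mathscr{A}$ produces normalized $x_n$ with $f(x_n)=0$ for the first $n$ functionals, and one passes to a basic subsequence) the proposition follows.

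To prove the claim, fix $k\in\mathbb{N}$ and $\varepsilon>0$, enumerate $\{1,\ldots,l\}\times\{1,\ldots,k\}$ as $(i_1,j_1),\ldots,(i_{lk},j_{lk})$ in the natural plegma order, and pick $\tilde\varepsilon>0$ small. Simulate a run of the game $G(p,lk,C+\varepsilon/2)$ in which player (S) follows the winning strategy of Lemma~\ref{lemma As lp} using only subspaces from $\mathscr{Y}$: at step $m$ player (S) presents $Y_m\in\mathscr{Y}$; by the first paragraph pick $n_m\in M$ with $n_m>\max\{n_{m-1},M(k)\}$ (setting $n_0=0$) together with a unit vector $y_m\in Y_m$ satisfying $\|x^{i_m}_{n_m}-y_m\|<\tilde\varepsilon$, let player (V) play $y_m$, and set $s_{i_m}(j_m)=n_m$. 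Then $n_1<\cdots<n_{lk}$ are elements of $M$, so $(s_i)_{i=1}^l$ is a strict plegma family in $[M]^k$ with $s_1(1)=n_1\ge M(k)$; the outcome $(y_m)_{m=1}^{lk}$ is $(C+\varepsilon/2)$-equivalent to the unit vector basis of $\ell_p^{lk}$; and since $\sum_m\|x^{i_m}_{n_m}-y_m\|<lk\,\tilde\varepsilon$, for $\tilde\varepsilon$ small enough a small-perturbation estimate makes $(x^{i_m}_{n_m})_{m=1}^{lk}$ likewise $(C+\varepsilon)$-equivalent to it. Hence for every $(a_{ij})_{i=1,j=1}^{l,k}\subset[-1,1]$ the generating property yields
\[
\Big|\,\Big\|\sum_{j=1}^k\sum_{i=1}^l a_{ij}e^i_j\Big\|_\ast-\Big\|\sum_{j=1}^k\sum_{i=1}^l a_{ij}x^i_{s_i(j)}\Big\|\,\Big|<\delta_k,
\]
while $\big\|\sum_{j=1}^k\sum_{i=1}^l a_{ij}x^i_{s_i(j)}\big\|$ lies between $(C+\varepsilon)^{-1}(\sum_{i,j}|a_{ij}|^p)^{1/p}$ and $(C+\varepsilon)(\sum_{i,j}|a_{ij}|^p)^{1/p}$ (reading $(\sum|a_{ij}|^p)^{1/p}$ as $\max_{i,j}|a_{ij}|$ when $p=\infty$). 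So, up to the additive error $\delta_k$, the first $lk$ vectors of $(e^i_n)_{i=1,n\in\mathbb{N}}^l$ behave like the unit vector basis of $\ell_p^{lk}$ with distortion at most $C+\varepsilon$; since $(e^i_n)_{i=1,n\in\mathbb{N}}^l$ is a fixed sequence, $\varepsilon$ is arbitrary, and $\delta_k$ may be made arbitrarily small by re-generating along a subsequence of $M$, letting $\varepsilon,\delta_k\to0$ gives the required uniform $C$-equivalence of $(e^i_n)_{i=1,n\in\mathbb{N}}^l$ with the unit vector basis of $\ell_p(\{1,\ldots,l\}\times\mathbb{N})$.

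The conceptual work is essentially already done by Lemma~\ref{lemma As lp}: its point is precisely that player (S) can be confined to a countable pool of subspaces, so that membership in $\mathscr{F}_{0,\mathscr{A}}$ — a restriction on only countably many functionals — is exactly enough to turn the sequences into valid approximate (V)-moves. What remains, and is the only place requiring care, is to run that approximate game while respecting the rigidity of strict plegma families (the chosen indices must be increasing, lie in $M$, and eventually exceed $M(k)$) and to track the perturbation constants; these are routine bookkeeping, as are the applications of Lemma~\ref{bslem} and the final passage to the limit.
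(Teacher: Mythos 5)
Your proposal is correct and follows essentially the same route as the paper: extract $\mathscr{A}$ from the kernels of the countable pool $\mathscr{Y}$ of Lemma~\ref{lemma As lp}, simulate the game $G(p,lk,C+\varepsilon)$ with (S) confined to $\mathscr{Y}$ and (V) playing small perturbations of the $x^i_n$ located via Lemma~\ref{bslem}, and transfer the $\ell_p^{lk}$-estimate to the joint spreading model. If anything, your bookkeeping is slightly more careful than the paper's, since you run the game in the natural plegma order of the index pairs (so that the chosen indices $n_1<\cdots<n_{lk}$ genuinely assemble into a strict plegma family), whereas the paper's enumeration $n(i,j)=(i-1)k+j$ would, read literally, fix all of $s_1$ before $s_2(1)$ and thereby clash with the interleaving that strictness requires.
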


\begin{proof}
	Let $\mathscr{Y}$ be as in Lemma \ref{lemma As lp} and, for each $Y\in\mathscr{Y}$, choose a finite subset $f_1^{Y},\ldots,f^{Y}_{k_{Y}}$ of $X^*$ such that $Y= \cap_{i=1}^{k_{Y}}\mathrm{ker}(f^{Y}_i)$ and set $\mathscr{A} = \cup_{Y\in\mathscr{Y}}\{f_1^{Y},\ldots,f^{Y}_{k_{Y}}\}$ which is a countable set. We will show that it is the desired one. To that end, let $l\in\mathbb{N}$ and $(x_n^1)_n,\ldots,(x_n^l)_n$ be sequences in $\mathscr{F}_{0,\mathscr{A}}$ generating an $l$-joint spreading model $(e^i_n)_{i=1,n}^l$. Let $k\in\mathbb{N}$, we will show that $(e^i_j)_{i=1,j=1}^{l,k}$ is $C$-equivalent to the unit vector basis of $\ell_p^{lk}$.  Set $m = lk$, fix $\varepsilon >0$ and, using Lemma \ref{bslem}, choose by induction normalized vectors $y_1,\ldots,y_m$ and $(s_i)_{i=1}^l$ in $S\hbox{-}Plm_l([\mathbb{N}]^k)$ such that
	\begin{itemize}
		\item[(i)] $(Y_1,y_1),\ldots,(Y_m,y_m)$ is the outcome of the game $G(m,p,C+\varepsilon)$.
		\item[(ii)] $Y_1,\ldots,Y_m\in\mathscr{Y}$.
		\item[(iii)] For $1\leq i\leq l$ and $1\leq j\leq k$, if we take $n(i,j) = (i-1)k + j$ then  $\|y_{n(i,j)} - x^i_{s_i(j)}\| \leq \varepsilon/m$.
	\end{itemize}  It follows that for any scalars $(a_{ij})_{i=1,j=1}^{l,k}$ with $|a_{ij}|\le 1$, we have that
	\[
	\Bigg| \Big\|\sum_{j=1}^k\sum_{i=1}^l a_{ij}x^i_{s_i(j)}\Big\| - \Big\|\sum_{j=1}^k\sum_{i=1}^l a_{ij}y_{n(i,j)}\Big\| \Bigg| < \varepsilon.
	\]
	As $(y_i)_{i=1}^m$ is $(C+\varepsilon)$-equivalent to the unit vector basis of $\ell_p^m$ the conclusion follows.
\end{proof}

The following is an immediate corollary of the above. In the general case, all Asymptotic $\ell_p$ spaces admit a uniformly unique joint spreading model with respect to the family of normalized weakly null Schauder basic sequences.

\begin{cor}\label{As lp with respect to F_0(X)}
Every Asymptotic $\ell_p$ space $X$, for $1\le p\le \infty$, admits a uniformly unique $l$-joint spreading model with respect to $\mathscr{F}_0(X)$. Moreover, every $l$-joint spreading model generated by a sequence from this family is equivalent to the unit vector basis of $\ell_p$ (or of $c_0$ if $p=\infty$).
\end{cor}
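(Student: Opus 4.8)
The plan is to reduce the statement to Proposition \ref{unique joint if these go to zero} by showing that, after passing to subsequences, any $l$-tuple of normalized weakly null Schauder basic sequences can be replaced by an $l$-tuple lying in the family $\mathscr{F}_{0,\mathscr{A}}$ while preserving the $l$-joint spreading model up to a fixed constant. First I would observe that $\mathscr{F}_{0,\mathscr{A}} \supset \mathscr{F}_0(X)$ is false in general (a weakly null sequence need not be $\mathscr{A}$-null for the specific countable $\mathscr{A}\subset X^*$ produced in the proof of Proposition \ref{unique joint if these go to zero}), so a genuine argument is needed; the key point is that $\mathscr{A}$ is \emph{countable}. So the main step is: given a normalized weakly null sequence $(x_n)_n$ in $X$ and a countable set $\mathscr{A}=\{f_1,f_2,\ldots\}\subset X^*$, there is a subsequence $(x_n)_{n\in L}$ with $\lim_{n\in L}f_k(x_n)=0$ for every $k$. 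This is a routine diagonalization: since $(x_n)_n$ is weakly null, $f_k(x_n)\to 0$ as $n\to\infty$ for each fixed $k$, so one extracts nested subsequences $L_1\supset L_2\supset\cdots$ making the first $k$ functionals vanish along $L_k$, and takes a diagonal $L$. Thus $(x_n)_{n\in L}\in\mathscr{F}_{0,\mathscr{A}}$.

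Next I would assemble this for an $l$-tuple. Let $(x_n^1)_n,\ldots,(x_n^l)_n$ be normalized weakly null Schauder basic sequences generating some $l$-joint spreading model $(e^i_n)_{i=1,n\in\N}^l$. By the diagonalization above applied successively (or simultaneously, since $\mathscr{A}$ is countable and there are finitely many sequences), pass to a common infinite subset $L\in[\N]^\infty$ so that each $(x_n^i)_{n\in L}$ belongs to $\mathscr{F}_{0,\mathscr{A}}$. By Remark following Definition \ref{ljsm}, item (iii) (stability of $l$-joint spreading models under passing to further subsequences of the generating set), the $l$-tuple $((x_n^i)_{n\in L})_{i=1}^l$ still generates $(e^i_n)_{i=1,n\in\N}^l$ as an $l$-joint spreading model. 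Now Proposition \ref{unique joint if these go to zero} applies to $((x_n^i)_{n\in L})_{i=1}^l$: it yields that for every $k$, the finite array $(e^i_j)_{i=1,j=1}^{l,k}$ is $C^2$-equivalent to the unit vector basis of $\ell_p^{lk}$ (or $c_0^{lk}$ when $p=\infty$). Letting $k\to\infty$ and using that the estimate is uniform in $k$, $(e^i_n)_{i=1,n\in\N}^l$ is $C^2$-equivalent to the unit vector basis of $\ell_p$ (resp.\ $c_0$).

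Finally, since the constant $C^2$ does not depend on $l$, on the choice of generating sequences, or on the limiting model, we conclude that any two $l$-joint spreading models generated by sequences from $\mathscr{F}_0(X)$ are $C^4$-equivalent (both being $C^2$-equivalent to $\ell_p$), which is exactly the assertion that $X$ admits a uniformly unique $l$-joint spreading model with respect to $\mathscr{F}_0(X)$, with the additional information about the identity of the model. I expect the only place requiring any care is the bookkeeping in the diagonalization when $l>1$: one must make sure the extracted subset $L$ works simultaneously for all $l$ sequences and all countably many functionals, but since a finite union of countable sets is countable this is immediate. There is no serious obstacle; the content is entirely carried by Proposition \ref{unique joint if these go to zero} and the countability of $\mathscr{A}$.
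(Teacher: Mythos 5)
Your overall route --- reduce to Proposition \ref{unique joint if these go to zero} and use the stability of $l$-joint spreading models under passing to subsequences --- is exactly what the paper intends when it calls the corollary immediate, and your argument does reach the correct conclusion. However, your opening observation is backwards: a weakly null sequence is null against \emph{every} functional in $X^*$, hence against every $f$ in the countable set $\mathscr{A}\subset X^*$, so the inclusion $\mathscr{F}_0(X)\subset\mathscr{F}_{0,\mathscr{A}}$ holds trivially. (The inclusion that genuinely fails is the reverse one, $\mathscr{F}_{0,\mathscr{A}}\subset\mathscr{F}_0(X)$, which is precisely why the paper works with $\mathscr{F}_{0,\mathscr{A}}$ rather than $\mathscr{F}_0$ in the Asymptotic $\ell_1$ case, where weakly null sequences may be scarce.) Consequently the entire diagonalization step you build is unnecessary: the given $l$-tuple already lies in $\mathscr{F}_{0,\mathscr{A}}$ and Proposition \ref{unique joint if these go to zero} applies directly. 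Your extraction of subsequences is valid and harmless, since passing to subsequences preserves the generated joint spreading model, so no damage is done --- but the ``genuine argument'' you announce is not needed. The one point worth making explicit, which you do not address, is the reduction to the separable case: Proposition \ref{unique joint if these go to zero} is stated for separable Asymptotic $\ell_p$ spaces, so one should note that a given $l$-tuple lives in a separable subspace $Z$ of $X$ and that $Z$ is $C$-Asymptotic $\ell_p$ with the same constant (player (S) intersects the finite codimensional subspaces of a winning strategy for $X$ with $Z$). With that in place, your final bookkeeping --- each model is $C^2$-equivalent to the unit vector basis of $\ell_p$ (or $c_0$), hence any two are $C^4$-equivalent with constants independent of $l$ --- is correct.
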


	\subsection{James Tree Space}
	We show that the James Tree space $JT$ admits a uniformly unique joint spreading model with respect to $\mathscr{F}_0(JT)$. This is however not true for joint spreading models with respect to $\mathscr{F}(JT)$ or $\mathscr{F}_b(JT)$.
		\begin{notn}
			We denote by $\dt$ the \textit{dyadic tree}, i.e. $\dt=\{0,1\}^{<\infty}$, ordered by the initial part order. We will use $S$ to denote segments of $\dt$ and $B$ to denote branches. For $m<n$, the \textit{band} $Q_{[m,n]}$ is the set $\{s\in\dt:m\le|s|\le n\}$. We set $c_{00}(\dt)$ to be the linear space of all eventually zero sequences $x:\dt\to\R$. For a segment $S$ of $\dt$ we denote by $S^*$ the linear functional on $c_{00}(\dt)$ defined as $S^*(x)=\sum_{s\in S}x(s)$.
		\end{notn}
		
		\begin{dfn}[\cite{J2}]
			On $c_{00}(\dt)$ we define the following norm
			\[\begin{split}
			\|x\|_{JT}=\sup\Bigg(\sum_{i=1}^n\Big(\sum_{s\in S_i}x(s)\Big)^2\Bigg)^{\frac{1}{2}}
			\end{split}\]
			where the supremum is taken over all finite  collections $S_1,\ldots,S_n$ of pairwise disjoint segments. The \textit{James Tree space}, denoted by $JT$, is the completion of $c_{00}(\dt)$ with respect to the above norm.
		\end{dfn}
		
		\begin{rems}\label{remnormjt} Let us observe, for later  use, that
			\begin{enumerate}
			\item[(i)]	The following set is norming for $JT$:
			\[\begin{split}
			W=\Bigg\{\sum_{i=1}^nb_iS^*_i:n\in\N,\sum_{i=1}^nb_i^2\le1, \{S_i\}_{i=1}^n\;\textnormal{pairwise disjoint segments} \Bigg\}.
			\end{split}\]
			\item[(ii)] Let $\varepsilon>0$, $x\in JT$ with $\|x\|=1$ and $\{S_i\}_{i\in I}$ be pairwise disjoint segments with the property that $|S^*_i(x)|\ge\varepsilon$ for every $i\in I$. Then $\#I\le{1}/{\varepsilon^2}$.			
			\item[(iii)] Let $S_1,\ldots,S_n$ be pairwise disjoint segments and $b_1,\ldots,b_n\in \R$, then
			\[
			\Big\|\sum_{i=1}^nb_iS^*_i\Big\|^2\le \sum_{i=1}^nb_i^2.
			\]
			\end{enumerate}
			
		\end{rems}
		
		We will prove the following theorem.
		
		\begin{thm}\label{jtunique}
			The space $JT$ admits a uniformly unique $l$-joint spreading model with respect to $\mathscr{F}_0(JT)$ and every $l$-joint spreading model generated by sequences from this family is $\sqrt{2}$-equivalent to the unit vector basis of $\ell_2$.
		\end{thm}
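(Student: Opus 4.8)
The plan is to follow the blueprint of the Amemiya--Ito theorem \cite{AI} and establish the two--sided estimate: for every $l$, every $l$--tuple $(x^i_n)_{i=1,n\in\N}^l$ of sequences from $\mathscr{F}_0(JT)$ generating an $l$--joint spreading model $(e^i_n)_{i=1,n\in\N}^l$, every $k\in\N$ and all scalars $(a_{ij})_{i=1,j=1}^{l,k}$,
\[
\Big(\sum_{i=1}^{l}\sum_{j=1}^{k}a_{ij}^{2}\Big)^{1/2}\;\le\;\Big\|\sum_{i=1}^{l}\sum_{j=1}^{k}a_{ij}e^i_j\Big\|_{\ast}\;\le\;\sqrt2\;\Big(\sum_{i=1}^{l}\sum_{j=1}^{k}a_{ij}^{2}\Big)^{1/2}.
\]
Since these constants are independent of $l$ and $k$, this says simultaneously that each such joint spreading model is $\sqrt2$--equivalent to the unit vector basis of $\ell_2$ and that any two of them are $2$--equivalent, which is exactly the uniform uniqueness in the statement. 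By Definition \ref{ljsm} it suffices, for a fixed $\varepsilon>0$, to pass to a suitable $M\in[\N]^{\infty}$ and verify the inequalities with $(1-\varepsilon)$ and $(\sqrt2+\varepsilon)$ in place of the constants, for all $k$, all $(a_{ij})\subset[-1,1]$ and all strict--plegma families $(s_i)_{i=1}^l$ with $M(k)\le s_1(1)$. The first reduction is to a block sequence. Interleaving the $l$ sequences into one normalized weakly null sequence, I use that weak nullity forces pointwise nullity and that the truncations of a vector of $JT$ to an initial band $Q_{[1,n]}$ and to a terminal band $Q_{[n,\infty)}$ are norm--non--increasing (split each disjoint segment accordingly and apply Remark \ref{remnormjt}(iii)); a routine gliding hump then lets me pass to subsequences and replace each $x^i_n$ by a summable perturbation $\tilde x^i_n$ so that all the $\tilde x^i_n$ are finitely supported in pairwise disjoint successive bands of $\dt$, still normalized, still weakly null. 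A summable perturbation changes neither the normalization nor the generated $l$--joint spreading model. As distinct pairs $(i,n)$ now occupy distinct bands, every strict--plegma selection $\{x^i_{s_i(j)}\}_{i=1,j=1}^{l,k}$ becomes a family $z_1<\dots<z_m$ (with $m=lk$) of pairwise disjointly supported, successive, normalized block vectors, so it remains only to estimate $\big\|\sum_{t=1}^{m}a_tz_t\big\|_{JT}$ above and below. Finally I pass to one further subsequence, in the spirit of Amemiya--Ito: since $JT$ is separable and each $z_t$ is finitely supported, only countably many branches of $\dt$ are ever relevant, and one diagonalizes so that every branch has an asymptotically negligible effect on the tail of the sequence --- it is precisely here that weak nullity is used essentially, and its failure for sequences running along a single branch is why the conclusion is false for $\mathscr{F}(JT)$ and $\mathscr{F}_b(JT)$.

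For the lower estimate, for each $t$ choose, by the definition of $\|\cdot\|_{JT}$, pairwise disjoint segments $S^t_1,\dots,S^t_{n_t}\subseteq\supp(z_t)$ (breaking each into connected components keeps it inside the support) with $\sum_{r}\big(S^t_r{}^{\ast}(z_t)\big)^{2}>1-\varepsilon$. Because the supports are pairwise disjoint, the whole family $\{S^t_r\}_{t,r}$ is pairwise disjoint, so by Remark \ref{remnormjt}(i) the functional
\[
\phi=\Big(\sum_{t}a_t^2\Big)^{-1/2}\sum_{t=1}^{m}\sum_{r=1}^{n_t}a_t\,S^t_r{}^{\ast}(z_t)\,S^t_r{}^{\ast}
\]
lies in the norming set $W$, and $\phi\big(\sum_t a_tz_t\big)\ge(1-\varepsilon)\big(\sum_t a_t^2\big)^{1/2}$; this is the routine direction. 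For the upper estimate fix a norming $\phi=\sum_k b_kS_k^{\ast}\in W$ with the $S_k$ pairwise disjoint finite segments and $\sum_k b_k^2\le1$. Writing $B_t=\supp(z_t)$ and letting $t_k$ be the least $t$ with $S_k^{\ast}(z_t)\neq0$, decompose $S_k=U_k\sqcup V_k$ with $U_k=S_k\cap B_{t_k}$ and $V_k=S_k\setminus B_{t_k}$ a segment issuing from level $\max(B_{t_k})$, so that
\[
\phi\Big(\sum_t a_tz_t\Big)=\sum_k b_k\,a_{t_k}\,U_k^{\ast}(z_{t_k})+\sum_k b_k\,V_k^{\ast}\Big(\sum_t a_tz_t\Big).
\]
For the first sum, the $U_k$ with a common value of $t_k$ are pairwise disjoint in that band, so Cauchy--Schwarz --- first inside each band using $\|z_t\|=1$, then over bands using $\sum_k b_k^2\le1$ --- bounds it by $\big(\sum_t a_t^2\big)^{1/2}$.

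The second sum is where the difficulty lies. The $V_k$ are pairwise disjoint and each traces a single branch, but the crude estimate that uses only this disjointness returns the trivial bound $\|\sum_t a_tz_t\|$, while iterating the first estimate returns a bound proportional to the largest number of bands a single $S_k$ meets, which is unbounded; so neither the triangle inequality nor a naive two--piece splitting of segment functionals gets below the constant $2$ (and in fact cannot produce any constant at all). The point of the Amemiya--Ito subsequence extracted in the reduction is exactly to tame this accumulation: it forces a long segment not to keep contributing along its branch, so that, after a careful bookkeeping that separates the role of a segment as originating in a band from its role in passing through later bands, the second sum is controlled by $\big(\sum_t a_t^2\big)^{1/2}$ with the correct weight and one obtains $\big|\phi(\sum_t a_tz_t)\big|\le(\sqrt2+\varepsilon)\big(\sum_t a_t^2\big)^{1/2}$; taking the supremum over $\phi\in W$ completes the proof. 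I expect the main obstacle to be precisely this last bookkeeping --- tracking how pairwise disjoint norming segments thread through the successive bands of the block sequence, and combining it with the weak-nullity-driven subsequence, so as to land on the sharp constant $\sqrt2$ rather than the $2$ that a careless argument produces. This is the $JT$--specific ingredient behind Amemiya--Ito, here carried over to the plegma--indexed, multidimensional setting, uniformly in $l$.
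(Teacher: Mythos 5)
Your setup coincides with the paper's: reduce to a level block family, obtain the lower bound by assembling pairwise disjoint norming segments inside each support, and for the upper bound split each norming segment at the first band it meets. The lower bound and the $U_k$-part of the upper bound are correct. But the argument stops exactly where the theorem lives: you never supply the mechanism that controls $\sum_k b_k V_k^*\big(\sum_t a_tz_t\big)$, and the extraction principle you invoke (``every branch has an asymptotically negligible effect on the tail'') is both too vague and not the statement actually needed. The required statement is a Ramsey-type lemma about \emph{initial segments}, not branches: for a weakly null level block family $(F_n)_n$ with $\sup_n\#F_n<\infty$ and any $\varepsilon>0$, one may pass to a subsequence along which each initial segment $S$ satisfies $|S^*(x)|\ge\varepsilon$ for some $x\in F_n$ for \emph{at most one} $n$. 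Its proof combines Ramsey's theorem, the bound $\#I\le 1/\varepsilon^2$ on disjoint segments acting with inner product $\ge\varepsilon$ against a single normalized vector, and the fact that a pointwise limit of initial segments is a branch, against which weak nullity is applied; this last step is the only place weak nullity enters and is why the conclusion fails for $\mathscr{F}(JT)$.

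Granting that lemma, one iterates it along a decreasing sequence $(\varepsilon_k)_k$ chosen so that $\sum_k 2^{q_{n_k}}\sum_{i\ge k}(i+1)\varepsilon_i<\varepsilon$, and the correct decomposition of a norming segment $S_j$ is into \emph{three} pieces, not two: its restriction $S_{j,1}$ to the band $i_{j,1}$ where it first bites, its restriction $S_{j,2}$ to the \emph{unique} later band $i_{j,2}$ where it can still satisfy $|S_j^*(x_{i_{j,2}})|\ge\varepsilon_{i_{j,1}}$, and the residue $S_{j,3}$. The constant $\sqrt2$ comes from a \emph{single} Cauchy--Schwarz applied to $S_{j,1}$ and $S_{j,2}$ jointly: each $j$ belongs to $J_i=\{j:i_{j,1}=i\text{ or }i_{j,2}=i\}$ for at most two values of $i$, hence $\sum_i\sum_{j\in J_i}b_j^2\le 2\sum_jb_j^2$ and the combined contribution is at most $\sqrt2\big(\sum_ta_t^2\big)^{1/2}$. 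Bounding the two sums separately by $\big(\sum_ta_t^2\big)^{1/2}$ each, as your text suggests, recovers only the constant $2$. The residue $S_{j,3}$ is absorbed into $\varepsilon$ via the count $\#\{j:i_{j,1}=i\}\le 2^{q_i}$ together with the summability condition above. These are precisely the contents of the ``bookkeeping'' you leave as an obstacle; without them the argument yields only $2$-equivalence, not the claimed $\sqrt2$.
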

			
			The proof of the theorem is a variant of the well known result due to I. Ameniya and T. Ito \cite{AI}, that every normalized weakly null sequence in $JT$ contains a subsequence which is $2$-equivalent to the usual basis of $\ell_2$. From this follows that every spreading model generated by a normalized weakly null sequence is $2$-equivalent to the unit vector basis of $\ell_2$. Our approach yields that every $l$-joint spreading model generated by sequences from $\mathscr{F}_0(JT)$ is equivalent to the unit vector basis of $\ell_2$ with equivalence constant $\sqrt{2}$, which as mentioned in \cite{HB}, \cite{Be} is the best possible.
			
		As a consequence of the fact that James space (see Definition \ref{james space}) is isometric to a subspace of $JT$ it follows that $J$ also admits a uniformly unique $l$-joint spreading model with respect to $\mathscr{F}_0(J)$.
		
		We break up the proof of the theorem into several lemmas and we start with the following Ramsey type result.

		\begin{dfn}\label{w0lbf}
		\label{dfnwnlbf}
		Let $(Q_{[p_n,q_n]})_n$ be successive bands in $\dt$ and $(F_n)_n$ be a sequence of finite subsets of $JT$. We will say that $(F_n)_n$ is a \textit{weakly null level block family} with respect to $(Q_{[p_n,q_n]})_n$ if the following hold.
		\begin{enumerate}
			\item[(i)] $\supp(x)\subset Q_{[p_n,q_n]}$ and $\|x\|=1$ for every $n\in\N$ and $x\in F_n$.
			\item[(ii)] The sequence $(x_n)_n$ is weakly null for any choice of $x_n\in F_n$.
		\end{enumerate}
		\end{dfn}
	
		\begin{lem}\label{jtramsey}
		Let $(F_n)_n$ be a weakly null level block family with $\sup_n\#F_n<\infty$. Then, for every $\varepsilon>0$, there exists an $L\in[\N]^\infty$ such that for every initial segment $S$ there exists at most one $n\in L$ with the property that $|S^*(x)|\ge\varepsilon$ for some $x\in F_n$.
		\end{lem}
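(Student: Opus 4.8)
The plan is to recast the conclusion as a disjointness statement and then extract $L$ by a Ramsey/König argument, with the only obstruction killed by the boundedness of branch functionals on $JT$. For each $n$ set
\[G_n=\big\{S\ \text{an initial segment of }\dt:\ |S^*(x)|\ge\varepsilon\ \text{for some }x\in F_n\big\},\]
a set of nodes of $\dt$ (identifying an initial segment $S$ with its top node $t(S)$). The required conclusion is precisely that the family $\{G_n\}_{n\in L}$ is pairwise disjoint. Put $d=\sup_n\#F_n<\infty$ and $R=\lceil d/\varepsilon^2\rceil$.

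\emph{Step 1 (per-band finiteness).} First I would show that for each $n$ there is a set $D_n$ of at most $R$ nodes at level $p_n$ such that every $S\in G_n$ has $t(S)$ passing through a node of $D_n$: fixing $x\in F_n$, the segments contained in $Q_{[p_n,q_n]}$, lying below \emph{distinct} nodes of level $p_n$, and with $|\cdot^*(x)|\ge\varepsilon$, are pairwise disjoint, so there are at most $1/\varepsilon^2$ of them by Remark \ref{remnormjt}(ii); take the union over $x\in F_n$. In particular, if an initial segment $S$ lies in $G_m\cap G_n$ with $m<n$, then $t(S)$ passes through some $w\in D_m$ at level $p_m$ and some $w'\in D_n$ at level $p_n$, so $w'$ is a descendant of $w$; moreover, since $|t(S)|\ge p_n>q_m$, the slice $\{s\preceq t(S)\}\cap Q_{[p_m,q_m]}$ is a \emph{full} band-$m$ slice, namely $\{w\to t(S)|_{q_m}\}$, and it is $\varepsilon$-large for some $x\in F_m$.

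\emph{Step 2 (weak nullity via branch functionals).} The crucial observation is that for any branch $B$ the functional $B^*:=\lim_h\big(\{s:s\preceq B|_h\}\big)^*$ is a well-defined element of the unit ball of $JT^*$: it is a pointwise limit on $c_{00}(\dt)$ of single-segment functionals, each of norm $\le 1$ by Remark \ref{remnormjt}(i),(iii). Hence, since every selection $(x_n)$ with $x_n\in F_n$ is weakly null, $B^*(x_n)\to0$ for each branch $B$; likewise each fixed initial segment lies in only finitely many $G_n$.

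\emph{Step 3 (Ramsey + chain extraction) — the heart of the proof.} I would colour a pair $\{m,n\}\subset\N$ by whether $G_m\cap G_n=\varnothing$. By Ramsey's theorem there is an infinite set $L$ that is either monochromatic empty — in which case $L$ is as desired and we are done — or monochromatic nonempty, i.e.\ $G_m\cap G_n\neq\varnothing$ for all $m<n$ in $L$. In the second case I would derive a contradiction: using Step 1 together with repeated applications of the pigeonhole principle (legitimate because each $D_n$ is finite, each $F_n$ is finite, and each level of $\dt$ is finite), thin $L$ to $\{m_1<m_2<\cdots\}$ and build an increasing chain of nodes
\[t(\varnothing)\prec\cdots\prec w_1\preceq u_1\prec w_2\preceq u_2\prec\cdots\]
so that for each $j$ the segment $\{w_j\to u_j\}$ is exactly the full band-$m_j$ slice $B^*\cap Q_{[p_{m_j},q_{m_j}]}$ of the branch $B^*$ determined by the chain, and is $\varepsilon$-large for some $x_{m_j}\in F_{m_j}$. (One extracts $\{w_j\to u_j\}$ from the full slice produced, via Step 1, by the pair $(m_j,m_{j+1})$ — this is the reason only the \emph{smaller} band of each pair is used, so that full slices, not merely partial ones, appear.) Then for the selection $(y_n)$ with $y_{m_j}=x_{m_j}$ we get $|B^*(y_{m_j})|=\big|(B^*\cap Q_{[p_{m_j},q_{m_j}]})^*(x_{m_j})\big|\ge\varepsilon$ for all $j$, contradicting Step 2.

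The main obstacle I anticipate is exactly the chain extraction in Step 3: one must simultaneously thread a single branch through the finite "active" sets $D_{m_j}$ \emph{and} through $\varepsilon$-large full slices, keeping the choices coherent across all the pairs $(m_j,m)$, $m\in L$. This needs an interleaved diagonal pigeonhole — at stage $j$ one must fix not only a node of $D_{m_j}$ and an element of $F_{m_j}$ but also the level-$q_{m_j}$ continuation $u_j$, and then pass to the infinite subset of $L$ for which the next band's active node can be chosen below $u_j$. The remaining routine points (reducing partial-slice witnesses to full-slice ones for the smaller band of a pair, and checking that restricting a vector of $JT$ to the descendants of a node is a norm-one projection, so that weak nullity is preserved under such restrictions — useful if one organizes the argument recursively instead of via Ramsey) I would leave to the write-up.
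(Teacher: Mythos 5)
Your Steps 1 and 2 and the Ramsey dichotomy are correct and coincide with the paper's opening moves, but the chain extraction in Step 3 --- which you rightly identify as the heart of the matter --- has a genuine gap. At stage $j$ you stabilize, over $n$ in an infinite set $L_{j-1}$, the full band-$m_j$ slice $\{w_j\to u_j\}$ of the witnessing segments $S_{m_j,n}$, and then propose to ``pass to the infinite subset of $L$ for which the next band's active node can be chosen below $u_j$''. No such subset need exist: the segments $S_{m_{j+1},n}$ are a priori unrelated to the segments $S_{m_j,n'}$, so nothing forces the band-$m_{j+1}$ entry node of $S_{m_{j+1},n}$ to be a descendant of $u_j$; the at most $R$ active nodes of $D_{m_{j+1}}$ could all lie on branches disjoint from the one through $u_j$. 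The only information linking two bands is carried by a single segment $S_{m,n}$, which is large on one witness from band $m$ and one from band $n$; pigeonholing with the lower index fixed never produces one segment that is simultaneously large on witnesses from two or more bands, which is what an infinite chain requires.

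The paper resolves exactly this point by pigeonholing in the opposite direction. For each fixed $n$, every $S_{m,n}$ with $m<n$ reaches level $p_n$, so its restriction to $Q_{[0,p_n]}$ is a full root-to-level-$p_n$ initial segment; distinct such restrictions give pairwise disjoint band-$n$ slices, all $\varepsilon$-large on elements of $F_n$, so by Remark \ref{remnormjt}(ii) there are at most $\mu=\max_n\#F_n/\varepsilon^2$ of them. Hence for each $n$ one obtains a single initial segment $S_n$ (up to level $p_n$) that is simultaneously $\varepsilon$-large on a witness from every band $m$ in a set $L^n_{i_0}$, and a pointwise-convergence diagonalization over $n$ (compactness of $\{0,1\}^{\dt}$, together with stabilizing the witnesses using $\#F_m\le d$) turns the segments $S_n$ into a branch $B$ with $|B^*(x_m)|\ge\varepsilon$ for infinitely many $m$, contradicting your Step 2. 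If you replace your stage-$j$ pigeonhole with this fixed-$n$ one, the rest of your write-up goes through.
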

\begin{proof}
	If the conclusion is false, using Ramsey's theorem from \cite{Ramsey}, we may assume that there exists $L\in[\N]^\infty$ such that, for every pair $m<n$ in $L$, there exist an initial segment $S_{m,n}$ and $x\in F_m$, $y\in F_n$ such that $|S^*_{m,n}(x)|\ge\varepsilon$ and $|S^*_{m,n}(y)|\ge\varepsilon$.\\\par
		{\em\underline{Claim}} : Set $\mu=\max_n\#F_n/\varepsilon^2$. Then $\#\{ S_{m,n}|_{[0,p_n]}:m\in L,m<n\}\le\mu$ for every $n\in L$, where for a segment $S$ and $p,q\in\N$ we denote $S|_{[p,q]}=S\cap Q_{[p,q]}$.
		\begin{proof}[Proof of Claim]\renewcommand{\qedsymbol}{}
			If $\#\{ S_{m,n}|_{[0,p_n]}:m\in L,m<n\}>\mu$ for some $n\in L$, then using the pigeon hole principle, we may find an $x\in F_n$ and $F\subset\{1,\ldots,n-1\}$ with $\#F>1/\varepsilon^2$ such that $|S_{m,n}(x)|\ge\varepsilon$ and the segments $S_{m,n}|_{[p_n,q_n]}$ are pairwise disjoint for $m\in F$. This contradicts item (ii) of Remark \ref{remnormjt}.
		\end{proof}

  Hence, for every $n\in L$, let $\{S_{m,n}|_{[0,p_n]}:m\in L,m<n\}=\{S^n_1,\ldots,S^n_{\mu(n)} \}$ with $\mu(n)\le\mu$ and set $L_i^n=\{m\in L:m<n\;\text{and }S_{m,n}|_{[0,p_n]}=S^n_i \}$, for $1\le i\le \mu(n)$, and $L^n_i=\emptyset$, for $\mu(n)<i\le \mu$. Notice that $\{m\in L:m<n\}=\cup_{i=1}^{\mu}L_i^n$ for all $n\in L$. Passing to a further subsequence we may assume that, for every $1\le i\le \mu$, $(L_i^n)_{n\in {L}}$ converges point-wise  and we denote that limit by $L_i$. Then it is easy to see that $L=\cup_{i=1}^\mu L_i$ and hence some $L_{i_0}$ is an infinite subset of $L$ such that, for every $n\in L_{i_0}$, there exists an initial segment $S_n$ such that, for all $m<n$ in $L_{i_0}$, we have that $|S^*_n(x_m)|\ge\varepsilon$ for some $x_m\in F_m$. Then there exist $M\in[L_{i_0}]^{\infty}$ and a sequence $(x_n)_{n\in M}$ with $x_n\in F_n$, such that $(S_n)_{n\in M}$ converges point-wise to a branch B and  $|S^*_m(x_n)|\ge\varepsilon$ for all $m>n$ in $M$. Hence $|B(x_n)|\ge\varepsilon$ for all $n\in M$, which contradicts item (ii) of Definition \ref{w0lbf}.
\end{proof}

\begin{lem}
	Let $\varepsilon>0$ and $(F_n)_n$ be a weakly null level block family with respect to $(Q_{[p_n,q_n]})_n$ and assume that $\sup_n\#F_n<\infty$. Then there exist an increasing sequence $(n_k)_k$ in $\N$ and a decreasing sequence $(\varepsilon_k)_k$ of positive reals such that
	\begin{enumerate}
		\item[(i)] For every $k\in\N$ and every initial segment $S$ there exists at most one $k'>k$ such that $|S^*(x)|\ge\varepsilon_{k}$ for some $x\in F_{n_{k'}}$.
		\item[(ii)] $\sum_{k=1}^\infty 2^{q_{n_k}}\sum_{i=k}^\infty (i+1)\varepsilon_{i}<\varepsilon$.
	\end{enumerate}
\end{lem}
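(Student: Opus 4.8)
The plan is to obtain $(n_k)_k$ and $(\varepsilon_k)_k$ by a single diagonal induction built on repeated applications of Lemma \ref{jtramsey}, being careful about the order in which the choices are made. We construct a decreasing chain $\mathbb{N}=M_0\supseteq M_1\supseteq M_2\supseteq\cdots$ of infinite subsets of $\mathbb{N}$ together with integers $n_1<n_2<\cdots$ and reals $\varepsilon_1>\varepsilon_2>\cdots>0$ so that, for each $k$, $n_k\in M_{k-1}\cap(n_{k-1},\infty)$, $M_k\subseteq M_{k-1}\cap(n_k,\infty)$, and $M_k$ has the separation property of Lemma \ref{jtramsey} at level $\varepsilon_k$, i.e. for every initial segment $S$ there is at most one $n\in M_k$ for which $|S^*(x)|\ge\varepsilon_k$ for some $x\in F_n$.

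For the inductive step assume $M_{k-1}$, $n_{k-1}$, $\varepsilon_{k-1}$ are given (with the conventions $n_0=0$, $\varepsilon_0=1$, $M_0=\mathbb{N}$). First pick any $n_k\in M_{k-1}$ with $n_k>n_{k-1}$; this fixes the endpoints $q_{n_1},\dots,q_{n_k}$. Next choose $\varepsilon_k\in(0,\varepsilon_{k-1})$ small enough that
\[
(k+1)\,\varepsilon_k\sum_{k'=1}^{k}2^{q_{n_{k'}}}<\varepsilon\,2^{-k-1}.
\]
Finally, observe that $(F_n)_{n\in M_{k-1},\,n>n_k}$ is again a weakly null level block family with $\sup\#F_n<\infty$, so Lemma \ref{jtramsey} applied with the positive number $\varepsilon_k$ produces an infinite $M_k\subseteq M_{k-1}\cap(n_k,\infty)$ enjoying the separation property at level $\varepsilon_k$. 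This completes the recursion.

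For property (i), fix $k$ and an initial segment $S$. Whenever $k'>k$ we have $n_{k'}\in M_{k'-1}\subseteq M_k$, since the $M_j$ decrease and $k'-1\ge k$; hence all of $\{n_{k'}:k'>k\}$ lies in $M_k$. By the defining property of $M_k$, at most one element $n$ of $M_k$ admits some $x\in F_n$ with $|S^*(x)|\ge\varepsilon_k$, so at most one index $k'>k$ does, which is exactly (i). For property (ii), interchanging the order of summation in the nonnegative double series and using, for each $i$, the estimate imposed on $\varepsilon_i$ at stage $i$,
\[
\sum_{k=1}^{\infty}2^{q_{n_k}}\sum_{i=k}^{\infty}(i+1)\varepsilon_i=\sum_{i=1}^{\infty}(i+1)\varepsilon_i\sum_{k=1}^{i}2^{q_{n_k}}<\sum_{i=1}^{\infty}\varepsilon\,2^{-i-1}=\frac{\varepsilon}{2}<\varepsilon.
\]

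The only genuinely delicate point is the ordering of the choices: the factor $2^{q_{n_k}}$ in (ii) multiplies the whole tail $\sum_{i\ge k}(i+1)\varepsilon_i$, so one cannot fix the decay rate of $(\varepsilon_i)$ before knowing the bands $Q_{[p_{n_k},q_{n_k}]}$. Committing to $n_k$ first, then to $\varepsilon_k$ (using that $q_{n_1},\dots,q_{n_k}$ are by then known), and only afterwards thinning the index set to $M_k$ via Lemma \ref{jtramsey} removes this apparent circularity; the rest is the routine bookkeeping of a diagonal argument.
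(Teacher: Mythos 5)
Your proof is correct and follows essentially the same route as the paper: an inductive diagonalization in which Lemma \ref{jtramsey} is applied at each stage to a nested sequence of infinite index sets, with $\varepsilon_k$ chosen only after $q_{n_1},\ldots,q_{n_k}$ are known. The only difference is cosmetic bookkeeping for (ii) — you interchange the order of summation against a geometric bound, whereas the paper maintains running partial-sum constraints against a fixed summable sequence $(\delta_m)$ — and both verifications are sound.
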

\begin{proof}
	Let $(\delta_n)_n$ be a sequence of positive reals such that $\sum_{n=1}^\infty\delta_n<\varepsilon$. We will construct $(n_k)_k$ and $(\varepsilon_k)_k$ by induction on $\N$ as follows. We set $n_1=1$ and $L_1=\N$ and choose $\varepsilon_1$ such that $2^{q_1}2\varepsilon_1<\delta_1$. Suppose that $n_1,\ldots,n_k$ and $\varepsilon_1,\ldots,\varepsilon_k$ have been chosen for some $k$ in $\N$. Then Lemma \ref{jtramsey} yields an $L_k\in[L_{k-1}]^\infty$ such that for every segment $S$ there exist at most one $n\in L_k$ with $|S^*(x)|\ge\varepsilon_k$ for some $x\in F_{n}$. We then choose $n_{k+1}\in L_k$ with $n_{k+1}>n_k$ and $\varepsilon_{k+1}<\varepsilon_k$ such that
	\begin{enumerate}
		\item[(a)] $2^{q_{n_{k+1}}}(k+2)\varepsilon_{k+1}<\delta_{k+1}$
		\item[(b)] $2^{q_{n_m}}\sum_{i=m}^{k+1}(i+1)\varepsilon_{i}<\delta_m$ for every $m\le k$.
	\end{enumerate}
	It is easy to see that $(n_k)_k$ and $(\varepsilon_k)_k$ are as desired.
\end{proof}

\begin{lem}\label{jtlast}
	Let $\varepsilon>0$ and $(\varepsilon_n)_n$ be a decreasing sequence of positive reals. Let also $(F_n)_n$ be a weakly null level block family with respect to $(Q_{[p_n,q_n]})_n$ and assume that $\sup_n\#F_n<\infty$ and that the following hold.
	\begin{enumerate}
		\item[(i)] For every $n\in\N$ and every initial segment $S$ there exists at most one $m>n$ such that $|S^*(x)|\ge\varepsilon_{n}$ for some $x\in F_{m}$.
		\item[(ii)] $\sum_{n=1}^\infty 2^{q_{n}}\sum_{i=n}^\infty (i+1)\varepsilon_{i}<\varepsilon$.
	\end{enumerate}
	Then for every  $n\in\N$ and every choice of $x_1,\ldots,x_n$ with $x_i\in F_i$ and scalars $a_1,\ldots,a_n$ we have that
	\[
	\Big(\sum_{i=1}^na_i^2\Big)^{\frac{1}{2}}\le\Big\|\sum_{i=1}^{n}a_ix_i\Big\| \le (\sqrt{2}+\varepsilon)\Big(\sum_{i=1}^na_i^2\Big)^{\frac{1}{2}}.
	\]
\end{lem}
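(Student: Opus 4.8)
The plan is to prove the two inequalities separately; the left one is elementary and the right one carries all the weight, and it is there that hypotheses (i) and (ii) are used.

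\textbf{The lower bound.} Since $\|x_i\|_{JT}=1$ and $\supp(x_i)\subseteq Q_{[p_i,q_i]}$, for any $\eta>0$ I would pick, for each $i\le n$, pairwise disjoint segments $S^i_1,\dots,S^i_{r_i}$ lying \emph{inside} the band $Q_{[p_i,q_i]}$ with $\sum_j (S^i_j{}^*(x_i))^2>1-\eta$; restricting any near-optimal family of segments for $x_i$ to the band does not change its action on $x_i$. Because the bands $Q_{[p_1,q_1]},\dots,Q_{[p_n,q_n]}$ are pairwise disjoint, $\bigcup_i\{S^i_j\}_j$ is a pairwise disjoint family of segments, and since $S^i_j{}^*(\sum_k a_kx_k)=a_i\,S^i_j{}^*(x_i)$, the form of the norming set (Remark \ref{remnormjt}) gives $\|\sum_i a_ix_i\|_{JT}^2\ge \sum_i a_i^2\sum_j (S^i_j{}^*(x_i))^2>(1-\eta)\sum_i a_i^2$. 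Letting $\eta\to 0$ yields the left-hand inequality; this uses neither (i) nor (ii).

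\textbf{The upper bound, setup.} Each $x_i$ is finitely supported, so the supremum defining $\|y\|_{JT}$, $y=\sum_{i=1}^n a_ix_i$, is attained; fix a pairwise disjoint family $S_1,\dots,S_N$. Writing $S_l^{(m)}=S_l\cap Q_{[p_m,q_m]}$ and $c_{l,m}=S_l^{(m)}{}^*(x_m)$, one has $S_l^*(y)=\sum_m a_mc_{l,m}$. For each fixed band $m$ the slices $(S_l^{(m)})_l$ are pairwise disjoint, so $\sum_l c_{l,m}^2\le\|x_m\|^2=1$; hence the ``diagonal'' part $\sum_l\sum_m a_m^2c_{l,m}^2=\sum_m a_m^2\sum_l c_{l,m}^2\le\sum_m a_m^2$. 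Everything reduces to bounding the cross-band contribution of $\sum_l S_l^*(y)^2$ by $\sum_m a_m^2+O(\varepsilon)\sum_m a_m^2$.

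\textbf{The upper bound, cross terms.} Let $a_l=\min\{m:S_l^{(m)}\neq\emptyset\}$ be the first band met by $S_l$ and $\hat S_l$ the initial segment from the root to $\max S_l$. The key structural observation is that for every $m>a_l$ one has $S_l^{(m)}=\hat S_l\cap Q_{[p_m,q_m]}$, so $c_{l,m}=\hat S_l^*(x_m)$; this is what bridges the fact that (i) speaks of \emph{initial} segments while the norm uses arbitrary ones (the first band $a_l$ must be treated on its own). Applying (i) to $\hat S_l$ with threshold levels $n=a_l,a_l+1,\dots$ in turn shows that among the bands $m>a_l$ only boundedly many (two per scale) have $|c_{l,m}|\ge\varepsilon_{a_l+j}$; sorting the values $|c_{l,m}|$, all but one ``dominant'' band above $a_l$ contribute at most $2\sum_{i\ge a_l}\varepsilon_i$ in $\ell_1$-norm. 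Thus $S_l^*(y)=(\text{contribution of band }a_l)+(\text{contribution of the dominant band above it})+E_l$ with $|E_l|\le 2\sum_{i\ge a_l}\varepsilon_i$. Squaring and summing, the two dominant contributions are each $\le\sum_m a_m^2$ after grouping the $l$'s by the relevant band and using within-band disjointness of the slices — this is the step that pins the constant at $2$, i.e. the $\sqrt2$. For the error, group the $l$'s by their first band $a$, use that at most $2^{q_a+1}$ pairwise disjoint segments can begin in $Q_{[p_a,q_a]}$, so $\sum_l E_l^2\le C\sum_a 2^{q_a}\big(\sum_{i\ge a}\varepsilon_i\big)^2$; since $\big(\sum_{i\ge a}\varepsilon_i\big)^2\le\big(\sum_i\varepsilon_i\big)\sum_{i\ge a}(i+1)\varepsilon_i\le\varepsilon\sum_{i\ge a}(i+1)\varepsilon_i$, hypothesis (ii) forces $\sum_l E_l^2\le C\varepsilon^2$. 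The remaining mixed terms between the dominant parts and $E_l$ are absorbed by Cauchy--Schwarz, and taking the supremum over all disjoint families yields $\|y\|\le(\sqrt2+\varepsilon')(\sum_m a_m^2)^{1/2}$; relabelling $\varepsilon'$ as $\varepsilon$ finishes.

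\textbf{Expected main obstacle.} The delicate point is the bookkeeping in the last paragraph: extracting from (i) the right ``one dominant band per scale'' statement and, crucially, arranging the error estimate so that exactly the factors present in (ii) — the number-of-segments / $\|x_m\|_1\le2^{q_m+1}$ factor $2^{q_a}$ and the band-index weight $(i+1)$ — suffice to make $\sum_l E_l^2$ of order $\varepsilon^2$; it is here, together with keeping the leading constant equal to $2$ rather than something larger (the improvement over the classical Amemiya--Ito bound), that the full strength of the hypotheses must be used.
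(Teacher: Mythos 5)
Your overall architecture matches the paper's: split each segment's action on $\sum_i a_ix_i$ into the contribution of its first band, the contribution of the unique ``dominant'' band supplied by hypothesis (i), and a tail controlled by (i), (ii) and the count $\#\{l:\ a_l=a\}\le 2^{q_a+1}$; the lower bound and the treatment of the error term are essentially the paper's (your ``two per scale'' count is off --- applying (i) at level $a_l+j$ bounds the number of bands at that scale by $j+1$, not $2$, which is exactly why the weight $(i+1)$ appears in (ii); but since your own Cauchy--Schwarz manipulation reintroduces that weight, this slip is reparable). The one genuinely different choice is that you work with the primal quantity $\sum_l (S_l^*(y))^2$ while the paper works with a norming functional $\sum_l b_lS_l^*$, $\sum_l b_l^2\le 1$, and splits it as $\sum_l b_l(S_{l,1}^*+S_{l,2}^*+S_{l,3}^*)$.

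The gap is at the step you yourself flag as the obstacle, and as written it does not deliver $\sqrt2$. From ``the two dominant contributions are each $\le\sum_m a_m^2$'', i.e.\ $\sum_l (a_{a_l}c_{l,a_l})^2\le\sum_m a_m^2$ and $\sum_l (a_{d_l}c_{l,d_l})^2\le\sum_m a_m^2$ separately, the best conclusion about $\sum_l(a_{a_l}c_{l,a_l}+a_{d_l}c_{l,d_l})^2$ is $4\sum_m a_m^2$ (triangle inequality in $\ell_2$, or $(\alpha+\beta)^2\le2\alpha^2+2\beta^2$ followed by the two separate bounds); that yields $\|y\|\le(2+\varepsilon)(\sum_m a_m^2)^{1/2}$, the classical Amemiya--Ito constant, not $\sqrt2+\varepsilon$. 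You never control the cross term between the two dominant parts. The missing idea is that the factor $2$ must enter only once, inside the square: apply Cauchy--Schwarz pointwise in $l$ to the two-term sum, $(a_{a_l}c_{l,a_l}+a_{d_l}c_{l,d_l})^2\le 2\left(a_{a_l}^2c_{l,a_l}^2+a_{d_l}^2c_{l,d_l}^2\right)$, then sum over $l$ grouping by band and use that, for each fixed band $m$, the slices $S_l^{(m)}$ over \emph{all} $l$ with $m\in\{a_l,d_l\}$ are jointly pairwise disjoint, so that $\sum_{l:\,m\in\{a_l,d_l\}}c_{l,m}^2\le\|x_m\|^2=1$ as a single combined sum (not $\le 2$); this gives $\sum_l(a_{a_l}c_{l,a_l}+a_{d_l}c_{l,d_l})^2\le 2\sum_m a_m^2$. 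This is precisely what the paper's computation with the sets $J_i=\{j:\ i_{j,1}=i\text{ or }i_{j,2}=i\}$ and the observation $\sum_i\sum_{j\in J_i}b_j^2\le 2\sum_j b_j^2$ accomplishes in dual form: the ``each segment hits at most two relevant bands'' factor appears under the outer square root and so costs only $\sqrt2$. Without this your proof proves a weaker statement than the lemma.
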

\begin{proof} Let us first observe that if $(x_n)_n$ is a sequence with each $x_n\in F_n$, then for every $n\in\N$ and every segment $S$ with $|M(x_{n-1})|< |\min S|\le| M(x_{n})|$, where for $x\in c_{00}(\mathscr{D})$ we denote $M(x)=\max\supp(x)$, the following hold due to (i).
	\begin{enumerate}
		\item[(a)] $\#\{i>n:|S^*(x_{i})|\ge\varepsilon_{n} \}\le 1$
		\item[(b)] $\#\{i>n:\varepsilon_{k-1}>|S^*(x_{i})|\ge\varepsilon_{k} \}\le k$ for every $k>n$.
	\end{enumerate}	
	Now for each $1\le i\le n$, there exist pairwise disjoint segments $S^i_1,\ldots,S^i_{m_i}$ such that $S_j^i\subset Q_{[p_i,q_i]}$ and $\sum_{j=1}^{m_i}(S^{i*}_j(x_i))^2=\|x_i\|$ and hence
	\[
	\Big\|\sum_{i=1}^na_ix_i\Big\|\ge \Big(\sum_{i=1}^na_i^2\sum_{j=1}^{m_i}\big( S_j^{i*}(x_i)\big)^2\Big)^{\frac{1}{2}}\ge \Big(\sum_{i=1}^na_i^2\Big)^{\frac{1}{2}}.
	\]
	Pick $S_1,\ldots,S_m$ pairwise disjoint segments and $b_1,\ldots,b_m$ reals with $\sum_{j=1}^mb_j^2\le1$. For given $1\le j\le m$, we will denote by $i_{j,1}$ the unique $1\le i\le n$ such that $|M(x_{i_{j,1}-1})|< |\min S_j|\le|M(x_{i_{j,1}})|$ and also by $i_{j,2}$ the unique, if there exists, $i_{j,1}<i\le n$ such that $|S_j^*(x_{i_{j,2}})|\ge\varepsilon_{i_{j,1}}$. We set $S_{j,k}=S_j\cap Q_{[p_{i_{j,k}},q_{i_{j,k}}]}$, for $k=1,2
	$, and $S_{j,3}=S_{j}\setminus(S_{j,1}\cup S_{j,2})$ and we also set $J_{i}=\{j:i_{j,1}=i\;\textnormal{or}\;i_{j,2}=i\}$, for $1\le i \le n$. Note that, by (a), each $j$ appears in $J_i$ for at most two $i$ and so $\sum_{i=1}^n\sum_{j\in J_i}b_j^2\le2\sum_{j=1}^mb_j^2$. We thus calculate:
	\begin{align*}
	&\Big|\sum_{j=1}^mb_jS^*_{j,1}\Big(\sum_{i=1}^na_ix_i\Big)+\sum_{j=1}^mb_jS^*_{j,2}\Big(\sum_{i=1}^na_ix_i\Big)\Big|=\Big|\sum_{i=1}^na_i\sum_{j\in J_i}b_jS^*_j(x_i)\Big|
	\\&\le\Big(\sum_{i=1}^na_i^2 \Big)^{\frac{1}{2}}\Big(\sum_{i=1}^n\big(\sum_{j\in J_i}b_jS^*_j(x_i)\big)^2 \Big)^{\frac{1}{2}}\le\Big(\sum_{i=1}^na_i^2 \Big)^{\frac{1}{2}}\Big(\sum_{i=1}^n\sum_{j\in J_i}b_j^2 \Big)^{\frac{1}{2}}\le\sqrt{2}\Big(\sum_{i=1}^na_i^2 \Big)^{\frac{1}{2}}.
	\end{align*}	
	Finally, we set $G_i=\{ j:|M(x_{i-1})|< |\min S_j|\le|M(x_i)|\}$ and we have that $\{1,\ldots,m \}=\cup_{i=1}^nG_i$. Notice that $\#G_i\le2^{q_i}$ and $|S^*_{j,3}(\sum_{k=1}^nx_k)|<\sum_{k=i}^\infty (k+1)\varepsilon_{k}$ for any $j\in G_i$. Then due to (b) and (ii), it follows that $\sum_{j=1}^m|S^*_{j,3}(\sum_{i=1}^nx_i)|<\varepsilon$ and hence we conclude that
	\[
	\Big|\sum_{j=1}^mb_jS^*_{j,3}\Big(\sum_{i=1}^na_ix_i\Big)\Big|=\Big|\sum_{i=1}^na_i\sum_{j=1}^mb_jS^*_{j,3}(x_i)\Big|<\varepsilon\Big(\sum_{i=1}^na_i^2 \Big)^{\frac{1}{2}}.
	\]
\end{proof}	

\begin{proof}[Proof of Theorem \ref{jtunique}]
	Let $(x^1_n)_n,\ldots,(x^l_n)_n$ in $\mathscr{F}_0(JT)$ be  such that $((x^i_n)_n)_{i=1}^l$ generates a sequence $(e^i_n)_{i=1,n\in\N}^l$ as an $l$-joint spreading model and by a sliding hump argument we may assume that each sequence $(x^i_n)_n$ is block. Hence we may choose $L\in[\N]^\infty$ such that the family $(F_n)_{n\in L}$, with $F_n=\{x_n^1,\ldots,x_n^l\}$, is a weakly null level block family in $JT$ which satisfies (i) and (ii) in Lemma \ref{jtlast}. Then, since $((x^i_n)_{n\in L})_{i=1}^l$ generates also generates $(e^i_n)_{i=1,n\in\N}^l$ as an $l$-joint spreading model, we conclude that $(e^i_n)_{i=1,n\in\N}^l$ is $\sqrt{2}$-equivalent to the usual basis of $\ell_2$ and therefore any two $l$-joint spreading models, generated by sequences from $\mathscr{F}_0(JT)$, are $2$-equivalent.
\end{proof}
		
		\begin{rem}
            The notion of asymptotic models, which appeared in \cite{HO}, also concerns the asymptotic behavior of countably many basic sequences. Although the asymptotic models are different from joint spreading models, as B. Sari pointed out, a Banach space admits a uniformly unique asymptotic model with respect to a family $\mathscr{F}$ if and only if it admits a uniformly unique joint spreading model with respect to $\mathscr{F}$.
        \end{rem}

	 \section{uniform approximation of bounded operators}
	 We now pass to the study of the UALS property on certain classes of spaces. First we consider spaces with very few operators, namely spaces with the scalar-plus-compact property. The second class includes spaces admitting a uniformly unique joint spreading model with respect to certain families of Schauder basic sequences. Here, the notion of joint spreading models and the UALS property come together in the sense that the first property yields the second one. The third subsection is devoted to the study of the UALS property under duality. A consequence of the main result, Theorem \ref{theorem UALS by duality}, is that the spaces $C(K)$, with $K$ countable compact, satisfy the UALS. In the fourth subsection we show that the spaces $L_p[0,1]$, for $1\leq p\leq \infty$ and $p\neq 2$, and $C(K)$ for uncountable compact metric spaces $K$ fail the UALS property. We close with some final remarks and open problems.

	 \begin{dfn}\label{uals}
	 	We will say that a Banach space $X$ satisfies the {\em Uniform Approximation on Large Subspaces} ({\em UALS} ) property  if there exists $C>0$ such that the following is satisfied. For every convex compact subset $W$ of $\mathcal{L}(X)$, every $A\in\mathcal{L}(X)$ and $\varepsilon>0$ with the property that, for every $x\in B_X$, there is a $B\in W$ such that $\|A(x)-B(x)\|\le\varepsilon$, then there exist a finite codimensional subspace $Y$ of $X$ and a $B\in W$ such that $\|(A-B)|_Y\|_{\mathcal{L}(Y,X)}\le C\varepsilon$.
	 \end{dfn}
	
	 \begin{dfn}\label{ualssat}	
	 	A Banach space $X$ will be called {\em UALS-saturated} if there exists $C>0$ such that for every convex compact subset $W$ of $\mathcal{L}(X)$, every $A\in\mathcal{L}(X)$ and $\varepsilon>0$ with the property that, for every $x\in B_X$, there is a $B\in W$ such that $\|(A-B) x\|\le\varepsilon$, it holds that for every subspace $Y$ of $X$ there exists a further subspace $Z$ such that $\|(A-B)|_Z\|_{\mathcal{L}(Z,X)}\le C\varepsilon$, for some $B\in W$.
	 \end{dfn}
	
	\subsection{The UALS Property for Compact Operators} The first class of spaces satisfying the UALS includes spaces with very few operators. We prove that Banach spaces with the scalar-plus-compact property satisfy the UALS and are in fact UALS-saturated. Hence, the main result from \cite{A} yields that a large class of spaces, that includes all superreflexive spaces, embed into spaces that satisfy the UALS. We start with the following variation of Mazur's theorem \cite[Theorem 1.a.5]{LT}.
		
		\begin{lem}
			Let $X$ be a Banach space, $T\in\mathcal{L}(X)$ and let $\varepsilon>0$ be such that $\|T|_Y\|_{\mathcal{L}(Y,X)}> \varepsilon$ for every subspace $Y$ of $X$ of finite codimension. Then there exists a normalized sequence $(x_n)_n$ in $X$ such that $(Tx_n)_n$ is seminormalized Schauder basic.
		\end{lem}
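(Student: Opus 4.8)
The plan is to imitate the classical Mazur construction of a Schauder basic sequence (as in \cite[Theorem 1.a.5]{LT}), carrying the operator $T$ along and using the hypothesis on the norms $\|T|_Y\|$ to keep the images $Tx_n$ bounded away from zero.

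First I would fix a sequence $(\delta_n)_n$ in $(0,1)$ with $C:=\prod_n(1-\delta_n)^{-1}<\infty$ and build, by induction on $n$, normalized vectors $x_n\in X$ as follows. Since $X$ itself is finite codimensional in $X$, the hypothesis gives $\|T\|=\|T|_X\|>\varepsilon$, so there is $x_1\in S_X$ with $\|Tx_1\|>\varepsilon$. Assuming $x_1,\dots,x_n$ are chosen, set $W_n=\spn\{Tx_1,\dots,Tx_n\}$, pick a finite $\delta_n$-net $G_n$ of the (compact) unit sphere $S_{W_n}$, and for each $w\in G_n$ choose a norming functional $w^\ast\in X^\ast$ with $\|w^\ast\|=1$ and $w^\ast(w)=1$. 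The key move is to let $Y_{n+1}=\bigcap_{w\in G_n}\ker(T^\ast w^\ast)$, which is a finite codimensional subspace of $X$; the hypothesis then yields $\|T|_{Y_{n+1}}\|_{\mathcal{L}(Y_{n+1},X)}>\varepsilon$, so I can select $x_{n+1}\in Y_{n+1}$ with $\|x_{n+1}\|=1$ and $\|Tx_{n+1}\|>\varepsilon$.

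Next I would verify the near-disjointness estimate underlying Mazur's argument: by construction $w^\ast(Tx_{n+1})=(T^\ast w^\ast)(x_{n+1})=0$ for every $w\in G_n$, hence $\|w+aTx_{n+1}\|\ge|w^\ast(w)|=1$ for every scalar $a$; approximating an arbitrary $v\in S_{W_n}$ by an element of $G_n$ and rescaling gives $\|v\|\le(1-\delta_n)^{-1}\|v+aTx_{n+1}\|$ for all $v\in W_n$ and all scalars $a$. Iterating this inequality across the construction shows, just as in the classical case, that $(Tx_n)_n$ is Schauder basic with basis constant at most $C$ (in particular the $Tx_n$ are linearly independent, so each $W_n$ is genuinely $n$-dimensional and the induction does not stall). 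Since $\varepsilon<\|Tx_n\|\le\|T\|\,\|x_n\|=\|T\|$ for every $n$, the sequence $(Tx_n)_n$ is seminormalized while $(x_n)_n$ is normalized, which is exactly what is claimed.

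I do not expect a genuine obstacle here; the only point that needs care is that the auxiliary vectors must be constrained to lie in a \emph{finite codimensional} subspace so that the hypothesis $\|T|_Y\|>\varepsilon$ can be invoked — this is why one intersects the kernels of the functionals $T^\ast w^\ast$ rather than those of the $w^\ast$ themselves.
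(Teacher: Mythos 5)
Your proposal is correct and follows essentially the same route as the paper: both run Mazur's induction with a finite norming set (or net) $G_n$ for $\spn\{Tx_1,\dots,Tx_n\}$ and pick the next vector $x_{n+1}$ in the finite codimensional subspace $\bigcap_{g\in G_n}\ker(T^*g)$, where the hypothesis supplies a unit vector with $\|Tx_{n+1}\|>\varepsilon$. The paper leaves the verification that $(Tx_n)_n$ is Schauder basic implicit, whereas you spell it out; the details you give are the standard ones and are fine.
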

		\begin{proof}
			Let $\delta>0$. Pick $x_1$ in the unit sphere of $X$ with $\|Tx_1\|\ge\varepsilon$ and assume that $x_1,\ldots,x_n$ have been chosen for some $n\in\N$. Let $G$ be a finite subset of $X^*$ such that, for every $x\in\spn\{Tx_1,\ldots,Tx_n\}$, we have that $\|x\|\le(1+\delta)\max\{g(x):g\in G\}$ and choose $x_{n+1}$ in the unit sphere of $\cap_{g\in G}\ker T^*g$ with $\|Tx_{n+1}\|\ge\varepsilon$. It follows quite easily that  $(Tx_n)_n$ is a Schauder basic sequence.
		\end{proof}
		
		\begin{prop}\label{compact operators inf is zero over codim}
			Let $X$ be a Banach space and $T\in\mathcal{L}(X)$ be a compact operator. Then $\inf \|T|_Y\|_{\mathcal{L}(Y,X)}=0$, where the infimum is taken over all subspaces $Y$ of $X$ of finite codimension.
		\end{prop}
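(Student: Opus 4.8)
The plan is to argue by contradiction, feeding a quantitative lower bound on $T$ into the lemma proved immediately above. Suppose the conclusion fails, so that $c := \inf_Y \|T|_Y\|_{\mathcal{L}(Y,X)} > 0$, the infimum being taken over all closed subspaces $Y$ of $X$ of finite codimension. If $X$ is finite dimensional this is already impossible, since then $\{0\}$ is a finite codimensional subspace on which $T$ restricts to the zero map; so I may assume $X$ is infinite dimensional, in which case every finite codimensional $Y$ is itself infinite dimensional. Then $\|T|_Y\|_{\mathcal{L}(Y,X)} \ge c > c/2$ for every such $Y$, so the hypothesis of the preceding lemma holds with $\varepsilon = c/2$, and it produces a normalized sequence $(x_n)_n$ in $X$ for which $(Tx_n)_n$ is seminormalized Schauder basic; say it has basis constant $K$ and $0 < \delta \le \|Tx_n\| \le M$ for all $n$.

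Next I would invoke the standard fact that a seminormalized basic sequence is uniformly separated in norm. Indeed, for $n < m$, expanding $Tx_n - Tx_m$ along the basic sequence $(Tx_k)_k$ and applying the partial sum projection onto the first $n$ coordinates (which has norm at most $K$) gives $Tx_n$, whence $\|Tx_n - Tx_m\| \ge \|Tx_n\|/K \ge \delta/K$ for all $n \ne m$. Consequently $(Tx_n)_n$ has no norm-Cauchy subsequence.

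Finally I would use compactness of $T$: since $(x_n)_n \subset B_X$, the set $\{Tx_n : n \in \N\}$ lies in the relatively compact set $\overline{T(B_X)}$, so $(Tx_n)_n$ has a norm-convergent, hence norm-Cauchy, subsequence. This contradicts the previous paragraph, and therefore $c = 0$, i.e. $\inf_Y \|T|_Y\|_{\mathcal{L}(Y,X)} = 0$.

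I do not expect a serious obstacle here: the substantive work has already been carried out in the preceding lemma (the iterative Mazur-type construction of the basic sequence out of the hypothesis $\|T|_Y\| > \varepsilon$), and the only point in the present argument deserving an explicit line is the uniform separation of a seminormalized basic sequence, which is an elementary consequence of the uniform boundedness of the coordinate projections. The rest is bookkeeping around the definition of compactness.
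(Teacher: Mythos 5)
Your proposal is correct and follows the same route as the paper: assume the infimum is positive, invoke the preceding Mazur-type lemma to extract a normalized $(x_n)_n$ with $(Tx_n)_n$ seminormalized Schauder basic, and contradict compactness of $T$. The only difference is that you spell out the uniform separation of a seminormalized basic sequence, which the paper leaves implicit.
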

		\begin{proof}
			Suppose that the conclusion is false, then the previous lemma yields that $T[B_X]$ contains a seminormalized Schauder basic sequence and this contradicts the fact that $T$ is compact.
		\end{proof}
		
		\begin{notn}
			Let $X$ be a Banach space, we will denote by $\mathcal{K}(X)$ the ideal of all compact operators in the unital algebra $\mathcal{L}(X)$.
		\end{notn}
		
		\begin{cor}
			Let $X$ be a Banach space, $W$ be a compact subset of $\mathcal{K}(X)$ and $A\in\mathcal{L}(X)$. Assume that there exists $\varepsilon>0$, such that for every $x\in B_X$, there is a $B\in W$ such that $\|A(x)-B(x)\|\le\varepsilon$. Then, for every $\delta>0$, there exists a finite codimensional subspace $Y$ of $X$ such that $\|(A-B)|_{Y}\|_{\mathcal{L}(Y,X)}\le\varepsilon+\delta$ for all $B\in W$.
		\end{cor}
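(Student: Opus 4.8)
The plan is to reduce everything to the already-proved Proposition~\ref{compact operators inf is zero over codim}, using the norm-compactness of $W$ only to replace a single compact operator by a finite net, and then to use the triangle inequality to convert the pointwise bound on $A$ into a uniform bound over $W$ on one finite-codimensional subspace. (If $W=\emptyset$ the hypothesis forces $B_X=\emptyset$, i.e.\ $X=\{0\}$, which is trivial, so I may assume $W\neq\emptyset$.)

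First I would fix $\delta>0$ and, using that $W$ is norm-compact in $\mathcal{K}(X)$, choose a finite $\delta/4$-net $B^1,\dots,B^m\in W$. Each $B^j$ is a compact operator, so Proposition~\ref{compact operators inf is zero over codim} provides a finite-codimensional subspace $Y_j$ of $X$ with $\|B^j|_{Y_j}\|_{\mathcal{L}(Y_j,X)}<\delta/4$. Setting $Y=\bigcap_{j=1}^m Y_j$, which is again of finite codimension, I obtain $\|B^j|_Y\|<\delta/4$ for every $j$, and hence, approximating an arbitrary $B\in W$ by some net point $B^j$ with $\|B-B^j\|<\delta/4$, the estimate $\|B|_Y\|\le\|B^j|_Y\|+\|B-B^j\|<\delta/2$ for every $B\in W$.

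Next I would invoke the pointwise hypothesis on this $Y$. For $x$ in the unit ball of $Y$ (in particular $x\in B_X$) pick $B\in W$ with $\|A(x)-B(x)\|\le\varepsilon$; then $\|A(x)\|\le\|(A-B)(x)\|+\|B(x)\|\le\varepsilon+\|B|_Y\|<\varepsilon+\delta/2$, so $\|A|_Y\|_{\mathcal{L}(Y,X)}\le\varepsilon+\delta/2$. Finally, for an arbitrary $B\in W$ this yields $\|(A-B)|_Y\|_{\mathcal{L}(Y,X)}\le\|A|_Y\|+\|B|_Y\|<(\varepsilon+\delta/2)+\delta/2=\varepsilon+\delta$, with the same $Y$ working for every $B\in W$, which is exactly the assertion.

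I do not expect a genuine obstacle here: the only point needing care is the passage from ``each compact operator is eventually small on a finite-codimensional subspace'' to ``all operators of $W$ are simultaneously small on a single finite-codimensional subspace,'' and this is precisely what the finiteness of the net together with the stability of finite codimension under finite intersections delivers. Thus no idea beyond Proposition~\ref{compact operators inf is zero over codim} and the compactness of $W$ is required.
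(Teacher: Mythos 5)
Your proof is correct and follows essentially the same route as the paper: take a finite net of $W$, apply Proposition \ref{compact operators inf is zero over codim} to each net point to get a single finite-codimensional $Y$ on which all of $W$ is uniformly small, then use the pointwise hypothesis to bound $\|A|_Y\|$ and conclude by the triangle inequality. Your constant bookkeeping is slightly tighter (you land exactly at $\varepsilon+\delta$ where the paper gets $\varepsilon+4\delta$ for a $\delta$-net), but this is immaterial since $\delta$ is arbitrary.
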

		\begin{proof}
			Let $\delta>0$ and $\{B_i\}_{i=1}^n$ be a ${\delta}$-net of W. Applying Proposition \ref{compact operators inf is zero over codim}, we choose a finite codimensional subspace $Y$ of $X$ such that $\|B_i|_Y\|<\delta$ for every $1\le i\le n$. Note that $\|B|_Y\|\le 2\delta$ for all $B\in W$. Then, for every $x$ in the unit ball of $Y$, there is a $B$ in $W$ such that $\|A(x)-B(x)\|\le\varepsilon$ and hence $\|A(x)\|<\varepsilon+2\delta$. That is, $\|A|_Y\|\le\varepsilon+2\delta$. Therefore $\|(A-B)|_Y\|_{\mathcal{L}(Y,X)}\le\varepsilon+4\delta$ for every $B\in W$.
		\end{proof}

		\begin{thm}\label{apspc}
			Every Banach space with the scalar-plus-compact property satisfies the  UALS.
		\end{thm}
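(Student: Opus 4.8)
The plan is to use the fact, recorded in Proposition \ref{compact operators inf is zero over codim}, that a compact operator has arbitrarily small norm when restricted to a suitable finite codimensional subspace; consequently, on such a subspace an operator behaves like its ``scalar part''. Concretely, the scalar-plus-compact property says that the Calkin algebra $\mathcal{L}(X)/\mathcal{K}(X)$ is one dimensional, so every $A\in\mathcal{L}(X)$ decomposes uniquely as $A=\mu(A)I+K_A$ with $\mu(A)$ a scalar and $K_A\in\mathcal{K}(X)$; here $\mu$ is linear with $|\mu(A)|\le\|A\|$ (it factors through the norm-one quotient map onto the Calkin algebra, identified with the scalar field via $I\mapsto 1$), and hence $A\mapsto K_A=A-\mu(A)I$ is continuous. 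Throughout, $X$ is infinite dimensional, so every finite codimensional subspace is nonzero.

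Fix $W\subset\mathcal{L}(X)$ convex and compact, $A\in\mathcal{L}(X)$, and $\varepsilon>0$ as in Definition \ref{uals}, and fix a small $\delta>0$ to be specified at the end. Since $B\mapsto K_B$ is continuous, $\{K_B:B\in W\}$ is a compact subset of $\mathcal{K}(X)$. Arguing exactly as in the proof of the corollary preceding this theorem --- take an operator-norm $\delta$-net $B_1,\dots,B_n$ of $W$, apply Proposition \ref{compact operators inf is zero over codim} to each $K_{B_i}$ and to $K_A$, and intersect the resulting finite codimensional subspaces --- we obtain a finite codimensional subspace $Y$ of $X$ with $\|K_A|_Y\|<\delta$ and $\|K_{B_i}|_Y\|<\delta$ for all $i$. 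For an arbitrary $B\in W$ pick $B_i$ with $\|B-B_i\|<\delta$; then $\|K_B-K_{B_i}\|\le\|B-B_i\|+|\mu(B)-\mu(B_i)|\le 2\|B-B_i\|<2\delta$, so $\|K_B|_Y\|<3\delta$ for every $B\in W$.

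Now $Y$ is infinite dimensional, so choose a unit vector $e\in Y\subset B_X$. By hypothesis there is $B^*\in W$ with $\|Ae-B^*e\|\le\varepsilon$; writing $Ae-B^*e=(\mu(A)-\mu(B^*))e+(K_A-K_{B^*})e$ and using $e\in Y$ gives $|\mu(A)-\mu(B^*)|\le\varepsilon+\|K_A|_Y\|+\|K_{B^*}|_Y\|<\varepsilon+4\delta$. With this \emph{same} $B^*$, for every $x$ in the unit ball of $Y$,
\[\|(A-B^*)x\|\le|\mu(A)-\mu(B^*)|\,\|x\|+\|K_A|_Y\|+\|K_{B^*}|_Y\|<(\varepsilon+4\delta)+\delta+3\delta=\varepsilon+8\delta,\]
so that $\|(A-B^*)|_Y\|_{\mathcal{L}(Y,X)}\le\varepsilon+8\delta$. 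Taking $\delta=\varepsilon/8$ yields $\|(A-B^*)|_Y\|_{\mathcal{L}(Y,X)}\le 2\varepsilon$, and since $W$, $A$, $\varepsilon$ were arbitrary this shows $X$ satisfies the UALS with constant $C=2$.

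The only delicate point --- and hence the main obstacle --- is the uniformity over the entire, possibly infinite, family $W$: this is precisely what the compactness of $W$ provides, via the net argument that yields a single finite codimensional $Y$ on which every $K_B$, $B\in W$, is simultaneously small; after that the pointwise hypothesis applied to a single unit vector of $Y$ is enough to locate $\mu(A)$ to within $O(\varepsilon)$ inside $\mu(W)$. Note that convexity of $W$ is not used in this argument; running the same reasoning with $Y$ replaced by $Y\cap Z$ for an arbitrary subspace $Z$ of $X$ shows, moreover, that spaces with the scalar-plus-compact property are UALS-saturated.
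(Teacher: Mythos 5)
Your proof is correct and follows essentially the same route as the paper's: decompose each operator as scalar-plus-compact, use a $\delta$-net of the compact set $W$ together with Proposition \ref{compact operators inf is zero over codim} to find one finite codimensional subspace $Y$ on which all the compact parts are uniformly small, and then test the pointwise hypothesis at a single unit vector of $Y$ to pin down the scalar part of $A$. The only cosmetic differences are that you return the operator $B^*$ furnished by the hypothesis rather than a net element, and you spell out the continuity of $B\mapsto K_B$, which the paper leaves implicit.
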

		\begin{proof}
			Let $W,A,\varepsilon$ be as in Definition \ref{uals}, with $A=\lambda_AI+K_A$ and $K_A$ compact. Let $\delta>0$ and $\{B_i\}_{i=1}^n$ be a ${\delta}$-net of $W$ with $B_i=\lambda_iI+K_i$ and $K_i\in\mathcal{K}(X)$, for $i=1,\ldots,n$. Proposition \ref{compact operators inf is zero over codim} then yields a finite codimensional subspace $Y$ of $X$ such that $\|K_A|_Y\|\le\delta$ and $\|K_i|_Y\|\le \delta$ for all $1\le i\le n$. Pick an $x\in Y$ with $\|x\|= 1$ and $B\in W$ with $B=\lambda_BI+K_B$ and $\|A(x)-B(x)\|\le\varepsilon$. Then, for $1\leq i\leq n$ such that $\|B - B_i\| \leq \delta$, we have $\|A(x) - B_i(x)\| \leq \varepsilon +\delta$ and hence $|\lambda_A - \lambda_i| = \|\lambda_Ax - \lambda_ix\| \leq \|Ax - B_ix\| + \|K_Ax + K_ix\| \leq \varepsilon + 2\delta$. Then, for every $y$ in the unit ball of $Y$, we have that $\|A(y)-B_i(y)\|\le\varepsilon+4\delta$ which proves the desired result.
		\end{proof}
		
		\begin{thm}
			Let $X$ be a Banach space such that, for every $A\in \mathcal{L}(X)$, there is a strictly singular operator $S$ and $\lambda\in\R$ such that $A=\lambda I+S$. Then $X$ is  UALS-saturated.
		\end{thm}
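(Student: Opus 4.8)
The plan is to follow the proof of Theorem~\ref{apspc} almost verbatim, with Proposition~\ref{compact operators inf is zero over codim} replaced by the following standard property of strictly singular operators: if $S\in\mathcal{L}(X)$ is strictly singular then for every infinite-dimensional closed subspace $V\subseteq X$ and every $\delta>0$ there exists an infinite-dimensional closed subspace $V'\subseteq V$ with $\|S|_{V'}\|_{\mathcal{L}(V',X)}\le\delta$. I would establish this by the Mazur-type construction of \cite[Theorem 1.a.5]{LT} (compare the Lemma preceding Proposition~\ref{compact operators inf is zero over codim}): since $S|_V$ is strictly singular it is bounded below on no finite-codimensional subspace of $V$, so one builds a normalized sequence $(y_n)_n$ in $V$ together with finite subsets $G_n\subseteq X^*$ norming $\spn\{y_1,\dots,y_n\}$ up to a factor close to $1$, with $y_{n+1}\in\bigcap_{g\in G_n}\ker g$ and $\sum_n\|Sy_n\|$ as small as desired; then $(y_n)_n$ is Schauder basic with a controlled basis constant $K$, and for $V'=\cspn\{y_n\}_n$ and $z=\sum_na_ny_n\in V'$ one has $|a_n|\le 2K\|z\|$, whence $\|Sz\|\le 2K\|z\|\sum_n\|Sy_n\|\le\delta\|z\|$.

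Granting this, fix a convex compact $W\subseteq\mathcal{L}(X)$, an $A\in\mathcal{L}(X)$ and $\varepsilon>0$ as in Definition~\ref{ualssat}, and fix an arbitrary (infinite-dimensional) subspace $Y$ of $X$. Write $A=\lambda_AI+S_A$ with $S_A$ strictly singular, fix $\delta>0$, and let $\{B_i\}_{i=1}^n\subseteq W$ be a $\delta$-net of $W$, with $B_i=\lambda_iI+S_i$ and each $S_i$ strictly singular (possible since, by hypothesis, every operator on $X$ is scalar-plus-strictly-singular). Applying the property above successively to $S_A,S_1,\dots,S_n$ we obtain infinite-dimensional closed subspaces $Y\supseteq Z_0\supseteq Z_1\supseteq\cdots\supseteq Z_n=:Z$ with $\|S_A|_Z\|\le\delta$ and $\|S_i|_Z\|\le\delta$ for all $1\le i\le n$.

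Now pick any $x\in Z$ with $\|x\|=1$; by the hypothesis on $A$ there is a $B\in W$ with $\|(A-B)x\|\le\varepsilon$, and choosing $i$ with $\|B-B_i\|\le\delta$ gives $\|(A-B_i)x\|\le\varepsilon+\delta$. Since $A-B_i=(\lambda_A-\lambda_i)I+(S_A-S_i)$, we obtain
\[
|\lambda_A-\lambda_i|=\|(\lambda_A-\lambda_i)x\|\le\|(A-B_i)x\|+\|S_Ax\|+\|S_ix\|\le\varepsilon+3\delta,
\]
and therefore, for every $y$ in the unit ball of $Z$,
\[
\|(A-B_i)y\|\le|\lambda_A-\lambda_i|\,\|y\|+\|S_A|_Z\|+\|S_i|_Z\|\le\varepsilon+5\delta .
\]
Thus $\|(A-B_i)|_Z\|_{\mathcal{L}(Z,X)}\le\varepsilon+5\delta$ with $B_i\in W$ and $Z\subseteq Y$; taking $\delta=\varepsilon$ yields the conclusion with the uniform constant $C=6$, so $X$ is UALS-saturated.

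The only genuinely new point beyond the scalar-plus-compact case is the ``small norm on a subspace'' fact for strictly singular operators. This is precisely the place where, as compared with Theorem~\ref{apspc}, we are forced to pass to an infinite-dimensional subspace of $Y$ rather than to a finite-codimensional subspace of $X$ (a strictly singular operator need not have small norm on \emph{any} finite-codimensional subspace), which is why one obtains here the saturated version of the property rather than the UALS itself.
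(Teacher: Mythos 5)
Your proof is correct and follows essentially the same route as the paper's: take a finite $\delta$-net of $W$, extract the scalar parts via the estimate $|\lambda_A-\lambda_i|\le\|(A-B_i)x\|+\|S_Ax\|+\|S_ix\|$ at a single witness point, and then propagate the bound uniformly over a common infinite-dimensional subspace inside the given $Y$, with $\delta=\varepsilon$ giving a uniform constant. The only (inessential) organizational difference is that the paper first passes to a subspace on which $S_A$ and all the $S_i$ restrict to \emph{compact} operators and then reuses Proposition \ref{compact operators inf is zero over codim}, whereas you invoke directly the Kato-type fact that a strictly singular operator has arbitrarily small norm on some infinite-dimensional subspace of any prescribed subspace; both reductions rest on the same standard lemma and lead to the identical final estimate.
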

		\begin{proof}
			Let $W,A,\varepsilon$ be as in Definition \ref{ualssat}, with $A=\lambda_AI+S_A$ and $S_A$  a strictly singular operator. Let  $\delta>0$ and $\{B_i\}_{i=1}^n$ be a $\delta$-net of $W$ with $B_i=\lambda_iI+S_i$ and $S_i$ strictly singular, $1\le i\le n$. Recall that for every infinite dimensional subspace of $X$ there exists a further subspace $Y$ such that $S_A|_Y$  and $S_i|_Y$, for $1\le i\le n$, are compact operators. Applying the same arguments used in the previous proof we obtain the desired conclusion.
	\end{proof}

	 \subsection{Uniformly Unique Joint Spreading Models and the UALS Property} In this subsection we study spaces that admit uniformly unique $l$-joint spreading models with respect to families with sufficient stability properties, described in the following definition,  to deduce that in certain cases they satisfy the UALS property. Such spaces are, for example, all Asymptotic $\ell_p$ spaces. This should be compared to the examples of the following subsection that fail the UALS and the proof of this fact is based on the existence of diverse plegma spreading sequences in these spaces.

	 The families of sequences that we restrict our study to are very rich, in the sense that any sequence has a subsequence the successive differences of which are in $\mathscr{F}$ and it is also closed under taking subsequences. Moreover, if a space has a uniformly unique $l$-joint spreading model with respect to such a family then, as already shown in Proposition \ref{kind of krivine sort of}, it has to be at least unconditional and in most cases it has to be some $\ell_p$ or $c_0$.

	 \begin{dfn}\label{difference including}
	 	Let $X$ be a Banach space. A collection $\mathscr{F}$ of normalized and Schauder basic sequences in $X$ will be called {\em difference-including} if
	 	\begin{itemize}
	 		
	 		\item[(i)] for every $(x_n)_n$ in $\mathscr{F}$ any  subsequence of $(x_n)_n$ is in $\mathscr{F}$ and
	 		
	 		\item[(ii)]  for every sequence $(x_n)_n$ in $X$ without a norm convergent subsequence there exists an infinite subset $L$ of $\mathbb{N}$ such that, for any further infinite subset $M = \{m_k:k\in\mathbb{N}\}$ of $L$,  if $z_k = \|x_{m_{2k-1}}-x_{m_{2k}}\|^{-1}(x_{m_{2k-1}}-x_{m_{2k}})$, then the sequence $(z_k)_k$ is in $\mathscr{F}$.
	 		
	 	\end{itemize}
	 \end{dfn}

	 \begin{rem}
	 	A difference-including collection clearly satisfies (a) and (b) of Proposition \ref{kind of krivine sort of}. In fact, most naturally defined families of normalized Schauder basic sequences in a Banach space $X$ are difference-including. Such families are:
	 	\begin{itemize}
	 		
	 		\item[(i)] $\mathscr{F}(X)$, the collection of all normalized Schauder basic sequences in $X$.
	 		
	 		\item[(ii)] $\mathscr{F}_{(1+\varepsilon)}(X)$, the collection of all normalized $(1+\varepsilon)$-Schauder basic sequences in $X$ for some fixed $\varepsilon>0$.
	 		
	 		\item[(iii)] $\mathscr{F}_{0,\mathscr{A}}$ for a countable subset $\mathscr{A}$ of the dual, where
	 		\[\mathscr{F}_{0,\mathscr{A}} = \left\{(x_n)_n:\;(x_n)_n\text{ is normalized and }\lim_nf(x_n) = 0\text{ for all }f\in\mathscr{A}\right\}.\]
	 		
	 		\item[(iv)] $\tilde{\mathscr{F}}_{b}(X) = \mathscr{F}_{0,(e_n^*)_n}$ if $X$ has a Schauder basis $(e_n)_n$, where $(e^*_n)_n$ are the biorthogonal functionals associated to the basis. Notice that a Banach space $X$ admits a uniformly unique $l$-joint spreading model with respect to $\mathscr{F}_{b}(X)$ if and only if it does so with respect to $\tilde{\mathscr{F}}_b(X)$.
	 		
	 		\item[(v)] $\mathscr{F}_0(X)$ if $X$ does not contain $\ell_1$.
	 		
	 		\item[(vi)] $\mathscr{F}_\mathrm{su}(X)$, the collection of all normalized Schauder basic sequences that generate a 1-suppression unconditional spreading model.
	 		
	 	\end{itemize}
	 \end{rem}

	 In certain cases, for $X$ non-separable it is convenient to consider different collections $\mathscr{F}_Z$ for different separable subspaces $Z$ of $X$. This is included in the statement of the following Theorem.

	 \begin{thm}
	 	\label{the theorem}
	 	Let $X$ be a Banach space and assume that for every separable subspace $Z$ of $X$ we have a difference-including collection $\mathscr{F}_Z$ of normalized Schauder basic sequences  in $Z$.  If there exists a uniform $K\geq 1$ such that each such $Z$ admits a $K$-uniformly unique $l$-joint spreading model with respect to $\mathscr{F}_Z$, then $X$ satisfies the UALS property.
	 \end{thm}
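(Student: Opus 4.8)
The plan is to argue by contradiction, using the Mazur-type lemma preceding Proposition~\ref{compact operators inf is zero over codim}, the difference-including hypothesis, the structural consequences of Proposition~\ref{kind of krivine sort of}, and a Kakutani fixed-point argument in the spirit of Lemma~9 of \cite{GM}. Suppose $X$ fails the UALS for the constant $C$ that will be fixed below as a function of $K$ (and of an integer $l$ itself determined by $K$). Then there are a convex compact $W\subseteq\mathcal{L}(X)$, an $A\in\mathcal{L}(X)$ and an $\varepsilon>0$ — which after rescaling we take to be $\varepsilon=1$ — such that for every $x\in B_X$ there is $B\in W$ with $\|(A-B)x\|\le 1$, while $\|(A-B)|_Y\|_{\mathcal{L}(Y,X)}>C$ for every $B\in W$ and every finite codimensional subspace $Y$ of $X$. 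First I would pass to a separable setting: fix a countable dense subset of $W$ and, along the construction below, enlarge a separable subspace $Z$ of $X$ so that it contains all the vectors produced as well as their images under $A$ and under the chosen dense family of operators; by hypothesis $\mathscr{F}_Z$ is then a difference-including collection in $Z$ and $Z$ carries a $K$-uniformly unique $l$-joint spreading model with respect to $\mathscr{F}_Z$ for every $l$.

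The second step manufactures, for each $B\in W$, a \emph{bad asymptotic direction} living in $\mathscr{F}_Z$. Applying the Mazur-type lemma to $T=A-B$ yields a normalized sequence $(u_n)_n$ with $((A-B)u_n)_n$ seminormalized and Schauder basic, with lower bound $C$ and basis constant arbitrarily close to $1$; since $((A-B)u_n)_n$ has no norm-convergent subsequence, neither does $(u_n)_n$, so Definition~\ref{difference including}(ii) lets me pass to a subsequence and replace $(u_n)_n$ by its normalized successive differences $(v_n)_n\in\mathscr{F}_Z$. A routine basic-sequence estimate preserves $\|(A-B)v_n\|\ge c_0$ for an absolute fraction $c_0$ of $C$, and performing the same difference operation simultaneously on the image sequence — whose normalized differences are again in $\mathscr{F}_Z$ by Definition~\ref{difference including}(ii) — arranges that the renormalized images $(A-B)v_n/\|(A-B)v_n\|$ either lie in $\mathscr{F}_Z$ or tend to $0$ in norm; crucially only the single operator $A-B$ is involved, so no blow-up of constants occurs. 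Carrying this out for $l$ disjointly indexed copies and diagonalizing produces $l$-tuples $((v^i_n)_n)_{i=1}^l$ in $\mathscr{F}_Z$ with $\|(A-B)v^i_n\|\ge c_0$ for all $i,n$; passing to an $l$-joint spreading model and invoking the $K$-uniform uniqueness together with Proposition~\ref{kind of krivine sort of} (whose hypotheses (a),(b) are satisfied by any difference-including family), both the system $(v^i_n)$ and the image system $((A-B)v^i_n)$ have $K$-suppression-unconditional $l$-joint spreading models, and the norm $\gamma$ of a plegma-diagonal vector $\sum_{i=1}^l v^i_{s_i(1)}$ obeys $1/K\le\gamma\le l$ uniformly in $B$. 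Hence for the unit vector $x_B=\gamma^{-1}\sum_{i=1}^l v^i_{s_i(1)}$ one gets, by $K$-suppression-unconditionality of the image joint spreading model, $\|(A-B)x_B\|\ge c_0/(K\gamma)\ge c_0/(Kl)$, an estimate in which $B$ enters only through universally bounded quantities.

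The third and decisive step is the fixed-point argument. The uniform uniqueness of the $l$-joint spreading model is precisely what makes the $l$-dimensional asymptotic gadgets of Step 2 vary with $B$ in a controlled way — all of them are $K$-equivalent to the single, $B$-independent $l$-joint spreading model of $\mathscr{F}_Z$ — and this uniformity lets one define a multivalued map $\Phi\colon W\to 2^W$ that assigns to $B$ a set of operators of $W$ that, via the pointwise hypothesis applied along the joint-spreading-model directions attached to $B$, is nonempty, convex and compact, and for which $\Phi$ has closed graph. Kakutani's theorem for multivalued mappings (\cite{BK}, \cite{Kakutani}) then furnishes $B^\ast\in\Phi(B^\ast)$, a single operator of $W$ that is, asymptotically, a best approximant of $A$ along its own bad directions. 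On one hand the fixed-point relation forces $\|(A-B^\ast)x\|\le 1$ for the corresponding unit vector $x=x_{B^\ast}$; on the other hand Step 2 gives $\|(A-B^\ast)x\|\ge c_0/(Kl)$ with $c_0\ge C/C_0$ for an absolute constant $C_0$. Choosing $l$ in terms of $K$ and then $C>C_0Kl$ yields a contradiction, so $X$ satisfies the UALS with a constant depending only on $K$.

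I expect the main obstacle to be exactly Step 3: formulating the multivalued map $\Phi$ so that the pointwise approximation hypothesis makes its values nonempty while the uniform uniqueness of the \emph{joint} ($l$-dimensional, not merely one-dimensional) spreading model supplies the convexity, compactness and upper semicontinuity needed for Kakutani's theorem — and the reflexive example in Definition~\ref{definition fails uals} shows that nothing weaker than joint uniqueness will do. The difference-including hypothesis is what allows Step 2 to go through even when $Z$ contains $\ell_1$, so that weakly null directions are unavailable, and the estimates there must be organized so that every constant is an absolute multiple of $C$, $K$ and $l$, never depending on $W$.
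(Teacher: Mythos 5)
Your overall strategy (contradiction, a Mazur-type selection, the difference-including hypothesis to land both the domain vectors and their normalized images in $\mathscr{F}_Z$, the uniform uniqueness of joint spreading models to compare norms, and Kakutani's theorem) is the right one and matches the paper's. But there is a genuine gap at exactly the point you flag as ``the main obstacle'': you never construct the multivalued map $\Phi$, and the way you have set things up it cannot be constructed. In your Step 2 the bad direction $x_B$ is attached to each \emph{individual} $B\in W$ through a highly non-canonical procedure (Mazur selection for $A-B$, passage to subsequences, successive differences, a choice of plegma family), so the assignment $B\mapsto x_B$ has no continuity or measurability properties whatsoever; consequently a map such as $\Phi(B)=\{S\in W:\|(A-S)x_B\|\le\varepsilon\}$ has nonempty convex values but there is no reason for it to have closed graph, and Theorem \ref{fixedpoint} does not apply. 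Saying that the joint spreading models attached to different $B$'s are all $K$-equivalent to a fixed one does not help: equivalence of the asymptotic models says nothing about how the actual vectors $x_B\in X$ vary with $B$.

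The paper's resolution, which is the essential idea you are missing, is to use the compactness of $W$ \emph{before} producing any sequences: one fixes a maximal $\delta$-separated set $T_1,\dots,T_l$ in $W$ (so $l$ is dictated by $W$ and $\varepsilon$, not by $K$ --- note that Definition \ref{defuniqe} demands the same constant $K$ for \emph{every} $l$, and this is why), applies Lemma \ref{all difference-including domain and target} once to the finitely many operators $A-T_i$ and $A-T_{ij}$ to get a single $l$-tuple $(z^i_k)_k$ in $\mathscr{F}_Z$, and then defines one \emph{continuous} test vector
\[
x(T)=\Big\|\sum_{i=1}^l f_i(T)z^i_{n_i}\Big\|^{-1}\sum_{i=1}^l f_i(T)z^i_{n_i},
\]
where $f_1,\dots,f_l$ is a partition of unity subordinated to the net. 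Continuity of $T\mapsto x(T)$ is then automatic, $\phi(T)=\{S\in W:\|(A-S)x(T)\|\le\varepsilon\}$ has closed graph, and the role of the $l$-joint (rather than ordinary) spreading model is precisely to give the lower bound $\|(A-T)x(T)\|\ge \tilde c/(3D)-9l\eta/4>\varepsilon$ for the \emph{mixed} convex combinations $\sum_i f_i(T)z^i_{n_i}$ via Lemma \ref{constant coming from unique joint sm}, with $D>2K^2$ independent of $l$. This also corrects your final bookkeeping: you cannot ``choose $l$ in terms of $K$ and then $C>C_0Kl$,'' because $l$ is not at your disposal --- it is forced by the net of $W$ --- and the whole point of the uniform (in $l$) uniqueness hypothesis is that the resulting constant $C$ depends on $K$ alone and not on $l$.
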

	
	 We postpone the proof of Theorem \ref{the theorem} to first state and prove its corollaries.  Note that if $X$ is a Banach space and $\mathscr{F}$ is a collection of normalized Schauder basic sequences in $X$ with respect to which it admits a uniformly unique $l$-joint spreading model, then we may consider, for every separable subspace $Z$ of $X$, the family $\mathscr{F}_Z = \{(x_n)_n$ in $ \mathscr{F}: x_i\in Z$ for all $i\in\mathbb{N}\}$. This is in fact  sufficient to prove most cases stated bellow.
	
	 \begin{cor}\label{the theorem 2}
	 	In the following cases, a Banach space $X$ and all of its subspaces satisfy the UALS property.
	 	\begin{itemize}
	 		
	 		\item[(i)] $X$ has a Schauder basis and it admits a uniformly unique $l$-joint spreading model with respect to $\mathscr{F}_b(X)$.
	 		
	 		\item[(ii)] $X$ is an arbitrary Banach space that admits a uniformly unique $l$-joint spreading model with respect to $\mathscr{F}(X)$.
	 		
	 		\item[(iii)] $X$ does not contain $\ell_1$ and it admits a uniformly unique $l$-joint spreading model with respect to $\mathscr{F}_0(X)$.
	 		
	 		\item[(iv)] $X$ is an arbitrary Banach space and, for some $\varepsilon >0$, it admits a uniformly unique $l$-joint spreading model with respect to $\mathscr{F}_{(1+\varepsilon)}(X)$.

	 	\end{itemize}
	 \end{cor}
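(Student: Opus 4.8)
The plan is to deduce the Corollary from Theorem~\ref{the theorem}. Fix one of the hypotheses (i)--(iv); this provides a Banach space $X$ and a collection $\mathscr{F}$ of normalized Schauder basic sequences in $X$ with respect to which $X$ admits a $K$-uniformly unique $l$-joint spreading model. Let $X'$ be an arbitrary subspace of $X$. Since every separable subspace $Z$ of $X'$ is also a separable subspace of $X$, it suffices to attach to each such $Z$ a difference-including collection $\mathscr{F}_Z$ of normalized Schauder basic sequences in $Z$ with respect to which $Z$ admits a $K$-uniformly unique $l$-joint spreading model, the constant $K$ being the \emph{same} for all $Z$ and all $X'$; then Theorem~\ref{the theorem}, applied to $X'$, yields the UALS property for $X'$.

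In cases (ii), (iii) and (iv) I would use the restricted family $\mathscr{F}_Z=\{(x_n)_n\in\mathscr{F}: x_i\in Z\text{ for all }i\in\N\}$, as in the note above. The key observation is that being Schauder basic, being $(1+\varepsilon)$-Schauder basic, and being weakly null (together with not containing $\ell_1$, which passes from $X$ to $Z$) are properties of a sequence that are inherited both by and from the closed subspace it generates; hence $\mathscr{F}_Z$ equals $\mathscr{F}(Z)$, $\mathscr{F}_{(1+\varepsilon)}(Z)$, or $\mathscr{F}_0(Z)$, respectively, each of which is difference-including by the Remark following Definition~\ref{difference including}. Moreover, any $l$-joint spreading model generated by an $l$-tuple of sequences from $\mathscr{F}_Z$ is, by construction, generated by an $l$-tuple of sequences from $\mathscr{F}$ (these lie in $X$ and satisfy the defining condition of $\mathscr{F}$), so any two such are $K$-equivalent; this is the required uniform uniqueness over $Z$.

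Case (i) follows the same scheme after one preliminary move, because $\mathscr{F}_b(X)$ is tied to the fixed basis $(e_n)_n$ of $X$ and its crude restriction to $Z$ could be degenerate. Using item (iv) of the Remark following Definition~\ref{difference including}, $X$ admits a uniformly unique $l$-joint spreading model with respect to $\mathscr{F}_b(X)$ if and only if it does so with respect to $\tilde{\mathscr{F}}_b(X)=\mathscr{F}_{0,(e_n^*)_n}$; so I would replace $\mathscr{F}$ by $\tilde{\mathscr{F}}_b(X)$ and put $\mathscr{F}_Z=\{(x_n)_n\in\mathscr{F}_{0,(e_n^*)_n}: x_i\in Z\text{ for all }i\in\N\}$. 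Since each term lies in $Z$, this family coincides with $\mathscr{F}_{0,(e_n^*|_Z)_n}$, i.e. an instance of $\mathscr{F}_{0,\mathscr{A}}$ for the countable subset $\mathscr{A}=(e_n^*|_Z)_n$ of $Z^*$, hence difference-including by item (iii) of the same Remark; and the transfer of $K$ is as before, since a sequence in $Z$ that is $(e_n^*|_Z)_n$-null is an $(e_n^*)_n$-null sequence of $X$.

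With Theorem~\ref{the theorem} available, the rest of the argument is mostly bookkeeping. The only point needing real thought is case (i): one must notice that $\mathscr{F}_b(X)$ does not restrict well to subspaces lacking a basis and that the remedy is to pass first to its coordinate-null reformulation $\tilde{\mathscr{F}}_b(X)$ and restrict afterwards --- after which everything reduces to the list of difference-including families in the Remark and to the elementary fact that a joint spreading model generated by sequences in $Z$ is a joint spreading model generated by the same sequences viewed in $X$.
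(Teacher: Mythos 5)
Your proposal is correct and follows essentially the same route as the paper: both deduce the corollary from Theorem \ref{the theorem} by attaching to each separable subspace $Z$ the restricted family $\mathscr{F}_Z=\{(x_n)_n\in\mathscr{F}:x_n\in Z\}$ (passing first to $\tilde{\mathscr{F}}_b(X)=\mathscr{F}_{0,(e_n^*)_n}$ in case (i)), checking that each such family is difference-including via the Remark after Definition \ref{difference including}, and transferring the uniform constant $K$ by observing that a joint spreading model generated by sequences in $Z$ is the same one generated by those sequences viewed in $X$. You merely spell out details the paper leaves implicit.
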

	
	 \begin{proof}
	 	All cases follow from Theorem \ref{the theorem}. We describe some of the details. The first case follows from the fact that such an $X$ admits a uniformly unique $l$-joint spreading model with respect to $\tilde{\mathscr{F}}_b(X) = \mathscr{F}_{0,(e_i^*)_i}$, which is difference-including. The second case follows directly from the fact that $\mathscr{F}(X)$ is difference-including. In case (iii), from Rosenthal's $\ell_1$ theorem \cite{Rosenthal}, we have that $\mathscr{F}_0(X)$ is difference-including. For case (iv), note that $\mathscr{F}_{(1+\varepsilon)}$ is difference-including as well.
	 \end{proof}
	
	\begin{cor}
		The following Banach spaces and all of their subspaces satisfy the UALS property.
		\begin{itemize}
			\item[(a)] The space $\ell_p(\Gamma)$, for $1\le p< \infty$ and any  infinite set $\Gamma$.
			\item[(b)] The space $c_0(\Gamma)$, for any  infinite set $\Gamma$.
			\item[(c)] The James Tree space.
			\item[(d)] Every Asymptotic $\ell_p$ space, for $1\leq p\leq \infty$.
		\end{itemize}				
	\end{cor}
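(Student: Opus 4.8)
The plan is to obtain every item as an instance of Theorem~\ref{the theorem}, in most cases through its packaged form Corollary~\ref{the theorem 2}, by feeding in the uniform uniqueness results of Section~4 together with the fact that the families of sequences involved are difference-including (see the remark following Definition~\ref{difference including}). Two preliminary observations make this essentially bookkeeping. First, Corollary~\ref{the theorem 2} and Theorem~\ref{the theorem} already assert the conclusion for $X$ \emph{and all of its subspaces}, so it suffices to verify the hypotheses for the ambient space. Second, each class in (a)--(d) is stable under passing to closed subspaces: a separable subspace of $\ell_p(\Gamma)$ (resp.\ $c_0(\Gamma)$) embeds isometrically into $\ell_p$ (resp.\ $c_0$), and a subspace of an Asymptotic~$\ell_p$ space is again Asymptotic~$\ell_p$ with no worse constant, since player~(S)'s winning strategy restricts by intersecting her chosen finite-codimensional subspaces with the smaller space.

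For (a): when $1<p<\infty$, $\ell_p(\Gamma)$ is reflexive and contains no isomorphic copy of $\ell_1$, and it admits a uniformly unique $l$-joint spreading model with respect to $\mathscr{F}(\ell_p(\Gamma))$ with constant independent of $\Gamma$ --- indeed any $l$-joint spreading model generated by Schauder basic sequences supported on a countable subset of $\Gamma$ is $K_p$-equivalent to the unit vector basis of $\ell_p$. Hence Corollary~\ref{the theorem 2}(ii) applies. When $p=1$, fix $\varepsilon>0$; the argument proving that $\ell_1$ admits a uniformly unique $l$-joint spreading model with respect to $\mathscr{F}_C(\ell_1)$ for every $C\ge1$ transfers to $\ell_1(\Gamma)$ through separable subspaces, with uniform constant $3+2\varepsilon$, and $\mathscr{F}_{(1+\varepsilon)}(X)$ is difference-including, so Corollary~\ref{the theorem 2}(iv) applies. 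For (b), $c_0(\Gamma)$ contains no copy of $\ell_1$, so $\mathscr{F}_0(c_0(\Gamma))$ is difference-including by Rosenthal's $\ell_1$ theorem \cite{Rosenthal}; together with the uniform uniqueness of $l$-joint spreading models with respect to $\mathscr{F}_0$ this places us in case~(iii) of Corollary~\ref{the theorem 2}.

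For (c), the James Tree space contains no isomorphic copy of $\ell_1$, hence $\mathscr{F}_0(JT)$ is difference-including by Rosenthal's $\ell_1$ theorem \cite{Rosenthal}; Theorem~\ref{jtunique} supplies a $2$-uniformly unique $l$-joint spreading model with respect to this family, and Corollary~\ref{the theorem 2}(iii) finishes the case. For (d), let $X$ be $C$-Asymptotic~$\ell_p$ with $1\le p\le\infty$; by the stability remark above it is enough to verify the hypothesis of Theorem~\ref{the theorem} for $X$ itself. Given a separable subspace $Z$ of $X$, $Z$ is a separable $C$-Asymptotic~$\ell_p$ space, so Proposition~\ref{unique joint if these go to zero} produces a countable $\mathscr{A}_Z\subseteq Z^*$ for which $Z$ admits $\ell_p$ as a $C^2$-uniformly unique $l$-joint spreading model with respect to $\mathscr{F}_{0,\mathscr{A}_Z}$, which is difference-including. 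Since $C^2$ does not depend on $Z$, Theorem~\ref{the theorem} applies. (For $1<p<\infty$ one may alternatively note that $X$ is reflexive, hence $\ell_1$-free, and combine Corollary~\ref{As lp with respect to F_0(X)} with Corollary~\ref{the theorem 2}(iii).)

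I do not expect a genuine obstacle here: the corollary simply harvests the machinery already in place. The only points that require attention are keeping the equivalence constant uniform as $Z$ ranges over all separable subspaces --- which is precisely why the $\mathscr{F}_{0,\mathscr{A}_Z}$ route, rather than a naive $\mathscr{F}_0$ argument, is used for Asymptotic~$\ell_1$ and Asymptotic~$c_0$ --- and confirming that every family used (namely $\mathscr{F}(X)$, $\mathscr{F}_{(1+\varepsilon)}(X)$, $\mathscr{F}_0(X)$ when $\ell_1\not\hookrightarrow X$, and $\mathscr{F}_{0,\mathscr{A}}$) satisfies conditions (i)--(ii) of Definition~\ref{difference including}, which was recorded in the remark immediately following that definition.
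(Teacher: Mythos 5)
Your proposal is correct and follows essentially the same route as the paper: each item is fed into Theorem \ref{the theorem} through Corollary \ref{the theorem 2}, using the Section 4 uniqueness results together with the difference-including families, and the $\mathscr{F}_{0,\mathscr{A}_Z}$ device from Proposition \ref{unique joint if these go to zero} is deployed for the Asymptotic $\ell_1$ case exactly as the paper does. The only deviation is for $\ell_1(\Gamma)$, where the paper invokes case (i) of Corollary \ref{the theorem 2} (via $\mathscr{F}_b$) while you invoke case (iv) (via $\mathscr{F}_{(1+\varepsilon)}$ and the remark on $\mathscr{F}_C(\ell_1)$); both work, and your variant has the mild advantage of not having to address the absence of a Schauder basis when $\Gamma$ is uncountable.
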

	
	\begin{proof}
		The case of $\ell_p(\Gamma)$ follows from item (ii) of Corollary \ref{the theorem 2} for $1<p<\infty$ and from (i) for $p=1$, while that of $c_0(\Gamma)$ and the James Tree follows from item (iii). Moreover, for case (d), if $1<p\leq \infty$ and $X$ is an Asymptotic $\ell_p$ space, then it does not contain $\ell_1$ and admits a uniformly unique $l$-joint spreading model with respect to $\mathscr{F}_0(X)$ so the result follows from case (iii) as well. Finally, if $X$ is $C$-Asymptotic $\ell_1$, use Theorem \ref{unique joint if these go to zero} to choose for every separable subspace $Z$ of $X$ a countable subset $\mathscr{A}_Z$ of $Z^*$ such that $Z$ admits a $C^2$-uniformly unique $l$-joint spreading model with respect to $\mathscr{F}_Z = \mathscr{F}_{0,\mathscr{A}_Z}$.
	\end{proof}

	 We break up the proof of Theorem \ref{the theorem} into several steps.
	
	 \begin{lem}
	 	\label{constant coming from unique joint sm}
	 	Let $X$ be a Banach space that admits a $K$-uniformly unique $l$-joint spreading model with respect to a difference-including collection $\mathscr{F}$ of normalized Schauder basic sequences. Then for any  $D > 2K^2$ and any sequences $(z_n^i)_n$, $(y_n^i)_n$, $i=1,\ldots,l$, in $\mathscr{F}$, there exists an infinite subset $L$ of $\mathbb{N}$ such that, for any scalars $a_1,\ldots,a_l$, $\theta_1,\ldots,\theta_l$ and $n_1 <\cdots <n_l$ in $L$ we have that
	 	\begin{equation}
	 	\label{same norms basically}
	 	\min_{1\leq i\leq l}|\theta_i|\frac{1}{D}\Big\|\sum_{i=1}^la_iz_{n_i}^i\Big\| \leq \Big\|\sum_{i=1}^la_i\theta_i y_{n_i}^i\Big\| \leq \max_{1\leq i\leq l}|\theta_i|D\Big\|\sum_{i=1}^la_iz_{n_i}^i\Big\|.
	 	\end{equation}
	 \end{lem}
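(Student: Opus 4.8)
The plan is to deduce the whole statement from its upper inequality alone, by exploiting a symmetry between the two $l$-tuples. Suppose we have established, for \emph{every} pair of $l$-tuples of sequences from $\mathscr{F}$, an infinite set $L$ — constructed in a way that depends symmetrically on the two tuples — along which $\|\sum_{i=1}^l b_i\eta_i v_{n_i}^i\|\le\max_i|\eta_i|\,D\,\|\sum_{i=1}^l b_iu_{n_i}^i\|$ for all scalars $b_i,\eta_i$ and all $n_1<\cdots<n_l$ in $L$. Then, applying this along the (symmetrically built) $L$ attached to the pair $((z_n^i)_n,(y_n^i)_n)$, once to that ordered pair and once to $((y_n^i)_n,(z_n^i)_n)$ with the choice $b_i=a_i\theta_i$, $\eta_i=1/\theta_i$ in the second case (legitimate when all $\theta_i\neq0$; otherwise the lower bound in \eqref{same norms basically} is trivial), one recovers both inequalities of \eqref{same norms basically}. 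So it is enough to prove the upper inequality.

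First I would pass to $M_1\in[\mathbb{N}]^\infty$ along which $((z_n^i)_{n\in M_1})_{i=1}^l$ generates some $l$-joint spreading model $(e_n^i)_{i=1,n}^l$ (Theorem \ref{joint Brunel-Sucheston}), and then to $M_2\in[M_1]^\infty$ along which $((y_n^i)_{n\in M_2})_{i=1}^l$ generates some $l$-joint spreading model $(f_n^i)_{i=1,n}^l$; along $M_2$ both tuples still generate these models. Three features will be used. Since $\mathscr{F}$ is difference-including it satisfies (a) and (b) of Proposition \ref{kind of krivine sort of}, so both models — hence also their first slices $(e_1^i)_{i=1}^l$ and $(f_1^i)_{i=1}^l$ — are $K$-suppression unconditional; over $\mathbb{R}$ this gives $\|\sum_i\rho_ic_ig_i\|\le2K\|\sum_ic_ig_i\|$ whenever $|\rho_i|\le1$ and $(g_i)_i$ is $(e_1^i)_i$ or $(f_1^i)_i$ (write $(\rho_i)$ as a convex combination of sign vectors and split each signed sum as $P_+x-P_-x$). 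By Definition \ref{defuniqe} the two models are $K$-equivalent, hence so are the two slices. Finally, each slice is a finite sub-collection of a plegma spreading sequence — the joint spreading model in question, by the remarks following Definition \ref{ljsm} — and is therefore linearly independent, so on the $l$-dimensional span of $\{e_1^i\}_i$ (resp.\ $\{f_1^i\}_i$) the norm dominates $\kappa_e\max_i|c_i|$ (resp.\ $\kappa_f\max_i|c_i|$) for some $\kappa_e,\kappa_f>0$; I set $\kappa=\min\{\kappa_e,\kappa_f\}$.

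Next I would pick $\delta>0$ with $\delta(D+1)\le(D-2K^2)\kappa$ (possible since $D>2K^2$) and, using the remarks following Definition \ref{ljsm}, pass to $L\subseteq M_2$ along which both tuples generate $(e_n^i)_{i,n}$ and $(f_n^i)_{i,n}$ with first error term at most $\delta$; this prescription is symmetric in the two tuples. Now fix $n_1<\cdots<n_l$ in $L$, which is exactly a strict-plegma family of length one in $[L]$, and scalars $a_i,\theta_i$, and put $\alpha=\max_i|a_i|$, $\mu=\max_i|\theta_i|$ (the cases $\alpha=0$ or $\mu=0$ being trivial). The defining estimates of the two joint spreading models give $\|\sum_i a_i\theta_iy_{n_i}^i\|\le\alpha\mu\|\sum_i(a_i/\alpha)(\theta_i/\mu)f_1^i\|+\alpha\mu\delta$ and $\|\sum_i(a_i/\alpha)e_1^i\|\le\alpha^{-1}\|\sum_i a_iz_{n_i}^i\|+\delta$. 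Chaining unconditionality of $(f_1^i)_i$ (the factors $\theta_i/\mu$ have modulus $\le1$), then $K$-equivalence of the slices, then the second estimate, bounds $\|\sum_i a_i\theta_iy_{n_i}^i\|$ above by $2K^2\mu\|\sum_i a_iz_{n_i}^i\|+(2K^2+1)\alpha\mu\delta$; and since $\|\sum_i a_iz_{n_i}^i\|\ge\alpha(\kappa-\delta)$ (once more by the second estimate and linear independence), the choice of $\delta$ makes $(2K^2+1)\alpha\delta\le(D-2K^2)\alpha(\kappa-\delta)$, which absorbs the error term into $\|\sum_i a_i\theta_iy_{n_i}^i\|\le\mu D\|\sum_i a_iz_{n_i}^i\|$, as required.

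I expect the only genuine difficulty to be precisely this conversion of the additive errors $\delta_k$ intrinsic to joint spreading models into the purely multiplicative estimate \eqref{same norms basically}. That is what forces the use of the finite-dimensional lower bound $\|\sum_i c_ie_1^i\|\ge\kappa\max_i|c_i|$ — available only because a slice of a plegma spreading sequence is linearly independent — and it is why $\kappa$ must be fixed from $M_2$ before the final set $L$ is extracted with a sufficiently small error; the symmetry of that extraction in the two tuples is then exactly what lets the lower inequality come for free from the upper one.
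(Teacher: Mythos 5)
Your proof is correct and follows essentially the same route as the paper's: pass to $L$ along which both tuples generate $K$-equivalent, $K$-suppression unconditional $l$-joint spreading models (via Proposition \ref{kind of krivine sort of} and the difference-including hypothesis), then pull the $\theta_i$ out by unconditionality and pass between the two tuples by equivalence. The paper compresses your careful additive-to-multiplicative step into the assertion that, for any $C>K$, one may pass to a further subset of $L$ so that $(z_{n_i}^i)_{i=1}^l$ and $(y_{n_i}^i)_{i=1}^l$ are themselves $C$-equivalent and $C$-suppression unconditional, and it proves the lower inequality by a direct computation rather than by your symmetry reduction.
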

	
	 \begin{proof}
	 	Choose $C > K$ and pass to an infinite set $L$ so that the $l$-tuples $((z_n^i)_{n\in L})_{i=1}^l$ and $((y_n^i)_{n\in L})_{i=1}^l$ generate some $l$-joint spreading models that are $K$-equivalent to one another. This means that we may perhaps pass to a further subset of $L$ and have that, for any $n_1<\cdots<n_l$ in $L$, the sequences $(z_{n_i}^i)_{i=1}^l$ and $(y_{n_i}^i)_{i=1}^l$ are $C$-equivalent and each of them is ${C}$-suppression unconditional and hence $2{C}$-unconditional. For any scalars $a_1,\ldots,a_l$, $\theta_1,\ldots,\theta_l$, we calculate:
	 	\[\Big\|\sum_{i=1}^la_i\theta_iy_{n_i}^i\Big\|\geq \frac{1}{2{C}}\min_{1\leq i\leq l}|\theta_i|\Big\|\sum_{i=1}^la_iy_{n_i}^i\Big\|\geq \frac{1}{2C^{2}}\min_{1\leq i\leq l}|\theta_i|\Big\|\sum_{i=1}^la_iz_{n_i}^i\Big\|.\]
	 	The other inequality is obtained identically. Therefore, any $D>2K^2$ satisfies the conclusion.
	 \end{proof}

	The following is another variant of Mazur's theorem \cite[Theorem 1.a.5]{LT}.

	 \begin{lem}
	 	\label{mazur with operators}

	 	Let $X$ be a separable Banach space. Let also $T_{ij}\in\mathcal{L}(X)$, for $1\leq i\leq n$, $1\leq j\leq m_i$ and $c>0$ be such that, for every $i=1,\ldots,n$ and $Y$ finite codimensional subspace of $X$, there is an $x_i\in Y$ with $\|x_i\|=1$ and $\|T_{ij}x_i\|>c$ for all $1\leq j\leq m_i$. Then, for every $\varepsilon>0$, there exist normalized sequences $(x^i_k)_k$,  $i = 1,\ldots, n$, in $X$ such that if we set $Z_k = \spn\{\{x^i_k\}_{i=1}^n\cup\{T_{ij}x^i_k\}_{i=1,j=1}^{n,m_i}\}$, for $k\in\N$, and $Z = \cspn\cup_k Z_k$ we have that:
	 	\begin{enumerate}
	 		\item[(i)]  $(Z_k)_k$ forms an FDD for the space $Z$, with projection constant at most $1+\varepsilon$.
	 		\item[(ii)] $\|T_{ij}(x^i_k)\|>c$ for every $i=1,\ldots,n$, $j=1,\ldots,m_i$, and $k\in\N$.
	 	\end{enumerate}
	 \end{lem}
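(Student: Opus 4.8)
The plan is to run the classical Mazur selection argument for basic sequences (\cite[Theorem 1.a.5]{LT}), augmented by the observation that at each stage one may force not only the newly chosen vectors but also their images under the $T_{ij}$ to vanish on the finitely many norming functionals accumulated so far. Fix $\varepsilon>0$. I would construct by induction finite dimensional subspaces $Z_1,Z_2,\ldots$ of $X$ and an increasing sequence $G_0\subset G_1\subset G_2\subset\cdots$ of finite subsets of $B_{X^*}$, with $G_0=\emptyset$, so that, writing $W_k=Z_1+\cdots+Z_k$ (and $W_0=\{0\}$):
\begin{enumerate}
\item[(a)] $\|w\|\le(1+\varepsilon)\max_{g\in G_k}|g(w)|$ for every $w\in W_k$;
\item[(b)] $Z_{k+1}\subset\bigcap_{g\in G_k}\ker g$;
\item[(c)] $Z_k=\spn\big(\{x^i_k\}_{i=1}^n\cup\{T_{ij}x^i_k\}_{i=1,j=1}^{n,m_i}\big)$ for some normalized $x^i_k\in X$ with $\|T_{ij}x^i_k\|>c$ for all $i,j$.
\end{enumerate}

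For the inductive step, assume $Z_1,\ldots,Z_{k-1}$ and $G_0,\ldots,G_{k-1}$ have been found. I would consider the subspace
\[
Y_k=\bigcap_{g\in G_{k-1}}\Big(\ker g\cap\bigcap_{i=1,j=1}^{n,m_i}\ker\big(T_{ij}^{*}g\big)\Big),
\]
which has finite codimension in $X$, being the intersection of the kernels of the finitely many functionals $g$ and $T^*_{ij}g$ with $g\in G_{k-1}$. For each $i=1,\ldots,n$ apply the hypothesis to $Y_k$ to obtain $x^i_k\in Y_k$ with $\|x^i_k\|=1$ and $\|T_{ij}x^i_k\|>c$ for every $j$, and set $Z_k=\spn\big(\{x^i_k\}_i\cup\{T_{ij}x^i_k\}_{i,j}\big)$; since $x^i_k\in\ker g\cap\ker(T^*_{ij}g)$ we get $g(x^i_k)=0$ and $g(T_{ij}x^i_k)=(T^*_{ij}g)(x^i_k)=0$ for all $g\in G_{k-1}$, so (b) holds for the index $k-1$ and (c) holds. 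Finally, since $W_k$ is finite dimensional, I enlarge $G_{k-1}$ to a finite set $G_k\subset B_{X^*}$ that $(1+\varepsilon)$-norms $W_k$, giving (a).

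It then remains to check that $(Z_k)_k$ is an FDD of $Z=\cspn\bigcup_k Z_k$ with projection constant at most $1+\varepsilon$. By (b) and the monotonicity of the $G_k$, every block $Z_j$ with $j>k$ lies in $\bigcap_{g\in G_{j-1}}\ker g\subset\bigcap_{g\in G_k}\ker g$; hence any $z$ in the closed span of $\bigcup_{j>k}Z_j$ satisfies $g(z)=0$ for all $g\in G_k$ (verify it on finite sums, then pass to the limit). Consequently, for $w\in W_k$ and such $z$,
\[
\|w\|\le(1+\varepsilon)\max_{g\in G_k}|g(w)|=(1+\varepsilon)\max_{g\in G_k}|g(w+z)|\le(1+\varepsilon)\|w+z\|.
\]
Taking $z=-w$ when $w$ also belongs to $\cspn\bigcup_{j>k}Z_j$ forces $w=0$, so $Z=\bigoplus_k Z_k$; and the displayed estimate says exactly that the canonical projection of $Z$ onto $W_k$ along $\cspn\bigcup_{j>k}Z_j$ has norm at most $1+\varepsilon$. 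This is (i), while (ii) is immediate from (c).

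The only step that goes beyond the classical argument, and the one requiring care, is ensuring in (b) that the \emph{images} $T_{ij}x^i_k$, not merely the vectors $x^i_k$, are killed by the previously accumulated norming functionals; without this the blocks would not be separated enough to form an FDD. This is what the extra kernels $\ker(T^*_{ij}g)$ in the definition of $Y_k$ accomplish, and because each $T^*_{ij}g$ is again an element of $X^*$, only finitely many new linear conditions are imposed, so $Y_k$ stays finite codimensional and the hypothesis of the lemma remains applicable at every stage. The remaining bookkeeping with the nested norming sets $G_k$ is routine.
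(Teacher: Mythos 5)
Your proposal is correct and follows essentially the same route as the paper: the paper's proof also runs Mazur's inductive selection, taking at each step a finite $(1+\varepsilon)$-norming set $G$ of the span of the blocks chosen so far and then choosing the new vectors in $\bigcap_{f\in F}\ker f$ where $F=\bigcup_{T\in\{I\}\cup\{T_{ij}\}}\{T^*g:g\in G\}$, which is exactly your subspace $Y_k$. The point you flag as the crucial extra step --- killing the images $T_{ij}x^i_k$ via the adjoint functionals $T_{ij}^*g$ --- is precisely the paper's device as well.
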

	
	 \begin{proof}
	 	Set $\mathcal{A} = \{I\}\cup\{T_{ij}\}_{i=1,j=1}^{n,m_i}$ and, for every $1\le i\le n$, choose a normalized vector $x_1^i$ in $X$ with $\|T_{ij}x_1^i\| > c$ for all $j=1,\ldots,m_i$. Assume that we have chosen $(x_k^i)_{k=1}^d$ up to some $d\in\N$, for $1\le i\le n$, so that the spaces $(Z_k)_{k=1}^d$ satisfy (i) for the space they generate and (ii)  for $1\leq k\leq d$. Choose a finite subset $G$ of the unit sphere of $X^*$ such that, for all $x$ in the linear span of $\cup_{k=1}^dZ_k$, we have $\|x\| \leq (1+\varepsilon)\max\{g(x): g\in G\}$ and set $F = \cup_{T\in\mathcal{A}}\{T^*g:g\in G\}$. Finally, for $i=1,\ldots,n,$ choose $x_{d+1}^i$ in the unit sphere of $\cap_{f\in F}\ker f$ such that $\|T_{ij}x_{d+1}^i\| > c$ for all $1\leq j\leq m_i$. It follows then quite easily that the sequences are as desired.
	 \end{proof}

	 \begin{lem}
	 	\label{all difference-including domain and target}

	 	Let $X$ be a Banach space and assume that for every separable subspace $Z$ of $X$ we have a difference-including collection $\mathscr{F}_Z$ of normalized Schauder basic sequences in $Z$. Let $T_1,\ldots,T_l$ be bounded linear operators on $X$ and assume that there is $c>0$ such that, for every finite codimensional subspace $Y$ of $X$ and every $i=1,\ldots,l$, we have that $\|T_i|_Y\|_{\mathcal{L}(Y,X)} \geq c$. Assume moreover that, for some $0<\delta<c$ and $i=1,\ldots,l$, we have  $T_{i1},\ldots,T_{im_i}$ in $\mathcal{L}(X)$ with $\|T_{ij} - T_i\| \leq \delta$ for $j=1,\ldots,m_i$. Then, if $\tilde c = c-\delta$, there exist a separable subspace $Z$ of $X$ and normalized  sequences $(z_k^i)_k$, $i=1,\ldots,l$ in $\mathscr{F}_Z$ such that
	 	
	 	\begin{itemize}
	 		
	 		\item[(i)] for any $n_1 < \cdots <n_l$, the sequence $(z_{n_i}^i)_{i=1}^l$ is $(9/8)$-Schauder basic,
	 		
	 		\item[(ii)] $\|T_{ij}z^i_k\| > \tilde c/3$ for   $i=1,\ldots,n$, $j=1,\ldots,m_i$ and $k\in\mathbb{N}$, and
	 		\item[(iii)] if $y_k^{ij} = \|T_{ij}z_k^i\|^{-1}T_{ij}z_k^i$ then $(y_k^{ij})_k$ is in $\mathscr{F}_Z$ for $i=1,\ldots,n$, $j=1,\ldots,m_i$.
	 		
	 	\end{itemize}
	 \end{lem}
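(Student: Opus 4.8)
The plan is to combine the operator form of Mazur's method, Lemma~\ref{mazur with operators}, with clause (ii) of the definition of a difference-including family. In one sentence: first produce a separable subspace carrying an almost monotone FDD along which every $T_{ij}$ is uniformly bounded below, and then replace the FDD vectors by normalized successive differences so that the resulting sequences, and their images under the $T_{ij}$, lie in the prescribed families.

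To begin, I would fix $\varepsilon>0$ small (say $\varepsilon\le 1/8$) together with a constant $c'<\tilde c$ chosen close enough to $\tilde c$ that $\tfrac{2(1+\varepsilon)}{3}\,\tilde c<c'$; this is possible exactly because $\tfrac{2(1+\varepsilon)}{3}<1$ for $\varepsilon$ small. Since $\|T_i|_Y\|_{\mathcal{L}(Y,X)}\ge c$ for every finite codimensional $Y$ and $\|T_{ij}-T_i\|\le\delta$, each such $Y$ contains a unit vector $x$ with $\|T_ix\|>c'+\delta$, hence $\|T_{ij}x\|>c'$ for all $j\le m_i$. Thus the hypotheses of Lemma~\ref{mazur with operators} are met for the finite family $\{T_{ij}\}$ with the constant $c'$ in place of $c$, and it supplies normalized sequences $(x_k^i)_k$, $i=1,\dots,l$, with $x_k^i,\ T_{ij}x_k^i\in Z_k$, with $\|T_{ij}x_k^i\|>c'$ for all $i,j,k$, and such that $(Z_k)_k$ is an FDD of $Z':=\cspn\bigcup_kZ_k$ with projection constant at most $1+\varepsilon$.

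Next I would feed the finite family of sequences $\{(x_k^i)_k\}_{i=1}^l\cup\{(T_{ij}x_k^i)_k\}_{i,j}$, all of which live in $Z'$, into clause (ii) of Definition~\ref{difference including} for $\mathscr{F}_{Z'}$. None of these has a norm convergent subsequence: the $k$-th term sits in the block $Z_k$ and has norm bigger than $c'$ (or equal to $1$ for the $(x_k^i)_k$), so applying the initial-segment projection of the FDD onto $Z_1\oplus\dots\oplus Z_k$ to the difference of the $k$-th and $k'$-th terms ($k<k'$) shows these are at least $c'/(1+\varepsilon)$ (resp.\ $1/(1+\varepsilon)$) apart. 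Running the difference-including property successively through this finite list---each application made to the next sequence restricted to the infinite set obtained so far---produces a single infinite set $L=\{l_1<l_2<\cdots\}$ such that, for every sequence $(u_k)_k$ on the list, the normalized differences $\bigl(\|u_{l_{2k-1}}-u_{l_{2k}}\|^{-1}(u_{l_{2k-1}}-u_{l_{2k}})\bigr)_k$ belong to $\mathscr{F}_{Z'}$.

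Finally I would set $z_k^i=\|x_{l_{2k-1}}^i-x_{l_{2k}}^i\|^{-1}(x_{l_{2k-1}}^i-x_{l_{2k}}^i)$, $y_k^{ij}=\|T_{ij}z_k^i\|^{-1}T_{ij}z_k^i$, and $Z=Z'$. Property (iii) and the membership $(z_k^i)_k\in\mathscr{F}_Z$ are then immediate from the previous paragraph, because $y_k^{ij}$ is precisely the $k$-th normalized difference of $(T_{ij}x_k^i)_k$ along $L$. Property (i) follows since, for $n_1<\cdots<n_l$, the vectors $z_{n_i}^i$ are successive normalized blocks of the FDD $(Z_k)_k$ (the supporting bands $Z_{l_{2n_i-1}}\oplus Z_{l_{2n_i}}$ are pairwise successive), hence $(1+\varepsilon)$-Schauder basic and a fortiori $(9/8)$-Schauder basic. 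For (ii), projecting $T_{ij}x_{l_{2k-1}}^i-T_{ij}x_{l_{2k}}^i$ onto $Z_1\oplus\cdots\oplus Z_{l_{2k-1}}$ gives $\|T_{ij}x_{l_{2k-1}}^i-T_{ij}x_{l_{2k}}^i\|\ge c'/(1+\varepsilon)$, while $\|x_{l_{2k-1}}^i-x_{l_{2k}}^i\|\le 2$, so $\|T_{ij}z_k^i\|\ge c'/\bigl(2(1+\varepsilon)\bigr)>\tilde c/3$ by the choice of $\varepsilon$ and $c'$. I expect (ii) to be the only delicate point: the passage to normalized differences inflates the denominator by up to $2(1+\varepsilon)$, which is why the FDD must be taken almost monotone and $c'$ pushed close to $\tilde c$; the bookkeeping needed to apply the difference-including property simultaneously to finitely many sequences is routine but should be spelled out carefully.
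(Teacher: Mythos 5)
Your proposal is correct and follows essentially the same route as the paper: apply Lemma \ref{mazur with operators} to the family $\{T_{ij}\}$ to get an almost-monotone FDD $(Z_k)_k$ along which the $T_{ij}$ are bounded below, pass to normalized successive differences via clause (ii) of Definition \ref{difference including}, and recover the lower bound in (ii) from the factor $\tfrac{1}{2}\cdot\tfrac{1}{1+\varepsilon}$ lost in normalizing differences. The only differences are cosmetic constant choices (the paper fixes $3\tilde c/4$ and $9/8$ where you use $c'$ and $1+\varepsilon$), and you in fact supply some details the paper leaves implicit (the non-existence of norm convergent subsequences and the iteration of the difference-including property over the finite list of sequences).
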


	 \begin{proof}
	 	Note that, for any finite codimensional subspace $Y$ of $X$, we may choose $x_i$ in the unit ball of $Y$ such that $\|T_ix_i\| > c - (c-\delta)/4$, which means that, for $j=1,\ldots,m_i$, we have $\|T_{ij}x_i\| > 3(c-\delta)/4 = 3\tilde c/4$. Apply Lemma \ref{mazur with operators} to find normalized sequences $(x_k^i)_k$, $i=1,\ldots,n$, such that $\|T_{ij}x_k^i\| > 3\tilde c/4$ for all $k\in\mathbb{N}$, $i=1,\ldots,n$ and $j=1,\ldots,m_i$, and the sequence $(Z_k)_k$ defined in Lemma \ref{mazur with operators} is an FDD with constant $9/8$. Let $Z = \cspn\cup_kZ_k$. Choose $L$ such that, if we set $z_k^i = \|x^i_{m_{2k-1}}-x^i_{m_{2k}}\|^{-1}(x^i_{m_{2k-1}}-x^i_{m_{2k}})$, then $(z_k^i)_k$ as well as $(\|T_{ij}z_k^i\|^{-1}T_{ij}z_k^i)_k$ are in $\mathscr{F}_Z$ for $i=1,\ldots,n$, $j=1,\ldots,m_i$. By the fact that $(T_{ij}x_k^i)_k$ is $9/8$-Schauder basic we obtain that
	 	\[\big\|T_{ij}z_k^i\big\| = \frac{1}{\|x^i_{m_{2k-1}}-x^i_{m_{2k}}\|}\big\|T_{ij}x^i_{m_{2k-1}}-T_{ij}x^i_{m_{2k}}\big\| \geq \frac{1}{2}\frac{1}{9/8}\big\|T_{ij}x^i_{m_{2k-1}}\big\| > \frac{3\tilde c/4}{9/4}.\]
	 	The fact that statement (i) is true follows from the fact $(Z_k)_k$ is an FDD with constant $9/8$ and $(z_{n_i}^i)_{i=1}^l$ is a block sequence.
	 \end{proof}	 	
	
	S. Kakutani \cite{Kakutani} proved the finite dimensional analog of the following theorem, also known as Kakutani's Fixed Point Theorem. We present the infinite dimensional case by H. F. Bohnenblust and S. Karlin \cite{BK}, which as mentioned already is a key ingredient in the proof of Theorem \ref{the theorem}. Recall that a multivalued mapping $\phi:X\twoheadrightarrow Y$ between topological spaces has closed graph if for every $(x_n)_n\in X$ with $\lim x_n=x$ and $(y_n)_n\in Y$ with $y_n\in\phi(x_n)$ and $\lim y_n=y$, we have that $y\in\phi(x)$.
	
	 \begin{thm}\label{fixedpoint}
	 	Let $X$ be a Banach space, $K$ be a nonempty compact convex subset of $X$ and let the multivalued mapping $\phi:K\twoheadrightarrow K$ have closed graph and nonempty convex values. Then $\phi$ has a fixed point, i.e. there exists $x\in X$ such that $x\in\phi(x)$.
	 \end{thm}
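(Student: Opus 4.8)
The plan is to reduce the statement to the finite-dimensional Kakutani theorem (already available to us) by a finite-dimensional approximation of $K$, following the classical argument of Bohnenblust and Karlin.

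First, fix $\varepsilon>0$. Since $K$ is compact it admits a finite $\varepsilon$-net $\{x_1,\ldots,x_n\}\subset K$; set $K_\varepsilon=\co\{x_1,\ldots,x_n\}$, which is a compact convex subset of the finite-dimensional subspace $\spn\{x_1,\ldots,x_n\}$, and note that $K_\varepsilon\subset K$ because $K$ is convex. Define continuous functions $\lambda_i\colon K\to[0,1]$ by $\lambda_i(x)=\max\{0,\varepsilon-\|x-x_i\|\}$ and normalize, $\hat\lambda_i=\lambda_i/\sum_j\lambda_j$ (the denominator is strictly positive since the $x_i$ form an $\varepsilon$-net), so that $p_\varepsilon(x)=\sum_i\hat\lambda_i(x)x_i$ is a continuous map $K\to K_\varepsilon$ satisfying $\|p_\varepsilon(x)-x\|<\varepsilon$ for all $x\in K$.

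Next I would note that the closed graph hypothesis forces each $\phi(y)$ to be closed, hence compact. Define a multivalued map $\psi_\varepsilon\colon K_\varepsilon\twoheadrightarrow K_\varepsilon$ by $\psi_\varepsilon(y)=\co\big(p_\varepsilon(\phi(y))\big)$; by Carath\'eodory's theorem this is a nonempty compact convex subset of $K_\varepsilon$, and a routine argument shows that $\psi_\varepsilon$ has closed graph. Applying the finite-dimensional Kakutani theorem to $\psi_\varepsilon$ on $K_\varepsilon$ yields $y_\varepsilon\in\psi_\varepsilon(y_\varepsilon)$, that is $y_\varepsilon=\sum_k t_k\,p_\varepsilon(z_k)$ for some $z_k\in\phi(y_\varepsilon)$, $t_k\ge0$, $\sum_k t_k=1$. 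Setting $w_\varepsilon=\sum_k t_k z_k$, convexity of $\phi(y_\varepsilon)$ gives $w_\varepsilon\in\phi(y_\varepsilon)$, while $\|y_\varepsilon-w_\varepsilon\|\le\sum_k t_k\|p_\varepsilon(z_k)-z_k\|<\varepsilon$. Finally, taking $\varepsilon=1/m$ with $m\in\N$ and using that $K$ is compact and metrizable, I pass to a subsequence with $y_{1/m}\to x\in K$; then $\|y_{1/m}-w_{1/m}\|\to 0$ forces $w_{1/m}\to x$ as well, and since $w_{1/m}\in\phi(y_{1/m})$ and $\phi$ has closed graph, we conclude $x\in\phi(x)$.

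The only genuinely delicate point is the verification that $\psi_\varepsilon$ has closed graph, i.e.\ that upper semicontinuity survives both the composition with $p_\varepsilon$ and the passage to convex hulls. This is where I expect the real work: given $y_k\to y$ in $K_\varepsilon$ and $u_k\in\psi_\varepsilon(y_k)$ with $u_k\to u$, one writes each $u_k$ as a Carath\'eodory combination $u_k=\sum_{j=0}^{d}t_{k,j}\,p_\varepsilon(z_{k,j})$ with $z_{k,j}\in\phi(y_k)$, and then uses compactness of $K$ to extract convergent subsequences of the coefficients $t_{k,j}$ and the points $z_{k,j}$, the closed graph of $\phi$ to place the limits of $z_{k,j}$ in $\phi(y)$, and the continuity of $p_\varepsilon$ to pass to the limit, obtaining $u\in\co\big(p_\varepsilon(\phi(y))\big)=\psi_\varepsilon(y)$. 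Everything else is bookkeeping.
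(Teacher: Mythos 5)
Your proof is correct, and it is the classical Bohnenblust--Karlin reduction to the finite-dimensional Kakutani theorem via a Schauder projection onto the convex hull of an $\varepsilon$-net. The paper does not prove this statement at all --- it simply cites Kakutani for the finite-dimensional case and Bohnenblust--Karlin \cite{BK} for the infinite-dimensional one --- so your argument is essentially a faithful reconstruction of the proof in the cited reference, including the one genuinely delicate point (the closed graph of $\psi_\varepsilon$ via Carath\'eodory representations and compactness of $K$), which you handle correctly.
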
	
	
	 \begin{proof}[Proof of Theorem \ref{the theorem}]
	 	Let $D > 2K^2$, i.e. a constant for which the conclusion of Lemma \ref{constant coming from unique joint sm} can be applied to all families $\mathscr{F}_Z$. Set $C = 7D$. Let $W$ be a convex and compact subset of $\mathcal{L}(X)$, $A\in\mathcal{L}(X)$, and $\varepsilon>0$ such that, for all $x$ in the unit ball of $X$, there is $T\in W$ with $\|A(x) - T(x)\|\le \varepsilon$. We claim that there is a finite codimensional subspace $Y$ of $X$ and $T\in W$ such that $\|(A-T)|_Y\|_{\mathcal{L}(Y,X)} < C\varepsilon$. Assume that the conclusion is false. Set $c = C\varepsilon$, $\delta = c/2$, and $\tilde c = c - \delta = C/2$. Choose a maximal $\delta$-separated subset $(T_i)_{i=1}^l$ of $W$, set $\eta = \varepsilon/(27l)$ and for $i=1,\ldots,l$ choose a maximal $\eta$-separated subset $(T_{ij})_{j=1}^{m_i}$ of $B_W(T_i,\delta) = \{T\in W: \|T_i - T\| \leq \delta\}$. Apply Lemma \ref{all difference-including domain and target} to the operators $A-T_i$ and $A - T_{ij}$, for $i=1,\ldots,l$ and $j=1,\ldots,m_i$, to find a separable subspace $Z$, normalized $9/8$-Schauder basic sequences $(z_k^i)_k$ in $\mathscr{F}_Z$ such that for $i=1,\ldots,l$, $j=1,\ldots,m_i$ if $y_k^{ij} = \|(A-T_{ij})z_k^i\|^{-1}(A-T_{ij})z_k^i$, for $k\in\mathbb{N}$, then $\|(A-T_{ij})z_k^i\| \geq \tilde c/3$ and  the sequence $(y_k^{ij})_k$ is in $\mathscr{F}_Z$. Iterate Lemma \ref{constant coming from unique joint sm} to find an infinite subset $L$ of $\mathbb{N}$ such that \eqref{same norms basically} is satisfied for $(z_k^i)_{k\in L}$ and $(y_{k}^{ij_i})_{k\in L}$ for all $i=1,\ldots,l$ and for any choice of $1\leq j_i\leq m_i$.
	 	
	 	Fix $k_1 <\cdots <k_l$ in $L$ and take a partition of unity $f_1,\ldots,f_l$ of $W$ subordinated to $T_1,\ldots,T_l$. That is, $f_i:W\to [0,1]$ is continuous, $\sum_{i=1}^lf_i(T) = 1$ for all $T$ in $W$ and $f_i(T_j) = \delta_{ij}$.  We define a continuous mapping $x:W\to X$ given by
	 	\[x(T) = \frac{\sum_{i=1}^lf_i(T)z^i_{n_i}}{\left\|\sum_{i=1}^lf_i(T)z^i_{n_i}\right\|}.\]
	 	Let $T$ be an arbitrary element of $W$ and if $I_T =  \{i=1,\ldots,l:$ with $\|T - T_i\| \leq \delta\}$, for $i\in I_T$, choose $1\leq j_i\leq m_i$ such that $\|T - T_{ij_i}\|\leq \eta$. Recall that $(z_{n_i}^i)_i$ is $(9/8)$-Schauder basic and therefore
	 	\begin{equation}
	 	\label{hatshu}
	 	\Big\|\sum_{i=1}^lf_i(T)z^i_{n_i}\Big\| \geq \frac{4}{9l}\sum_{i=1}^l\big|f_i(T)\big| = \frac{4}{9l}.
	 	\end{equation}
	 	We observe the following:
	 	\begin{align}
	 	\left\|\left(A-T\right)x(T)\right\| & = \frac{1}{\left\|\sum_{i\in I_T}f_i(T)z^i_{n_i}\right\|}\Big\|\sum_{i\in I_T}f_i(T)\left(A - T\right)z^i_{n_i}\Big\|\nonumber\\
	 	& \geq  \frac{\left\|\sum_{i\in I_T}f_i(T)\left(A - T_{ij_i}\right)z^i_{n_i}\right\|}{\left\|\sum_{i\in I_T}f_i(T)z^i_{n_i}\right\|} - \frac{\sum_{i\in I_T}\left|f_i(T)\right|\left\|T - T_{ij_i}\right\|}{\left\|\sum_{i\in I_T}f_i(T)z^i_{n_i}\right\|} \nonumber\\
	 	&\geq \frac{\left\|\sum_{i\in I_T}f_i(T)\left(A - T_{ij_i}\right)z^i_{n_i}\right\|}{\left\|\sum_{i\in I_T}f_i(T)x^i_{n_i}\right\|}  - \eta\frac{\sum_{i\in I_T}|f_i(T)|}{4/(9l)\sum_{i\in I_T}|f_i(T)|}\text{ (by \eqref{hatshu})}\nonumber\\
	 	& = \frac{\left\|\sum_{i\in I_T}f_i(T)\left\|\left(A-T_{ij_i}\right)z^i_{n_i}\right\|y^{ij}_{n_i}\right\|}{\left\|\sum_{i\in I_T}f_i(T)x^i_{n_i}\right\|} - \frac{9l\eta}{4}\nonumber\\
	 	& \geq \min_{1\leq i\leq l}\left\|\left(A-T_{ij_i}\right)z^i_{n_i}\right\|\frac{1}{D}\frac{\left\|\sum_{i\in I_T}f_i(T)x^i_{n_i}\right\|}{\left\|\sum_{i\in I_T}f_i(T)x^i_{n_i}\right\|} - \frac{9l\eta}{4} \text{ (by \eqref{same norms basically})}\nonumber\\
	 	&\geq \frac{\tilde c}{3D} - \frac{9l\eta}{4} = \frac{7D}{6D}\varepsilon - \frac{1}{12}\varepsilon = \frac{13}{12}\varepsilon.\label{yes success}
	 	\end{align}
	 	We now define a multivalued mapping $\phi:W \twoheadrightarrow W$ with
	 	\[\phi(T) = \left\{S\in W: \left\|\left(A - S\right)x(T)\right\| \leq \varepsilon\right\}.\]
	 	By assumption, the values of $\phi$ are non-empty and they are also closed and convex. It also easily follows that $\phi$ has a closed graph. Hence, from Theorem \ref{fixedpoint}, there exists $T\in W$ with $T\in\phi(T)$, i.e., $\|(A-T)x(T)\| \leq \varepsilon$. This contradicts \eqref{yes success} which completes the proof.
	 \end{proof}

	 The following lemma shows that if $X$ is a Banach space with a shrinking FDD that satisfies the UALS property, the finite codimensional subspaces of $X$ on which the approximations happen, can be assumed to be tail subspaces.

	 \begin{lem}\label{uals shrinking tail subspace}
	 	Let $X$ be a Banach space with a shrinking  FDD $(X_n)_n$ and $Y$ be a finite codimensional subspace of $X$. Then, for every $\varepsilon>0$, there exists a tail subspace $Z$ of $X$ such that $B_Z\subset B_Y+\varepsilon B_X$.
	 \end{lem}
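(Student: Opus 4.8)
The plan is to reduce the statement to the quantitative fact that the distance functional $z\mapsto d(z,Y)$ becomes uniformly small on the unit ball of a sufficiently far tail subspace, and then to correct the approximant so that it lands in $B_Y$ rather than merely in $Y$. First I would set up the duality: writing $q\colon X\to X/Y$ for the quotient map, the adjoint $q^*$ identifies $(X/Y)^*$ isometrically with the annihilator $Y^\perp=\{f\in X^*:f|_Y=0\}$, which is finite-dimensional since $Y$ has finite codimension. Hahn--Banach then gives $d(z,Y)=\|q(z)\|_{X/Y}=\sup\{|f(z)|:f\in Y^\perp,\ \|f\|\le 1\}$ for every $z\in X$, so it suffices to make $\sup\{\|f|_{Z}\|:f\in Y^\perp,\ \|f\|\le 1\}$ small, where $Z$ ranges over the tail subspaces $Z_N=\cspn\bigcup_{n\ge N}X_n$.

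The heart of the argument is a compactness-plus-shrinking step. Since the FDD $(X_n)_n$ is shrinking, for each individual $f\in X^*$ we have $\|f|_{Z_N}\|\to 0$ as $N\to\infty$. To upgrade this to uniform convergence over the unit ball of $Y^\perp$ I would use that this ball is norm-compact (being the unit ball of a finite-dimensional space): cover it by a finite $\delta/2$-net, apply the shrinking property to each of the finitely many net elements to obtain a single $N_0$ beyond which each of them has restriction-norm $<\delta/2$ on $Z_N$, and then a triangle-inequality estimate gives $\|f|_{Z_N}\|<\delta$ for \emph{every} $f$ in the unit ball of $Y^\perp$ and every $N\ge N_0$. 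Consequently $d(z,Y)<\delta$ for all $z\in B_{Z_{N_0}}$.

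Finally I would handle the normalization. Given $\varepsilon>0$, take $\delta=\varepsilon/4$ and $Z=Z_{N_0}$ as produced above; for $z\in B_Z$ pick $\tilde y\in Y$ with $\|z-\tilde y\|<2\delta$, note $\|\tilde y\|\le 1+2\delta$, and replace $\tilde y$ by $y=\tilde y/\max\{1,\|\tilde y\|\}\in B_Y$; a short estimate bounds $\|z-y\|$ by $4\delta=\varepsilon$, so $z\in B_Y+\varepsilon B_X$, giving $B_Z\subset B_Y+\varepsilon B_X$. I do not expect any genuine obstacle here; the only point requiring a little care is the uniformity step, i.e.\ ensuring that the smallness of $\|f|_{Z_N}\|$ can be taken uniform over the unit ball of $Y^\perp$ — which is precisely where finite codimensionality enters, through compactness of that ball. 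The normalization bookkeeping is routine and I would not belabor it.
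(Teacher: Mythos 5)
Your proposal is correct and rests on the same mechanism as the paper's proof: finite codimensionality reduces everything to finitely many (equivalently, a norm-compact ball of) annihilating functionals, the shrinking property of the FDD makes their restrictions to far tail subspaces uniformly small, and a final perturbation-plus-renormalization puts the approximant into $B_Y$. The paper phrases this with an explicit biorthogonal system $x_i,x_i^*$ for a complement of $Y$ rather than with the quotient norm and a net in $B_{Y^\perp}$, but this is only a difference in bookkeeping.
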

	 \begin{proof}
	 	Let $x_1,\ldots,x_n\in B_X$ with $X=Y\oplus\spn\{x_1,\ldots,x_n\}$ and $x^*_1,\ldots,x^*_n\in {X^*}$ be such that $x^*_i(x_j)=\delta_{ij}$ for every $1\le i,j\le n$. Notice that $Y=\cap_{i=1}^n\ker x^*_i$. Since $(X_n)_n$ is a shrinking FDD, we may choose $n_0\in\N$ such that $\|x^*_i-P^*_{n_0}(x^*_i)\|<\varepsilon/l\|x_i\|$ for all $1\le i\le n$, and set $Z=\spn\cup_{n>n_0}X_n$. Pick a $z$ in the unit ball of $Z$ and set $x=\sum_{i=1}^l{x^*_i(z)x_i}/{\varepsilon}$. Then $|x^*_i(z)|<\varepsilon/l\|x_i\|$ and $x^*_i(x)=x^*_i(z)/\varepsilon$ for all $1\le i\le n$. Hence $\|x\|<1$ and $z-{\varepsilon} x\in\cap_{i=1}^n\ker x_i^*$, from which follows that $z\in B_Y+2\varepsilon B_X$.
	 \end{proof}
	
	 The next example demonstrates a shrinking FDD is necessary above to assume that the uniform approximation happens on tail subspaces. Let us recall that the basis of $\ell_1$ is not shrinking.
	
	 \begin{exmp}\label{counterell1}
	 	Let $(e_n)_n$ denote the unit vector basis of $\ell_1$ and consider the operator $A:\ell_1\to \ell_1$ with
	 	\[
	 	A\big((x_n)_n\big)=\sum_{n=1}^\infty x_{2n-1}e_1+\sum_{n=1}^\infty x_{2n}e_2
	 	\]
	 	and for $z\in\ell_1$, the operators $B_z^+, B_z^-:\ell_1\to\ell_1$ with
	 	\[
	 	B_z^+\big((x_n)_n\big)=\sum_{n=1}^\infty x_nz\quad\textnormal{and}\quad
	 	B_z^-\big((x_n)_n\big)=\Big(\sum_{n=1}^\infty x_{2n-1}-\sum_{n=1}^\infty x_{2n}\Big)z.
	 	\]
	 	Set $W=\textnormal{co}\big\{ B_z^\pm: z\in\spn\{ e_1,e_2 \}\;\textnormal{and}\;\|z\|\le1 \big\}$.	 	
	 	
	 	Let $x\in {\ell_1}$ with $\|x\|\le1$ and $A(x)=a_1e_1+a_2e_2$, where $a_1=\sum_{n=1}^\infty x_{2n-1}$ and $a_2=\sum_{n=1}^\infty x_{2n}$. Suppose that $A(x)\neq0$ and set $a=\max\{|a_1+a_2|,|a_1-a_2|\}$. Notice that $a=|a_1|+|a_2|$. If $a=|a_1+a_2|$, setting $z=\frac{1}{a_1+a_2}A(x)$, we have that $\|z\|=1$ and $B^+_z(x)=A(x)$. If $a=|a_1-a_2|$, then the same hold for $z=\frac{1}{a_1-a_2}A(x)$. Hence, for every $x\in {\ell_1}$ with $\|x\|\le1$, there is a $B\in W$ such that $\|(A-B)x\|=0$.
	 	
	 	Pick any $B\in W$ and $n_0\in\N$. Then there exists a convex combination in $W$ such that $B=\sum_{i=1}^{n}a_iB_{y_i}^++\sum_{i=1}^{m}b_i B_{z_i}^-$ and, for every $k\in\N$, we have that $B(e_{2k-1})= \sum_{i=1}^na_iy_i+\sum_{i=1}^mb_iz_i$ and $B(e_{2k})=\sum_{i=1}^na_iy_i-\sum_{i=1}^mb_iz_i$ and hence
	 	\[\begin{split}
	 	\Big\|\big(A-B\big)\frac{e_{2k-1}+e_{2k}}{2}\Big\|=\Big\|\frac{e_1+e_2}{2}-\sum_{i=1}^na_iy_i\Big\|\ge 1-\sum_{i=1}^na_i.
	 	\end{split}\]
	 	Similarly, $\|(A-B)\frac{e_{2k-1}-e_{2k}}{2}\|\ge \sum_{i=1}^na_i$ and thus, for any $k_0\in\N$ with $n_0\le2k_0-1$,  either $\|(A-B)\frac{e_{2k_0-1}+e_{2k_0}}{2}\|\ge{1}/{2}$ or $\|(A-B)\frac{e_{2k_0-1}-e_{2k_0}}{2}\|\ge{1}/{2}$. Therefore, we conclude that $\|(A-B)|_{\spn\{e_n:n\ge n_0 \}}\|\ge{1}/{2}$ while, for every $x$, in the unit ball of $\ell_1$ there exists a $B\in W$ such that $\|A(x)-B(x)\|=0$.
	 \end{exmp}

	 \subsection{The UALS property and duality}
	 We make a connection between the UALS property of a space and its dual. In particular, for reflexive spaces with an FDD we show that the UALS for $X$ is equivalent to the UALS for $X^*$. We also show that if $X$ has an FDD and $X^*$ has a unique $l$-joint spreading model with respect to a difference-including family, then $X$ must satisfy the UALS as well. This allows us to show indirectly that certain spaces, such as $\mathscr{L}_\infty$ spaces with separable dual, satisfy the UALS.

	 \begin{prop}
	 \label{dualize e-approximation}
	 	
	 Let $X$ be a Banach space, $A\in\mathcal{L}(X)$ and $W$ be a convex and WOT-compact subset of $\mathcal{L}(X)$. If there is an $\varepsilon>0$ such that $W$ $\varepsilon$-pointwise approximates $A$, then the set $W^* = \{T^*: T\in W\}$ $\varepsilon$-pointwise approximates $A^*$.
	 \end{prop}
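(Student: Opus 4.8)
The plan is to reduce the statement to a finite intersection argument and then resolve the resulting ``quantifier swap'' by a finite dimensional separation.

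Fix $x^*\in B_{X^*}$; since $\|(A-B)^*x^*\| = \sup_{x\in B_X}|x^*((A-B)x)|$, producing a $B\in W$ with $\|(A^*-B^*)x^*\|\le\varepsilon$ is the same as producing a $B\in W$ with $|x^*((A-B)x)|\le\varepsilon$ for every $x\in B_X$. For $x\in B_X$ set $W_x=\{B\in W:|x^*((A-B)x)|\le\varepsilon\}$. The map $B\mapsto x^*(Bx)$ is affine and WOT-continuous (this last point is literally the definition of WOT), so each $W_x$ is a WOT-closed, hence WOT-compact, subset of $W$, and it is nonempty by hypothesis. Since $W$ is WOT-compact, it suffices to verify that the family $\{W_x:x\in B_X\}$ has the finite intersection property; then $\bigcap_{x\in B_X}W_x\ne\emptyset$, and any element of this intersection is the desired $B$.

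For the finite intersection property, fix $x_1,\dots,x_n\in B_X$. Consider the affine, WOT-continuous map $\Phi:W\to\R^n$, $\Phi(B)=(x^*(Bx_i))_{i=1}^n$; then $K:=\Phi(W)$ is a compact convex subset of $\R^n$ (here convexity of $W$ is used), and with $v=(x^*(Ax_i))_{i=1}^n$ one checks directly that $\bigcap_{i=1}^n W_{x_i}=\emptyset$ is equivalent to $v\notin K+[-\varepsilon,\varepsilon]^n$. Assuming the latter, the Hahn--Banach separation theorem in $(\R^n,\|\cdot\|_\infty)$ (equivalently, the formula for the distance from a point to a convex set via $\ell_\infty$--$\ell_1$ duality) produces scalars $\lambda_1,\dots,\lambda_n$ with $\sum_{i=1}^n|\lambda_i|=1$ such that $\sum_{i=1}^n\lambda_i v_i-\sum_{i=1}^n\lambda_i w_i>\varepsilon$ for every $w\in K$. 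Setting $x=\sum_{i=1}^n\lambda_i x_i$ one has $\|x\|\le\sum_{i=1}^n|\lambda_i|=1$, and, taking $w=\Phi(B)$, the inequality reads $x^*((A-B)x)>\varepsilon$ for \emph{every} $B\in W$. But the hypothesis applied to this $x\in B_X$ yields some $B_0\in W$ with $\|(A-B_0)x\|\le\varepsilon$, whence $x^*((A-B_0)x)\le\|x^*\|\,\|(A-B_0)x\|\le\varepsilon$, a contradiction. (For complex scalars one replaces $x^*$ by $\mathrm{Re}\,x^*$ in the separation step; otherwise the argument is unchanged.)

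The only genuine content is the passage from ``for every $x$ there is a $B$'' to ``there is a $B$ for every $x$'', a minimax phenomenon which I expect to be the one nontrivial point; the finite intersection/separation route handles it using only finite dimensional Hahn--Banach together with the convexity and WOT-compactness of $W$. Alternatively one could invoke Sion's minimax theorem directly on $B_X\times W$ with the function $(x,B)\mapsto x^*((A-B)x)$, which is linear in $x$ and affine-and-WOT-continuous in $B$; I prefer the argument above since it is self-contained. Everything else — WOT-closedness of the sets $W_x$, compactness and convexity of $K=\Phi(W)$, and the bound $\|x\|\le1$ for the convex combination — is routine.
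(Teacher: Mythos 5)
Your proof is correct, and it takes a genuinely different route from the paper's. The paper argues by contradiction with a single infinite-dimensional separation: if some $x^*\in S_{X^*}$ had $\mathrm{dist}(A^*x^*,W^*x^*)\ge\varepsilon+\delta$, then since $W^*x^*=\{T^*x^*:T\in W\}$ is convex and $w^*$-compact (using WOT-compactness of $W$ and $w^*$-continuity of $T\mapsto T^*x^*$), a separation theorem in $(X^*,w^*)$ produces an $x\in S_X$ with $x^*(Tx-Ax)\ge\varepsilon+\delta/2$ for all $T\in W$, contradicting the hypothesis at that single $x$. You instead reduce to finitely many test vectors through the finite intersection property of the WOT-compact sets $W_x$, and then separate in $(\R^n,\|\cdot\|_\infty)$; the $\ell_1$-normalized separating functional $\lambda$ hands you the witnessing vector $x=\sum_{i}\lambda_ix_i\in B_X$. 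The underlying duality is the same --- in both arguments the contradiction comes from evaluating the hypothesis at a norm-one vector manufactured from a separating functional --- but your version trades the $w^*$-separation (which, to get the quantitative conclusion, really separates the closed $\varepsilon$-ball around $A^*x^*$, a $w^*$-compact set by Alaoglu, from $W^*x^*$) for finite-dimensional Hahn--Banach plus compactness bookkeeping. What you buy is a more elementary, self-contained argument that uses WOT-compactness of $W$ only through the FIP and the compactness of $\Phi(W)\subset\R^n$; what the paper's route buys is brevity, at the cost of a couple of infinite-dimensional topological verifications it leaves implicit.
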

	
	 \begin{proof}
	  If we assume that the conclusion is false, then there exists $x^*$ in the unit sphere of $X^*$ and $\delta>0$ such that if $W^*x^* = \{T^*x^*: T\in W\}$, then we have that $\mathrm{dist}(A^*x^*,W^*x^*) \geq \varepsilon+\delta$. As $W^*x^*$ is a convex and $w^*$-compact subset of $X^*$, a separation theorem yields that there exists $x$ in the unit sphere of $X$ such that $x(A^*x^*) + (\varepsilon+\delta/2) \leq \inf_{T\in W}x(T^*x^*)$ or \[\|Ax - Tx\| \geq x^*(Tx - Ax)\geq \varepsilon + \delta/2\]
	  for all $T\in W$.
	  \end{proof}
	
	  \begin{rem}
	  The compactness of $W$ is necessary in Proposition \ref{dualize e-approximation}. To see this, consider the case when $X = \ell_1$, $A$ is the identity operator, and $W$ is the closed convex hull of all natural projections onto finite subsets of $\mathbb{N}$ with respect to the unit vector basis.
	  \end{rem}
	
	  We state the main results and prove them afterwards.
	
	  \begin{thm}
	  \label{uals and reflexivity}
	  Let $X$ be a reflexive Banach space with an FDD. Then $X$ satisfies the UALS if and only if $X^*$ does.
	  \end{thm}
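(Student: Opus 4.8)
The plan is to establish only the implication ``$X^*$ satisfies the UALS $\Rightarrow$ $X$ satisfies the UALS''. The converse is then free: $X^*$ is again reflexive with an FDD and $X^{**}=X$, so applying the established implication to $X^*$ in place of $X$ yields ``$X$ satisfies the UALS $\Rightarrow$ $X^*$ satisfies the UALS''. Throughout I will use the standard facts that a reflexive space with an FDD has a shrinking (and boundedly complete) FDD, that the dual FDD $(X_n^*)_n$ is then an FDD of $X^*$, and that $T\mapsto T^*$ is an isometry of $\mathcal{L}(X)$ into $\mathcal{L}(X^*)$ carrying convex norm-compact sets to convex norm-compact sets.

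Assume $X^*$ satisfies the UALS with constant $C$, and fix $\varepsilon>0$, $A\in\mathcal{L}(X)$, and a convex norm-compact $W\subseteq\mathcal{L}(X)$ that $\varepsilon$-pointwise approximates $A$. Since $W$ is convex and WOT-compact, Proposition \ref{dualize e-approximation} gives that $W^*=\{T^*:T\in W\}$ $\varepsilon$-pointwise approximates $A^*$, and $W^*$ is a convex norm-compact subset of $\mathcal{L}(X^*)$; hence the UALS for $X^*$ provides a finite codimensional subspace $V$ of $X^*$ and a $T_0\in W$ with $\|(A-T_0)^*|_V\|\le C\varepsilon$.

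The key step is to replace $V$ by a tail subspace of the dual FDD. Put $R=\sup_{T\in W}\|T\|<\infty$; from the pointwise approximation $\|A\|\le R+\varepsilon$, so $\|(A-T_0)^*\|=\|A-T_0\|\le 2R+\varepsilon$. Applying Lemma \ref{uals shrinking tail subspace} to the shrinking FDD $(X_n^*)_n$ with $\eta=\varepsilon/(2R+\varepsilon)$ produces a tail subspace $V_m=\cspn\{X_n^*:n>m\}$ with $B_{V_m}\subseteq B_V+\eta B_{X^*}$, whence $\|(A-T_0)^*|_{V_m}\|\le(C+1)\varepsilon$. Now $V_m$ is the annihilator in $X^*$ of the head $H_m=X_1\oplus\cdots\oplus X_m$ of $X$; taking adjoints once more and identifying $X^{**}$ with $X$ via reflexivity, one computes $\|(A-T_0)^*|_{V_m}\|=\sup_{x\in B_X}\operatorname{dist}_X\big((A-T_0)x,H_m\big)$. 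Therefore $\operatorname{dist}_X((A-T_0)x,H_m)\le(C+1)\varepsilon$ for all $x\in B_X$, and since $\operatorname{dist}_X(z,H_m)\ge\|P_{>m}z\|/K$, where $P_{>m}=I-P_{\le m}$, $P_{\le m}$ is the natural projection onto $H_m$ along the tail, and $K=\sup_m\|P_{>m}\|<\infty$, we conclude $\|P_{>m}(A-T_0)\|\le K(C+1)\varepsilon$.

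It remains to observe that $G:=P_{\le m}(A-T_0)$ is a finite rank operator, say $Gx=\sum_{i=1}^d x_i^*(x)z_i$ with $x_i^*\in X^*$; for the finite codimensional subspace $Y=\bigcap_{i=1}^d\ker x_i^*$ we have $Gy=0$, hence $(A-T_0)y=P_{>m}(A-T_0)y$ for $y\in Y$ and so $\|(A-T_0)|_Y\|\le K(C+1)\varepsilon$. This gives the UALS for $X$ with the uniform constant $K(C+1)$. The one point that genuinely needs care — and the reason the FDD hypothesis is essential rather than mere finite codimension of $V$ — is the reduction to the tail subspace $V_m$: a general finite dimensional $V^\perp$ would be complemented in $X$ only with a constant depending on its dimension, i.e. on $W$ and $A$, whereas the heads $H_m$ of the FDD are complemented with the intrinsic constant $K$, which is precisely what keeps the final UALS constant from depending on the data.
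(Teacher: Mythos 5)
Your proof is correct and follows essentially the same route as the paper: dualize the pointwise approximation via Proposition \ref{dualize e-approximation}, apply the UALS in $X^*$, pass to a tail subspace of the dual FDD using Lemma \ref{uals shrinking tail subspace}, and transfer back to $X$ by killing the finite-rank head (your last paragraph is precisely the paper's Lemma \ref{tail in front and in the back}). Your distance-to-head computation is just an unwound version of the paper's adjoint identity $\|Q_m(A-T_0)\|=\|(A-T_0)^*Q_m^*\|$, so the two arguments coincide up to bookkeeping of constants.
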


	 \begin{thm}
	 \label{theorem UALS by duality}
	 Let $X$ be a Banach space with an FDD. Assume that there exist a uniform constant $C>0$ and, for every separable subspace $Z$ of $X^*$,  a difference-including family $\mathscr{F}_Z$ of normalized sequences in $X^*$ such that $Z$ admits a $C$-uniformly unique $l$-joint spreading model with respect to $\mathscr{F}_Z$. Then $X$ satisfies the UALS property.
	 \end{thm}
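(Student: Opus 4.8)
The plan is to transport the whole problem to the dual space $X^*$, apply (the proof of) Theorem \ref{the theorem} there, and use the FDD of $X$ as the bridge that makes the transport legitimate. Fix a convex compact $W\subseteq\mathcal{L}(X)$, an operator $A\in\mathcal{L}(X)$, and $\varepsilon>0$ such that for every $x\in B_X$ there is $T\in W$ with $\|Ax-Tx\|\le\varepsilon$; we must produce a finite codimensional $Y\subseteq X$ and $B\in W$ with $\|(A-B)|_Y\|_{\mathcal{L}(Y,X)}\le\mathbf{C}\varepsilon$ for a constant $\mathbf{C}$ depending only on $C$ and the FDD constant $\lambda$ of $X$. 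Since $W$ is norm compact it is WOT compact, so Proposition \ref{dualize e-approximation} applies and the norm compact convex set $W^*=\{T^*:T\in W\}\subseteq\mathcal{L}(X^*)$ $\varepsilon$-pointwise approximates $A^*$.

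The key new ingredient is the following transfer estimate. Let $(P_n)_n$ be the FDD projections of $X$ and put $R_n=(I-P_n)^*$, a projection of $X^*$ of norm $\le 1+\lambda$ whose range is the finite codimensional, norm closed subspace $(P_nX)^\perp$. I claim that if $S\in\mathcal{L}(X)$ satisfies $\|S|_Y\|_{\mathcal{L}(Y,X)}\ge c$ for every finite codimensional $Y\subseteq X$, then $\|S^*|_{(P_nX)^\perp}\|_{\mathcal{L}((P_nX)^\perp,X^*)}\ge c/(1+\lambda)$ for every $n$. Indeed, $P_nS$ has finite rank, hence is compact, so given $\eta>0$ Proposition \ref{compact operators inf is zero over codim} yields a finite codimensional $Y'$ with $\|(P_nS)|_{Y'}\|<\eta$; choosing $y\in Y'$, $\|y\|\le 1$, with $\|Sy\|>c-\eta$ and $x^*\in B_{X^*}$ with $x^*(Sy)>c-\eta$, the functional $\tilde x^*=R_nx^*\in(P_nX)^\perp$ has $\|\tilde x^*\|\le 1+\lambda$ and $(S^*\tilde x^*)(y)=x^*\big((I-P_n)Sy\big)=x^*(Sy)-x^*(P_nSy)>c-2\eta$, so $\|S^*\tilde x^*\|>c-2\eta$; letting $\eta\downarrow 0$ gives the claim. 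Consequently, \emph{assuming for contradiction} that $\|(A-B)|_Y\|>\mathbf{C}\varepsilon$ for every finite codimensional $Y\subseteq X$ and every $B\in W$, we get $\|(A^*-B^*)|_{(P_nX)^\perp}\|\ge \mathbf{C}\varepsilon/(1+\lambda)$ for all $n$ and all $B\in W$.

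Now I would rerun the proof of Theorem \ref{the theorem} with $X^*$, $A^*$, $W^*$ in place of $X$, $A$, $W$, using the cofinal family $\{(P_nX)^\perp\}_n$ in the role of ``finite codimensional subspaces''. Choosing $D>2C^2$ and $\mathbf{C}=7D(1+\lambda)$ (up to the usual slack absorbed into the $\delta,\eta$ parameters), the bounds of the previous paragraph are $\ge 7D\varepsilon$ on each $(P_nX)^\perp$. Pick a maximal $\delta$-separated $(T_i)_{i=1}^l$ in $W$ and, for each $i$, a maximal $\eta$-separated $(T_{ij})_{j=1}^{m_i}$ in $B_W(T_i,\delta)$, and carry out the Mazur-type construction of Lemmas \ref{mazur with operators} and \ref{all difference-including domain and target} for the operators $(A-T_i)^*$, $(A-T_{ij})^*$ \emph{inside} $X^*$; the one adjustment is that the finitely many norming functionals required at each step can be taken in $X\subseteq X^{**}$ (since $X$ norms $X^*$ and finite dimensional subspaces of $X^*$ admit finite norming subsets of $B_X$) and then, using $P_nf\to f$ in norm, replaced by normalized truncations $P_{n_1}f$; a vector chosen in $(P_{n_1}X)^\perp$ then annihilates the ``$I$''-type functionals exactly and the ``$(A-T_{ij})$''-type ones up to an arbitrarily small error, while on $(P_{n_1}X)^\perp$ the operators $(A-T_{ij})^*$ still have norm $>\mathbf{C}\varepsilon/(1+\lambda)-\delta$, so the inductive step goes through with summable perturbations. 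Passing to a separable subspace $Z$ of $X^*$ and to successive differences produces $9/8$-Schauder basic sequences $(z_k^i)_k$ in $\mathscr{F}_Z$ and sequences $(y_k^{ij})_k=(\|(A-T_{ij})^*z_k^i\|^{-1}(A-T_{ij})^*z_k^i)_k$ in $\mathscr{F}_Z$ (this is where the difference-including hypothesis enters); then Lemma \ref{constant coming from unique joint sm}, applied with the $C$-uniformly unique $l$-joint spreading model of $\mathscr{F}_Z$, together with the continuous map $x:W\to X^*$ built from the $z^i_{n_i}$ and a partition of unity $f_1,\dots,f_l$ of $W$ subordinated to $T_1,\dots,T_l$ exactly as in the proof of Theorem \ref{the theorem}, yields the estimate \eqref{yes success}, namely $\|(A^*-T^*)x(T)\|\ge\tfrac{13}{12}\varepsilon$ for all $T\in W$. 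Finally the multivalued map $\phi(T)=\{S\in W:\|(A^*-S^*)x(T)\|\le\varepsilon\}$ has nonempty values (because $W^*$ $\varepsilon$-pointwise approximates $A^*$), closed convex values, and closed graph, so Theorem \ref{fixedpoint} furnishes a fixed point of $\phi$, contradicting the estimate just obtained.

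The step I expect to be the main obstacle is exactly the mismatch between what Theorem \ref{the theorem} demands and what dualization provides: the machinery needs the lower bound $\|(A-T_i)|_Y\|\ge c$ on \emph{all} finite codimensional subspaces, whereas on $X^*$ the transfer estimate only delivers it on the tail subspaces $(P_nX)^\perp$ (a finite codimensional subspace of $X^*$ cut out by functionals genuinely in $X^{**}\setminus X$ contains no $(P_nX)^\perp$ at all). Overcoming this forces the Mazur-type construction to stay within these tail subspaces, which is possible precisely because the norming functionals can be pushed into finite blocks of the FDD of $X$ — and this, rather than reflexivity, is the sole role of the FDD hypothesis here, in contrast with Theorem \ref{uals and reflexivity}.
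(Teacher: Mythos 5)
Your proposal is correct and follows essentially the same route as the paper: dualize the $\varepsilon$-pointwise approximation via Proposition \ref{dualize e-approximation}, use the FDD to reduce the conclusion to a norm estimate on the tail subspaces $(P_nX)^\perp=\mathrm{ran}\,Q_n^*$ (your transfer estimate is the contrapositive of the paper's Lemma \ref{tail in front and in the back}, with the factor $1+\lambda$ replaced in the paper by a bimonotone renorming), and then rerun the Kakutani argument of Theorem \ref{the theorem} in $X^*$ with the Mazur-type construction confined to those tails. Your device of truncating the norming functionals into finite blocks $P_{n_1}X$ is the same mechanism as the paper's Lemma \ref{DUAL all difference-including domain and target}, which instead notes that functionals supported on tails are $w^*$-null; and you correctly identify that the sole role of the FDD is to supply this cofinal family of tail subspaces.
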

	
	 Recall that results from \cite{H}, \cite{HS}, \cite{LS}, and \cite{Ste} yield that if $X$ is an infinite dimensional $\mathscr{L}_\infty$-space with separable dual then $X^*$ is isomorphic to $\ell_1$. Also this is the case if and only if $\ell_1$ is not isomorphic to a subspace of $X$. As it was proven in \cite{FOS}, every Banach space with separable dual embeds in a $\mathscr{L}_\infty$ space with separable dual.
	
	  \begin{cor}
	  \label{some duality stuff}
	 Every $\mathscr{L}_\infty$-space with separable dual satisfies the UALS property. In particular:
	 \begin{itemize}
	 \item[(i)] every hereditarily indecomposable $\mathscr{L}_\infty$-space satisfies the UALS,
	 \item[(ii)] every Banach space with separable dual embeds in a space that satisfies the UALS, and
	 \item[(iii)] for every countable compact metric space $K$, the space $C(K)$ satisfies the UALS.
	 \end{itemize}
	 \end{cor}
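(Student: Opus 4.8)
The plan is to derive all three assertions from Theorem~\ref{theorem UALS by duality}, so the work is to verify its hypotheses for an infinite dimensional $\mathscr{L}_\infty$-space $X$ with separable dual (the finite dimensional case being trivial: take $Y=\{0\}$). First I would dispose of the structural prerequisites. Since $X$ is a $\mathscr{L}_\infty$-space it is the union of an increasing net of uniformly complemented finite dimensional subspaces (each copy of $\ell_\infty^n$ inside $X$ is uniformly complemented by $1$-injectivity of $\ell_\infty^n$), hence $X$ has the bounded approximation property; as $X$ has separable dual, it follows, by the Johnson--Rosenthal--Zippin theorem, that $X$ has a shrinking FDD, in particular an FDD. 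Moreover, by the results of \cite{H}, \cite{HS}, \cite{LS}, \cite{Ste} recalled above, $X^*$ is isomorphic to $\ell_1$; fix an isomorphism $U\colon X^*\to\ell_1$ and set $\lambda=\|U\|\,\|U^{-1}\|$.

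The core of the argument is to supply, for every separable (hence every) subspace $Z$ of $X^*$, a difference-including family $\mathscr{F}_Z$ of normalized sequences in $Z$ with respect to which $Z$ admits a $C$-uniformly unique $l$-joint spreading model, with $C$ independent of $Z$ and $l$. Fixing $\varepsilon_0>0$, I would take $\mathscr{F}_Z=\mathscr{F}_{(1+\varepsilon_0)}(Z)$, which is difference-including for the same reason as item (ii) of the Remark following Theorem~\ref{the theorem}, applied inside $Z$. For the uniform uniqueness I would transport the problem to $\ell_1$ through $U$: the argument given in the Remark establishing that $\ell_1$ admits a uniformly unique $l$-joint spreading model with respect to $\mathscr{F}_C(\ell_1)$ only uses pointwise limits of subsequences and gliding-hump estimates, all of which remain inside a prescribed closed subspace, and therefore in fact shows that \emph{every} $l$-joint spreading model generated by an $l$-tuple of normalized $C$-Schauder basic sequences contained in an arbitrary closed subspace of $\ell_1$ is equivalent, with a constant depending only on $C$, to the unit vector basis of $\ell_1$. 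Given an $l$-tuple from $\mathscr{F}_Z$, I would pass to a subsequence along which the norms $\|Ux_n^i\|$ converge, normalize the images $Ux_n^i$ in $\ell_1$ to obtain an $l$-tuple of normalized $C'$-Schauder basic sequences in $U[Z]\subseteq\ell_1$ with $C'$ of the order of $\lambda(1+\varepsilon_0)$, apply the above to bound its joint spreading model, and push the estimate back through $U$. Since the models obtained are $\ell_1$-like and hence unconditional, the bounded fluctuation of the scalars $\|Ux_n^i\|$ is absorbed into the constant, and one concludes that every $l$-joint spreading model generated by sequences from $\mathscr{F}_Z$ is equivalent to the unit vector basis of $\ell_1$ with a constant depending only on $\lambda$ and $\varepsilon_0$; hence any two are uniformly equivalent, and Theorem~\ref{theorem UALS by duality} yields that $X$ satisfies the UALS.

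For the particular cases: (i) a hereditarily indecomposable space contains no unconditional basic sequence, in particular no isomorph of $\ell_1$, so by the dichotomy recalled above an HI $\mathscr{L}_\infty$-space has $X^*\cong\ell_1$, hence separable dual, and the main statement applies; (ii) by \cite{FOS} every Banach space with separable dual embeds isomorphically into a $\mathscr{L}_\infty$-space with separable dual, which satisfies the UALS by the main statement; (iii) for $K$ a countable compact metric space $C(K)$ is a $\mathscr{L}_\infty$-space and $C(K)^*=M(K)$ is the purely atomic space $\ell_1(K)$, which is separable, so the main statement applies once more.

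The step I expect to be the main obstacle is the transport through $U$: one must check that replacing the $U$-images of normalized vectors by their $\ell_1$-normalizations, together with the attendant change of scalars, does not degrade the \emph{uniformity} of the equivalence constant, and that the joint spreading model of the normalized $U$-images captures, up to a fixed factor, the joint spreading model in $X^*$. This is precisely where one uses that the models in question are equivalent to the $\ell_1$-basis (hence unconditional) and that the perturbing scalars $\|Ux_n^i\|$ converge along a diagonal subsequence; everything else is either routine or classical, the only ingredient external to the paper being the fact that a $\mathscr{L}_\infty$-space with separable dual has a shrinking FDD.
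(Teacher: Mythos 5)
Your proposal is correct and follows what is clearly the intended route: the paper states this corollary without proof, and the argument it has in mind is exactly yours, namely to feed Theorem \ref{theorem UALS by duality} with the facts that $X^*\cong\ell_1$, that $\ell_1$ admits a uniformly unique $l$-joint spreading model with respect to $\mathscr{F}_C(\ell_1)$ (the Remark in Section 4), and that a separable $\mathscr{L}_\infty$-space with separable dual has an FDD. Your two supplementary verifications — the Johnson--Rosenthal--Zippin input for the FDD, and the check that normalizing the $U$-images and absorbing the scalars $\|Ux_n^i\|$ into the (unconditional, $\ell_1$-like) joint spreading model costs only a factor depending on $\|U\|\,\|U^{-1}\|$ and $\varepsilon_0$ — are exactly the details the paper leaves implicit, and both go through as you describe.
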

	
	 \begin{cor}
	  If $X$ is Banach space such that $X^*$ is an Asymptotic $\ell_p$ space, for some $1\leq p\leq \infty$, then every quotient of $X$ with an FDD satisfies the UALS.
	  \end{cor}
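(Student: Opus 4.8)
The plan is to deduce the statement from Theorem~\ref{theorem UALS by duality}. Let $Q$ be a quotient of $X$ that has an FDD; we may assume $Q$ is infinite dimensional, since otherwise the UALS holds trivially (take $Y=\{0\}$). Writing $Q=X/V$ for a closed subspace $V$ of $X$, the adjoint of the quotient map identifies $Q^*$ isometrically with the annihilator $V^{\perp}$, a closed subspace of $X^*$. Since $Q$ has an FDD, it then suffices to produce, for every separable subspace $Z$ of $Q^*$, a difference-including family $\mathscr{F}_Z$ of normalized sequences in $Q^*$ together with a single constant $C'$, such that $Z$ admits a $C'$-uniformly unique $l$-joint spreading model with respect to $\mathscr{F}_Z$; Theorem~\ref{theorem UALS by duality} then yields that $Q$ satisfies the UALS.

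The one point I would establish before that is that the coordinate-free Asymptotic $\ell_p$ property passes to closed subspaces with the same constant. If $X^*$ is $C$-Asymptotic $\ell_p$ and $W\subseteq X^*$ is a closed subspace, then in the game $G(p,n,C)$ played inside $W$ player (S) simulates his winning strategy for the game in $X^*$: whenever that strategy prescribes a finite codimensional subspace $U$ of $X^*$, he instead plays $U\cap W$, which is finite codimensional in $W$, and any normalized vector player (V) selects from $U\cap W\subseteq U$ is a legal move in the ambient game. After $n$ rounds the produced vectors are $C$-equivalent to the unit vector basis of $\ell^n_p$ (or $c^n_0$ if $p=\infty$), so $W$ is $C$-Asymptotic $\ell_p$. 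Applying this to $W=Q^*\cong V^{\perp}$, and once more to its subspaces, shows that every separable subspace of $Q^*$ is $C$-Asymptotic $\ell_p$.

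Now fix a separable subspace $Z$ of $Q^*$. If $Z$ is finite dimensional I would take $\mathscr{F}_Z=\emptyset$, which is vacuously difference-including and for which the uniqueness requirement is void. If $Z$ is infinite dimensional, then $Z$ is a separable $C$-Asymptotic $\ell_p$ space, so Proposition~\ref{unique joint if these go to zero} supplies a countable set $\mathscr{A}_Z\subseteq Z^*$ such that, setting $\mathscr{F}_Z=\mathscr{F}_{0,\mathscr{A}_Z}$ (a family of normalized sequences in $Z$, hence in $Q^*$), the space $Z$ admits $\ell_p$ (or $c_0$ when $p=\infty$) as a $C^2$-uniformly unique $l$-joint spreading model with respect to $\mathscr{F}_Z$; and $\mathscr{F}_{0,\mathscr{A}_Z}$ is difference-including by item~(iii) of the Remark following Definition~\ref{difference including}. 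This verifies the hypotheses of Theorem~\ref{theorem UALS by duality} for $Q$ with the uniform constant $C'=C^2$, and the conclusion follows.

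The main obstacle is the heredity of the Asymptotic $\ell_p$ property recorded above; once that is in place, and noting that Proposition~\ref{unique joint if these go to zero} was designed precisely to cover the case $p=1$, where $Z$ may contain $\ell_1$ and $\mathscr{F}_0(Z)$ need not be difference-including, everything else is a direct invocation of earlier results. Minor points to keep track of are the isometric identification $Q^*\cong V^{\perp}\subseteq X^*$ and the fact that the families $\mathscr{F}_Z$, although tailored to $Z$, consist of sequences lying in $Q^*$, as Theorem~\ref{theorem UALS by duality} requires.
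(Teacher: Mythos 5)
Your argument is correct and is exactly the route the paper intends (the corollary is stated there without proof): identify $Q^*$ isometrically with $V^{\perp}\subseteq X^*$, note that the coordinate-free Asymptotic $\ell_p$ property passes to closed subspaces with the same constant via the simulation of player (S)'s strategy, invoke Proposition~\ref{unique joint if these go to zero} to obtain the difference-including families $\mathscr{F}_{0,\mathscr{A}_Z}$ with the uniform constant $C^2$, and apply Theorem~\ref{theorem UALS by duality}. The only cosmetic wrinkle is the finite-dimensional case of $Z$, where $\mathscr{F}_Z=\emptyset$ does not literally satisfy condition (ii) of Definition~\ref{difference including} (an unbounded sequence in a finite-dimensional space has no norm convergent subsequence), but this is immaterial because the hypothesis is only ever invoked, in the proof of Theorem~\ref{theorem UALS by duality}, for the infinite-dimensional separable subspace produced by Lemma~\ref{DUAL all difference-including domain and target}.
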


	 \begin{lem}
	  \label{tail in front and in the back}
	  Let $X$ be a Banach space, let $R:X\to X$ be a finite rank operator, and let $Q = I - R$. If $T$ is in $\mathcal{L}(X)$, then there exists a subspace $Y$ of $X$ of finite codimension such that $\|T|_Y\|_{\mathcal{L}(Y,X)} \le\|QT\|$.
	  \end{lem}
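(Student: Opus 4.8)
The plan is to use the identity $T = QT + RT$, which holds since $Q+R = I$, together with the fact that the correction term $RT$ is a finite rank operator that can be annihilated by passing to a subspace of finite codimension.

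First I would write the finite rank operator $R$ in the form $Rx = \sum_{i=1}^{n} x_i^*(x)\,x_i$ for suitable $x_1^*,\dots,x_n^* \in X^*$ and $x_1,\dots,x_n \in X$; such a representation exists because $R$ has finite rank. Composing with $T$ on the right gives, for every $x \in X$,
\[
RTx = R(Tx) = \sum_{i=1}^{n} x_i^*(Tx)\,x_i = \sum_{i=1}^{n} (T^*x_i^*)(x)\,x_i.
\]
Hence $RT$ vanishes identically on the subspace $Y = \bigcap_{i=1}^{n} \ker(T^*x_i^*)$, which, being the intersection of finitely many kernels of continuous linear functionals, has codimension at most $n$ in $X$ and is therefore finite codimensional. (Some of the functionals $T^*x_i^*$ may be zero; this only enlarges $Y$ and is harmless.)

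It then remains to estimate $\|T|_Y\|$. For any $x \in Y$ we have $RTx = 0$, so
\[
Tx = (Q+R)Tx = QTx + RTx = QTx,
\]
whence $\|Tx\| = \|QTx\| \le \|QT\|_{\mathcal{L}(X)}\,\|x\|$. Taking the supremum over all $x$ in the unit ball of $Y$ yields $\|T|_Y\|_{\mathcal{L}(Y,X)} \le \|QT\|$, which is exactly the assertion.

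No serious obstacle arises here: the argument is essentially a one-line computation once $R$ is expressed as a finite sum of rank one operators, the only routine points being the existence of that representation and the elementary fact that a finite intersection of kernels of bounded linear functionals is finite codimensional.
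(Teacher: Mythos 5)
Your argument is correct and is essentially the paper's proof: the paper also takes the finite codimensional subspace $Y=\ker RT$ (your intersection of the kernels of the $T^*x_i^*$ is just a concrete realization of this) and concludes $\|T|_Y\|\le\|RT|_Y\|+\|QT|_Y\|\le\|QT\|$. No issues.
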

	
	  \begin{proof}
	
	  Since $RT$ is a finite rank operator the subspace $Y = \ker RT$ is of finite codimension. Then, $\|T|_Y\| \leq \|RT|_Y\| + \|QT|_Y\|\leq \|QT\|$.
	  \end{proof}

	 \begin{proof}[Proof of Theorem \ref{uals and reflexivity}]
	 It is clearly enough to show one implication. Let us assume that $X^*$ satisfies the UALS with constant $C>0$ and let $A\in\mathcal{L}(X)$ and $W$ be a compact and convex subset of $\mathcal{L}(X)$ that $\varepsilon$-approximates $A$. Then by Proposition \ref{dualize e-approximation} we have that the set $W^* = \{T^*: T\in W\}$ $\varepsilon$-approximates $A^*$ and so there exists a subspace $Z$ of $X^*$ of finite codimension such that $\|(T^*-A^*)|_Z\| \le C\varepsilon$. By Lemma \ref{uals shrinking tail subspace}, and perhaps some additional error, we may assume that $Z$ is a tail  subspace with an associated projection $Q^*_{n}$ and hence \[\|Q_n(T-A)\|=\|(T^*-A^*)Q^*_{n}\| \le C \|Q_{n}\|\varepsilon.\] Applying Lemma \ref{tail in front and in the back}, we may find a subspace $Y$ of $X$ of finite codimension such that $\|(T-A)|_Y\| \le\|Q_n(T-A)\|$ and therefore $\|(T-A)|_Y\|\le C \|Q_{n}\|\varepsilon$.
	 \end{proof}

	  \begin{lem}
	 	\label{DUAL all difference-including domain and target}
Let $X$ be a Banach space with a bimonotone FDD  and let $(Q_n)_n$ denote the basis tail projections (i.e. $Q_n = I-P_n$, for all $n\in\mathbb{N}$). Assume that for every separable subspace $Z$ of $X^*$ we have a difference-including collection $\mathscr{F}_Z$ of normalized Schauder basic sequences in $Z$. Let $T_1,\ldots,T_l$ be bounded linear operators on $X$ and assume that there is $c>0$ such that, for every $n\in\mathbb{N}$ and every $i=1,\ldots,l$, we have that $\|T^*_iQ^*_{n}\| \geq c$. Assume moreover that, for some $0<\delta<c$ and every $i=1,\ldots,l$, we have  $T_{i1},\ldots,T_{im_i}$ in $\mathcal{L}(X)$ with $\|T_{ij} - T_i\| \leq \delta$ for $j=1,\ldots,m_i$. Then, if $\tilde c = c-\delta$, there exist a separable subspace $Z$ of $X^*$ and normalized  sequences $(z_k^i)_k$, $i=1,\ldots,l$ in $\mathscr{F}_Z$ such that
	 	
	 	\begin{itemize}
	 		
	 		\item[(i)] for any $n_1 < \cdots <n_l$, the sequence $(z_{n_i}^i)_{i=1}^l$ is $(9/8)$-Schauder basic,
	 		
	 		\item[(ii)] $\|T^*_{ij}z^i_k\| > \tilde c/3$ for   $i=1,\ldots,n$, $j=1,\ldots,m_i$ and $k\in\mathbb{N}$, and
	 		\item[(iii)] if $y_k^{ij} = \|T^*_{ij}z_k^i\|^{-1}T^*_{ij}z_k^i$, then $(y_k^{ij})_k$ is in $\mathscr{F}_Z$ for $i=1,\ldots,n$, $j=1,\ldots,m_i$.
	 		
	 	\end{itemize}
	 \end{lem}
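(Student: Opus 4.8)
The plan is to mimic the proof of Lemma~\ref{all difference-including domain and target}, carrying out the Mazur-type construction of Lemma~\ref{mazur with operators} inside $X^*$ rather than $X$. Two facts make this possible: $X$ is norming for $X^*$; and, since the FDD partial sums converge, every functional $Q^*_Nu$ with $\|u\|\le 1$ annihilates any prescribed finite subset of $X$ up to an error that tends to $0$ as $N\to\infty$. Moreover $\|Q^*_N\|=1$ by bimonotonicity, and the hypothesis $\|T^*_iQ^*_n\|\ge c$ (equivalently $\|Q_nT_i\|\ge c$, since $Q^*_n$ is a norm-one projection of $X^*$ onto $(P_nX)^\perp$) always furnishes, inside the range of $Q^*_N$, a unit-ball functional on which $T^*_i$ is as large as $c$ up to a small error.

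Set $\tilde c=c-\delta$ and fix the cyclic map $\iota\colon\N\to\{1,\dots,l\}$, $\iota(t)=((t-1)\bmod l)+1$. By induction on $t$ I build a strictly increasing sequence $(N_t)_t$ in $\N$, functionals $v_t=Q^*_{N_t}u_t$ with $u_t$ in the unit sphere of $X^*$, and auxiliary unit vectors $y_{t,1},\dots,y_{t,m_{\iota(t)}}$ of $X$, so that at step $t$: first pick a finite $\eta_t$-net $G_t$ of the unit sphere of $\spn\{v_s:s<t\}$ that is $(1+\eta_t)$-norming for this span (using that $X$ norms $X^*$); then choose $N_t>N_{t-1}$ so large that $\|Q_{N_t}g\|<\eta_t$ for every $g\in G_t$ and $\|Q_{N_t}T_{\iota(s)j}y_{s,j}\|<\tilde c/24$ for all $s<t$ and all $j\le m_{\iota(s)}$; using $\|T^*_{\iota(t)}Q^*_{N_t}\|\ge c$ choose $u_t$ with $\|T^*_{\iota(t)}v_t\|>c-\tilde c/4$, whence $\|T^*_{\iota(t)j}v_t\|>3\tilde c/4$ for every $j$ by the perturbation bound; finally pick $y_{t,j}$ in the unit sphere of $X$ with $|v_t(T_{\iota(t)j}y_{t,j})|>3\tilde c/4$. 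With the $\eta_t$ small and summable and using $\|v_t\|\ge\rho_0:=(c-\tilde c/4)/\max_i\|T_i\|>0$, the usual small-perturbation estimate (exactly as in Lemma~\ref{mazur with operators}, the only change being that annihilation of $G_t$ is now approximate, an error the lower bound $\rho_0$ absorbs) shows that $(v_t)_t$ is $9/8$-Schauder basic. Put $w^i_k=v_{i+(k-1)l}$ for $1\le i\le l$, $k\in\N$, and let $Z=\cspn\big(\{v_t:t\in\N\}\cup\{T^*_{ij}v_t:t\in\N,\ 1\le i\le l,\ 1\le j\le m_i\}\big)$, a separable subspace of $X^*$.

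Now I verify the three conclusions. For (i): for $n_1<\cdots<n_l$ the vectors spanning the various $z^i_{n_i}$, namely $v_{i+(m_{2n_i-1}-1)l}$ and $v_{i+(m_{2n_i}-1)l}$, occupy disjoint successive stretches of $(v_t)_t$, so $(z^i_{n_i})_{i=1}^l$ is a block sequence over $(v_t)_t$ and hence $9/8$-Schauder basic. For (ii): $\|w^i_{m_{2k-1}}-w^i_{m_{2k}}\|\le 2$, and writing $p=i+(m_{2k-1}-1)l<q=i+(m_{2k}-1)l$ the witness $y_{p,j}$ gives
\[\big\|T^*_{ij}(w^i_{m_{2k-1}}-w^i_{m_{2k}})\big\|\ \ge\ |v_p(T_{ij}y_{p,j})|-|v_q(T_{ij}y_{p,j})|\ >\ \tfrac{3\tilde c}{4}-\tfrac{\tilde c}{24}\ >\ \tfrac{2\tilde c}{3},\]
because $|v_q(T_{ij}y_{p,j})|=|u_q(Q_{N_q}T_{ij}y_{p,j})|\le\|Q_{N_q}T_{ij}y_{p,j}\|<\tilde c/24$ by the choice of $N_q$; dividing yields $\|T^*_{ij}z^i_k\|>\tilde c/3$, so in particular $y^{ij}_k$ is well-defined. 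For (iii): each $(w^i_k)_k$ and each $(T^*_{ij}w^i_k)_k$ is $w^*$-null, since $v_t(x)=u_t(Q_{N_t}x)\to 0$ and $v_t(T_{ij}x)=u_t(Q_{N_t}T_{ij}x)\to 0$, while these sequences are norm-bounded below; hence none has a norm-convergent subsequence, and Definition~\ref{difference including}(ii) provides infinite sets $L^i$, $L^{ij}$. Choosing the set $M=\{m_k:k\in\N\}$ at the outset inside $\bigcap_{i,j}(L^i\cap L^{ij})$ then makes $(z^i_k)_k$ and $(y^{ij}_k)_k$ members of $\mathscr F_Z$.

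The main obstacle is reconciling two competing pressures on $N_t$: it must be large enough that $v_t$ behaves as a ``tail'' functional, so that the Mazur estimate yields the $9/8$ basis constant and the $w^*$-nullity feeding the difference-including property holds, and, at the same time, large relative to the previously recorded witnesses $T_{\iota(s)j}y_{s,j}$, which is what keeps $\|T^*_{ij}(w^i_{m_{2k-1}}-w^i_{m_{2k}})\|$ bounded below and thus replaces, on the target side, the FDD that Lemma~\ref{mazur with operators} supplies in the primal setting. Checking that at every stage both requirements can be met, with the constants arranged so that the final basis constant is at most $9/8$ and the final lower bound is $\tilde c/3$, is the technical heart of the argument; bimonotonicity of the FDD enters only to guarantee $\|v_t\|\le 1$.
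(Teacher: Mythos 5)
Your proof is correct and follows essentially the same route as the paper's: the paper likewise passes to tail-supported functionals $Q_n^*x_n^{i*}$ with $\|T_i^*Q_n^*x_n^{i*}\|\ge c-\tilde c/4$ (using bimonotonicity so that $\|Q_n^*\|=1$), observes that these and their images under the $T_{ij}^*$ are $w^*$-null, and then reruns the Mazur-type construction of Lemmas \ref{mazur with operators} and \ref{all difference-including domain and target} inside $X^*$, using exactly as you do that $X$ is norming for $X^*$ and that tail functionals approximately annihilate any prescribed finite subset of $X$. The only nitpick is that $\bigcap_{i,j}(L^i\cap L^{ij})$ need not be infinite; the sets should instead be produced by successive nested applications of Definition \ref{difference including}(ii), which is the standard and immediate fix.
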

	
	 \begin{proof}
	 For every $i=1,\ldots,l$, we choose a normalized sequence $(x_n^{i*})_n$ such that $\|T^*_iQ^*_{n}x_n^{i*}\| \geq c - (c-\delta)/4$. Since the FDD is bimonotone we may assume that $\min\supp(x_n^{i*}) > n$ for all $n\in\mathbb{N}$ and $1\leq i\leq l$ and that $\|T_i^*x_n^{i*}\| \geq c - (c-\delta)/4$. This means that all sequences $(x_n^{i*})_n$, $(T^*x_n^{i*})_n$, for $1\leq i\leq l$, are $w^*$-null (by using $w^*$-continuity). We may now apply reasoning identical to that used in Lemmas \ref{mazur with operators} and \ref{all difference-including domain and target} to achieve the desired conclusion.
	 \end{proof}

	 \begin{proof}[Proof of Theorem \ref{theorem UALS by duality}]
	 We renorm the space $X$ so that its FDD is bimonotone. Let $D > 2K^2$, i.e. a constant for which the conclusion of Lemma \ref{constant coming from unique joint sm} can be applied to all families $\mathscr{F}_Z$ for all separable subspaces $Z$ of $X^*$. Set $C = 14D$. We will show that $X$ satisfies the UALS with constant $C$. Let $A\in\mathcal{L}(X)$ and $W$ be a convex compact subset of $\mathcal{L}$ of $X$ that $\varepsilon$-approximates $A$. It is sufficient to find $T\in W$ and $n_0\in\mathbb{N}$ such that $\|(T^*-A^*)Q^*_{n_0}\| < C\varepsilon$. Indeed, then $\|Q_{n_0}(A-T)\|  = \|(T^*-A^*)Q^*_{n_0}\| < C\varepsilon$ and by  Lemma \ref{tail in front and in the back} we will be done. If we assume that the conclusion is false, we may follow the proof of Theorem \ref{the theorem} to the letter, only replacing Lemma \ref{all difference-including domain and target} with Lemma \ref{DUAL all difference-including domain and target}, to reach the desired conclusion.
	 \end{proof}

	 \subsection{Spaces Failing the UALS Property}
In this section we present an archetypal example of a reflexive Banach space $\mathcal{X}$ that fails the UALS and admits a unique spreading model isometric to $\ell_2$. The proof that $\mathcal{X}$ fails the property is based on the the fact that it does not admit a uniformly unique joint spreading model. This reasoning may then be modified and utilized to show that classical spaces such as $L_p[0,1]$, $1\leq p\leq \infty$ and $p\neq 2$, and $C(K)$, for uncountable compact metric spaces $K$, fail the UALS.

	 \begin{dfn}\label{definition fails uals}
	 	For each $n\in\N$, we set $X_n=(\sum_{i=1}^{2n}\oplus\ell_2)_1$ and $Y_n=(\sum_{i=1}^{2n}\oplus\ell_2)_\infty$ and let $\mathcal{X}=(\sum\oplus X_n\oplus Y_n)_2$.
	 \end{dfn}

 		For a vector $x$ in $\mathcal{X}$, we write $x = \sum_{n=1}^\infty x_n+ y_n$ to mean that $x_n\in X_n$ and $y_n \in Y_n$ and  $x_n = \sum_{j=1}^nx_{n(j)}$, $y_n = \sum_{j=1}^{2n}y_{n(j)}$ to denote the coordinates of each $x_n$ and $y_n$ with respect to the natural decomposition of $X_n$ and $Y_n$ respectively. Under this notation we compute the norm of $x$ as follows:
 	\[\|x\|^2 = \sum_{n=1}^\infty\left(\bigg(\sum_{j=1}^{2n}\left\|x_{n(j)}\right\|\bigg)^2 +\bigg(\max_{1\leq j\leq 2n}\left\|y_{n(j)}\right\|\bigg)^2\right). \]
 	By taking an orthonormal basis for each corresponding $\ell_2$-component of the space $X_n$ as well as of the space $Y_n$ and taking a union over all $n\in\mathbb{N}$ and for $j=1,\ldots,2n$, we obtain a 1-unconditional basis for the space $\mathcal{X}$. Henceforth, when we say $(x_k)_k$ is a block sequence in $\mathcal{X}$, it will be understood that this is with respect to a fixed enumeration of the aforementioned basis.
	
	 \begin{prop}\label{fails uals}
	 	The space $\mathcal{X}$ fails the UALS property.
	 \end{prop}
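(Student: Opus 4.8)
Since every $X_n$ and $Y_n$ is finite-dimensional, $\mathcal{X}$ is reflexive and the natural basis described after Definition~\ref{definition fails uals} is shrinking; hence, by Lemma~\ref{uals shrinking tail subspace}, it is enough to defeat the UALS inequality using \emph{tail} subspaces $Q_m\mathcal{X}$ in place of arbitrary finite-codimensional subspaces. Concretely, the plan is to produce, for every $C>0$, an operator $A\in\mathcal{L}(\mathcal{X})$, a convex compact $W\subset\mathcal{L}(\mathcal{X})$ and an $\varepsilon>0$ such that $W$ $\varepsilon$-pointwise approximates $A$, yet $\|(A-B)Q_m\|_{\mathcal{L}}>C\varepsilon$ for every $B\in W$ and every $m$; letting $C\to\infty$ then yields the failure. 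The driving phenomenon is that every tail of $\mathcal{X}$ contains, for arbitrarily large $l$, two badly-inequivalent $l$-joint spreading models: one of type $\ell_2(\ell_1^l)$, built from normalized $\ell_1$-vectors placed one per $\ell_2$-summand of the spaces $X_n$, and one of type $\ell_2(\ell_\infty^l)$, built the same way from the $Y_n$; these are only $l$-equivalent, and $l$ can be as large as we wish inside a fixed tail because the number $2n$ of summands of $X_n,Y_n$ grows.

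\textbf{The configuration.} The elementary building block is the coordinatewise identification $\phi_n\colon X_n\to Y_n$, which is a contraction ($\|\phi_n\|=1$) although its inverse $\psi_n\colon Y_n\to X_n$ has norm $2n$. I would take $A$ to be the block-diagonal operator acting on $E_n=X_n\oplus_2 Y_n$ by transferring the $X_n$-part into $Y_n$ via $\phi_n$ and the $Y_n$-part back into $X_n$ via $\tfrac{1}{2n}\psi_n$; the rescaling keeps $A$ a contraction (one uses that the two summands of $E_n$ are combined in $\ell_2$), and $A$ is non-compact because $\|A_n\|=1$ for every $n$ — this non-compactness is essential, since a compact operator on the shrinking-FDD space $\mathcal{X}$ would be killed on tails and provide an escape. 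For $W$ I would take the closed convex hull of a norm-compact family of block-diagonal operators $B_{(\xi,\eta)}$, indexed by a pair $(\xi,\eta)$ ranging over a \emph{fixed} compact parameter set, which on each $E_n$ perform the analogous two transfers but ``collapsed'', so that the image of $B_{(\xi,\eta)}$ inside each block is controlled by the single $\xi$-direction (a vector of the $\ell_2$-diagonal copy of $X_n$) and the single $\eta$-direction in $Y_n$; exactly as with the operators $B_z^{\pm}$ of Example~\ref{counterell1}, a convex combination of such operators is again one of the same shape, so that every element of $W$ has, block by block, ``rank governed by one $\xi$ and one $\eta$'', while $A_n$ is full and can only be written as such a collapse at coefficient-cost of order $2n$.

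\textbf{Pointwise approximation and failure on tails.} Any fixed $x\in B_{\mathcal{X}}$ is, up to $\varepsilon$, supported on finitely many blocks, and on each of them $x_n+y_n$ is a single point of $E_n$; as in Example~\ref{counterell1}, where $Ax$ is one point of the two-dimensional range and is hit exactly by a single $B_z^{\pm}$, one checks that there is a choice of the global parameter $(\xi,\eta)$ for which $B_{(\xi,\eta)}$ matches $A$ on $x$ within $\varepsilon$, and taking $\varepsilon$ of order $1/N$ (with $N$ the first block actually used) absorbs the error accumulated in the $\ell_2$-sum over blocks. For the failure, fix $B\in W$: being a convex combination of the $B_{(\xi,\eta)}$, it is on each block $E_n$ again such a collapse, and the $\ell_1$-versus-$\ell_\infty$ geometry forces $\|A_n-B_n\|_{\mathcal{L}(E_n)}\ge c$ for a constant $c>0$ independent of $n$ and of $B$ (a collapse sees a vector only through one diagonal direction and so cannot approximate $\phi_n$, respectively $\tfrac1{2n}\psi_n$, to within $c$). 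Picking for each $m$ a unit vector $v\in E_m$ realizing this, and noting that $A$ and $B$ remain nonzero on arbitrarily high blocks, gives $\|(A-B)Q_m\|\ge c$ for all $m$; combined with the reduction to tails and with $\varepsilon\asymp 1/N$, the ratio $\|(A-B)Q_m\|/\varepsilon\gtrsim cN$ exceeds $C$, which is the desired contradiction.

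\textbf{Main obstacle.} The technical heart is the tension in the choice of $W$: it must be norm-compact — so the operators $B_{(\xi,\eta)}$ cannot carry independent data on each of the infinitely many blocks and must instead be pinned down by a parameter living in a single compact set — yet it must be wide enough to $\varepsilon$-pointwise approximate the non-compact $A$ on \emph{every} vector, while still leaving $A$ at essential distance bounded below from \emph{every} element of $W$, uniformly over the blocks of unbounded dimension. Identifying precisely which collapsed operators to put into $W$, and with which compactly-parametrized family, so that all three requirements hold simultaneously, is where the real work lies; once this is in place, the boundedness estimates, the pointwise matching and the per-block lower bound are routine computations with the norm of $\mathcal{X}$ recorded after Definition~\ref{definition fails uals}.
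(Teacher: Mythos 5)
Your proposal correctly isolates the relevant phenomenon (the coordinatewise identity from $\ell_1^{2n}(\ell_2)$ to $\ell_\infty^{2n}(\ell_2)$, and the fact that each coordinate is an infinite-dimensional $\ell_2$, so that finite-codimensional restrictions cannot kill anything), and the reduction to tail subspaces via Lemma \ref{uals shrinking tail subspace} matches the paper. But the proof has a genuine gap at its center: the set $W$ is never actually constructed, and you acknowledge as much in your final paragraph. Worse, the shape you propose for $W$ --- block-diagonal operators ``collapsed'' to a single direction $\xi$ per block, parametrized by a fixed compact set, in analogy with Example \ref{counterell1} --- cannot work. In Example \ref{counterell1} the operator $A$ has rank two, so $A(x)$ is a single point of a two-dimensional space and a rank-one $B_z^{\pm}$ can hit it exactly; here your $A$ restricted to any one $\ell_2$-coordinate of $X_n$ is an isometry onto an infinite-dimensional space, so no family of operators whose action on a block is governed by one direction can $\varepsilon$-pointwise approximate it (take $x$ a unit vector in one coordinate orthogonal to the collapse direction). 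Two further steps would also fail as written: a convex combination of collapsed operators is not again a collapse (even in Example \ref{counterell1} the paper must analyze general convex combinations $\sum a_iB^+_{y_i}+\sum b_iB^-_{z_i}$ directly), and $\varepsilon$ cannot be taken ``of order $1/N$ with $N$ the first block actually used by $x$'': in Definition \ref{uals}, $\varepsilon$ is one number that must serve every $x\in B_{\mathcal X}$, including vectors supported on the first block.

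The missing idea is combinatorial rather than a compact parametrization. The paper fixes a single block index $n$ with $C/n<1/2$, takes $A_n=I_{\{1,\dots,2n\}}:X_n\to Y_n$ the full coordinatewise identity, and takes $W_n=\co\{I_G:\#G=n\}$, the convex hull of the \emph{finitely many} restrictions of $A_n$ to half of the coordinates; compactness of $W_n$ is then automatic. Pointwise approximation with $\varepsilon=1/(n+1)$ holds because for $\sum_{i=1}^{2n}\|x_i\|=1$ the $(n+1)$-st largest coordinate has norm at most $1/(n+1)$, and the target norm is a maximum. Failure on every finite-codimensional subspace follows from a pigeonhole: any $B=\sum_i\lambda_iI_{G_i}$ assigns total weight at most $1/2$ to some coordinate $j$, and since the $j$-th coordinate is an infinite-dimensional $\ell_2$ it contains unit vectors supported arbitrarily far out, on which $\|A_n(x)-B(x)\|\ge 1/2>C\varepsilon$. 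Your draft, as it stands, does not reach either half of this argument.
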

	
	 \begin{proof}	
	Assume that $\mathcal{X}$ satisfies the UALS with constant $C>0$ and pick $n\in \N$ with ${C}/{n}<\frac{1}{2}$. For $G\subset\{1,\ldots,2n\}$, consider the bounded operator $I_G:X_n\to Y_n$ with  $I_G(\sum_{i=1}^{2n}x_i)=\sum_{i\in G}x_i$ and set $A_n=I_{\{1,\ldots,2n\}}$ and $W_n=\co\{I_G:\#G=n\}$. Let $x\in X_n$ with $x=\sum_{i=1}^{2n}x_i$ and $\|x\|=1$, that is $\sum_{i=1}^{2n}\|x_i\|=1$, and $\sigma$ be a permutation  of $\{1,\ldots,2n\}$ such that $\|x_{\sigma(1)}\|\ge\ldots\ge\|x_{\sigma(2n)}\|$. Then notice that $\|x_{\sigma(n+1)}\|\le\frac{1}{n+1}$ and hence $\| A_n(x)-I_G(x)\|\le\frac{1}{n+1}$, for $G=\{\sigma(1),\ldots,\sigma(n)\}$.
	
	The basis $(e_n)_n$ of $\mathcal{X}$ is shrinking, since $\mathcal{X}$ is reflexive, and therefore Lemma \ref{uals shrinking tail subspace} yields a tail subspace $Y={\spn\{e_n:n\ge n_0\}}$ of $X$ such that $\|(A_n-B)|_Y\|<C/n$ for some $B\in W$. Then $B$ is a convex combination $B=\sum_{i=1}^k\lambda_iI_{G_i}$ and we have that $\int\sum_{i=1}^k\lambda_i\scalebox{1.2}{$\chi$}_{G_i}=\frac{1}{2}$, where the integral is with respect to the normalized counting measure on $\{1,\ldots,2n\}$. Hence there exists a $1\le j \le 2n$ such that $\sum_{i=1}^{k}\lambda_i\scalebox{1.2}{$\chi$}_{G_i}(j)\le\frac{1}{2}$. Pick any $x\in X_{n(j)}$ with $\|x\|=1$ and $\supp(x)\ge n_0$ and notice that $\|A_n(x)-B(x) \|\ge  1-\sum_{i=1}^k\lambda_i\scalebox{1.2}{$\chi$}_{G_i}(j)$. Thus $\|(A_n-B)|_Y\|\ge\frac{1}{2}$, which is a contradiction.
\end{proof}
	
	The space $\mathcal{X}$ is a first example of a space failing the UALS property. As we show next, it admits a uniformly unique spreading model while it fails to admit a uniformly unique $l$-joint spreading model. We start with the following lemmas.
	
	 \begin{lem}\label{lem1}
	 	Let $(x^k)_k$ be a block sequence in $\mathcal{X}$ with $x^k=\sum_{n=n_0}^{n_1}x^k_n+y^k_n$ and assume that $\|x^{k_1}_{n(j)}\|=\|x^{k_2}_{n(j)}\|$ and $\|y^{k_1}_{n(j)}\|=\|y^{k_2}_{n(j)}\|$ for every $k_1,k_2\in\N$, $n_0\le n\le n_1$ and $1\le j\le 2n$. Set $\varepsilon=\|x^k\|$, for $k\in\N$. Then, for all $m\in\N$ and $\lambda_1,\ldots,\lambda_m\in\R$, we have that $\|\sum_{k=1}^m\lambda_kx^k\|=\varepsilon(\sum_{k=1}^m\lambda^2_k)^{\frac{1}{2}}$.
	 \end{lem}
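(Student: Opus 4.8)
The plan is to reduce everything to the explicit formula for the norm of $\mathcal{X}$ together with the orthogonality that is built into each $\ell_2$-summand. First I would fix notation: for $n_0\le n\le n_1$ and $1\le j\le 2n$ set $a_{n,j}=\|x^1_{n(j)}\|$ and $b_{n,j}=\|y^1_{n(j)}\|$. By hypothesis $\|x^k_{n(j)}\|=a_{n,j}$ and $\|y^k_{n(j)}\|=b_{n,j}$ for every $k\in\N$, so the norm formula for $\mathcal{X}$ gives
\[
\varepsilon^2=\|x^k\|^2=\sum_{n=n_0}^{n_1}\Big(\big(\textstyle\sum_{j=1}^{2n}a_{n,j}\big)^2+\big(\max_{1\le j\le 2n}b_{n,j}\big)^2\Big),
\]
a quantity that does not depend on $k$.

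Next I would use that $(x^k)_k$ is a block sequence. For each fixed pair $(n,j)$ the vectors $x^1_{n(j)},x^2_{n(j)},\dots$ are supported on pairwise disjoint subsets of the chosen orthonormal basis of the $(n,j)$-th $\ell_2$-summand of $X_n$, hence are mutually orthogonal there; the same holds for $y^1_{n(j)},y^2_{n(j)},\dots$ inside the $(n,j)$-th $\ell_2$-summand of $Y_n$. Writing $\Lambda=(\sum_{k=1}^m\lambda_k^2)^{1/2}$, the $(n,j)$-coordinate of $\sum_{k=1}^m\lambda_kx^k$ in $X_n$ is $\sum_{k=1}^m\lambda_k x^k_{n(j)}$, whose $\ell_2$-norm equals $(\sum_{k=1}^m\lambda_k^2\|x^k_{n(j)}\|^2)^{1/2}=a_{n,j}\Lambda$; likewise its $(n,j)$-coordinate in $Y_n$ has norm $b_{n,j}\Lambda$.

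Finally I would substitute these coordinate norms into the defining formula for $\|\cdot\|_{\mathcal{X}}$. Since $\sum_{j=1}^{2n}(a_{n,j}\Lambda)=\Lambda\sum_{j=1}^{2n}a_{n,j}$ and $\max_{1\le j\le 2n}(b_{n,j}\Lambda)=\Lambda\max_{1\le j\le 2n}b_{n,j}$, a factor $\Lambda^2$ pulls out of every term, so that
\[
\Big\|\sum_{k=1}^m\lambda_kx^k\Big\|^2=\Lambda^2\sum_{n=n_0}^{n_1}\Big(\big(\textstyle\sum_{j=1}^{2n}a_{n,j}\big)^2+\big(\max_{1\le j\le 2n}b_{n,j}\big)^2\Big)=\Lambda^2\varepsilon^2,
\]
which yields $\|\sum_{k=1}^m\lambda_kx^k\|=\varepsilon(\sum_{k=1}^m\lambda_k^2)^{1/2}$. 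I do not expect a genuine obstacle here; the only points requiring care are the orthogonality statement inside each $\ell_2$-summand — this is precisely where the block-sequence assumption enters — and bookkeeping the three different norm structures (the outer $\ell_2$-sum over $n$, the $\ell_1$-sum over $j$ in $X_n$, and the sup over $j$ in $Y_n$) when factoring out $\Lambda$.
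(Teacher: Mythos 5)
Your proof is correct and follows essentially the same route as the paper: exploit the block (hence orthogonal) supports within each $\ell_2$-summand to get $\|\sum_k\lambda_k x^k_{n(j)}\|=\|x^{k_0}_{n(j)}\|(\sum_k\lambda_k^2)^{1/2}$, then push the common factor $(\sum_k\lambda_k^2)^{1/2}$ through the $\ell_1$-sum over $j$ in $X_n$, the max over $j$ in $Y_n$, and the outer $\ell_2$-sum over $n$. No gaps.
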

	 \begin{proof}
	 	Let $k_0\in\N$. For every $k\in\N$ and $n_0\le n\le n_1$, since $(x^k)_k$ is block, we have that
	 	\[\Big\|\sum_{k=1}^m \lambda_kx^k_{n(j)}\Big\|=\Bigg(\sum_{k=1}^m \lambda_k^2\big\|x^k_{n(j)}\big\|^2\Bigg)^{\frac{1}{2}}=\big\|x^{k_0}_{n(j)}\big\|\Bigg(\sum_{k=1}^m \lambda_k^2\Bigg)^{\frac{1}{2}}.\]
	 	We thus calculate
	 	\begin{equation}\label{l1eq1} \Big\|\sum_{k=1}^m \lambda_kx^k_{n}\Big\|=\sum_{j=1}^{2n}\Big\|\sum_{k=1}^m \lambda_kx^k_{n(j)}\Big\|=\big\|x^{k_0}_{n}\big\|\Bigg(\sum_{k=1}^m \lambda_k^2\Bigg)^{\frac{1}{2}}\end{equation}
	 	and similarly
	 	\begin{equation}\label{l1eq2} \Big\|\sum_{k=1}^m \lambda_ky^k_{n}\Big\|=\max_{1\le j\le 2n}\Big\|\sum_{k=1}^m \lambda_ky^k_{n(j)}\Big\|=\big\|y^{k_0}_{n}\big\|\Bigg(\sum_{k=1}^m \lambda_k^2\Bigg)^{\frac{1}{2}}.\end{equation}
	 	Finally, using \eqref{l1eq1} and \eqref{l1eq2}, we conclude that
	 	\begin{equation}\begin{split}\label{l1eq3}
	 	\Big\|\sum_{k=1}^m \lambda_kx^k\Big\|^2&=\sum_{n=n_0}^{n_1}\bigg(\Big\|\sum_{k=1}^m \lambda_kx^k_{n}\Big\|^2+\Big\|\sum_{k=1}^m \lambda_ky^k_{n}\Big\|^2\bigg)\\
	 	&=\sum_{k=1}^m \lambda_k^2\sum_{n=n_0}^{n_1}\Big(\big\|x^{k_0}_n\big\|^2+\big\|y^{k_0}_n\big\|^2\Big)=\sum_{k=1}^m \lambda_k^2\big\|x^{k_0}\big\|^2.
	 	\end{split}\end{equation}
	 \end{proof}
	
	 \begin{lem}\label{lem2}
	 	Let $(n_k)_{k\ge 0}$ be an increasing sequence of naturals and $(x^k)_k$ be a block sequence in $\mathcal{X}$ such that
	 	\begin{enumerate}
	 		\item[(i)] There exist $c_1,c_2>0$ such that $c_1\le \|x^k\|\le c_2$ for every $k\in\N$.
	 		\item[(ii)] $x^k=\sum_{n=n_0}^{n_1}(x^k_n+y^k_n)+\sum_{n=n_k+1}^{n_{k+1}}(x^k_n+y^k_n)$ for every $k\in\N$.
	 		\item[(iii)] $\|x^{k_1}_{n(j)}\|=\|x^{k_2}_{n(j)}\|$ and $\|y^{k_1}_{n(j)}\|=\|y^{k_2}_{n(j)}\|$ for every $k_1,k_2\in\N$, $n_0\le n\le n_1$ and $1\le j\le 2n$.		
	 	\end{enumerate}
	 	Then, for all $m\in\N$ and $\lambda_1,\ldots,\lambda_m\in\R$, we have that
	 	\[
	 	c_1\Bigg(\sum_{k=1}^m \lambda_k^2\Bigg)^{\frac{1}{2}}\le \Big\|\sum_{k=1}^m \lambda_kx^k\Big\|\le c_2\Bigg(\sum_{k=1}^m \lambda_k^2\Bigg)^{\frac{1}{2}}.
	 	\]
	 \end{lem}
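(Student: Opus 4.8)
The plan is to split each $x^k$ into the part supported on the common band of coordinates $n_0\le n\le n_1$ and the part supported on the band $n_k< n\le n_{k+1}$, reducing everything to Lemma \ref{lem1} together with the fact that the norm of $\mathcal{X}$ is an $\ell_2$-sum over the summands $X_n$ and $Y_n$. Write $x^k=h^k+t^k$ with $h^k=\sum_{n=n_0}^{n_1}(x^k_n+y^k_n)$ and $t^k=\sum_{n=n_k+1}^{n_{k+1}}(x^k_n+y^k_n)$. Since $(n_k)_{k\ge0}$ is increasing, the band $[n_0,n_1]$ is disjoint from every band $(n_k,n_{k+1}]$, $k\ge1$, and the bands $(n_k,n_{k+1}]$ are pairwise disjoint. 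Consequently, for any $m\in\N$ and scalars $\lambda_1,\dots,\lambda_m$, the vectors $\sum_k\lambda_kh^k$ and $\sum_k\lambda_kt^k$ are supported on disjoint families of $\oplus$-summands, so
\[
\Big\|\sum_{k=1}^m\lambda_kx^k\Big\|^2=\Big\|\sum_{k=1}^m\lambda_kh^k\Big\|^2+\Big\|\sum_{k=1}^m\lambda_kt^k\Big\|^2,
\]
and likewise, because the $t^k$ have pairwise disjoint supports, $\|\sum_{k=1}^m\lambda_kt^k\|^2=\sum_{k=1}^m\lambda_k^2\|t^k\|^2$.

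Next I would handle the heads. The sequence $(h^k)_k$ is again a block sequence for the fixed basis of $\mathcal{X}$ (it is the restriction of the block sequence $(x^k)_k$ to a fixed band), all $h^k$ are supported on the single band $n_0\le n\le n_1$, and hypothesis (iii) asserts exactly the equality of component norms $\|x^{k_1}_{n(j)}\|=\|x^{k_2}_{n(j)}\|$ and $\|y^{k_1}_{n(j)}\|=\|y^{k_2}_{n(j)}\|$ needed to apply Lemma \ref{lem1}. Lemma \ref{lem1} then gives $\|\sum_k\lambda_kh^k\|=\varepsilon_h(\sum_k\lambda_k^2)^{1/2}$, where $\varepsilon_h=\|h^k\|$ does not depend on $k$ (again by (iii)). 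Since $h^k$ and $t^k$ are disjointly supported we have $\|x^k\|^2=\varepsilon_h^2+\|t^k\|^2$, and substituting the two computations of the previous paragraph into one another yields
\[
\Big\|\sum_{k=1}^m\lambda_kx^k\Big\|^2=\varepsilon_h^2\sum_{k=1}^m\lambda_k^2+\sum_{k=1}^m\lambda_k^2\|t^k\|^2=\sum_{k=1}^m\lambda_k^2\|x^k\|^2.
\]
Finally, hypothesis (i) gives $c_1^2\sum_k\lambda_k^2\le\sum_k\lambda_k^2\|x^k\|^2\le c_2^2\sum_k\lambda_k^2$, and taking square roots completes the proof.

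The only place that needs genuine care is the support bookkeeping: one must verify that $[n_0,n_1]$ is disjoint from each $(n_k,n_{k+1}]$ (which follows from $n_1\le n_k$ for $k\ge1$) and that the tail bands are mutually disjoint, so that the norm of $\mathcal{X}$ really does split as an $\ell_2$-sum over those summands. Once this is checked the argument is purely a matter of combining Lemma \ref{lem1} with the $\ell_2$-additivity of the norm over disjoint summands, so I do not anticipate any further difficulty; in fact the argument produces the exact identity $\|\sum_k\lambda_kx^k\|=(\sum_k\lambda_k^2\|x^k\|^2)^{1/2}$, of which the claimed two-sided estimate is an immediate consequence.
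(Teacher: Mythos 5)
Your proposal is correct and follows essentially the same route as the paper: both decompose the sum into the common band $[n_0,n_1]$, handled via Lemma \ref{lem1}, and the pairwise disjoint tail bands $(n_k,n_{k+1}]$, handled by the $\ell_2$-additivity of the norm of $\mathcal{X}$ over disjoint summands, arriving at the exact identity $\|\sum_k\lambda_kx^k\|^2=\sum_k\lambda_k^2\|x^k\|^2$ before invoking hypothesis (i). The only cosmetic difference is that you invoke Lemma \ref{lem1} as a black box on the heads $h^k$, whereas the paper re-uses the displayed computation from its proof directly.
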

	 \begin{proof}
	 	Using \eqref{l1eq3}, we have that
	 	\begin{equation*}\begin{split}
	 	\bigg\|\sum_{k=1}^m \lambda_kx^k\bigg\|^2&=\sum_{n=n_0}^{n_1}\bigg(\Big\|\sum_{k=1}^m \lambda_kx^k_{n}\Big\|^2+\Big\|\sum_{k=1}^m \lambda_ky^k_{n}\Big\|^2\bigg)+\sum_{k=1}^m\sum_{n=n_k+1}^{n_{k+1}}\bigg(\big\|\lambda_kx^k_n\big\|^2+\big\|\lambda_ky^k_n\big\|^2\bigg)\\
	 	&=\sum_{k=1}^m \lambda_k^2\sum_{n=n_0}^{n_1}\Big(\big\|x^k_n\big\|^2+\|y^k_n\big\|^2\Big)+\sum_{k=1}^m\lambda_k^2\sum_{n=n_k+1}^{n_{k+1}}\Big(\big\|x^k_n\big\|^2+\|y^k_n\big\|^2\Big)\\&=\sum_{k=1}^m \lambda_k^2\bigg(\sum_{n=n_0}^{n_1}\Big(\big\|x^k_n\big\|^2+\|y^k_n\big\|^2\Big)+\sum_{n=n_k+1}^{n_{k+1}}\Big(\big\|x^k_n\big\|^2+\|y^k_n\big\|^2\Big)\bigg)=\sum_{k=1}^m \lambda_k^2\big\|{x}^k\big\|^2
	 	\end{split}\end{equation*}
	 	which, due to (i), yields the desired result.
	 \end{proof}
	
	 \begin{prop}
	 	Let $(x^k)_k$ be a normalized block sequence in $\mathcal{X}$. For every $\varepsilon>0$, $(x^k)_k$ has a subsequence $(x^{k_i})_i$ such that for every $m\in\N$ and $\lambda_1,\ldots,\lambda_m\in\R$
	 	\[
	 	(1-\varepsilon)\Bigg(\sum_{i=1}^m \lambda_i^2\Bigg)^{\frac{1}{2}}\le\Big\|\sum_{i=1}^m \lambda_i x^{k_i}\Big\|\le (1+\varepsilon)\Bigg(\sum_{i=1}^m \lambda_i^2\Bigg)^{\frac{1}{2}}.
	 	\]
	 \end{prop}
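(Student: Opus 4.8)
The plan is to pass to a subsequence and perturb the $x^k$ so that they fall into one of the two model situations already treated by Lemmas~\ref{lem1} and~\ref{lem2}, the key device being a diagonalisation over the coordinate norm-profiles. Write $x^k=\sum_n(x^k_n+y^k_n)$ with $x^k_n=\sum_{j=1}^nx^k_{n(j)}\in X_n$, $y^k_n=\sum_{j=1}^{2n}y^k_{n(j)}\in Y_n$, and recall that $\|x^k\|^2=\sum_n\big((\sum_j\|x^k_{n(j)}\|)^2+(\max_j\|y^k_{n(j)}\|)^2\big)$. Since each $x^k$ is a block vector it is supported on a band $[p_k,q_k]$. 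By a diagonal argument I would first pass to a subsequence along which every coordinate norm $\|x^k_{n(j)}\|$ and $\|y^k_{n(j)}\|$ converges, say to $c_{n(j)}$ and $d_{n(j)}$, and set $s^2=\sum_n\big((\sum_jc_{n(j)})^2+(\max_jd_{n(j)})^2\big)\le 1$, the ``persistent mass''. A further routine reduction lets me assume either $p_k\to\infty$ (in which case, after sparsifying so that $q_{k_{i-1}}<p_{k_i}$, the band-supports are disjoint and successive and the norm formula makes $(x^{k_i})_i$ isometrically equivalent to the $\ell_2$-basis) or $p_k=p_0$ constant, and likewise that $(q_k)$ is either bounded or tends to $\infty$. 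Note it is enough, for each $\delta>0$, to produce a subsequence $(x^{k_i})_i$ together with vectors $\check x^{k_i}$ with $\sum_i\|x^{k_i}-\check x^{k_i}\|<\delta$ such that $(\check x^{k_i})_i$ is $(1+\delta)$-equivalent to the unit vector basis of $\ell_2$; the principle of small perturbations, used exactly as in Proposition~\ref{basicmain}, then yields the statement for $\delta=\delta(\varepsilon)$ small enough.

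If $(q_k)$ is bounded all $x^k$ live on one fixed band, and rescaling each $\ell_2$-coordinate of $x^k$ to its limit value (deleting coordinates whose limit is $0$) gives block vectors $\check x^k$ with the constant profile $(c_{n(j)},d_{n(j)})$ and $\|x^k-\check x^k\|\to0$; Lemma~\ref{lem1} then makes $(\check x^k)$ isometrically a multiple of the $\ell_2$-basis. If $q_k\to\infty$, I would choose $R_k$ to be the largest $M$ with $\sum_{n\le M}\big((\sum_j\|x^k_{n(j)}\|)^2+(\max_j\|y^k_{n(j)}\|)^2\big)\le s^2+1/k$; its very definition forces $\|x^k|_{[p_0,R_k]}\|\to s$, $\|x^k|_{[R_k+1,q_k]}\|\to\sqrt{1-s^2}$ and $R_k\to\infty$. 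Passing to a further subsequence with $q_{k_{i-1}}<R_{k_i}$, the ``escaping parts'' $t^{k_i}:=x^{k_i}|_{[R_{k_i}+1,q_{k_i}]}$ have pairwise disjoint successive band-supports, so the norm formula gives $\|\sum_i\lambda_it^{k_i}\|^2=\sum_i\lambda_i^2\|t^{k_i}\|^2$, which is close to $(1-s^2)\sum_i\lambda_i^2$. For the ``persistent parts'' $h^{k_i}:=x^{k_i}|_{[p_0,R_{k_i}]}$ I would run the telescoping computation from the proof of Lemma~\ref{lem2}: split each $h^{k_i}$ into its restriction to $[p_0,R_{k_{i-1}}]$ and to $(R_{k_{i-1}},R_{k_i}]$, use that on any \emph{fixed} band the coordinates $(x^k_{n(j)})_k$, $(y^k_{n(j)})_k$ are block sequences in the corresponding $\ell_2$'s, hence orthogonal, so that a mixed sum over those $k$ has $\ell_2$-norm equal to the $\ell_2$-combination of the coordinate norms; since those norms converge, one obtains $\|\sum_i\lambda_ih^{k_i}\|^2$ close to $s^2\sum_i\lambda_i^2$, with all errors governed by how fast the profiles converge (arranged by a final sparsification) and by $s^2-\sum_{n\le R_{k_i}}(\cdots)\to0$. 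Finally $h^{k_i}$ sits on low bands and $t^{k_j}$ on high disjoint bands, so cross contributions are negligible and $\|\sum_i\lambda_ix^{k_i}\|^2$ is close to $(s^2+(1-s^2))\sum_i\lambda_i^2=\sum_i\lambda_i^2$; replacing each $h^{k_i}$ by its exactly-profiled perturbation and bookkeeping the accumulated errors gives the desired $\check x^{k_i}$.

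The main obstacle is precisely the case $0<s<1$ in which the persistent profile is spread over infinitely many bands: then one cannot reduce to a single fixed head band as in Lemma~\ref{lem2}, and the naive ``fixed head $+$ negligible middle $+$ disjoint tail'' decomposition fails because the middle mass stays bounded away from $0$. The resolution is to choose the cutoffs $R_k$ so that the head captures essentially \emph{all} of the persistent mass and the tail essentially \emph{all} of the escaping mass, leaving no middle at all, and then to handle the resulting (necessarily overlapping) heads via the telescoping estimate underlying the proof of Lemma~\ref{lem2}; verifying that this estimate still goes through with the overlaps, and assembling the head, tail and cross-term errors into a single $(1+\varepsilon)$-bound, is where the real work lies, the remaining steps being routine diagonalisation and small-perturbation arguments.
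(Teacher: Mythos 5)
Your overall architecture --- diagonalise the coordinate norm-profiles to limits $a_{n,j}=\lim_k\|x^k_{n(j)}\|$, $b_{n,j}=\lim_k\|y^k_{n(j)}\|$, split each selected vector into a head carrying the persistent mass and a tail supported on bands disjoint from those of the other selected vectors, feed the pieces into Lemmas \ref{lem1} and \ref{lem2}, and finish by a small-perturbation argument --- is the same as the paper's. However, the proposal stalls exactly at the step you yourself flag as ``where the real work lies,'' and the reason you give for taking that harder route is wrong. You claim that when $0<s<1$ the ``fixed head $+$ negligible middle $+$ disjoint tail'' decomposition fails because the middle mass stays bounded away from $0$. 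It does not. Since $\sum_n(a_n^2+b_n^2)\le 1$ is a convergent series, one may fix $n_1$ once and for all with $\sum_{n>n_1}(a_n^2+b_n^2)<\varepsilon_1$; and if, as in the paper, one chooses $n_i$ first (beyond the support of $x^{k_{i-1}}$, with $\sum_{n>n_i}(a_n^2+b_n^2)<\varepsilon_i$) and only afterwards chooses $k_i$ so large that the profile of $x^{k_i}$ on the bands $n\le n_i$ is $\delta_i$-close in the summed sense to the limit profile, then the middle of $x^{k_i}$ --- its restriction to the bands in $(n_1,n_i]$ --- has squared norm roughly at most $\varepsilon_1+\delta_i$, uniformly in $i$. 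The order of the choices is the point: any mass of $x^{k_i}$ that has not yet ``escaped'' is forced by the closeness condition to sit beyond band $n_i$, hence in the tail, not in the middle. After this, Lemma \ref{lem2} applies verbatim to the head (bands $\le n_1$, profiles perturbed to be exactly the limit profile) together with the pairwise band-disjoint tails, and the middles are absorbed as an error by applying Lemma \ref{lem1} on each band group $(n_{i-1},n_i]$, where all surviving vectors share the limit profile, and summing in $\ell_2$ over the groups.

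Your growing cut-offs $R_k$ try to eliminate the middle altogether, but they create the overlapping-heads problem that you then leave unverified. That telescoping can be pushed through, but only via the same observation: the overlap region, namely the bands in $(R_{k_1},R_{k_m}]$, carries limit-profile mass at most $\sum_{n>R_{k_1}}(a_n^2+b_n^2)$, which is small once $R_{k_1}$ is large --- so your overlapping heads are really a fixed head up to band $R_{k_1}$ plus a negligible middle, i.e.\ precisely the decomposition you dismissed. In short, the missing idea is the summability of the limit profile; it dissolves your ``main obstacle,'' makes the naive decomposition work, and reduces the proof to Lemmas \ref{lem1} and \ref{lem2}. Without it, the estimate at your self-identified crux is not established.
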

	 \begin{proof}
	 	We choose $L\in[\N]^\infty$ such that $\lim_{k\in L}\|x^k_{n(j)}\|=a_{n,j}$ and $\lim_{k\in L}\|y^k_{n(j)}\|=b_{n,j}$ for all $n\in \N$ and $1\le j\le 2n$. Set $\lim_{k\in L} \|x^k_n\|=a_n$ and $\lim_{k\in L} \|y^k_n\|=b_n$. Since $\sum_{n=1}^\infty \|x^k_n\|^2+\|y^k_n\|^2\le 1$ for every $k\in\N$, then  $\sum_{n=1}^\infty a_n^2+b_n^2\le 1$.
	 	
	 	Let $(\varepsilon_i)_i$ and $(\delta_i)_i$ be sequences of positive reals such that $\sum_{i=1}^\infty\varepsilon_i<\varepsilon$ and $\sum_{i=1}^\infty\delta_i<\varepsilon$. We then choose, by induction, $(n_i)_i\subset \N$ and $(k_i)_i\subset L$ increasing sequences such that the following hold for every $i\in \N$.
	 	\begin{enumerate}
	 		\item[(i)] $n_i > \max\{n:x^{k_{i-1}}_n\neq 0\;\text{or }y^{k_{i-1}}_n\neq 0 \}$ when
	 		$i>1$.
	 		\item[(ii)] $\sum_{n>n_i}a_n^2+b_n^2<\varepsilon_i$.
	 		\item[(iii)] $\sum_{n=1}^{n_i}\sum_{j=1}^{2n}\big| \|x^{k_i}_{n(j)}\|-a_{n,j}\big|^2+\big| \|y^{k_i}_{n(j)}\|-b_{n,j}\big|^2<\delta_i$.
	 	\end{enumerate}
	 	For each $i\in\N$, due to (iii), we may assume that $\|x^{k_i}_{n(j)}\|=a_{n,j}$ and $\|y^{k_i}_{n(j)}\|=b_{n,j}$ for all $1\le n\le n_i$ and $1\le j\le 2n$, with an error $\delta_i$. Then Lemma \ref{lem1} yields that
	 	\begin{equation*}\begin{split}
	 	\Big\|\sum_{i=2}^m\sum_{j=i}^m\lambda_j\sum_{n=n_{i-1}+1}^{n_i}\big(x^{k_j}_n+y^{k_j}_n\big)\Big\|\le \Bigg(\sum_{i=2}^m\varepsilon_{i-1}\sum_{j=i}^m \lambda_j^2\Bigg)^{\frac{1}{2}}+\Bigg(\sum_{i=2}^m\delta_{i}\sum_{j=i}^m \lambda_j^2\Bigg)^{\frac{1}{2}}.
	 	\end{split}\end{equation*}
	 	Hence, applying Lemma \ref{lem2},  we calculate
	 	\begin{equation*}\begin{split}
	 	\Big\|\sum_{i=1}^m \lambda_i x^{k_i}\Big\|&\ge\Big\|\sum_{i=1}^m \lambda_i\Big(\sum_{n=1}^{n_1} \big(x^{k_i}_n+y^{k_i}_n\big)+\sum_{n=n_i+1}^{n_{i+1}}\big(x^{k_i}_n+y^{k_i}_n\big)\Big)\Big\|-2\sqrt{\varepsilon}\Bigg(\sum_{i=1}^m \lambda_i^2\Bigg)^{\frac{1}{2}}\\
	 	&\ge\sqrt{1-4\varepsilon}\Bigg(\sum_{i=1}^m a_i^2\Bigg)^{\frac{1}{2}}-2\sqrt{\varepsilon}\Bigg(\sum_{i=1}^m a_i^2\Bigg)^{\frac{1}{2}}
	 	\end{split}\end{equation*}
	 	and
	 	\begin{equation*}\begin{split}
	 	\Big\|\sum_{i=1}^m \lambda_i x^{k_i}\Big\|&\le\Big\|\sum_{i=1}^m \lambda_i\Big(\sum_{n=1}^{n_1} \big(x^{k_i}_n+y^{k_i}_n\big)+\sum_{n=n_i+1}^{n_{i+1}}\big(x^{k_i}_n+y^{k_i}_n\big)\Big)\Big\|+2\sqrt{\varepsilon}\Bigg(\sum_{i=1}^m \lambda_i^2\Bigg)^{\frac{1}{2}}\\
	 	&\le\sqrt{1+4\varepsilon}\Bigg(\sum_{i=1}^m \lambda_i^2\Bigg)^{\frac{1}{2}}+2\sqrt{\varepsilon}\Bigg(\sum_{i=1}^m \lambda_i^2\Bigg)^{\frac{1}{2}}.
	 	\end{split}\end{equation*}
	 \end{proof}
	
	 \begin{cor}
	 	Every spreading model generated by a basic sequence in $\mathcal{X}$ is isometric to $\ell_2$ and hence $\mathcal{X}$ admits a uniformly unique spreading model with respect to $\mathscr{F}(X)$.
	 \end{cor}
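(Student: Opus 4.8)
The plan is to reduce the general case to the block-sequence case already settled in the preceding Proposition, the crux being that no normalized basic sequence in $\mathcal{X}$ has a nonzero weak limit along a subsequence. Let $(e_n)_n$ be a spreading model generated by a seminormalized basic sequence in $\mathcal{X}$; after rescaling we may take the generating sequence $(x_n)_n$ normalized, with basis constant $K$, and since any subsequence of a generating sequence generates the same spreading model we may freely pass to subsequences. Recall the standard fact that $(e_n)_n$ is itself basic with basis constant $\le K$: for $n_1<\cdots<n_r$ the finite sequence $(x_{n_i})_{i=1}^r$ has basis constant $\le K$, and letting the $n_i$ tend to infinity gives $\|\sum_{i=1}^m a_ie_i\|\le K\|\sum_{i=1}^r a_ie_i\|$ for $m\le r$. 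Since $\mathcal{X}$ is reflexive, pass to a subsequence with $x_n\to x_0$ weakly and set $z_n=x_n-x_0$, a weakly null sequence.

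First I would show $x_0=0$. If $\|z_n\|\to 0$ along a further subsequence, then $x_n\to x_0$ in norm, impossible for a normalized basic sequence (the $m$-th coordinate projection of a basic sequence has norm $\le 2K$, so $\inf_{n\ne m}\|x_n-x_m\|\ge 1/(2K)$); hence we may assume $\|z_n\|\to\lambda>0$. Fix $\phi$ in the unit sphere of $\mathcal{X}^*$ with $\phi(x_0)=\|x_0\|$; as $(z_n)_n$ is weakly null, $\phi(z_n)\to 0$, so
\[\Big\|\sum_{i=1}^m e_i\Big\|=\lim_{n_1<\cdots<n_m\to\infty}\Big\|mx_0+\sum_{i=1}^m z_{n_i}\Big\|\ge\lim_{n_i\to\infty}\phi\Big(mx_0+\sum_{i=1}^m z_{n_i}\Big)=m\|x_0\|.\]
On the other hand, $(z_n/\|z_n\|)_n$ is normalized and weakly null, hence by a sliding hump argument, after a further subsequence it is arbitrarily close to a normalized block sequence of $\mathcal{X}$'s basis; applying the preceding Proposition to that block sequence shows a subsequence of $(z_n/\|z_n\|)_n$ is $2$-equivalent to the unit vector basis of $\ell_2$, so there is a constant $C$ (depending only on $\lambda$) with $\|\sum_{i=1}^{2m}\theta_i z_{n_i}\|\le C\sqrt m$ for all signs $\theta_i$ and all such $n_i$. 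Taking $\theta_i=1$ for $i\le m$, $\theta_i=-1$ for $m<i\le 2m$, and passing to the spreading model limit (independent of the subsequence) gives $\|\sum_{i=1}^m e_i-\sum_{i=m+1}^{2m}e_i\|\le C\sqrt m$. Combining this with the displayed inequality and the basis-constant estimate for $(e_n)_n$ yields $m\|x_0\|\le\|\sum_{i=1}^m e_i\|\le K\|\sum_{i=1}^m e_i-\sum_{i=m+1}^{2m}e_i\|\le KC\sqrt m$ for every $m$, which forces $\|x_0\|=0$.

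Hence $x_0=0$, so $(x_n)_n$ has a weakly null subsequence, which (again by a sliding hump) is, up to an arbitrarily small perturbation, a normalized block sequence $(u_n)_n$; this subsequence still generates $(e_n)_n$, and $(u_n)_n$ generates the same spreading model. For each $\varepsilon>0$ the preceding Proposition produces a subsequence of $(u_n)_n$ lying in the sandwich $(1-\varepsilon)(\sum_i a_i^2)^{1/2}\le\|\sum_i a_i(\cdot)\|\le(1+\varepsilon)(\sum_i a_i^2)^{1/2}$; since the spreading model limit is the same along any subsequence, $(1-\varepsilon)(\sum_i a_i^2)^{1/2}\le\|\sum_i a_ie_i\|\le(1+\varepsilon)(\sum_i a_i^2)^{1/2}$ for all scalars and all $\varepsilon>0$, whence $\|\sum_i a_ie_i\|=(\sum_i a_i^2)^{1/2}$. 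Thus $(e_n)_n$ is isometric to the unit vector basis of $\ell_2$; in particular any two spreading models of $\mathcal{X}$ generated by basic sequences are isometrically equivalent, so $\mathcal{X}$ admits a uniformly unique spreading model with respect to $\mathscr{F}(\mathcal{X})$ (with constant $1$). The main obstacle is the elimination of a nonzero weak limit: one must use the Proposition to bound $\|\sum\pm z_{n_i}\|$ by $O(\sqrt m)$ while a norming functional forces $\|\sum_{i=1}^m e_i\|\gtrsim m$ whenever $x_0\ne 0$; once this is settled the Proposition finishes the argument routinely.
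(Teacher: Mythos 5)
Your proof is correct and follows the route the paper intends for this corollary: use reflexivity to extract a weakly null subsequence, perturb it to a block sequence by a gliding hump, and invoke the preceding Proposition together with the fact that the spreading model is unchanged under passing to subsequences and vanishing perturbations. The only divergence is your quantitative elimination of the weak limit $x_0$ (norming functional versus the $O(\sqrt{m})$ bound from the Proposition), which, while valid, can be replaced by the standard one-line fact that a weakly convergent basic sequence is automatically weakly null, since its weak limit lies in $\bigcap_N \cspn\{x_n\}_{n\ge N}=\{0\}$.
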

	
	 \begin{rem}
	 	Using similar arguments we may show that every $l$-joint spreading model generated by a basic sequence in $\mathcal{X}$ is isomorphic to $\ell_2$, while this does not happen with a uniform constant and as already shown $\mathcal{X}$ fails the UALS property. This shows a strong connection between the UALS and spaces with uniformly unique joint spreading models, which fails when the space only admits a uniformly unique spreading model.
	 	
	 	As mentioned in Section 3, the space from \cite{AM1} is another example of a space that admits a uniformly unique spreading model and fails to have a uniform constant for which all of its $l$-joint spreading models, for every $l\in\N$, are equivalent. This space however satisfies the stronger property that every one of its subspaces does not admit a uniformly unique $l$-joint spreading model, contrary to the space $\mathcal{X}$ which contains $\ell_2$.	 	
	 \end{rem}

 	Motivated by the definition of space $\mathcal{X}$, we modify the above arguments to show that every $L_p[0,1]$, for $1\le p\le\infty$, as well as the space $C(K)$ for an uncountable compact metric space $K$ fail the UALS.
	
	 \begin{prop}
	 	For every $1<p<q<\infty$, the spaces $(\sum\oplus\ell_p)_q$ and $(\sum\oplus\ell_q)_p$ fail the UALS property.
	 \end{prop}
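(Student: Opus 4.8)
The plan is to reproduce, with only cosmetic changes, the proof of Proposition~\ref{fails uals} that the space $\mathcal{X}$ of Definition~\ref{definition fails uals} fails the UALS; the only new work lies in locating inside each of these spaces the appropriate isometric sub-blocks. I will describe the argument for $Z=(\sum\oplus\ell_q)_p$, the space $(\sum\oplus\ell_p)_q$ being treated identically after interchanging the roles of the inner and the outer exponent (there the ``outer-$p$'' structure is realized inside a single factor and the ``outer-$q$'' structure across distinct factors, whereas for $Z$ it is the other way around). Write $Z=(\sum_{k}\oplus H_k)_p$ with each $H_k$ isometric to $\ell_q$. Any enumeration of the union of the unit vector bases of the $H_k$ yields a $1$-unconditional Schauder basis $(e_m)_m$ of $Z$, which is shrinking since $Z$ is reflexive; with respect to it every $H_k$ --- being infinite dimensional, hence supported on an unbounded set of coordinates --- meets every tail subspace $\cspn\{e_m:m\ge m_0\}$. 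Now fix, for each $n\in\N$, $2n$ of the factors as the coordinate pieces $X_{n(1)},\dots,X_{n(2n)}$ of a domain block $X_n\cong(\sum_{j=1}^{2n}\oplus\ell_q)_p$, together with $2n$ pairwise disjoint infinite blocks $Y_{n(1)},\dots,Y_{n(2n)}$ inside one further, unused, factor, spanning a target block $Y_n\cong(\sum_{j=1}^{2n}\oplus\ell_q)_q=\ell_q$. Fix isometries $\phi^n_j\colon X_{n(j)}\to Y_{n(j)}$ and, for $G\subseteq\{1,\dots,2n\}$, set $I_G=\sum_{j\in G}\phi^n_j\circ R_j\in\mathcal{L}(Z)$, where $R_j\colon Z\to X_{n(j)}$ is the norm-one coordinate projection; finally let $A_n=I_{\{1,\dots,2n\}}$ and $W_n=\co\{I_G:\#G=n\}$, a norm-compact convex subset of $\mathcal{L}(Z)$. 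Since $p<q$ gives $\|\cdot\|_{\ell_q^{2n}}\le\|\cdot\|_{\ell_p^{2n}}$ on $\R^{2n}$, each $I_G$ has norm at most $1$.

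Next I would check that $W_n$ pointwise $\varepsilon_n$-approximates $A_n$, with $\varepsilon_n=n^{1/q-1/p}\to0$. Given $x\in B_Z$, both $A_nx$ and $I_Gx$ depend only on the scalars $b_j=\|R_jx\|$, and $\sum_{j=1}^{2n}b_j^p\le\|x\|_Z^p\le1$; choosing a permutation $\sigma$ with $b_{\sigma(1)}\ge\cdots\ge b_{\sigma(2n)}$ forces $b_{\sigma(j)}\le(n+1)^{-1/p}$ for $j\ge n+1$, so for $G=\{\sigma(1),\dots,\sigma(n)\}$
\[
\|A_nx-I_Gx\|=\Big(\sum_{j\notin G}b_j^{q}\Big)^{1/q}\le\big(n\,(n+1)^{-q/p}\big)^{1/q}\le n^{1/q-1/p}=\varepsilon_n .
\]

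Finally I would derive the contradiction exactly as for $\mathcal{X}$. Suppose $Z$ satisfies the UALS with constant $C$ and pick $n$ with $C\varepsilon_n<1/2$, with a little room to spare. The UALS applied to $A_n$ and $W_n$, together with Lemma~\ref{uals shrinking tail subspace} (legitimate since $(e_m)_m$ is shrinking), produces a tail subspace $Y=\cspn\{e_m:m\ge m_0\}$ and some $B\in W_n$ with $\|(A_n-B)|_Y\|<1/2$. Writing $B=\sum_i\lambda_iI_{G_i}$ as a convex combination with $\#G_i=n$ and averaging $\sum_i\lambda_i\chi_{G_i}$ over $\{1,\dots,2n\}$ against the normalized counting measure yields the value $1/2$, so some $j_0$ satisfies $c:=\sum_i\lambda_i\chi_{G_i}(j_0)\le1/2$. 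Since $X_{n(j_0)}$ is infinite dimensional it contains a unit vector $x$ with $\supp(x)$ entirely past $m_0$, i.e.\ $x\in X_{n(j_0)}\cap Y$; then $(A_n-B)x=(1-c)\,\phi^n_{j_0}(x)$ has norm $1-c\ge1/2$, contradicting $\|(A_n-B)|_Y\|<1/2$.

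The only genuinely new ingredient beyond the $\mathcal{X}$ argument, and the step I expect to require the most care, is the first one: arranging the isometric copies of $(\sum\oplus\ell_q)_p$ and of $\ell_q$ on mutually disjoint coordinates of $Z$ and fixing a basis --- rather than the obvious FDD $(H_k)_k$, with respect to which a tail subspace could miss a given $X_{n(j)}$ entirely --- so that these copies still meet every tail subspace, which is what makes Lemma~\ref{uals shrinking tail subspace} usable. Once this bookkeeping is in place, the norm estimates and the averaging argument are verbatim those of Proposition~\ref{fails uals}.
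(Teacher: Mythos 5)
Your proof is correct and follows essentially the same route as the paper's: the same operators $I_G$ built from coordinate blocks, the same $\ell_p$-versus-$\ell_q$ estimate giving pointwise $n^{-(q-p)/pq}$-approximation, and the same averaging against the normalized counting measure combined with the shrinking-basis/tail-subspace lemma to reach a contradiction. The only difference is cosmetic: the paper assembles an auxiliary model $(\sum\oplus X_n\oplus Y_n)_q$ isometric to the given space, whereas you locate the domain and target blocks directly inside $(\sum\oplus\ell_q)_p$, which makes the tail-subspace step more explicit.
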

	 \begin{proof}
	 	For each $n\in\N$, we set $X_n=(\sum_{i=1}^{2n}\oplus\ell_p)_p$ and $Y_n=(\sum_{i=1}^{2n}\oplus\ell_p)_q$ and $X=(\sum X_n\oplus Y_n)_q$. Assume that $X$ satisfies the  UALS property with constant $C>0$ and pick $n\in\N$ with $C/n^{r}<\frac{1}{2}$, where $r=(q-p)/pq$.
	 	
	 	For every $G\subset\{1,\ldots,2n\}$, consider the operator $I_G:X_n\to Y_n$ such that $I_G(\sum_{i=1}^{2n}a_ix_i)=\sum_{i\in G}a_ix_i$ and set $A_n=I_{\{1,\ldots,2n\}}$ and $W_n=\co\{I_G:\#G=n \}$. Let $x\in X_n$ with $x=\sum_{i=1}^{2n}x_i$ and $\sum_{i=1}^{2n}\|x_i\|^p=1$ and let $\sigma$ be a permutation of $\{1,\ldots,2n\}$ such that $\|x_{\sigma(1)}\|^p\ge\ldots\ge\|x_{\sigma(2n)}\|^p$. Hence for $G=\{\sigma(1),\ldots,\sigma(n)\}$ we have that $\|A_n(x)-I_G(x)\|<1/{n}^{r}$ and using the same arguments as in the proof of Proposition \ref{fails uals}, we derive a contradiction. The case of $(\sum\oplus\ell_q)_p$ is similar.
	 \end{proof}
	
	 \begin{rem}
	 It is immediate that if some infinite dimensional complemented subspace of Banach space $X$ fails the  UALS property, then the same holds for $X$.
	 \end{rem}
	
	 \begin{prop}
	 	The space $L_p[0,1]$, for $1<p<\infty$ and $p\neq 2$, fails the UALS property.
	 \end{prop}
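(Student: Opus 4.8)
The strategy is to locate inside $L_p[0,1]$ a complemented subspace that has already been shown to fail the UALS, and then appeal to the last remark. Recall that for $1<p<\infty$ the closed span of the Rademacher functions in $L_p[0,1]$ is isomorphic to $\ell_2$ and, since $p\neq 1$, it is the range of a bounded projection of $L_p[0,1]$ (the Rademacher projection is bounded on $L_p$ for every $1<p<\infty$, by Khintchine's inequality applied in both directions). Using this I would build, in the usual fashion, a complemented copy of $(\sum_n\oplus\ell_2)_p$ inside $L_p[0,1]$: choose pairwise disjoint Borel sets $(A_n)_n$ of positive measure and, inside each $L_p(A_n)\cong L_p[0,1]$, place a complemented copy $Z_n$ of $\ell_2$. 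Because the $A_n$ are disjoint, for $z_n\in Z_n$ one has $\|\sum_n z_n\|_p=(\sum_n\|z_n\|_p^p)^{1/p}$, so $Z=\cspn\bigcup_n Z_n$ is isomorphic to $(\sum_n\oplus\ell_2)_p$, and the direct sum of the block projections $L_p(A_n)\to Z_n$ is a bounded projection of $L_p[0,1]$ onto $Z$.

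Next I would observe that $(\sum_n\oplus\ell_2)_p$, after renaming exponents, is exactly one of the two spaces treated in the preceding proposition. Indeed $p\neq 2$, so $\{p,2\}$ is a pair of distinct exponents in $(1,\infty)$: if $p<2$ then $(\sum_n\oplus\ell_2)_p=(\sum\oplus\ell_q)_p$ with $q=2$, and if $p>2$ then $(\sum_n\oplus\ell_2)_p=(\sum\oplus\ell_p)_q$, with the roles of the proposition's $p$ and $q$ now played by $2$ and $p$ respectively. In either case $1$ lies strictly below the two exponents and both are finite, so the preceding proposition applies and $(\sum_n\oplus\ell_2)_p$ fails the UALS property. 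Combining this with the first step and with the remark that the UALS passes from any infinite dimensional complemented subspace to the ambient space, we conclude that $L_p[0,1]$ fails the UALS for every $1<p<\infty$ with $p\neq 2$.

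There is no essential obstacle here: all of the genuine content, namely the construction of the diverse plegma spreading sequences and of the operators $I_G$ witnessing the failure of uniform approximation, was carried out for the model spaces $(\sum\oplus\ell_p)_q$ in the previous proposition, and the only additional ingredient for $L_p$ is the classical fact that it contains $\ell_2$, hence $(\sum_n\oplus\ell_2)_p$, as a complemented subspace. If one wished to avoid invoking this, one could instead realize the blocks $Z_n$ directly as disjointly supported, suitably normalized systems (for instance normalized sums of independent Gaussian variables transported onto $A_n$, which span an $\ell_2$-type subspace of $L_p(A_n)$ that is complemented there) and rerun the estimates of the previous proposition verbatim; the only point to verify would again be the complementability of such a block inside a single $L_p(A_n)$, which is standard.
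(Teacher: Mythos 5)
Your argument is correct and follows essentially the same route as the paper: both reduce to a complemented subspace of $L_p[0,1]$ built from Khintchine-complemented copies of $\ell_2$ and then rely on the $I_G$-operator construction of the preceding propositions together with the remark on complemented subspaces. The only (harmless) organizational difference is that you invoke the statement of the proposition on $(\sum\oplus\ell_p)_q$ and $(\sum\oplus\ell_q)_p$ directly for the exponent pair $\{2,p\}$, whereas the paper re-instantiates that proof with $\ell_2$ blocks in the space $(\sum\oplus X_n\oplus Y_n)_p$, which is complemented in $L_p[0,1]=(\sum\oplus L_p[0,1])_p$.
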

	 \begin{proof}
	 	Recall that, as follows from Khintchine's inequality, $\ell_2$ embeds isomorphically as a complemented subspace into $L_p[0,1]$, for all $1<p<\infty$. If $p>2$, for each $n\in\N$ set $X_{n}=(\sum_{i=1}^{2n}\oplus\ell_2)_2$ and $Y_{n}=(\sum_{i=1}^{2n}\oplus\ell_2)_p$ and if $p<2$ we then  set $X_{n}=(\sum_{i=1}^{2n}\oplus\ell_2)_p$ and $Y_{n}=(\sum_{i=1}^{2n}\oplus\ell_2)_2$. Then, following the proof of Proposition \ref{fails uals}, we may show that the space $X=(\sum\oplus X_n\oplus Y_n)_p$ fails the UALS and since it is complemented into $L_p[0,1]=(\sum\oplus L_p[0,1])_p$, the latter also fails the property.
	 \end{proof}
	
	 \begin{prop}
	 	The space $L_1[0,1]$ fails the UALS property.
	 \end{prop}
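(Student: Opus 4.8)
The plan is to reuse the mechanism of Proposition~\ref{fails uals}: I will produce an infinite-dimensional complemented subspace of $L_1[0,1]$ that fails the UALS, which by the remark above forces $L_1[0,1]$ itself to fail it. For each $n\in\N$ put $X_n=(\sum_{i=1}^{2n}\oplus\ell_2)_1$ and $Y_n=(\sum_{i=1}^{2n}\oplus\ell_2)_2$, and set $X=(\sum_{n}\oplus(X_n\oplus_1 Y_n))_1$. Here the $\ell_1$-norm on $X_n$ makes the $(n{+}1)$-st largest block of a normalized vector small, while the $\ell_2$-norm on $Y_n$ makes a sum of $n$ such small blocks small as well; the $\ell_\infty$-summands used over $\ell_2$ in Proposition~\ref{fails uals} are unavailable inside $L_1$, but $\ell_2$-summands do the job. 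First I would observe that, after reindexing the $\ell_2$-coordinates, $X$ is isometric to $\ell_1(\ell_2)$ (an $\ell_1$-sum of finite $\ell_1$- or $\ell_2$-sums of copies of $\ell_2$ is an $\ell_1$-sum of countably many Hilbert spaces), and that $\ell_1(\ell_2)$ is complemented in $L_1[0,1]$: $\ell_2$ is complemented in $L_1[0,1]$ by Khintchine's inequality, $L_1[0,1]\cong(\sum_n\oplus L_1[0,1])_1$, and the resulting projections assemble into a bounded one with a uniform constant. So it suffices to show $X$ fails the UALS.

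Next I would set up the operators as in Proposition~\ref{fails uals}. Fixing $n$, for $G\subseteq\{1,\dots,2n\}$ let $I_G\in\mathcal{L}(X)$ send the $X_n$-component $\sum_{i=1}^{2n}u_i$ of a vector to $\sum_{i\in G}u_i$ read as an element of $Y_n$, and kill everything else; one checks $\|I_G\|\le 1$. Put $A_n=I_{\{1,\dots,2n\}}$ and $W_n=\co\{I_G:\#G=n\}$, which is compact and convex, being the convex hull of finitely many operators. For the pointwise approximation, given $x\in B_X$ with $X_n$-component $\sum u_i$, relabelled so that $\|u_1\|\ge\cdots\ge\|u_{2n}\|$, one has $\|u_i\|\le 1/(n{+}1)$ for $i\ge n{+}1$, so for $G=\{1,\dots,n\}$, $\|A_nx-I_Gx\|_X=(\sum_{i>n}\|u_i\|^2)^{1/2}\le\sqrt n/(n{+}1)=:\varepsilon_n$; thus $W_n$ $\varepsilon_n$-pointwise approximates $A_n$ and $\varepsilon_n\to 0$.

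For the lower bound, let $Y$ be any finite-codimensional subspace of $X$ and $B=\sum_k\lambda_k I_{G_k}\in W_n$. The average of $\sum_k\lambda_k\chi_{G_k}$ over $\{1,\dots,2n\}$ equals $\sum_k\lambda_k\,\#G_k/(2n)=1/2$, so some coordinate $j$ has $c:=\sum_k\lambda_k\chi_{G_k}(j)\le 1/2$. Since the $j$-th $\ell_2$-summand of $X_n$ is infinite-dimensional, it meets $Y$; a unit vector $x\in Y$ in it satisfies $\|A_nx\|_X=1$ and $Bx=c\,A_nx$, whence $\|(A_n-B)|_Y\|_{\mathcal{L}(Y,X)}\ge\|(A_n-B)x\|_X=1-c\ge 1/2$. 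If $X$ had the UALS with constant $C$, choosing $n$ with $C\varepsilon_n<1/2$ would produce a finite-codimensional $Y$ and $B\in W_n$ with $\|(A_n-B)|_Y\|\le C\varepsilon_n<1/2$, a contradiction; hence $X$, and therefore $L_1[0,1]$, fails the UALS.

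The routine part is the operator bookkeeping (that $A_n$ and the $I_G$ are contractions on $X$, that $W_n$ is compact, and the averaging step yielding $j$). The one place needing care is the first paragraph: realizing $X$ as a complemented subspace of $L_1[0,1]$ with a uniform projection constant, i.e., organizing the Rademacher/Khintchine complementation of $\ell_2$ in $L_1$ over a partition of $[0,1]$ and verifying the assembled projection is bounded. I expect this to be the main obstacle, though a mild one given the classical facts available.
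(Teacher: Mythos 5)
There is a genuine gap, and it sits exactly at the point you flagged as ``a mild obstacle'': the space $X=(\sum_n\oplus(X_n\oplus_1Y_n))_1\cong\ell_1(\ell_2)$ is \emph{not} complemented in $L_1[0,1]$, so the reduction via the remark on complemented subspaces collapses. Khintchine's inequality gives an isomorphic embedding of $\ell_2$ into $L_1[0,1]$ (the Rademacher span), but that copy of $\ell_2$ is not complemented: if $\ell_1(\ell_2)$ were complemented in $L_1[0,1]$, then each $\ell_2$-summand, being $1$-complemented in $\ell_1(\ell_2)$, would yield a bounded projection of $L_1[0,1]$ onto an infinite-dimensional reflexive subspace, which is impossible (by the Dunford--Pettis property of $L_1$ such a projection maps the weakly compact unit ball of the range onto a norm compact set; equivalently, dualizing would complement $\ell_2$ in $L_\infty\cong\ell_\infty$, whose infinite-dimensional complemented subspaces are all isomorphic to $\ell_\infty$). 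Consequently your operators $A_n$ and $I_G$ cannot be extended to all of $L_1[0,1]$ by precomposing with a projection, and your argument only establishes that the abstract space $\ell_1(\ell_2)$ fails the UALS --- which is correct, and essentially a repetition of Proposition \ref{fails uals}, but does not transfer to $L_1[0,1]$.

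The paper's proof is built precisely to dodge this. It takes the domain blocks to be $X_n=(\sum_{i=1}^{2n}\oplus\ell_1)_1$, so that $(\sum\oplus X_n)_1$ \emph{is} complemented in $L_1[0,1]$ (as $\ell_1$ is), and only asks the target $(\sum\oplus Y_n)_2$, with $Y_n=(\sum_{i=1}^{2n}\oplus\ell_2)_2$, to embed isomorphically --- a target need not be complemented. The price is paid in the lower bound: the formal identity $\ell_1\to\ell_2$ is far from an isometry, so one cannot take an arbitrary unit vector of the $j$-th summand lying in the finite codimensional subspace $Y$; instead the paper takes $x=(e_{m_1}-e_{m_2})/2$, whose $\ell_1$-norm is $1$ while its image in $\ell_2$ still has norm $\sqrt{2}/2$, and uses Lemma \ref{bslem}, after passing to a subsequence along which the finitely many functionals defining $Y$ converge, to arrange $d(x,Y)<1/8$. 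Your computation of the pointwise $\varepsilon_n$-approximation and the averaging step selecting the coordinate $j$ are fine; the missing ingredient is this replacement of complemented $\ell_2$-blocks by complemented $\ell_1$-blocks mapped into isomorphically embedded $\ell_2$-blocks, together with the extra care needed to produce norming vectors nearly inside the given finite codimensional subspace.
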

	 \begin{proof}
	 	Assume that $L_1[0,1]$ satisfies the UALS with constant $C>0$ and pick $n\in\N$ with $C/n<\frac{1}{9}$. Set $X_n=(\sum_{i=1}^{2n}\oplus\ell_1)_1$ and $Y_n=(\sum_{i=1}^{2n}\oplus\ell_2)_2$. Since $\ell_1$ is isometric to a complemented subspace of $L_1[0,1]$, then the same holds for $(\sum\oplus X_n)_1$. Moreover, Khintchine's inequality yields that $\ell_2$ embeds isomorphically into $L_1[0,1]$ and hence so does $(\sum \oplus Y_n)_2$.
	 	
	 	For every $G\subset \{1,\ldots,2n\}$, consider the operator $I_G:X_n\to Y_n$ such that $I_G(\sum_{i=1}^{2n}a_ix_i)=\sum_{i\in G}a_ix_i$ and set $A_n=I_{\{1,\ldots,2n\}}$ and $W_n=\co\{I_G:\#G=n\}$. As above, for all $x\in X_n$ with $\|x\|\le1$, we may find $G\subset\{1,\ldots,2n\}$ such that $\|A_n(x)-I_G(x)\|<{1}/{n}$. Let $B=\sum_{i=1}^k\lambda_iI_{G_i}$ in $W_n$ and $Y$ be a finite codimensional subspace of $L_1[0,1]$ such that $\|(A_n-B)|_Y\|<C/n$ and choose, as in the proof of Proposition \ref{fails uals}, $1\le j\le 2n$ with $\sum_{i=1}^{k}\lambda_i\scalebox{1.2}{$\chi$}_{G_i}(j)\le\frac{1}{2}$. Let $x^*_1,\ldots,x^*_l\in L_{\infty}[0,1]$ with $Y=\cap_{i=1}^l\ker x^*_i$. Denote by $(e_m)_m$ the basis of $X_{n(j)}$ and choose $M\in[\N]^\infty$ such that $(x^*_i(e_m))_{m\in M}$ converges for all $1\le i\le l$. Using Lemma \ref{bslem} we choose $m_1,m_2\in M$ such that $d(x,Y)<\frac{1}{8}$, for $x=(e_{m_1}-e_{m_2})/2$. Then $\|A_n(x)-B(x)\|\ge\frac{1}{4}$ and hence $\|(A_n-B)|_Y\|\ge\frac{1}{9}$, which is a contradiction.
	 \end{proof}
	
	 \begin{prop}\label{capital L infty uals}
	 	The space $L_\infty[0,1]$ fails the UALS property.
	 \end{prop}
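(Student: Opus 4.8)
The difficulty for $L_\infty[0,1]$ is that, unlike the $L_p$ with $p<\infty$, it admits no bad complemented subspace on which to plant the configuration: $L_\infty[0,1]\cong\ell_\infty$ is prime, so every infinite dimensional complemented subspace of it is isomorphic to $\ell_\infty$, which has no $\ell_1$‑flavoured ``thin mass''. The plan is therefore to place the configuration of Proposition~\ref{fails uals} on a \emph{separable} subspace and then extend the operators to all of $L_\infty[0,1]$ by means of Grothendieck's theorem. Fix $n\in\N$, set $X_n=(\sum_{i=1}^{2n}\oplus\ell_2)_1$, $Y_n=(\sum_{i=1}^{2n}\oplus\ell_2)_\infty$ and $V_n=X_n\oplus_\infty Y_n$. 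Since $V_n$ is separable and $L_\infty[0,1]$ contains $C[0,1]$ isometrically, hence is isometrically universal for separable spaces, fix an isometric copy of $V_n$ inside $L_\infty[0,1]$ and identify $V_n$ with it. Let $\pi_{X_n}\colon V_n\to X_n$, $\iota_{Y_n}\colon Y_n\to V_n$ be the canonical norm‑one coordinate projection and inclusion, and for $G\subset\{1,\ldots,2n\}$ let $I_G\colon X_n\to Y_n$, $I_G(\sum_i x_i)=\sum_{i\in G}x_i$, as in Proposition~\ref{fails uals}. On $V_n$ we then have the norm‑$\le1$ operators $A_n^0=\iota_{Y_n}\circ I_{\{1,\ldots,2n\}}\circ\pi_{X_n}$ and $I_G^0=\iota_{Y_n}\circ I_G\circ\pi_{X_n}$.

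The heart of the argument is the extension. Because $X_n=\ell_1^{2n}(\ell_2)$ is $\sqrt{2n}$‑isomorphic to $\ell_2$, pick $J\colon X_n\to\ell_2$ with $\|J\|\le1$ and $\|J^{-1}\|\le\sqrt{2n}$. The operator $J\circ\pi_{X_n}\colon V_n\to\ell_2$ has norm $\le1$ and $V_n$ is a subspace of the $C(K)$‑space $L_\infty[0,1]$; by Grothendieck's theorem every operator from a subspace of a $C(K)$‑space into a Hilbert space is $2$‑summing (with constant at most Grothendieck's constant $K_G$) and hence extends to the whole space with the same norm control, so there is $\Psi_n\colon L_\infty[0,1]\to\ell_2$ extending $J\circ\pi_{X_n}$ with $\|\Psi_n\|\le K_G$. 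Set $\Pi_n=J^{-1}\Psi_n\colon L_\infty[0,1]\to X_n$, so $\Pi_n|_{V_n}=\pi_{X_n}$ and $\|\Pi_n\|\le K_G\sqrt{2n}$. Now define in $\mathcal L(L_\infty[0,1])$ the operators $A_n=\iota_{Y_n}\circ I_{\{1,\ldots,2n\}}\circ\Pi_n$ and $\tilde I_G=\iota_{Y_n}\circ I_G\circ\Pi_n$ (their ranges lie in $Y_n\subset V_n\subset L_\infty[0,1]$); these extend $A_n^0$ and $I_G^0$. Put $W_n=\co\{\tilde I_G:\#G=n\}$, a convex compact subset of $\mathcal L(L_\infty[0,1])$.

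It then remains to verify the two requirements. For the pointwise approximation, let $x\in B_{L_\infty[0,1]}$ and write $\Pi_nx=(z_1,\ldots,z_{2n})\in X_n$, so $\sum_i\|z_i\|\le K_G\sqrt{2n}$; with $G$ the set of the $n$ indices with largest $\|z_i\|$ one gets $\max_{i\notin G}\|z_i\|\le\tfrac1{n+1}\sum_i\|z_i\|\le\tfrac{K_G\sqrt{2n}}{n+1}=:\varepsilon_n$, and since $(A_n-\tilde I_G)x=\iota_{Y_n}\big((z_i)_{i\notin G}\big)$ this gives $\|(A_n-\tilde I_G)x\|\le\varepsilon_n$. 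Thus $W_n$ $\varepsilon_n$‑pointwise approximates $A_n$ on $B_{L_\infty[0,1]}$, and crucially $\varepsilon_n=K_G\sqrt{2n}/(n+1)\to0$: the factor $\sqrt{2n}$ paid in the extension is outweighed by the factor $1/(n+1)$ gained from discarding $n$ of the $2n$ coordinates of an $\ell_1$‑sum. For the lower bound on finite codimensional subspaces, let $Z\subset L_\infty[0,1]$ have finite codimension and $B=\sum_G\lambda_G\tilde I_G\in W_n$; as in Proposition~\ref{fails uals}, the average of $\sum_G\lambda_G\chi_G$ over $\{1,\ldots,2n\}$ is $\tfrac12$, so some $j$ has $\sum_G\lambda_G\chi_G(j)\le\tfrac12$. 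The $j$‑th $\ell_2$‑summand $H$ of $X_n$ is an infinite dimensional subspace of $V_n\subset L_\infty[0,1]$, so by Lemma~\ref{bslem}, applied to the functionals defining $Z$, there is a norm‑one $x\in H$ with $d(x,Z)$ as small as we wish; since $x\in V_n$, both $A_n$ and $B$ act on $x$ through $A_n^0$ and $B^0=B|_{V_n}$, and the computation of Proposition~\ref{fails uals} yields $\|(A_n-B)x\|=\big|1-\sum_G\lambda_G\chi_G(j)\big|\,\|x\|_{Y_n}\ge\tfrac12$; hence $\|(A_n-B)|_Z\|\ge\tfrac14$.

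Finally, if $L_\infty[0,1]$ satisfied the UALS with constant $C>0$, then choosing $n$ with $C\varepsilon_n<\tfrac14$ the configuration $(A_n,W_n,\varepsilon_n)$ would satisfy the hypothesis of the UALS but violate its conclusion, a contradiction, so $L_\infty[0,1]$ fails the UALS. The main obstacle is precisely the extension step of the second paragraph: since no bad complemented subspace of $L_\infty[0,1]$ is available, one must transport the operators from the separable model $V_n$ onto \emph{all} of $L_\infty[0,1]$ while keeping the pointwise approximation valid on the whole unit ball, and the only estimate strong enough for this is the Hilbert‑space factorization furnished by Grothendieck's theorem, combined with the fact that $\ell_1^{2n}(\ell_2)$ is only $\sqrt{2n}$‑Euclidean.
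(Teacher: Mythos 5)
Your opening diagnosis is exactly right -- the obstruction is that $L_\infty[0,1]\cong\ell_\infty$ is prime, so no separable infinite dimensional subspace is complemented -- but the extension step you propose to get around it is false, and in fact provably impossible. Grothendieck's theorem asserts that every bounded operator from a $C(K)$-space (or, more generally, an $\mathscr{L}_\infty$-space) into a Hilbert space is $2$-summing; it says nothing about operators defined only on a \emph{subspace} of a $C(K)$-space. Since $C[0,1]$ is isometrically universal for separable spaces, the subspace version of your claim would force every operator from every separable Banach space into $\ell_2$ to be $2$-summing, which fails already for the identity on a copy of $\ell_2$ sitting inside $C[0,1]$. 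Your particular operator $J\circ\pi_{X_n}$ exhibits the failure concretely: its restriction to any single $\ell_2$-summand of $X_n$ is an isometric embedding into $\ell_2$, so it cannot be $2$-summing. Worse, no bounded extension $\Pi_n$ of $\pi_{X_n}$ to all of $L_\infty[0,1]$ can exist with \emph{any} norm bound: such a $\Pi_n$ restricts to the identity on $X_n$, so $\iota_{X_n}\circ\Pi_n$ would be a bounded projection of $L_\infty[0,1]$ onto an infinite dimensional separable subspace isomorphic to $\ell_2$, contradicting the primeness of $\ell_\infty$ that you yourself invoke. So the construction collapses at the second paragraph, and the error is not repairable along these lines.

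The paper avoids the extension problem by never working on a separable subspace in the first place: it uses operators that are globally defined on $L_\infty[0,1]\cong L_\infty([0,1]^{2n})$, namely the martingale differences $\Delta_i=E_i-E_{i-1}$ associated with the filtration by the first $i$ coordinates, mapping into $\big(\sum_{i=1}^{2n}\oplus L_2\big)_\infty$, which embeds isometrically back into $L_\infty$. The role that the $\ell_1$-sum structure of $X_n$ plays in your scheme (at most $n$ of the $2n$ coordinates can be large) is played instead by Burkholder's square function inequality $\big(\int\sum_{i=1}^{2n}\Delta_i(f)^2\big)^{1/2}\le c_2\|f\|_2\le c_2\|f\|_\infty$, which forces all but $n$ of the $\Delta_i(f)$ to have $L_2$-norm at most $c_2/\sqrt{n+1}$; this gives the pointwise approximation with $\varepsilon_n\to 0$. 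The lower bound on finite codimensional subspaces is then obtained from differences of Rademacher functions depending only on the $j$-th coordinate, much as in your third paragraph. If you want to salvage your write-up, this is the substitution to make: replace the separable model $V_n$ and the impossible extension $\Pi_n$ by these globally defined conditional-expectation operators.
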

	 \begin{proof} Fix $n\in\N$. The $\sigma$-algebra $\mathcal{B}[0,1]$ of all Borel sets of $[0,1]$ is homeomorphic to that of $[0,1]^{2n}$ and hence $L_\infty[0,1]$ is isometric to $L_\infty[0,1]^{2n}$. For $1\le i \le 2n$, denote by $\mathcal{B}_i$ the $\sigma$-algebra generated by  $\{B\in \prod_{i=1}^{2n}\mathcal{B}[0,1]:B_j=[0,1]\;\text{for }j>i \}$ and for $f\in L_\infty[0,1]^{2n}$ set $E_i(f)=E[f|\mathcal{B}_i]$ and consider the operator $\Delta_i:L_\infty[0,1]^{2n}\to  L_2([0,1]^i,\otimes_{j\le i}\lambda)$ with $\Delta_i(f)=E_i(f)-E_{i-1}(f)$, where $E_0(f)=0$ and $\lambda$ denotes the Lebesgue measure on $[0,1]$.

 For every $G\subset\{1,\ldots,2n\}$, let $\Delta_G:L_\infty[0,1]^{2n}\to (\sum_{i=1}^{2n}\oplus L_2([0,1]^i,\otimes_{j\le i}\lambda) )_\infty$ with $\Delta_G=\sum_{i\in G}\Delta_i$ and set $A_n=\Delta_{\{1,\ldots,2n \}}$ and  $W_n=\co\{\Delta_G:\#G=n \}$. Observe that $(\sum_{i=1}^{2n}\oplus L_2([0,1]^i,\otimes_{j\le i}\lambda) )_\infty$ embeds isometrically into $L_\infty[0,1]^{2n}$ and hence we have that $\Delta_G:L_\infty[0,1]\to L_\infty[0,1]$. Let $f\in L_\infty[0,1]^{2n}$ and notice that $(E_i(f))_{i=1}^{2n}$ is a martingale, since $\mathcal{B}_i$ is a subalgebra of $\mathcal{B}_j$ for every $1\le i<j\le 2n$. Then for the martingale differences $(\Delta_{i}(f))_{i=1}^{2n}$ the Burkholder inequality \cite{B} yields a $c_2>0$ such that
 \begin{equation}\label{burkholder}\Bigg(\int_{0}^1 \sum_{i=1}^{2n}\Delta_i(f)^2\Bigg)^{\frac{1}{2}}\le c_2\|f\|_2.\end{equation}

 		{ \underline{\em {Claim} 1} }  : For every $\varepsilon>0$, there exists $n_0\in\N$ such that, for every $n\ge n_0$ and $f\in L_\infty[0,1]^{2n}$, there is a $B\in W_n$ such that $\|(A_n-B)f\|\le\varepsilon\|f\|$.
		\begin{proof}[Proof of Claim 1]\renewcommand{\qedsymbol}{}
            Pick $n_0\in\N$ such that $c_2/\sqrt{n_0}<\varepsilon$. Let $n\ge n_0$ and $f$ in $L_\infty[0,1]^{2n}$ with $\|f\|=1$. Then, as a direct consequence of \eqref{burkholder}, we have that $\#\{i:\|\Delta_i(f)\|_2>c_2/\sqrt{n+1} \}\le n$. Let $\sigma$ be permutation of $\{1,\ldots,2n\}$ such that $\|\Delta_{\sigma(1)}(f)\|_2\ge\ldots\ge\|\Delta_{\sigma(2n)}(f)\|_2$. Hence for $G=\{\sigma(1),\ldots,\sigma(n)\}$, we conclude that $\|(A_n-\Delta_{G})f\|< c_2/\sqrt{n}$ and this yields the desired result.
		\end{proof}

 		{ \underline{\em {Claim} 2} }  : For every $n\in\N$, every finite codimensional subspace $Y$ of $L_\infty[0,1]^{2n}$ and $B\in W_n$, we have that $\|(A-B)|_Y\|\ge 1/9$.
		\begin{proof}[Proof of Claim 2]\renewcommand{\qedsymbol}{}
            There exist $x^{*}_1,\ldots,x^{*}_l\in (L_\infty[0,1]^{2n})^*$ with $Y=\cap_{i=1}^l\ker x^{*}_i$ and also $B$ is a convex combination $B=\sum_{i=1}^k\lambda_i\Delta_{G_i}$ in $W_n$. Then, as in Proposition \ref{fails uals}, choose $1\le j \le 2n$ with $\sum_{i=1}^{k}\lambda_i\scalebox{1.2}{$\chi$}_{G_i}(j)\le\frac{1}{2}$. Denote by $(R_m)_m$ the Rademacher system and consider a natural extension $(\tilde{R}_m)_m$ into $L_\infty[0,1]^{2n}$ such that $\tilde{R}_m(t_1,\ldots,t_{2n})=R_m(t_{j})$. We choose $M\in[\N]^\infty$ such that $(x^*_i({\tilde{R}_m}))_{m\in M}$ converges for all $1\le i\le l$, and applying Lemma \ref{bslem} we may find $m_1,m_2\in M$ such that $d(f,Y)<1/8$, for $f=(\tilde{R}_{m_1}-\tilde{R}_{m_2})/2$. We recall that $(\tilde{R}_m)_m$ is isomorphic to the unit vector basis of $\ell_1$ in the $L_\infty$-norm and hence $\|f\|_\infty=1$. Notice that, for every $m\in M$, we have that $\Delta_i(\tilde{R}_m)=\delta_{ij}\tilde{R}_m$ and $\|\tilde{R}_m\|_2=1$ and since $({R}_m)_m$ are orthogonal, $\|\tilde{R}_{m_1}-\tilde{R}_{m_2}\|_2=(\|\tilde{R}_{m_1}\|^2+\|\tilde{R}_{m_2}\|^2)^\frac{1}{2}$. Hence $\|(A_n-B)f\|\ge1/4$ and so we conclude that $\|(A_n-B)|_Y\|\ge 1/9$, since $d(f,Y)<1/8$.
		\end{proof}

    Assume that $L_\infty[0,1]$ satisfies the UALS with constant $C>0$ and pick $\varepsilon>0$ such that $C\varepsilon<1/9$. The first claim yields an $n\in\N$ such that, for every $f\in L_\infty[0,1]^{2n}$ with $\|f\|\le1$, there exists $B\in W_n$ with $\|(A_n-B)f\|<\varepsilon$. Hence there exist a subspace $Y$ of $L_\infty[0,1]^{2n}$ of finite codimension and a $B\in W_n$ such that $\|(A-B)|_Y\|<C\varepsilon$ and this contradicts our second claim, since $C\varepsilon<1/9$.
	 \end{proof}
	
	 \begin{prop}\label{C(K)}
	 	Let $K$ be an uncountable compact metrizable space. Then the space $C(K)$ fails the UALS property.
	 \end{prop}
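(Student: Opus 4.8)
The plan is to run, essentially verbatim, the argument of Proposition~\ref{capital L infty uals} (the case of $L_\infty[0,1]$), but on a totally disconnected model where the relevant ``Rademacher'' functions are genuinely continuous. First I would reduce to the Cantor set: since $K$ is uncountable compact metrizable, a classical theorem of Milyutin gives $C(K)\cong C[0,1]$, and the latter is isomorphic to $C(\Delta)$ where $\Delta=\{0,1\}^{\N}$; as the UALS is plainly an isomorphic invariant it suffices to show that $C(\Delta)$ fails it (alternatively, a simultaneous linear extension operator shows that $C(\Delta)$ is complemented in $C(K)$ for a Cantor subset $\Delta\subset K$, and one invokes the remark on complemented subspaces above). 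Fix $n\in\N$ and write $\Delta^{2n}=\prod_{i=1}^{2n}\Delta_i$ with each $\Delta_i$ a copy of $\{0,1\}^{\N}$; since $\Delta^{2n}$ is homeomorphic to $\Delta$ we have $C(\Delta^{2n})\cong C(\Delta)$ isometrically and we work inside $C(\Delta^{2n})$. Equip each $\Delta_i$ with its Haar probability measure $\mu$, let $E_i\colon C(\Delta^{2n})\to C(\prod_{j\le i}\Delta_j)$ be the norm-one projection obtained by averaging over the coordinates of index $>i$ (so $E_0$ is the mean), and put $\Delta_i=E_i-E_{i-1}$, viewed as an operator into $L_2(\prod_{j\le i}\Delta_j,\mu^{\otimes i})$. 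For $G\subset\{1,\dots,2n\}$ set $\Delta_G=\sum_{i\in G}\Delta_i$, a map from $C(\Delta^{2n})$ into $Y_n:=\bigl(\sum_{i=1}^{2n}\oplus L_2(\prod_{j\le i}\Delta_j,\mu^{\otimes i})\bigr)_\infty$. The $\Delta_i f$ are pairwise orthogonal in $L_2(\mu^{\otimes 2n})$, so $\sum_{i=1}^{2n}\|\Delta_i f\|_2^2\le\|f\|_2^2\le\|f\|_\infty^2$ and hence $\|\Delta_G\|\le1$. Since $L_2$ embeds isometrically in $C(\Delta)$ (Banach--Mazur) and a finite $\ell_\infty$-sum of isometric copies of $C(\Delta)$ is again isometric to $C(\Delta)$ (a finite disjoint union of Cantor sets is a Cantor set), $Y_n$ embeds isometrically in $C(\Delta^{2n})$; thus each $\Delta_G$ is a norm-$\le1$ operator on $C(\Delta^{2n})$, with all constants independent of $n$. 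Finally set $A_n=\Delta_{\{1,\dots,2n\}}$ and $W_n=\co\{\Delta_G:\#G=n\}$, a convex compact polytope in $\mathcal L(C(\Delta^{2n}))$.

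The proof then rests on two claims paralleling those of Proposition~\ref{capital L infty uals}. For the first, given $\varepsilon>0$ I would take any $n\ge\varepsilon^{-2}$: if $\|f\|_\infty\le1$, orthogonality forces at most $n$ indices $i$ to satisfy $\|\Delta_i f\|_2>(n+1)^{-1/2}$, so letting $G$ be the $n$ largest one gets $\|(A_n-\Delta_G)f\|_{Y_n}=\max_{i\notin G}\|\Delta_i f\|_2<n^{-1/2}\le\varepsilon$, i.e. $W_n$ $\varepsilon$-pointwise approximates $A_n$ on $B_{C(\Delta^{2n})}$. For the second, fix a finite codimensional $Y=\bigcap_{i=1}^l\ker x_i^*$ and $B=\sum_k\lambda_k\Delta_{G_k}\in W_n$; averaging $\sum_k\lambda_k\chi_{G_k}$ over $\{1,\dots,2n\}$ gives $1/2$, so some $j$ has $c_j:=\sum_k\lambda_k\chi_{G_k}(j)\le1/2$. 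Let $(r_m)_m$ be the coordinate functions of $\Delta_j$ — continuous, $\{\pm1\}$-valued, orthonormal in $L_2(\mu)$ and isometrically equivalent to the $\ell_1$-basis in the sup norm — and let $\tilde r_m(t)=r_m(t_j)\in C(\Delta^{2n})$, so that $\Delta_i\tilde r_m=\delta_{ij}\tilde r_m$ and $\|\tilde r_m\|_2=\|\tilde r_m\|_\infty=1$. Passing to an infinite set $M$ along which all $x_i^*(\tilde r_m)$ converge and using Lemma~\ref{bslem}, choose $m_1,m_2\in M$ so that $f=(\tilde r_{m_1}-\tilde r_{m_2})/2$ has $d(f,Y)<1/8$; note $\|f\|_\infty=1$ and $\|f\|_2=1/\sqrt2$. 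Since $A_n-B=\sum_i(1-c_i)\Delta_i$ with all coefficients in $[0,1]$, one has $\|A_n-B\|\le1$ and $\|(A_n-B)f\|_{Y_n}=(1-c_j)\|f\|_2\ge1/(2\sqrt2)\ge1/4$; picking $g\in Y$ with $\|f-g\|<1/8$ (so $\|g\|\le9/8$) gives $\|(A_n-B)g\|\ge1/4-1/8=1/8$ and hence $\|(A_n-B)|_Y\|\ge1/9$. Combining the two claims exactly as in Proposition~\ref{fails uals}: if $C(\Delta)$ had UALS with constant $C$, choose $\varepsilon$ with $C\varepsilon<1/9$ and the $n$ from the first claim; UALS produces a finite codimensional $Y$ and $B\in W_n$ with $\|(A_n-B)|_Y\|\le C\varepsilon<1/9$, contradicting the second claim. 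Hence $C(\Delta)$, and therefore $C(K)$, fails the UALS.

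The conceptual crux — and the reason the proof of Proposition~\ref{capital L infty uals} does not apply to $C[0,1]$ ``directly'' — is the passage to the totally disconnected model: on $\{0,1\}^{\N}$ the coordinate functions, hence the lifts $\tilde r_m$, are genuinely continuous, so the separating vectors used in the second claim live in $C(\Delta)$ rather than merely in $L_\infty$. The only other point that requires care is verifying that the martingale-difference operators $\Delta_G$ truly are bounded operators on $C(\Delta)$ — equivalently, that $Y_n=(\sum\oplus L_2)_\infty$ embeds isometrically in $C(\Delta)$ with $n$-independent norm — and then arranging the numerical constants so that the two claims collide.
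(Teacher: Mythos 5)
Your proof is correct and follows essentially the same route as the paper's: Milutin's theorem to pass to the Cantor group, the product decomposition into $2n$ factors, the martingale-difference operators $\Delta_G$ mapping into an $\ell_\infty$-sum of $L_2$'s that embeds back isometrically, and the lifted coordinate (Rademacher) functions to defeat any finite codimensional subspace. The only divergence is cosmetic: you replace the appeal to Burkholder's inequality by the direct orthogonality bound $\sum_i\|\Delta_i f\|_2^2\le\|f\|_2^2$, which is a harmless (indeed cleaner) simplification since the target is $L_2$.
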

	 \begin{proof}
        We set $\Omega=\{-1,1\}^\N$ and Milutin's Theorem \cite{M} yields that the space $C(K)$ is isomorphic to $C(\Omega)$ for every $K$ uncountable compact metrizable. We now fix $n\in\N$, consider a partition of $\N$ into disjoint infinite sets $N_1,\ldots,N_{2n}$ and set $\Omega_i=\{-1,1\}^{N_i}$, for $1\le i\le 2n$. Clearly $C(\Omega)$ is isometric to $C(\prod_{i=1}^{2n}\Omega_i)$.

        In a similar manner as in the previous proposition, for every $1\le i\le 2n$, we define $E_i,\Delta_i:C(\prod_{i=1}^{2n}\Omega_i)\to L_2(\prod_{j\le i}\Omega_j,\otimes_{j\le i}\mu_j)$, where by $\mu_j$ we denote the Haar probability measure on $\Omega_j$. Moreover, for every $G\subset\{1,\ldots,2n\}$, we define the operator $\Delta_G:C(\prod_{i=1}^{2n}\Omega_i)\to (\sum_{i=1}^{2n}L_2(\prod_{j\le i}\Omega_j,\otimes_{j\le i}\mu_j))_\infty$ with $\Delta_G=\sum_{i\in G}\Delta_i$. Observe that $(\sum_{i=1}^{2n}L_2(\prod_{j\le i}\Omega_j,\otimes_{j\le i}\mu_j))_\infty$ is isometric to a subspace of $C(\Omega)$ and hence $\Delta_G:C(\Omega)\to C(\Omega)$. Also set $A_n=\Delta_{\{1,\ldots,2n\}}$ and $W_n=\co\{\Delta_G:\#G=n\}$.

        The family $(\pi_n)_n$ of the projections of $\Omega$ onto its coordinates corresponds to the Rademacher system in $L_\infty[0,1]$. Therefore, assuming that $C(\Omega)$ satisfies the UALS property, we arrive at a contradiction applying the corresponding arguments of Proposition \ref{capital L infty uals}.
	 \end{proof}

 	\subsection{Final Remarks} This last subsection contains some final remarks and open problems concerning the UALS property. We start with the following example suggested by W. B. Johnson which shows that in the definition of the UALS, we cannot expect the uniform approximation to happen on the whole space.
 	
 		 \begin{exmp}\label{ex1}
 		Let $\|\cdot\|$ be a norm on $\R^2$ and for $x,x^*\in\R^2$ define the operator $x^*\otimes x:\R^2 \to \R^2$ with $x^*\otimes x(y)=x^*(y) x$ and set
 		\[
 		W={\textnormal{co}}\{x^*\otimes x:x,x^*\in\R^2\;\textnormal{and}\;\|x\|,\|x^*\|\le1\}.
 		\]
 		Let $y\in {\R^2}$ with $\|y\|\le1$ and $x^*\in\R^2$ with $\|x^*\|=1$ such that $x^*(y)=\|y\|$. Then for $x={y}/{\|y\|}$, we have that $x^*\otimes x\in W$ and $\|x^*\otimes x(y)-I(y)\|=0$, where $I$ denotes the identity operator.
 		
 		For any $B\in W$, there exists a convex combination $\sum_{i=1}^5a_iB_i$ in $W$ such that $B=\sum_{i=1}^5a_iB_i$. Then $a_{i_0}\ge{1}/{5}$ for some $1\le i_0\le 5$, and for $x\in\ker B_{i_0}$ with $\|x\|=1$ we have  that $\|x-\sum_{i=1}^4a_iB_i(x)\|	\ge 1- \sum_{i=1}^4a_i$. Hence $\|I-B\|\ge{1}/{5}$ for all $B\in W$.
 	\end{exmp}
 	
 	This example is extended to every Banach space with dimension greater than two, by the following easy modification.
 	
 	\begin{prop}
 		Let $X$ be a Banach space with $\dim X\ge 2$. There exist $C> 0$ and a convex compact subset $W$ of $\mathcal{L}(X)$ with the property that, for every $x\in B_X$, there exists a $B\in W$ such that $\|x-B(x)\|=0$ whereas $\|I-B\|\ge C$ for all $B\in W$, where $I:X\to X$ denotes the identity operator.
 	\end{prop}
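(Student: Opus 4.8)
The plan is to reduce the statement to the two-dimensional case of Example \ref{ex1}, by conjugating that construction with a bounded projection onto a $2$-dimensional subspace and letting the candidate operators act as the identity on a complement. First I would fix a $2$-dimensional subspace $V\subseteq X$ and a bounded linear projection $P\colon X\to V$; such a $P$ exists since every finite-dimensional subspace of a Banach space is complemented (one may take $\|P\|\le 2$ by Auerbach's lemma, though the value plays no role). Put $Q=I-P$ and, working inside $V$, let
\[S=\bigl\{v^*\otimes u:\ v^*\in V^*,\ u\in V,\ \|v^*\|\le1,\ \|u\|\le1\bigr\}\subset\mathcal{L}(V),\qquad (v^*\otimes u)(w)=v^*(w)u,\]
and $\mathcal{W}_0=\co S$. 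Since $S$ is a continuous image of the compact set $B_{V^*}\times B_V$, it is compact, so $\mathcal{W}_0$ is a convex compact subset of the four-dimensional space $\mathcal{L}(V)$. Finally set $W=\{TP+Q:\ T\in\mathcal{W}_0\}$, where each $T\in\mathcal{L}(V)$ is viewed as an operator on $X$ via $V\hookrightarrow X$. As $T\mapsto TP+Q$ is an affine, continuous map $\mathcal{L}(V)\to\mathcal{L}(X)$, the set $W$ is convex and compact.

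Next I would verify the two required properties. For the pointwise one, fix $x\in B_X$ and set $v=Px\in V$. If $v=0$, then $x=Qx$ and any $B\in W$ satisfies $Bx=TPx+Qx=Qx=x$. If $v\ne0$, use Hahn--Banach to choose $v^*\in V^*$ with $\|v^*\|=1$ and $v^*(v)=\|v\|$, put $u=v/\|v\|$, and set $T=v^*\otimes u\in S\subseteq\mathcal{W}_0$; then $Tv=v^*(v)u=v$, so $B=TP+Q\in W$ satisfies $Bx=Tv+Qx=v+Qx=x$. Hence every $x\in B_X$ is fixed by some element of $W$. For the lower bound, let $B=TP+Q\in W$ be arbitrary and denote by $I_V$ the identity on $V$. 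For every $w$ in the unit sphere of $V$ we have $Pw=w$ and $Qw=0$, hence $(I-B)w=w-Tw=(I_V-T)w$; since $B_V\subseteq B_X$, this yields $\|I-B\|_{\mathcal{L}(X)}\ge\|I_V-T\|_{\mathcal{L}(V)}$. As $\mathcal{W}_0=\co S$ with $S$ compact in the four-dimensional space $\mathcal{L}(V)$, Carathéodory's theorem gives $T=\sum_{i=1}^{5}a_iT_i$ with $a_i\ge0$, $\sum_{i=1}^5a_i=1$, and each $T_i\in S$ (so $\|T_i\|\le1$ and $T_i$ has rank at most one). Pick $i_0$ with $a_{i_0}=\max_i a_i\ge1/5$. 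Since $T_{i_0}$ has rank at most one and $\dim V=2$, there is $w\in\ker T_{i_0}$ with $\|w\|=1$, and then, using $T_{i_0}w=0$,
\[\|(I_V-T)w\|=\Bigl\|w-\sum_{i\ne i_0}a_iT_iw\Bigr\|\ \ge\ \|w\|-\sum_{i\ne i_0}a_i\|T_iw\|\ \ge\ 1-\sum_{i\ne i_0}a_i\ =\ a_{i_0}\ \ge\ \tfrac15.\]
Therefore $\|I-B\|\ge1/5$ for every $B\in W$, and the proposition follows with $C=1/5$.

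The only genuinely delicate point is the reduction itself: one must run the rank-one construction of Example \ref{ex1} inside the complemented subspace $V$ while forcing the candidate operators to act as the identity on $\ker P$, so that the identity $Bx=x$ holds for \emph{all} $x\in B_X$, yet the norm estimate $\|I-B\|\ge\|I_V-T\|$ localises to the unit sphere of $V$ --- where $P$ restricts to the identity --- and hence produces a constant $C=1/5$ independent of $\|P\|$. Everything else (convexity and compactness of $W$, the Hahn--Banach selection of $v^*$, and the Carathéodory plus rank-one-kernel estimate) is routine and mirrors Example \ref{ex1}.
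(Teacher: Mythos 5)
Your proposal is correct and follows essentially the same route as the paper: the paper also fixes a two\hyphenation{dimensional}-dimensional subspace $Y$ with complement $Z$, takes $W=\co\{x^*\otimes x|_Y+I|_Z\}$ (which is exactly your $\{TP+Q:T\in\co S\}$), and invokes the argument of the two-dimensional example. You have merely written out the details (compactness of $W$, the Hahn--Banach selection, and the Carath\'eodory plus rank-one-kernel estimate giving $C=1/5$) that the paper leaves as ``similar arguments to those in the previous example.''
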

 	\begin{proof}
 		Let $e_1,e_2$ be linearly independent vectors in $X$, denote by $Y$ their linear span and let $Z$ be a subspace of $X$ such that $X=Y\oplus Z$. Set
 		\[
 		W={\textnormal{co}}\{x^*\otimes x|_Y+I|_Z:x,x^*\in Y\;\textnormal{and}\;\|x\|,\|x^*\|\le1\}
 		\]
 		and notice that, using similar arguments to those in the previous example, we obtain the desired result.
 	\end{proof}

	\begin{rem}
        I. Gasparis pointed out that in the case of $c_0$, the UALS can be proved without the use of Kakutani's theorem. This is a consequence of the following fact. Let $T\in\mathcal{L}(c_0)$ and $(x^i_n)_n$, $1\le i\le l$, be normalized block sequences such that for some $\varepsilon>0$, we have that $\|T(x^1_n)\|\ge \varepsilon$ for all $n\in\N$. Then, for every $\delta>0$, there exists a choice $n_1<\ldots<n_l$ such that $\|T(\sum_{i=1}^lx^i_{n_i})\|>\varepsilon-\delta$. Assume now that $T_1,\ldots,T_l\in\mathcal{L}(c_0)$ and $\varepsilon>0$, such that for every $x$ in the unit ball of $c_0$, there exists $1\le i\le l$ such that $\|T_i(x)\|\le\varepsilon$. Then, for every $\varepsilon'>\varepsilon$, there exist $1\le i\le l$ and $n_0\in\N$ such that $\|T_i|_{\spn\{e_n:n\ge n_0\}}\|\le\varepsilon'$. If not, we may choose for each $i=1,\ldots,l$ a normalized block sequence $(x^i_n)_n$ such that $\|T_i(x^i_n)\|\ge\varepsilon'$ for all $n\in\N$. Then applying simultaneously the above observation, for the operators $T_1,\ldots,T_l$, we may select $n_1<\ldots<n_l$ such that $\|T_i(\sum_{i=1}^lx^i_{n_i})\|>\varepsilon$ for all $i=1,\ldots,l$, and this yields a contradiction.
    \end{rem}

	\begin{rem}
There exist Banach spaces which satisfy the UALS while this is not true for all of their subspaces. As already shown, every $L_p[0,1]$ for $1<p<\infty$ and $p\neq2$, fails the UALS whereas item (ii) of Corollary \ref{some duality stuff} yields that it embeds in a space satisfying the property.
	\end{rem}

	Another open problem in a similar context as the above remark is the following. Notice that all spaces in the previous subsection failing the UALS contain a subspace which satisfies the property.

	\begin{pr}
		Does there exist a Banach space that none of its subspaces satisfy the UALS property?		
	\end{pr}

\bigskip

\end{document}